\newtheorem{thm}{Theorem}[section]
\newtheorem*{thm*}{Theorem}
\newtheorem{prop}[thm]{Proposition}
\newtheorem*{prop*}{Proposition}
\newtheorem{cor}[thm]{Corollary}
\newtheorem*{cor*}{Corollary}
\newtheorem{lem}[thm]{Lemma}
\newtheorem*{lem*}{Lemma}
\newtheorem*{oquest*}{Open Question}
\theoremstyle{remark}
\theoremstyle{remark}
\newtheorem*{rmk*}{Remark}
\theoremstyle{definition}
\newtheorem{defn}[thm]{Definition}
\theoremstyle{definition}
\theoremstyle{definition}
\newtheorem*{defn*}{Definition}
\theoremstyle{definition}
\numberwithin{equation}{section}
\newcommand{\vect}[1]{\bm{#1}}
\newcommand{\Z}{\mathbb{Z}}
\newcommand{\QQ}{\mathbb{Q}}
\newcommand{\Gal}{\text{Gal}}
\DeclareFontFamily{U}{wncy}{}
\DeclareFontShape{U}{wncy}{m}{n}{<->wncyr10}{}
\DeclareSymbolFont{mcy}{U}{wncy}{m}{n}
\DeclareMathSymbol{\Sha}{\mathord}{mcy}{"58}
\newcommand\restr[2]{{
  \left.\kern-\nulldelimiterspace 
  #1 
  \vphantom{\big|} 
  \right|_{#2} 
  }}
\begin{document}


\title[$2^{\infty}$-Selmer groups and $2^{\infty}$-class groups]{
$2^{\infty}$-Selmer groups, $2^{\infty}$-class groups, and Goldfeld's conjecture
}

\author{Alexander Smith}
\address{Department of Mathematics, Harvard University}
\email{adsmith@math.harvard.edu}

\thanks{I would like to thank Noam Elkies, Jordan Ellenberg, and Melanie Matchett Wood for their support over the course of this project. I would also like to thank Brian Conrad, Dorian Goldfeld, Ye Tian, and David Yang for their comments on prior versions of this paper.}

\begin{abstract}
We prove that the $2^{\infty}$-class groups of the imaginary quadratic fields have the distribution predicted by the Cohen-Lenstra heuristic. Given an elliptic curve $E/\QQ$ with full rational $2$-torsion and no rational cyclic subgroup of order four, we analogously prove that the $2^{\infty}$-Selmer groups of the quadratic twists of $E$ have distribution as predicted by Delaunay's heuristic. In particular, among the twists $E^{(d)}$ with $|d| < N$, the number of curves with rank at least two is $o(N)$.
\end{abstract}

\maketitle
\tableofcontents

\section{Introduction}
\label{sec:intro}

Recall that a positive integer is called a congruent number if it is the area of some right triangle with rational side lengths. This paper was born as an  eventually-successful attempt to prove the following theorem.
\begin{thm*}
\label{yay}
The set of congruent numbers equal to $1$, $2$, or $3$ mod $8$ has zero natural density in $\mathbb{N}$.
\end{thm*}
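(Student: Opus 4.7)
The plan is to apply the paper's main theorem to the congruent number curve $E_1\colon y^2=x^3-x$, whose quadratic twist by squarefree $n$ is $E_n\colon y^2=x^3-n^2x$; recall that a positive squarefree $n$ is a congruent number precisely when $E_n(\QQ)$ has positive Mordell-Weil rank. First I would verify the hypotheses of the abstract theorem for $E_1$: the $2$-torsion $\{O,(0,0),(\pm 1,0)\}$ is fully $\QQ$-rational, and $E_1$ has no $\QQ$-rational cyclic subgroup of order four (a calculation using the CM structure by $\Z[i]$: the unique Galois-stable subgroup of order four inside $E_1[4]$ is $E_1[2]$, which is non-cyclic). The abstract theorem then yields that the number of squarefree $d$ with $|d|<N$ for which $E_d$ has Mordell-Weil rank at least two is $o(N)$.

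The heart of the argument is a parity step that upgrades ``rank $\geq 1$'' to ``rank $\geq 2$'' for $n$ in the target residue classes. For positive squarefree $n\equiv 1,2,3\pmod 8$ the root number of $E_n$ is $+1$, and the $2$-parity theorem (known for the CM curves $E_n$ over $\QQ$; equivalently, by combining Monsky's explicit computation of $\dim_{\FFF_2}\SelT(E_n)$ with the fact that the $2$-corank of $\Sha(E_n)$ is even, the latter via the alternating Cassels--Tate pairing on $\Sha[2^\infty]$ modulo its divisible part) shows the Mordell-Weil rank of $E_n(\QQ)$ is even. Hence a squarefree congruent $n$ in these residues must in fact have $\operatorname{rank} E_n(\QQ)\geq 2$, so by the previous step the count of such squarefree $n\leq N$ is $o(N)$.

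To pass to all (not necessarily squarefree) congruent $n\leq N$ in the classes $1,2,3\pmod 8$, write $n=m^2 d$ with $d$ squarefree; a short $2$-adic bookkeeping shows $d$ itself lies in $\{1,2,3\}\pmod 8$, and $n$ is congruent iff $d$ is. Letting $B(x)$ denote the number of such ``bad'' squarefree $d\leq x$, the count of bad $n\leq N$ is bounded by $\sum_{d\text{ bad sqf},\,d\leq N}\sqrt{N/d}$; Abel summation combined with the bound $B(x)=o(x)$ from the previous paragraph (splitting the range at $d=\sqrt N$) shows this sum is $o(N)$, which is the desired density statement.

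The main obstacle is the abstract theorem itself: its proof, a Cohen--Lenstra/Delaunay-style distribution result for $2^\infty$-Selmer groups across the full quadratic twist family of $E_1$, is the technical substance of the paper. The parity bootstrap and the squarefree reduction sketched above are comparatively routine once that theorem is in hand; the essential point is that the residue classes $1,2,3\pmod 8$ are precisely those where $L(E_n,s)$ has even order of vanishing, so ``rank $\geq 2$'' is the first nontrivial route to congruence and the main theorem is sharp enough to exclude it on a full-density set.
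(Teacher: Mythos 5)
Your overall architecture is the same as the paper's: verify the hypotheses of Corollary \ref{cor:main} for $y^2=x^3-x$ (full rational $2$-torsion, no rational cyclic subgroup of order four), use parity in the residue classes $1,2,3 \bmod 8$ to upgrade ``positive rank'' to ``at least two'', and finish with the routine reduction from squarefree $d$ to all $n$. The hypothesis check and the squarefree bookkeeping are fine.

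The gap is in the parity step. You assert that for squarefree $n\equiv 1,2,3 \pmod 8$ the Mordell--Weil rank of $E_n$ is even, justified by Monsky's computation of $\dim_{\mathbb{F}_2}\text{Sel}^2(E_n)$ together with ``the $2$-corank of $\Sha(E_n)$ is even'' via the Cassels--Tate pairing. Unconditionally this does not follow: the Cassels--Tate pairing is alternating and nondegenerate only on $\Sha[2^\infty]$ modulo its maximal divisible subgroup, so it shows the finite quotient has even $2$-rank but says nothing about the divisible part, whose corank $\delta$ is exactly the discrepancy between the Mordell--Weil rank and $\text{corank}\,\text{Sel}^{2^\infty}(E_n)$. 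What Monsky plus Cassels--Tate actually give is that $\text{corank}\,\text{Sel}^{2^\infty}(E_n)=\text{rank}\,E_n(\QQ)+\delta$ is even; evenness of the rank itself would require $\delta$ even, i.e.\ (essentially) finiteness of the $2$-part of $\Sha$, which is open. So the implication ``congruent $\Rightarrow \text{rank}\ge 2$'' is not established as written. The repair is immediate and is how the paper argues: since $\text{rank}\,E_n(\QQ)\le \text{corank}\,\text{Sel}^{2^\infty}(E_n)$, a congruent $n$ in these classes has Selmer corank at least $1$, hence at least $2$ by the unconditional Selmer parity, and Corollary \ref{cor:main} is stated precisely for the set of twists with $\text{corank}\,\text{Sel}^{2^\infty}\ge 2$, so it applies directly; your concluding count over $n=m^2d$ then goes through unchanged.
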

Previously, the best upper bound on this density was due to Heath-Brown, who found the limit as $N$ approaches $\infty$ of the distribution of $2$-Selmer groups in the quadratic twist family
\[E^{(d)}: \,dy^2 = x^3 - x \quad\text{with } |d| < N.\]
He found that, among $d$ equal to $1$, $2$, or $3$ mod $8$, the minimal $2$-Selmer rank of two was attained in the limit by about $41.9\%$ of curves \cite{Heat94}. It is well known that $d$ is congruent if and only if $E^{(d)}$ has positive rank, and we always have an inequality
\[\text{rank}(E^{(d)}) \le -2 \,+\, \dim \text{Sel}^2 \,E^{(d)},\]
with $\dim$ denoting the dimension of the $2$-Selmer group as an $\mathbb{F}_2$-vector space. Then, from his Selmer group computations, Heath-Brown could show that at most $58.1\%$ of $d$ equal to $1$, $2$, or $3$ mod $8$ were congruent.

The work of Heath-Brown was extended by Kane to families of the form
\[E^{(d)}:\, dy^2 = x(x+a)(x+b)\quad\text{with } |d| < N,\]
where $a$ and $b$ are distinct nonzero rational numbers; that is to say, Kane assumed that $E/\QQ$ had full rational $2$-torsion. With the additional hypothesis that $E$ had no rational cyclic subgroup of order four, Kane proved that the limit of the distribution of the $2$-Selmer groups in this family approached the distribution found by Heath-Brown \cite{Kane13}. With these results, Kane was able to find upper bounds on the density of twists in this family with rank $\ge 2$.

Now, the $2$-Selmer rank provides a coarse upper bound for the rank of an elliptic curve. This bound can be improved by instead considering the ranks of the $2^k$-Selmer groups, with larger $k$ giving finer estimates for the rank of the elliptic curve. In fact, if the Shafarevich-Tate conjecture is true, we expect that the $\Z_2$-Selmer corank
\[\text{corank }\text{Sel}^{2^{\infty}} E = \lim_{k \rightarrow \infty} \dim 2^{k-1}\text{Sel}^{2^k}\,E\]
should equal the rank of $E$ for any elliptic curve $E/\QQ$.

With this in mind, the first goal of the paper is to find the distribution of the $2^k$-Selmer groups in the quadratic twist family of a curve $E/\QQ$. To write down the result, we will need some notation:
\begin{defn*}
For $n \ge j \ge 0$, take $P^{\text{Alt}}(j \,|\, n)$
to be the probability that a a uniformly selected alternating $n \times n$ matrix with coefficients in $\mathbb{F}_2$ has kernel of rank exactly $j$.

In addition, given an elliptic curve $E/\QQ$ and integers $n \ge 0$ and $k \ge 1$, take $R_{E,\, k}(n)$ to be the set of squarefree $d$ for which
\[\dim 2^{k-1}\text{Sel}^{2^k}(E^{(d)}) = \begin{cases} n + 2&\text{if } k = 1 \\ n &\text{otherwise.}\end{cases}\]
\end{defn*}

\begin{thm}
\label{thm:Sel_main}
Take $E/\QQ$ to be an elliptic curve with full rational $2$-torsion. Assume that $E$ has no rational cyclic subgroup of order four. Choose $m \ge 1$, and choose any sequence of nonnegative integers $n_1 \ge n_2 \ge \dots \ge n_{m+1}$ for which the $n_k$ are either all even or all odd. Then
\[\lim_{N \rightarrow \infty} \frac{\big|\{1, \dots, N\} \, \cap\,  R_{E,\, 1}(n_1) \cap \dots \cap R_{E,\, m}(n_m) \cap R_{E,\, m+1}(n_{m+1})\big|}{\big|\{1, \dots, N\} \, \cap\, R_{E,\, 1}(n_1) \cap \dots \cap R_{E,\, m}(n_m)\big|\qquad\qquad\quad\,}\]
\[ = \,P^{\text{\emph{Alt}}}(n_{m+1}\,|\, n_{m}).\]
\end{thm}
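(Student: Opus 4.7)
The plan is to identify $\dim 2^k \Sel^{2^{k+1}} E^{(d)}$ with the corank of a canonical alternating pairing on a natural $\FFF_2$-vector space attached to the twist, and then to prove equidistribution of these pairings as $d$ varies. Concretely, set $V_1(d) = \Sel^2 E^{(d)}\big/E^{(d)}(\QQ)[2]$ and $V_k(d) = 2^{k-1}\Sel^{2^k} E^{(d)}$ for $k \geq 2$; the quotient by $E^{(d)}(\QQ)[2]$ in the level-$1$ case accounts for the shift by two in the definition of $R_{E,1}$, and in every case $\dim V_k(d)$ is the integer ``$n$'' in the membership $d \in R_{E,k}(n)$. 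The key algebraic input is that there is a natural higher Cassels--Tate pairing
\[
\text{CT}_k \colon V_k(d) \times V_k(d) \,\longrightarrow\, \FFF_2
\]
which is alternating and whose radical is precisely the image of $V_{k+1}(d)$ in $V_k(d)$, so that $\dim V_{k+1}(d) = \dim \ker \text{CT}_k$. Granting this, the theorem reduces to showing that, conditionally on $d \in R_{E,1}(n_1) \cap \cdots \cap R_{E,m}(n_m)$, the matrix of $\text{CT}_m$ is equidistributed among all alternating $n_m \times n_m$ matrices over $\FFF_2$; the claimed conditional probability $P^{\text{Alt}}(n_{m+1}\,|\, n_m)$ then falls out of the definition.

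To prove equidistribution, I would describe $\text{CT}_m$ explicitly in terms of the prime factorization $d = \pm p_1 \cdots p_s$, restricting to $d$ coprime to the conductor of $E$. As in the Heath-Brown and Kane analyses at level one, a basis of $\Sel^2 E^{(d)}$ is indexed via Kummer theory by certain subsets of the $p_i$ together with mild local data at the bad primes; at higher levels the entries of $\text{CT}_m$ in a corresponding basis of $V_m(d)$ should be expressible as iterated symbols in the spirit of R\'edei's symbols for $4$-ranks of class groups, built from Legendre symbols $\bigl(\tfrac{p_i}{p_j}\bigr)$ together with Artin symbols of number fields obtained by adjoining square roots of lower-level Selmer cocycles. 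The case $m = 1$ is precisely the Heath-Brown/Kane computation, and the case $m = 2$ should recover a R\'edei-like story; the genuinely new content appears at $m \geq 3$, where an iterative construction of higher symbols is forced on us.

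The analytic core of the argument is then to prove joint equidistribution of these iterated symbols as $d$ ranges over squarefree integers with $|d| < N$ and $N \to \infty$. My strategy would be a Markov-chain-style swapping argument on the prime factorization of $d$: removing one prime $p_i$ from $d$ and inserting another prime $p_i'$ in the same residue class modulo a controlled modulus produces a twist $E^{(d')}$ whose low-level Selmer structure matches that of $E^{(d)}$ while flipping a specific target entry of $\text{CT}_m$. Iterating such swaps, and invoking Chebotarev density in a tower of field extensions of height growing with $m$, one should be able to flip entries of the matrix of $\text{CT}_m$ essentially independently, converging to the uniform distribution on alternating matrices of the prescribed size. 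The parity constraint that $n_1, \dots, n_{m+1}$ all have the same parity is then a clean consequence of the fact that alternating matrices have even rank.

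The hard part, I expect, is quantitative control of this iterated Chebotarev input. The relevant field extensions have degree tending to infinity with $s$ and complexity tending to infinity with $m$, so the equidistribution of the higher symbols must be proved with error terms small enough to survive averaging over all $d$ with $|d| < N$. The bulk of the technical work will lie in setting up the iterated symbol calculus cleanly and in proving this quantitative equidistribution; the theorem itself then follows by induction on $m$, using the level-$m$ equidistribution as input to the level-$(m+1)$ swapping argument and thus building the joint distribution of the full Selmer tower one level at a time.
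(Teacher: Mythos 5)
Your first step---identifying $\dim 2^{k}\mathrm{Sel}^{2^{k+1}}$ with the kernel of an alternating Cassels--Tate pairing on $2^{k-1}\mathrm{Sel}^{2^k}$ (modulo torsion at level one) and reducing the theorem to equidistribution of the level-$m$ pairing conditioned on the lower levels---is exactly the paper's starting point, and the parity remark is fine. The gap is in your analytic core. Your swapping step asserts that replacing a single prime $p_i$ of $d$ by a prime $p_i'$ with prescribed splitting behaviour modulo a \emph{controlled} modulus preserves the lower-level Selmer structure while flipping one prescribed entry of $\mathrm{CT}_m$. For $m=1,2$ this is the Heath-Brown/Kane and R\'edei picture, but for $m\ge 3$ it amounts to the Cohn--Lagarias governing-field conjecture, for which there is compelling evidence of failure (Milovic), and no construction of such fields is available. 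The paper never controls an individual pairing entry at a single twist: the governing expansions of Section \ref{ssec:gov_exp} only determine the \emph{sum} of the level-$m$ pairing over the $2^{|S|}$ vertices of a hypercube of twists with $|S|=m+2$ (Theorem \ref{thm:Sel_gov}, Proposition \ref{prop:AR_main}), i.e.\ the value at one vertex relative to all the others. Converting this purely relative information into equidistribution is not automatic; it is done through the additive-restrictive formalism of Section \ref{sec:add_res} together with the Ramsey-theoretic Propositions \ref{prop:subgrid} and \ref{prop:staff}, and this is precisely where the weak $\log\log\log\log N$ error quality comes from. An induction that flips entries ``essentially independently'' by single swaps has no known algebraic mechanism behind it at level $m\ge 3$.

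A second, related problem: you flag the iterated Chebotarev input as the hard part, but the obstruction is not merely the size of error terms. The governing fields have conductor built out of all the smaller prime factors of $d$, so for a typical $d$ the primes you want to vary are far too small for the unconditional Chebotarev statement (Proposition \ref{prop:2Cheb}) to say anything at all. The paper must restrict to \emph{extravagantly spaced} integers, those with a gap $\log\log p_{i+1}-\log\log p_i$ of size about $\tfrac12\log\log\log\log N$, and prove via the Poisson-process analysis of Section \ref{sec:pois} (Theorem \ref{thm:SkND}) that almost all integers have such a gap; it must also handle the $2$-Selmer (Legendre-symbol) level separately, since the symbols among the small primes cannot be equidistributed individually and are only controlled after averaging over permutations of those primes via the large sieve (Theorem \ref{thm:Lgn_perm}), with the passage from $\{1,\dots,N\}$ to product-space ``boxes'' in Section \ref{ssec:box}. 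Without these ingredients, or substitutes for them, the level-by-level induction you describe does not get off the ground beyond the known cases $m\le 2$.
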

Together with Kane's results, this Markov-chain behavior establishes that the $2^{\infty}$-Selmer groups of the twists of such an elliptic curve $E/\QQ$ have the distribution predicted by Delaunay \cite{Del01} and Bhargava et al. \cite{BKLPR15}. This theorem also gives us very fine control on the rank of elliptic curves in this family. 
\begin{cor}
\label{cor:main}
Take $E/\QQ$ to be an elliptic curve with full rational $2$-torsion. Assume that $E$ has no rational cyclic subgroup of order four. Then, for any $N > 1$, we have
\[\big| \big\{ 1 \le d \le N\,:\,\, \text{\emph{corank }} \text{\emph{Sel}}^{2^{\infty}} E^{(d)} \ge 2 \big\}\big| = o(N).\]
\end{cor}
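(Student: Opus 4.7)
The plan is to combine Kane's limiting distribution for $\dim \text{Sel}^2 E^{(d)}$ with Theorem \ref{thm:Sel_main} to describe the joint distribution of the Selmer dimensions $X_k(d) := \dim 2^{k-1}\text{Sel}^{2^k} E^{(d)}$ as arising from a Markov chain on $\Z_{\ge 0}$ with transition matrix $P^{\text{Alt}}$, and then to show that this chain is almost surely absorbed into $\{0,1\}$. From the inclusion $2^k \text{Sel}^{2^{k+1}} E^{(d)} \subseteq 2^{k-1}\text{Sel}^{2^k} E^{(d)}$, the sequence $X_k(d)$ is non-increasing in $k$, so $\text{corank }\text{Sel}^{2^{\infty}} E^{(d)} \ge 2$ if and only if $X_K(d) \ge 2$ for every $K \ge 1$. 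It therefore suffices to prove that the natural density of $\{1 \le d \le N : X_K(d) \ge 2\}$ can be made arbitrarily small by choosing $K$ sufficiently large.

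Let $\alpha_n$ denote the natural density of $\{d : X_1(d) = n+2\}$, which exists by Kane's theorem and satisfies $\sum_n \alpha_n = 1$. Fix $\epsilon > 0$ and pick $B$ so that $\sum_{n \ge B} \alpha_n < \epsilon$. Iterating Theorem \ref{thm:Sel_main} and telescoping over $m = 1, 2, \dots, K-1$ yields, for each tuple $n_1 \ge \cdots \ge n_K \ge 0$ of common parity,
\[
\mathrm{dens}\big(R_{E,1}(n_1) \cap \cdots \cap R_{E,K}(n_K)\big) \;=\; \alpha_{n_1} \prod_{k=1}^{K-1} P^{\text{Alt}}(n_{k+1} \,|\, n_k).
\]
Summing over the intermediate states $n_2, \ldots, n_{K-1}$ and then over $n_K \ge 2$ and $n_1 < B$, the density of $\{d : X_1(d) < B+2 \text{ and } X_K(d) \ge 2\}$ is at most $\sum_{n_1 < B} \alpha_{n_1}\, \mu_{K,n_1}$, where $\mu_{K, n_1}$ denotes the probability that a Markov chain with transition matrix $P^{\text{Alt}}$ started at $n_1$ lies in $\{n \ge 2\}$ after $K-1$ steps.

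It remains to prove $\mu_{K, n_1} \to 0$ as $K \to \infty$ for each fixed $n_1$. Because $P^{\text{Alt}}(j \,|\, n)$ is supported on $\{j \le n : j \equiv n \pmod 2\}$, the chain is non-increasing and visits only finitely many states starting from $n_1$. At any state $k \ge 2$ the probability of remaining equals $P^{\text{Alt}}(k \,|\, k) = 2^{-\binom{k}{2}} < 1$, so the chain escapes $k$ within a geometrically many steps almost surely; a finite induction on the at most $\lfloor n_1/2 \rfloor + 1$ possible intermediate states then shows the chain is absorbed in $\{0, 1\}$ almost surely. Choosing $K$ large enough that $\mu_{K, n_1} < \epsilon$ simultaneously for each of the finitely many $n_1 < B$, and combining with $\sum_{n \ge B} \alpha_n < \epsilon$, gives $\limsup_N N^{-1} |\{d \le N : \text{corank }\text{Sel}^{2^\infty} E^{(d)} \ge 2\}| \le 2\epsilon$; sending $\epsilon \to 0$ completes the argument.

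The main technical subtlety I expect is the telescoping step: Theorem \ref{thm:Sel_main} is phrased as a conditional density, so its iteration into an absolute density must handle tuples where some intermediate set $R_{E,1}(n_1) \cap \cdots \cap R_{E,m}(n_m)$ already has density zero. These cases are degenerate since their refinements also have density zero, but the bookkeeping must be done carefully to obtain the clean product formula above and to interchange the finite summation over $n_1 < B$ with the limits defining these densities.
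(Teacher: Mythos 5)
Your proposal is correct and is essentially the paper's argument: the paper likewise treats the successive dimensions $\dim 2^{k-1}\text{Sel}^{2^k}E^{(d)}$ as a Markov chain with kernel $P^{\text{Alt}}$ via Theorem \ref{thm:Sel_main}, takes Kane's distribution of $2$-Selmer ranks as the initial law, and concludes by absorption of that chain into the states $\{0,1\}$. The only difference is that the paper runs the argument quantitatively in Corollary \ref{cor:drown}, letting the number of Markov steps grow with $N$ and using a uniform $\mathcal{O}(2^{-m})$ absorption bound together with the explicit error terms of Theorem \ref{thm:Sel_main_exp} and Corollary \ref{cor:Kane}, whereas your fixed-$K$, $\epsilon \to 0$ version suffices for the qualitative $o(N)$ statement.
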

By applying this corollary to $E$ the curve $y^2 = x^3 - x$, we derive the zero density result that opened this paper. More generally, recall that Goldfeld's conjecture states that, given an elliptic curve $E/\QQ$, $50\%$ of the quadratic twists of $E$ have analytic rank $0$, $50\%$ have analytic rank $1$, and $0\%$ have higher analytic rank \cite{Gold79}. From global root number calculations, we know that $50\%$ of the twists will have even $\Z_2$-Selmer corank, and $50\%$ have odd $\Z_2$-Selmer corank. In light of this, we have the following.
\begin{cor}
Take $E/\QQ$ to be an elliptic curve with full rational $2$-torsion. Assume that $E$ has no rational cyclic subgroup of order four. Then, if the Birch and Swinnerton-Dyer conjecture holds for the set of twists of $E$, Goldfeld's conjecture holds for $E$.
\end{cor}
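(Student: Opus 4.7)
The plan is to combine Corollary \ref{cor:main} with the global root-number parity split referenced in the paragraph preceding the statement and, via the assumed BSD conjecture, convert the $o(N)$ bound on $2^\infty$-Selmer coranks into a bound on analytic ranks. The argument is a short bookkeeping once these three inputs are in hand.

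First, I would record the root-number input cited just above the statement: a standard product of local Tate factors shows that $50\%$ of the squarefree twists $E^{(d)}$ with $|d|\le N$ have global root number $+1$ and $50\%$ have $-1$ as $N\to\infty$. Via the functional equation of $L(E^{(d)},s)$, which holds unconditionally by modularity of $E/\QQ$, this is the parity of the analytic rank $r_{\mathrm{an}}(E^{(d)})$.

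Second, I would use Corollary \ref{cor:main} together with the unconditional inequality
\[\rank E^{(d)}(\QQ) \,\le\, \text{corank }\text{Sel}^{2^{\infty}} E^{(d)}\]
to conclude that $\rank E^{(d)}(\QQ)\ge 2$ holds for only $o(N)$ of the $d\in\{1,\dots,N\}$. The rank part of the assumed BSD conjecture identifies $r_{\mathrm{an}}(E^{(d)}) = \rank E^{(d)}(\QQ)$, so the same $o(N)$ bound holds for twists of analytic rank at least $2$.

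Combining the two inputs: among the $(\tfrac12 + o(1))N$ twists with even analytic rank, all but $o(N)$ have $r_{\mathrm{an}}=0$, and among the $(\tfrac12 + o(1))N$ twists with odd analytic rank, all have $r_{\mathrm{an}}\ge 1$ while all but $o(N)$ satisfy $r_{\mathrm{an}}<3$, hence $r_{\mathrm{an}}=1$. This yields Goldfeld's predicted $50\%/50\%/0\%$ distribution of analytic ranks $0/1/{\ge}2$. The main difficulty of the corollary is not in the corollary itself but in Corollary \ref{cor:main} (and thus Theorem \ref{thm:Sel_main}); the only auxiliary work is verifying that the root-number split is genuinely equidistributed and not forced to one sign by the local factors, which is a routine case-check given the hypothesis of full rational $2$-torsion.
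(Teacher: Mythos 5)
Your argument is correct and matches the paper's (implicit) reasoning: the paper likewise deduces the corollary by combining the $50\%/50\%$ root-number split with Corollary \ref{cor:main} and the assumed BSD conjecture, the only cosmetic difference being that the paper phrases the parity input as a statement about $\Z_2$-Selmer coranks while you route it through the functional equation as parity of the analytic rank — under BSD these coincide. The bookkeeping deducing the $50\%/50\%/0\%$ split for analytic ranks $0/1/{\ge}2$ is exactly as you describe.
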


We will prove an explicit form of Theorem \ref{thm:Sel_main} as Theorem \ref{thm:Sel_main_exp} and an explicit form of Corollary \ref{cor:main} as Corollary \ref{cor:drown}. Neither of these results is likely to be sharp, with Corollary \ref{cor:drown} particularly egregious in this manner. As detailed in \cite{Park16}, most heuristics for ranks of elliptic curves suggest that, for any $\epsilon > 0$ and any elliptic curve $E$, there is some $N_0$ so, for $N > N_0$, we have
\[\big| \big\{ 1 \le d \le N\,:\,\, \text{rank}( E^{(d)}) \ge 2 \big\}\big| < N^{3/4 + \epsilon}.\]
We instead prove that, for any elliptic curve $E/\QQ$ as in Corollary \ref{cor:main} and any $c < \frac{\log 2}{\log 6}$, there is some $N_0$ so, for $N > N_0$, we have
\[\big| \big\{ 1 \le d \le N\,:\,\, \text{rank}( E^{(d)}) \ge 2 \big\}\big| < \frac{N}{(\log \log \log \log \log N)^c}.\]
If we assume the grand Riemann hypothesis, we can remove about three of these logarithms. The remaining two logarithms come from the use of Ramsey theory in our arguments, and are likely unremovable without a new proof strategy. 
 
Our final main result concerns the class groups of quadratic fields. For a positive integer $k$, the $2^k$-Selmer groups of quadratic twists of an elliptic curve and the $2^{k+1}$-class groups of imaginary quadratic fields are analogous families of objects. The strength of this analogy can be seen in the work of Fouvry and Kl{\"u}ners in \cite{Fouv07}. By modifying the strategy used by Heath-Brown to find the distribution of $2$-Selmer groups, this pair found that the distribution of the $4$-class groups in the family of imaginary quadratic fields was consistent with Gerth's extension of the Cohen-Lenstra heuristic to $p = 2$ \cite{Gert84, CoLe84}. Similarly, by modifying our approach to $2^k$-Selmer groups in Theorem \ref{thm:Sel_main}, we can find the distribution of $2^{k+1}$-class groups in the family of imaginary quadratic fields. We start by introducing the notation we will use.
\begin{defn*}
For $n \ge j \ge 0$, take $P^{\text{Mat}}(j\,|\,n)$ to be the probability that a uniformly selected $n \times n$ matrix with coefficients in $\mathbb{F}_2$ has kernel of rank exactly $j$.

In addition, given $k \ge 2$ and  $n \ge 0$, take $R_{\text{Im},\, k}(n)$ to be the set of squarefree $d$ for which
\[\dim 2^{k-1} \text{Cl}\, \QQ\big(\sqrt{-d}\big)[2^k] = n.\]
\end{defn*}
\begin{thm}
\label{thm:Cl_main}
Take $m \ge 2$, and choose any sequence of nonnegative integers $n_2 \ge n_3 \ge \dots \ge n_{m+1}$. Then
\[\lim_{N \rightarrow \infty} \frac{\big|\{1, \dots, N\} \, \cap\,  R_{\text{\emph{Im}},\, 2}(n_2) \cap \dots \cap R_{\text{\emph{Im}},\, m}(n_m) \cap R_{\text{\emph{Im}},\, m+1}(n_{m+1})\big|}{\big|\{1, \dots, N\} \, \cap\, R_{\text{\emph{Im}},\, 2}(n_2) \cap \dots \cap R_{\text{\emph{Im}},\, m}(n_m)\big|\qquad\qquad\quad\,\,\,}\]
\[ = \,P^{\text{\emph{Mat}}}(n_{m+1}\,|\, n_{m}).\]
\end{thm}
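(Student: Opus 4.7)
The plan is to port the machinery behind Theorem \ref{thm:Sel_main} to the class-group side, replacing the $2^k$-Selmer filtration of a quadratic twist by the analogous filtration on the $2$-primary part of $\text{Cl}\,\QQ(\sqrt{-d})$. For squarefree $d = p_1 p_2 \cdots p_r$, Gauss's genus theory identifies $\text{Cl}\,\QQ(\sqrt{-d})[2]$ with an explicit $\mathbb{F}_2$-space of dimension essentially $r-1$ built from the prime divisors of $d$, and R\'edei's theorem computes the $4$-rank as the kernel of an explicit bilinear pairing on subspaces of that space. This description extends inductively: the graded piece $2^{k-1}\text{Cl}[2^k]/2^k \text{Cl}[2^{k+1}]$ is the kernel of a pairing $\phi_k$ whose entries are $2$-adic Artin symbols in governing extensions built iteratively from the primes dividing $d$. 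Fouvry--Kl\"uners handled the base step (the unconditional distribution of $R_{\text{Im},\,2}$); the task here is to extend the analysis through all $k \ge 2$.

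The content of the theorem is that, after conditioning on the values of $n_2, \ldots, n_m$, the next pairing $\phi_{m+1}$ is asymptotically distributed as a uniformly random $n_m \times n_m$ matrix over $\mathbb{F}_2$. After isolating the subspace $V$ on which the earlier pairings vanish and its target quotient $W$ (both of dimension $n_m$ under the conditioning), I would write each matrix entry $\phi_{m+1}(v_i, w_j)$ as a product of Legendre symbols twisted by a class in a higher governing field that depends only on $v_i$, $w_j$, and the earlier-level data. Unlike the Selmer setting, where the Cassels--Tate pairing on the relevant quotient of $\text{Sel}^{2^k}$ is forced to be alternating, here $\phi_{m+1}$ pairs two different $\mathbb{F}_2$-spaces and carries no symmetry constraint. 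This structural point explains both why $P^{\text{Mat}}$ replaces $P^{\text{Alt}}$ and why the parity hypothesis appearing in Theorem \ref{thm:Sel_main} drops out.

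The hard step, and the main obstacle, is the joint equidistribution of the $n_m^2$ Artin symbols forming the entries of $\phi_{m+1}$. I would run precisely the Chebotarev plus large-sieve plus Ramsey-theoretic scheme developed for the Selmer version, substituting the appropriate governing fields. Turning the heuristic independence of twisted Legendre symbols into a conditional equidistribution statement that is uniform in the earlier-level data is the delicate part: one must rule out improbable correlations between matrix entries across different rows and columns, and this is where the iterated-logarithm loss in the quantitative version originates. Once equidistribution is in hand, the theorem follows by the same counting argument used for Theorem \ref{thm:Sel_main}, now applied to a uniformly random $n_m \times n_m$ matrix instead of a uniformly random alternating matrix, yielding $P^{\text{Mat}}(n_{m+1}\,|\,n_m)$ as the limiting conditional probability.
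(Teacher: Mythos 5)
Your plan is essentially the paper's own route: the paper proves the Selmer-side statement in detail and obtains Theorem \ref{thm:Cl_main} by exactly this translation --- Artin pairings on $2^k$-class groups controlled by relative governing expansions (Theorem \ref{thm:Cl_gov}), additive-restrictive systems, and the Chebotarev/large-sieve/Ramsey equidistribution scheme, together with the observation that pairing $\text{Cl}^{\vee}$ against $\text{Cl}$ carries no alternating constraint, so $P^{\text{Mat}}$ replaces $P^{\text{Alt}}$ and the parity hypothesis disappears. The one small adjustment is at the base level: rather than quoting Fouvry--Kl\"uners, the paper rederives the $4$-class distribution on its boxes from Legendre-symbol-matrix equidistribution combined with Gerth's Markov-chain analysis, since the moment method does not localize to the specialized product sets the induction requires.
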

This Markov-chain behavior is consistent with the Cohen-Lenstra heuristic and represents the third major result towards this heuristic for quadratic fields, following the result of Fouvry-Kl{\"u}ners for $4$-torsion and the substantially earlier results of Davenport-Heilbronn for $3$-torsion \cite{DaHe71}.

Theorem \ref{thm:Sel_main} and \ref{thm:Cl_main} are generalizations of prior conditional results from \cite{Smith16b}, a paper by the author on $8$-class groups and $4$-Selmer groups. That paper was based on a result of R{\'e}dei that, for any negative squarefree $d$, there is some number field $M$ so, for any odd prime $p$ not dividing $d$, the $8$-class rank of $\QQ(\sqrt{dp})$ is determined by the splitting behavior of $p$ in $M/\QQ$ \cite{Rede39}. In \cite{CoLa83}, it was conjectured that this result can be extended to higher class groups. More specifically, it was conjectured that, for any $k > 1$ and any squarefree negative $d$, the structure of  $\text{Cl} \, \QQ(\sqrt{dp})[2^k]$ could be determined from the splitting behavior of $p$ in some governing field $M/\QQ$ determined from $d$ and $k$.

For $k > 3$, this conjecture is likely to be false for all $d$, with compelling evidence found by Milovic in \cite{Milo15}. 
However, the concept of a governing field remains useful for $k > 3$, as we can use splitting behavior to determine some relative information about class groups. In particular, for $d$ negative squarefree, and for $\{p_{10}, p_{11}\}, \dots,  \{p_{m0}, p_{m1}\}$ some sequence of pairs of distinct primes, we can sometimes derive the $2^m$-class structure of
\[\QQ\left(d^{1/2} \prod_{i \le m} p_{i0}^{1/2}\right)\]
from the $2^m$-class structures of the $2^m -1 $ fields
\[\QQ\left(d^{1/2} \prod_{i \le m} p_{if(i)}^{1/2}\right)\quad\text{with}\quad f \in \mathbb{F}_2^{\{1, \dots, m\}} - \{0\}\]
together with the splitting behavior of $p_{10}$ and $p_{11}$ in a governing field determined from the primes $p_{20}, p_{21}, \dots, p_{m0}, p_{m1}$.  Thinking of the quadratic fields as lying at the vertices of some $m$-dimensional cube, we can rephrase this result as finding the $2^m$-class structure at one vertex of the cube from the $2^m$-class structures at all the other vertices. We have a similar result for predicting $2^m$-Selmer structure at one vertex of an $m + 1$ dimensional cube from the $2^m$-Selmer structures at all the other vertices in the cube.

Making this relative governing field idea concrete takes up most of Sections \ref{sec:alg1} and \ref{sec:add_res}. The governing fields we need are constructed as the fields of definition of certain Galois cochains that we call \emph{governing expansions}. In Section \ref{ssec:gov_exp}, we prove the existence and basic properties of these cochains.

Next, in Section \ref{ssec:raw_exp}, we study sets of Galois cocycles on cubes of quadratic twists of a given Galois module. We find that the na{\"i}ve way of summing this set of cocycles gives a cocycle under one set of hypotheses (Proposition \ref{prop:psi_coh}) and gives a governing expansion under another set of hypotheses (Proposition \ref{prop:R_G_agree}). These two simple propositions are the most fundamental results in this paper. On the class side, the results are used in Section \ref{ssec:class_exp} to prove Theorem \ref{thm:Cl_gov}, which allows us to control the Artin pairing on the $2^k$-class groups. On the Selmer side, the results are used in Section \ref{ssec:Sel_exp} to prove Theorem \ref{thm:Sel_gov}, which allows us to control the Cassels-Tate pairing on the $2^k$-Selmer groups.

The conditions under which we can use either of these theorems are extremely limited. In Section \ref{sec:add_res}, we axiomatize some of the conditions with a structure that we denote an \emph{additive-restrictive system}. Over the course of this technical section, we find additive-restrictive systems that handle sets of governing expansions and systems that handle sets of cocycles coming from either class structure or Selmer structure. Using this new terminology, we reduce Theorems \ref{thm:Cl_gov} and \ref{thm:Sel_gov} to Proposition \ref{prop:AR_main}.

We have almost no control on the shape of these additive-restrictive systems. That said, as we will show in Proposition \ref{prop:ars_density}, we do have some control on their sizes. We can then prove the equidistribution results we want on these arbitrarily-shaped additive-restrictive systems using Ramsey theory. This is the main goal of Section \ref{sec:rams}, a section that cumulates in the proof of Proposition \ref{prop:staff}. As a first step towards this proposition, we prove the following lovely result:
\begin{prop*}
Take $d \ge 2$ to be an integer, take $2^{-d-1} > \delta > 0$, and take $X_1, \dots, X_d$ to be finite sets with cardinality at least $n > 1$. Suppose that $Y$ is a subset of $X = X_1 \times \dots \times X_d$ of cardinality at least $\delta \cdot |X|$. Then, for any positive $r$ satisfying
\[r \le \left(\frac{\log n}{5 \log \delta^{-1}}\right)^{1/(d-1)},\]
there exists a choice of sets $Z_1, \dots, Z_d$, each of cardinality $r$, such that
\[Z_1 \times \dots \times Z_d \subseteq Y.\]
\end{prop*}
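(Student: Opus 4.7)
The plan is to build $Z_1, \dots, Z_d$ one at a time by an iterated dependent random choice. Set $Y^{(0)} := Y$ with density $\delta_0 := \delta$, and for $k = 1, \dots, d-1$ pick $Z_k \subseteq X_k$ of cardinality $r$ so that the residual slice
\[
Y^{(k)} \,:=\, \big\{(x_{k+1}, \dots, x_d) \in X_{k+1} \times \dots \times X_d \,\mid\, Z_k \times \{(x_{k+1}, \dots, x_d)\} \subseteq Y^{(k-1)}\big\}
\]
has density $\delta_k \ge (\delta_{k-1}/2)^r$ in $X_{k+1} \times \dots \times X_d$. After the $(d-1)$-st step, the residue $Y^{(d-1)} \subseteq X_d$ has cardinality at least $\delta_{d-1} n$, and I take $Z_d$ to be any $r$-subset.

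The existence of each $Z_k$ is a standard averaging. Write $N(x_{k+1}, \dots, x_d) = \{x_k \in X_k : (x_k, x_{k+1}, \dots, x_d) \in Y^{(k-1)}\}$. For a uniformly random $Z_k \in \binom{X_k}{r}$, the expected density of $Y^{(k)}$ is the average of $\binom{|N|}{r}/\binom{|X_k|}{r}$ over the tuples $(x_{k+1}, \dots, x_d)$. Since the mean of $|N|$ is $\delta_{k-1}|X_k|$, the (discrete) convexity of $\binom{\cdot}{r}$ bounds this average below by $\binom{\delta_{k-1}|X_k|}{r}/\binom{|X_k|}{r}$, and the factor-by-factor estimate $\binom{\delta M}{r}/\binom{M}{r} \ge (\delta/2)^r$, valid whenever $r \le \delta M/2$, finishes the existence argument.

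Setting $\alpha_k := \log(1/\delta_k)$ turns $\delta_k \ge (\delta_{k-1}/2)^r$ into $\alpha_k \le r(\log 2 + \alpha_{k-1})$, which unwinds to $\alpha_{d-1} \le r^{d-1}(\alpha_0 + 2\log 2)$ (using $r \ge 2$; the case $r = 1$ of the proposition is trivial because any point of $Y$ works). The hypothesis $\delta < 2^{-d-1}$ gives $\alpha_0 = \log \delta^{-1} \ge (d+1)\log 2 \ge 3\log 2$, hence $\alpha_{d-1} \le (5/3) r^{d-1} \log \delta^{-1}$. Combining with $r^{d-1} \le \log n / (5 \log \delta^{-1})$ yields $\alpha_{d-1} \le \log n / 3$, so $|Y^{(d-1)}| \ge n \cdot n^{-1/3} = n^{2/3}$, which is comfortably larger than $r$.

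The main obstacle is calibrating constants: the factor $5$ in the hypothesis has to simultaneously absorb the geometric drop $(\delta/2)^r$ at each of the $d-1$ iterations and the slack between $\log(2/\delta)$ and $\log \delta^{-1}$, and this is precisely why the hypothesis forces $\delta$ to be small relative to $d$ via $\delta < 2^{-d-1}$. A secondary bookkeeping step verifies that the convexity prerequisite $r \le \delta_{k-1}|X_k|/2$ holds at every level $k \le d-1$, but this is implied by the same bound $\alpha_{k-1} \le (5/3) r^{d-2} \log \delta^{-1}$ together with the upper bound on $r$.
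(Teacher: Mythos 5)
Your argument is correct, and it reaches the stated bound by a route that differs in structure from the paper's. The paper proves the proposition by induction on $d$: it defines $N_d(n,r,\delta)$, the minimum number of $r$-boxes $Z_1\times\dots\times Z_d$ contained in a set of density $\delta$, splits off the first coordinate into ``thick'' slices of density at least $\delta/2$, applies the inductive count to each slice, and then uses convexity (a power-mean step) to convert the average of the slice counts into a lower bound on $\sum_{\mathbf z}\binom{n_{\mathbf z}}{r}$, ending with the explicit supersaturation estimate $N_d(n,r,\delta)\ge (2^{-d-1}\delta)^{(r^{d+1}-r)/(r-1)}\,n^{rd}/(r!)^d$; existence is the statement that this count is at least one. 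You instead run a greedy, one-coordinate-at-a-time dependent random choice: pick $Z_k$ by averaging $\binom{|N|}{r}/\binom{|X_k|}{r}$ over co-slices (the same binomial convexity that drives the paper's induction), and track only the density of the residual fully-covered set via $\delta_k\ge(\delta_{k-1}/2)^r$, which unwinds to the same $\delta^{\,O(r^{d-1})}$ loss and hence the same threshold $r\lesssim(\log n/\log\delta^{-1})^{1/(d-1)}$; in both arguments the hypothesis $\delta<2^{-d-1}$ is exactly what absorbs the factor-of-two losses per coordinate. Your version is more elementary and purely existential — it never counts boxes — which is all the paper actually uses downstream (e.g.\ in Propositions \ref{prop:sbst_FG} and \ref{prop:staff} and in the reduction of Proposition \ref{prop:proof_B} to Proposition \ref{prop:proof_C}); the paper's induction yields in addition a quantitative count of sub-boxes, at the cost of a slightly heavier bookkeeping. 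Two small points you should make explicit in a final write-up: the Jensen step needs the convex extension of $t\mapsto\binom{t}{r}$ (zero below $t=r-1$) so that fibers with $|N|<r$ cause no trouble, and the verification $r\le\delta_{k-1}|X_k|/2$ at every level, which you correctly indicate follows from $\alpha_{k-1}\le\tfrac{5}{3}r^{d-2}\log\delta^{-1}$ and the upper bound on $r$, deserves the one-line computation $\delta_{k-1}|X_k|\ge n^{1-1/(3r)}\ge n^{2/3}\ge 2r$.
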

This bound on $r$ can be shown to be sharp up to a change of constant using the probabilistic method.

Through Sections \ref{sec:add_res} and \ref{sec:rams}, we are working with a grid of quadratic twists. We cannot explicitly find the $2^k$-Selmer structure or $2^{k+1}$-class structure at any point in this grid. At the same time, under the condition that the corresponding grid of governing Artin symbols behaves generically, the results of these two sections let us say that the $2^k$-Selmer groups and $2^{k+1}$-class groups have the distribution we expect anyways.

The next goal is to find a situation where this grid of Artin symbols usually behaves generically. If we had assumed the grand Riemann hypothesis, this step would be straightforward.  As we are not using this hypothesis, it takes a three-logarithm detour to deal with this grid of Artin symbols. To understand the issue, choose some large $N$, and choose $n$ uniformly among the positive squarefree integers less than $n$. Write  $p_1 < \dots < p_r$ for the sequence of prime factors of $n$. Choose $k < r$, and take $M$ to be a number field of discriminant near $p_1 \cdot \dots \cdot p_k$. Suppose we wish to control the splitting of $p_{k+1}, \dots, p_{r}$ in $M$ as we vary these primes in small intervals $X_{k+1}, \dots, X_r$. Using the strongest form of unconditional Chebotarev available to us (Proposition \ref{prop:2Cheb}), we find that we can only do this if the gap 
\[\log \log p_{k+1} - \log \log p_k\]
is unusually large. We call tuples with such a sufficiently large gap \emph{extravagantly spaced}. By carefully analyzing the Poisson point process that models prime divisors, we are able to show that most tuples are (just barely!) extravagantly spaced. This is the main focus of Section \ref{sec:pois}, with our main result being Theorem \ref{thm:SkND}.

To avoid thinking about more complicated objects, we usually understand $2^k$-Selmer structure via the natural inclusion
\[2^{k-1}\text{Sel}^{2^k}E \,\subseteq\, \text{Sel}^2 E\big/E[2] \quad\text{for}\quad k > 1,\]
and we think about $2^{k+1}$-class structure similarly. For this to work, we need good ways to control $2$-Selmer and $4$-class structures. The distributional results of Heath-Brown, Fouvry and Kl{\"u}ners, and Kane are based on moment calculations for these groups and are difficult to use for more specialized sets of integers. As an alternative, we take the following tack. Choosing $p_1 < \dots < p_r$ as in the previous paragraph, we can define an $r \times r$ matrix $M$ whose off-diagonal coefficient $M_{ij}$ is given by the Legendre symbol $\left(\frac{p_i}{p_j}\right)$. With some extra quadratic-residue information, this matrix can be used to determine the $2$-Selmer structure of $E^{(d)}$; our main aim of Section \ref{sec:Leg} is to prove that the matrix $M$ is almost equidistributed among all possibilities satisfying quadratic reciprocity after some of the $p_i$ are permuted. The major analytic ingredients for this work are Chebotarev's density theorem and the large sieve. With these tools and a subtle induction argument, we prove Proposition \ref{prop:eqd_lgn_raw}, a weak equidistribution result for Legendre symbol matrices. By accounting for the effect of permuting primes via some basic combinatorics, we can strengthen this result to the form given in Theorem \ref{thm:Lgn_perm}. In Section \ref{ssec:box}, we make the transition from the set of all integers to certain product spaces of integers that we call \emph{boxes}. By applying Theorem \ref{thm:Lgn_perm} to these boxes, we then rederive Kane's results directly as Corollary \ref{cor:Kane}.

Finally, in Section \ref{sec:proofs}, we use the results of Sections \ref{sec:pois} and \ref{sec:Leg} to shave the set of integers $\{1, \dots, N\}$ to grids on which the additive-restrictive systems of Section \ref{sec:add_res} can be defined, and on which the governing grid of Artin symbols behaves generically. Using this, we prove Theorem \ref{thm:Sel_main_exp} and Corollary \ref{cor:drown}, and these give our main results on the Selmer side. We omit the analogous arguments used for the results on the class side, as no new ideas are needed for the translation.

\section{Algebraic tools}
\label{sec:alg1}
We will use the following notation:
\begin{itemize}
\item $X$ will denote a product
\[X = X_1 \times X_2 \times \dots \times X_d,\] 
where each $X_i$ is a finite set. In all our applications, the $X_i$ will be disjoint collections of odd primes.
\item For $a$ a positive integer, $[a]$ will denote the set $\{1, \dots, a\}$.
\item For $S \subseteq [d]$, we define
\[\overline{X}_S = \prod_{i \in S} (X_i \times X_i)  \times \prod_{i \in [d] - S} X_i.\]
We use $\pi_i$ to denote projection to the $i^{th}$ factor.
\item We denote the projections of $X_i \times X_i$ to $X_i$ by $\pi_0$ and $\pi_1$.
\item For $S, S_0 \subseteq [d]$, we take
\[\pi_{S_0}: \overline{X}_S \rightarrow \prod_{i \in S \cap S_0} (X_i \times X_i) \times \prod_{i \in ([d] - S) \cap {S_0}} X_i\]
to be the natural projection.
\item Given an element $\bar{x} \in \overline{X}_S$ and a subset $T$ of $S$, and writing $U = S - T$, we define a subset $\widehat{x}(T)$ of $\overline{X}_T$ by
\[ \left\{\bar{y} \in \overline{X}_T \,:\,\,  \pi_i(\bar{y}) \in \pi_i(\bar{x}) \text{ for } i \in U \text{ and }  \pi_{[d]- U}(\bar{y}) = \pi_{[d] - U}(\bar{x})\right\}.\] 
\end{itemize}

Fix some algebraic closure $\overline{\QQ}$ of $\QQ$, and take $G_{\mathbb{\QQ}} = \text{Gal}(\overline{\QQ}/\QQ)$. All the number fields used in this paper will be Galois extensions of $\QQ$ inside of this algebraic closure.

Our main results will fall as a consequence of the Chebotarev density theorem. We begin by constructing the sets of governing fields we need to do this.
\subsection{Sets of governing expansions}
\label{ssec:gov_exp}

Take $X_1, \dots, X_d$ to be disjoint collections of odd primes, and take $X$ to be their product. Given a subset $S$ of $[d]$ and $\bar{x} \in \pi_S(\overline{X}_S)$, define
\[K(\bar{x}) = \prod_{i \in S} \QQ\left(\sqrt{\pi_0(\pi_i(\bar{x}))\cdot \pi_1(\pi_i(\bar{x}_i))}\right)\]
where we use the $\prod$ symbol to denote a composition of number fields.

For $T \subseteq S \subseteq [d]$ and $\bar{x} \in \overline{X}_S$, take $\chi_{T,\, \bar{x}}: G_{\QQ} \rightarrow \mathbb{F}_2$ to be defined by
\[\chi_{T, \,\bar{x}}(\sigma) = \begin{cases} 1 &\text{if } \sigma(\sqrt{p_{0i}p_{1i}}) = -\sqrt{p_{0i}p_{1i}} \text{ for } i \in T \\ 0 &\text{otherwise,}\end{cases}\]
where we have taken $(p_{0i}, p_{1i})$ to be the coordinate $\pi_i(\bar{x})$ for $i \in S$.

From the equation
\[\chi_T(\sigma\tau) = \prod_{i \in T} \big(\chi_{\{i\}}(\sigma) + \chi_{\{i\}}(\tau)\big) = \sum_{U \subseteq T} \chi_{U}(\sigma)\chi_{T - U}(\tau),\]
we calculate the coboundary of $\chi_T$ to be
\begin{equation}
\label{eq:chi_cob}
d\chi_T(\sigma, \tau) = \sum_{\emptyset \ne U \subsetneq T} \chi_U(\sigma) \cdot \chi_{T-U}(\tau).
\end{equation}
This equation is the backbone for the following definition.

\begin{defn*}
Choose $S_0 \subseteq [d]$, choose $\bar{x}$ in $\overline{X}_{S_0}$, and choose some homomorphism
\[\phi_{\emptyset}: G_{\QQ} \rightarrow \mathbb{F}_2.\]
Suppose that we have a set of maps $\phi_S$ indexed by the subsets of $S_0$ such that we have the coboundary relation
\begin{equation}
\label{eq:expns}
d\phi_S (\sigma, \tau) = \sum_{\emptyset \ne T \subseteq S}  \chi_{T, \,\bar{x}}(\sigma) \cdot \phi_{S - T}(\tau)
\end{equation}
for each subset $S$ of $S_0$. Then, if $\phi_S$ is defined, we call it a $(S, \,\bar{x})$-expansion of $\phi_{\emptyset}$.
\end{defn*}

Using \eqref{eq:chi_cob}, we can verify that the right hand side of \eqref{eq:expns} has zero coboundary, so this definition is reasonable.

There are two main ways to construct expansions. In this section, we will use class field theory to construct $(S, \,\bar{x})$ expansions from a set of smaller expansions. The ramification of these \emph{governing expansions} can be precisely controlled, so their fields of definitions can be used as governing fields. In Section \ref{ssec:raw_exp}, we instead find expansions by summing cocycles representing Selmer or class elements over the points of $\widehat{x}(\emptyset)$. Such expansions are less nicely behaved.  However, if we calculate enough of these expansions within a small space, we can force some of the expansions to equal a governing expansion. This gives us enough control over the Selmer groups and class groups to prove our main theorems.

We start with the class field theory we will need. We assume that  the reader is familiar with the material in \cite{Serre79}.
\begin{prop}
\label{prop:S_expns}
Take $X_1, \dots, X_d$ to be disjoint collections of odd primes, and write $X$ for their product. Choose a subset $S \subseteq [d]$ and a member $\bar{x}$ of $\overline{X}_S$. Take $\phi_{\emptyset} \in H^1(G_{\QQ}, \mathbb{F}_2)$. Suppose we have $(S - \{i\}, \, \bar{x})$ expansions of $\phi_{\emptyset}$ for all $i$ in $S$, and take $M_i$ to be the field of definition for $\phi_{S - \{i\}}$. Write
\[M = K(\bar{x}) \prod_{i \in S} M_i.\]

Write $(p_{0i}, p_{1i}) = \pi_i(\bar{x})$. Suppose that, for all $i$ in $S$,
\begin{itemize}
\item  $p_{0i}$ and $p_{1i}$ split completely in the extension $M_i/\QQ$, and
\item $p_{0i}p_{1i}$ is a square at $2$ and at all primes where $M_i/\QQ$ is ramified.
\end{itemize}
Then $\phi_{\emptyset}$ has an $(S,\, \bar{x})$ expansion $\phi_S$ whose field of definition is unramified above $M$ at all finite places.
\end{prop}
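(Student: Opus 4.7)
The plan is to show the natural 2-cocycle
\[\omega_S(\sigma,\tau) := \sum_{\emptyset \ne T \subseteq S} \chi_{T,\bar{x}}(\sigma)\,\phi_{S-T}(\tau)\]
is cohomologically trivial in $H^2(G_\QQ,\mathbb{F}_2)$, and then to choose a primitive 1-cochain $\phi_S$ whose field of definition has the desired ramification behaviour. That $\omega_S$ lies in $Z^2(G_\QQ,\mathbb{F}_2)$ was already noted after \eqref{eq:expns} and follows from a direct Leibniz-type calculation using \eqref{eq:chi_cob} and the defining coboundary relations of each $\phi_{S-T}$.

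To show $[\omega_S] = 0$ globally, I would invoke the Hasse principle for $H^2(G_\QQ,\mathbb{F}_2) \cong \text{Br}(\QQ)[2]$: the Albert--Brauer--Hasse--Noether theorem gives an injection $\text{Br}(\QQ)[2] \hookrightarrow \bigoplus_v \text{Br}(\QQ_v)[2]$, so it suffices to check $[\omega_S|_v] = 0$ for every place $v$. At places $v$ unramified in every $M_i$ and coprime to $2\prod_i p_{0i}p_{1i}$, all the $\chi_{\{i\}}|_v$ and $\phi_{S-T}|_v$ are inflated from the unramified quotient $\text{Gal}(\QQ_v^{\text{unr}}/\QQ_v) \cong \hat{\Z}$, which has cohomological dimension $1$, so the local class is automatically zero. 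At the remaining places I would argue case by case: at $v \in \{p_{0i},p_{1i}\}$ the splitting hypothesis forces $\phi_{S-\{i\}}|_{D_v} = 0$, while at $v = 2$ or at primes where some $M_i$ ramifies, the local-square hypothesis on $p_{0i}p_{1i}$ makes $\chi_{\{i\}}|_{D_v} = 0$ and hence annihilates every term with $i \in T$. In each case the surviving terms either vanish directly or can be exhibited as a coboundary by a short induction on $|S|$ using the local primitives supplied by the given $(S - \{i\}, \bar{x})$-expansions.

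Once global triviality is established, fix any $\phi_S^0$ with $d\phi_S^0 = \omega_S$. Since every $\chi_{T,\bar{x}}$ and every $\phi_{S-T}$ with $T \ne \emptyset$ kills $G_M$, the cocycle $\omega_S$ itself vanishes on $G_M$, so $\phi_S^0|_{G_M}$ is a homomorphism cutting out a quadratic extension $L/M$. Any other primitive differs from $\phi_S^0$ by a global character $\delta \in \QQ^\times/(\QQ^\times)^2$, which changes $L$ by composition with $M(\sqrt{\delta})$. Choosing $\delta$ so that its restriction to each finite inertia subgroup $I_w \subseteq G_M$ cancels $\phi_S^0|_{I_w}$ -- possible by a standard Grunwald--Wang or ray-class argument, since the local inertial defects form a coherent family at finite places -- produces a $\phi_S$ whose field of definition is unramified above $M$ at all finite places.

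The main obstacle I expect is the local analysis at ramified primes in step two. The case-by-case reduction must combine the splitting hypothesis (which kills certain $\phi_{S-\{i\}}$ locally) with the local-square hypothesis (which kills certain $\chi_T$ locally), and the accompanying induction on $|S|$ has to be arranged so that the residual local cocycle is genuinely a coboundary on the local decomposition group. The subsequent adjustment by a global quadratic character is technically easier but requires a brief verification that no Grunwald--Wang obstruction arises at the prime $2$.
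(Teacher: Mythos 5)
Your overall strategy is the same as the paper's: identify the obstruction as a class in $H^2(G_{\QQ}, \mathbb{F}_2)$, kill it by the local-global principle for the $2$-torsion of the Brauer group (the paper gets the injection via Hilbert 90 and $H^2(G_{\QQ}, \overline{\QQ}^{\times}) \hookrightarrow \prod_v H^2(\text{Gal}(\overline{\QQ}_v/\QQ_v), \overline{\QQ}_v^{\times})$), check the local invariants vanish from the splitting and local-square hypotheses, and then adjust the resulting primitive by a rational quadratic class so that its field of definition is unramified above $M$ at finite places. Your local case analysis is in fact more explicit than the paper's one-line assertion that $\text{inv}_v(\psi) = 0$ everywhere, and that part of the sketch is fine.

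The gap is in the last step. You propose to choose a single $\delta \in \QQ^{\times}/(\QQ^{\times})^2$ whose restriction to each finite inertia group inside $G_M$ cancels $\phi_S^0$, citing Grunwald--Wang and a ``coherent family'' of local defects. But $M/\QQ$ is a $2$-extension, so at an odd prime $p$ that ramifies in $M$ every place $w \mid p$ of $M$ has even ramification index; hence for rational $\delta$ the valuation $w(\delta)$ is even, and twisting by $\delta$ cannot change the parity of $w(\alpha)$, i.e.\ cannot remove ramification of $M(\sqrt{\alpha})/M$ at such $w$ if it were present. So no choice of rational $\delta$ repairs a defect at those primes; what one must show instead is that there is no defect to repair. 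This is exactly what the paper proves there: local triviality of $\psi$ at such $p$ forces $\text{Gal}(M_p(\sqrt{\alpha})/\QQ_p) \cong (\Z/2\Z) \times \text{Gal}(M_p/\QQ_p)$, and since the inertia group at an odd prime has cyclic $2$-Sylow it cannot contain $(\Z/2\Z)^2$, so $M_p(\sqrt{\alpha})/M_p$ is automatically unramified. With that argument supplied, the remaining adjustments are the easy ones you describe: multiply $\alpha$ by $p$ at odd primes unramified in $M$ (Galois invariance makes the parity condition uniform over $w \mid p$), and by $\pm 1$ or $\pm 2$ at $2$, which is also how the paper disposes of the prime $2$ rather than by a Grunwald--Wang argument.
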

\begin{proof}
We need to check that the cocycle given on the right hand side of \eqref{eq:expns} is zero in $H^2(G_{\QQ}, \mathbb{F}_2)$. Call this cocycle $\psi$. Identifying $\mathbb{F}_2$ with $\pm 1$ and using the exact sequence
\[1 \xrightarrow{\quad\,} \pm 1 \xrightarrow{\quad\,} \overline{\QQ}^{\times} \xrightarrow{\,\,\,2\,\,\,} \overline{\QQ}^{\times} \xrightarrow{\quad\,} 1,\]
we find an exact sequence
\[0 = H^1\big(G_{\QQ}, \overline{\QQ}^{\times}\big) \rightarrow H^2\big(G_{\QQ}, \mathbb{F}_2\big) \rightarrow H^2\big(G_{\QQ}, \overline{\QQ}^{\times}\big),\]
with the left equality by Hilbert 90. But we know that the map
\[ H^2\big(G_{\QQ}, \overline{\QQ}^{\times}\big) \rightarrow \prod_{v} H^2\big(\text{Gal}(\overline{\QQ}_v/\QQ_v), \overline{\QQ}_v^{\times}\big)\]
is injective, where the product is over all places of $\QQ$. Furthermore, the conditions of the proposition imply that $\text{inv}_v(\psi)$ is zero at all places. Then $\psi$ is the image of some $1$-cochain. This cochain corresponds to a $\mathbb{F}_2$ central extension of $M$.

Write this extension as $M(\sqrt{\alpha})/M$. This extension is Galois over $\QQ$, so if $M(\sqrt{\alpha})/\QQ$ is ramified at some place $p$ other than $2$ or $\infty$ where $M/\QQ$ is unramified, we can lose the ramification by multiplying $\alpha$ by $p$. Now, suppose $M/\QQ$ is ramified at $p$. We see that the local conditions force $\psi$ to be trivial on $\text{Gal}(\overline{\QQ}_p/\QQ_p)$, so $M_p(\sqrt{\alpha})/\QQ_p$ has Galois group 
\[(\Z/2\Z) \times \text{Gal}(M_p/\QQ_p)\]
 if $M_p(\sqrt{\alpha})$ does not equal $M_p$. But the inertia group cannot contain $(\Z/2\Z)^2$ for $p \ne 2$, so $M_p(\sqrt{\alpha})/M_p$ is unramified. At $p=2$, we can avoid ramification by multiplying $\alpha$ by $\pm 2$ or $\pm 1$.
\end{proof}

With this out of the way, we can define systems of governing expansions.

\begin{defn}
Take $X_1, \dots, X_d$ to be disjoint collections of odd primes, and write $X$ for their product. Fix $i_a \le d$ and $\overline{Y}_{\emptyset} \subseteq X$. Suppose we choose the following objects:
\begin{itemize}
\item For each subset $S \subseteq [d]$ containing $i_a$, we choose a subset
\[\overline{Y}_S \subseteq \overline{X}_S.\]
\item For each $S \subseteq [d]$ containing $i_a$ and each $\bar{x} \in \overline{Y}_S$, we choose a continuous function
\[\phi_{\bar{x}}: G_{\QQ} \rightarrow \mathbb{F}_2,\]
taking $M(\bar{x})$ to be the minimal field of definition of this $\phi_{\bar{x}}$.
\end{itemize}
We call the collection of $\phi_{\bar{x}}$ a \emph{set of governing expansions} if the following criteria are satisfied.
\begin{enumerate}
\item If $\bar{x}$ is in $\overline{Y}_{\{i_a\}}$, then
\[\phi_{\bar{x}} = \chi_{i_a,\, \bar{x}}.\]
\item If $S$ contains $i_a$ and $\bar{x}$ is in $\overline{Y}_S$, then
\[\widehat{x}(T) \subset \overline{Y}_T \quad \text{for}\quad i_a \in T \subseteq S \quad\text{or for}\quad T = \emptyset.\]
Choosing $\bar{x}_{S- T}$ arbitrarily in $\widehat{x}(S - T)$, we have
\[d \phi_{\bar{x}}(\sigma, \tau) = \sum_{i_a \not\in T \subset S} \chi_{T,\, \bar{x}}(\sigma) \cdot \phi_{\bar{x}_{S - T}}(\tau).\]

\item Suppose $\bar{x}_1, \bar{x}_2$ are in $\overline{X}_S$, and suppose that
\[\bigg\{\pi_0\big(\pi_i(\bar{x}_1)\big),\,\, \pi_1\big(\pi_i(\bar{x}_1)\big)\bigg\} = \bigg\{\pi_0\big(\pi_i(\bar{x}_2)\big),\,\, \pi_1\big(\pi_i(\bar{x}_2)\big)\bigg\}\]
for all $i \in S$. Then, if
\[\widehat{x}_1(\emptyset) \cup \widehat{x}_2(\emptyset) \subseteq \overline{Y}_{\emptyset},\]
we have an equivalence
\[\bar{x}_1 \in \overline{Y}_S \,\,\Longleftrightarrow\,\, \bar{x}_2 \in \overline{Y}_S.\]
If both lie in $\overline{Y}_S$, then they satisfy
\[\phi_{\bar{x}_1} = \phi_{\bar{x}_2}.\]
\item \emph{(Additivity)} Taking $i \in S \subseteq [d]$, suppose $\bar{x}_1$, $\bar{x}_2$, $\bar{x}_3 \in \overline{Y}_S$ satisfy
\[\pi_{S -\{i\}}(\bar{x}_1) = \pi_{S -\{i\}}(\bar{x}_2) =\pi_{S -\{i\}}(\bar{x}_3)\]
and
\[\pi_i(\bar{x}_1) = (p_1, p_2),\,\,\,\pi_i(\bar{x}_2) = (p_2, p_3), \,\,\,\pi_i(\bar{x}_3) = (p_1, p_3).\]
Then
\[\phi_{\bar{x}_1}  + \phi_{ \bar{x}_2} = \phi_{ \bar{x}_3}.\]
\item If $\bar{x} \in \overline{Y}_S$, then $M(\bar{x})K(\bar{x})/K(\bar{x})$ is unramified at all finite places.

\item Take $\bar{x} \in \overline{X}_S$. Suppose that
\[\widehat{x}(\emptyset) \subseteq \overline{Y}_{\emptyset}\]
and that, for all $i \in S - \{i_a\}$, we have
\[\widehat{x}(S - \{i\}) \subseteq \overline{Y}_{S-\{i\}}.\]
Choosing $\bar{x}_i \in \widehat{x}(S - \{i\})$, suppose further that, for each $i \in S$, $\pi_0(\pi_i(\bar{x}))$ and $\pi_1(\pi_i(\bar{x}))$ split completely in $M(\bar{x}_i)$ and
\[\pi_0(\pi_i(\bar{x}))\pi_1(\pi_i(\bar{x}))\]
is a quadratic residue at $2$ and at all primes ramifying in $K(\bar{x}_i)/\QQ$. Then
\[\bar{x} \in \overline{Y}_S.\]
\end{enumerate}

We will use the letter $\mathfrak{G}$ to denote a set of governing expansions, writing $\overline{Y}_S(\mathfrak{G})$, $i_a(\mathfrak{G})$, etc. to denote the data associated with $\mathfrak{G}$.
\end{defn}

Additivity reflects a natural tensor product structure present in a set of governing expansions. We can explicitly uncover this linear structure via iterated commutators.
\begin{defn*}
Given a set of governing expansions, choose any $S \ni i_a$ and any $\bar{x} \in \overline{Y}_S$. Write $k = |S|$, and define
\begin{equation}
\label{eq:beta_def}
\beta_k \phi_{\bar{x}}(\sigma_1, \dots, \sigma_k) = \phi_{\bar{x}}\big([\sigma_1, \,[\sigma_2,\, [ \dots, \,[\sigma_{k-1}, \,\sigma_k]\dots]]]\big)
\end{equation}
\end{defn*}
Note that
\[\phi_{\bar{x}}\big([\sigma, \tau]\big) = \phi_{\bar{x}}(\sigma \tau) + \phi_{\bar{x}}(\tau \sigma) + d\phi_{\bar{x}}\big([\sigma, \tau],\, \,\tau\sigma\big).\]
From \eqref{eq:expns}, we see that the coboundary above is zero since each $\chi_T$ has abelian field of definition. But we have
\[\phi_{\bar{x}}(\sigma\tau) + \phi_{\bar{x}}(\tau\sigma) = d\phi_{\bar{x}}(\sigma, \tau) + d\phi_{\bar{x}}(\tau, \sigma).\]
Take $\text{Bij}^*([k],\, S)$ to be the set of bijective maps $g$ from $[k]$ to $S$ such that either $g(k-1)$ or $g(k)$ equals $i_a$. Then, in light of the above equation and \eqref{eq:expns}, we can calculate
\[\beta_k \phi_{\bar{x}}(\sigma_1, \dots, \sigma_k) = \sum_{g \in \text{Bij}^*([k],\, S)} \prod_{i \le k} \chi_{g(i), \, \bar{x}}(\sigma_i).\]

Write
\[K(X) = \prod_{\bar{x} \in \overline{X}_{[d]}}K(\bar{x}),\]
and write $V$ for the $\mathbb{F}_2$ vector space $\text{Gal}(K(X)/\QQ)$. Then $\beta_k$ can be considered as a linear operator from the space generated by the $\phi_{\bar{x}}$ to
\[\bigotimes_{i \in S} \text{Hom}(V,\, \mathbb{F}_2).\]

If $\bar{x}_1$, $\bar{x}_2$, and $\bar{x}_3$ are as in part (4) of the definition above, we see that
\begin{equation}
\label{eq:bk_add}
\beta_k \phi_{\bar{x}_1} + \beta_k \phi_{\bar{x}_2} = \beta_k \phi_{\bar{x}_3}.
\end{equation}
This turns out to be a good way to force additivity on our set of governing expansions.

\begin{prop}
\label{prop:gov_set_ex}
For any choice of a product $X$ of disjoint sets $X_1, \dots, X_d$ of odd primes, for any choice of $i_a \in [d]$, and for any choice of $\overline{Y}_{\emptyset}$, there is a set of governing expansions $\mathfrak{G}$ defined on $X$ with $i_a(\mathfrak{G}) = i_a$ and $\overline{Y}_{\emptyset}(\mathfrak{G}) = \overline{Y}_{\emptyset}$.
\end{prop}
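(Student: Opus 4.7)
The plan is to construct the sets $\overline{Y}_S$ and cochains $\phi_{\bar x}$ by induction on $k=|S|$, simultaneously verifying conditions (1)--(6) at each step. For the base case $k=1$, so $S=\{i_a\}$, conditions (1) and (6) force $\overline{Y}_{\{i_a\}}=\{\bar x:\widehat{x}(\emptyset)\subseteq\overline{Y}_\emptyset\}$ and $\phi_{\bar x}=\chi_{i_a,\bar x}$. The remaining conditions reduce to quick checks: (2) is an empty-sum identity matching $d\chi_{i_a,\bar x}=0$ because $\chi_{i_a,\bar x}$ is a homomorphism; (3) follows from the symmetry of the product $p_0 p_1$; (4) is the standard $\mathbb{F}_2$-additivity of quadratic characters; and (5) holds trivially since $\chi_{i_a,\bar x}$ is defined over $K(\bar x)$ itself.

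For the inductive step, assume all data has been constructed for subsets $T$ with $i_a\in T$ and $|T|<|S|$. Define $\overline{Y}_S$ to be the set of $\bar x$ satisfying the hypotheses of (6); this is the minimal choice compatible with (6). A downward-inductive unravelling shows that $\bar x\in\overline{Y}_S$ forces $\widehat{x}(T)\subseteq\overline{Y}_T$ for every $i_a\in T\subseteq S$, so the data appearing on the right-hand side of (2) is available. To build an initial cochain $\phi^0_{\bar x}$, apply a variant of Proposition \ref{prop:S_expns} with $\chi_{i_a,\bar x}$ in the role of $\phi_\emptyset$ and the previously constructed $\phi_{\bar x_{S-\{j\}}}$ (for $j\in S-\{i_a\}$) in the role of the smaller expansions. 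The splitting and quadratic-residue hypotheses in (6) are precisely those needed to make all local invariants of the right-hand side of (2) vanish in $H^2(G_\QQ,\mathbb{F}_2)$, so that the Hilbert-90 plus local-global argument of Proposition \ref{prop:S_expns} goes through; after clearing surplus ramification by multiplying the underlying Kummer element by extra primes, condition (5) is secured. One also verifies that the right-hand side of (2) is independent of the permissible choice of $\bar x_{S-T}$, using the inductive forms of (3) and (4) together with the fact that $\chi_{T,\bar x}(\sigma)$ is supported on $\sigma$'s that distinguish the two primes in each pair $\pi_i(\bar x)$ for $i\in T$.

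The principal obstacle is simultaneously enforcing additivity (4) and symmetry (3) while preserving (2) and (5), since each $\phi^0_{\bar x}$ is determined only up to addition of a homomorphism in $H^1(G_\QQ,\mathbb{F}_2)$. The strategy is to fix base primes $p_i^*\in X_i$ for each $i\in S$, to run the Proposition \ref{prop:S_expns} construction only for ``generator'' tuples $\bar x$ whose pair at every $i\in S$ has the form $(p_i,p_i^*)$, and to extend $\phi_{\bar x}$ to arbitrary $\bar x$ by iterated triangle identities of (4). Checking that this extension still satisfies the coboundary equation (2) reduces, for any additivity triple at position $j$, to a cancellation of cross-terms; this uses the $\mathbb{F}_2$-linearity $\chi_{j,(p,q)}=\chi_p+\chi_q$ together with the inductive form of (4) at smaller $|S-T|$ and compatible choices of the representatives $\bar x_{k,S-T}$. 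Symmetry (3) then falls out of (4) applied to the degenerate triple $(p,q),(q,p),(p,p)$: this forces $\phi_{\bar x(p,q)}+\phi_{\bar x(q,p)}=\phi_{\bar x(p,p)}$, and one verifies $\phi_{\bar x(p,p)}=0$ because $\chi_{j,(p,p)}\equiv 0$ collapses the coboundary equation at the diagonal, and ramification control then forces the remaining homomorphism to be trivial. The bulk of the technical work lies in verifying that the generator-plus-additivity scheme does not destroy condition (5); this follows from the compositum structure of the fields $K(\bar x)$ and the fact that Proposition \ref{prop:S_expns} applied to the generators already controls ramification above each relevant base.
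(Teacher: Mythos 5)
Your base case is fine, and your use of Proposition \ref{prop:S_expns} for tuples of $\overline{Y}_S$ (where the hypotheses of condition (6) supply exactly the local vanishing needed) is the same mechanism the paper uses. The genuine gap is in the generator-plus-additivity scheme: the generator tuples, whose pair at each $i\in S$ is $(p_i,p_i^*)$ for a fixed base prime $p_i^*$, are in general \emph{not} in $\overline{Y}_S$, and there is no reason an expansion exists at them. An $(S,\bar{x})$-expansion exists only if the $2$-cocycle on the right of \eqref{eq:expns} is trivial in $H^2(G_{\QQ},\mathbb{F}_2)$, equivalently has vanishing local invariants at every place; already for $|S|=2$ this class is the cup product $\chi_{\{j\},\,\bar{x}}\cup\chi_{\{i_a\},\,\bar{x}}$, whose local invariants are Hilbert symbols that vanish only under Legendre-symbol conditions between the primes in the two pairs, and in higher rank one needs the splitting of $\pi_0(\pi_i(\bar x)),\pi_1(\pi_i(\bar x))$ in the fields $M(\bar{x}_i)$ and the quadratic-residue conditions of (6). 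These conditions hold by hypothesis for members of $\overline{Y}_S$ but fail for roughly half of the pairs $(q,p_i^*)$, and no choice of base primes $p_i^*$ can make them hold for all $q$ simultaneously (the governing fields $M(\bar{x}_i)$ also vary with the other coordinates). So the auxiliary expansions on which your whole extension-by-additivity rests need not exist, and the construction collapses at its first step.

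Two further points. First, your derivation of (3) from the degenerate triple needs $\phi_{\bar{x}(p,p)}=0$, which is not forced: at a diagonal tuple the coboundary relation does not collapse to $d\phi=0$ but to the coboundary of a lower-level expansion, so $\phi_{\bar{x}(p,p)}$ is pinned down only up to a character unramified over $K(\bar{x})$, not over $\QQ$; diagonal vanishing is a normalization you would have to impose and then re-verify against (2), (5), (6). Second, the paper forces additivity by an entirely different device that avoids ever leaving $\overline{Y}_S$: it constructs $\phi_{\bar{x}}$ for $\bar{x}\in\overline{Y}_S$ via Proposition \ref{prop:S_expns}, notes that each is free up to a quadratic character, and normalizes all of them to vanish on fixed inertia elements $\sigma_p$ attached to the narrow Hilbert class field of $K(X)$. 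This makes the iterated-commutator linearization $\beta_{|S|}$ of \eqref{eq:beta_def} injective on the span $W_S$ (a kernel element is shown to be a global character unramified at all finite primes, hence zero), and since $\beta_k\phi_{\bar{x}}$ is an explicit tensor of the characters $\chi_{i,\,\bar{x}}$, relation \eqref{eq:bk_add} holds automatically and injectivity transfers it to $\phi_{\bar{x}_1}+\phi_{\bar{x}_2}=\phi_{\bar{x}_3}$ itself. If you want a constructive route, the correction characters have to be chosen on the actual tuples of $\overline{Y}_S$, as in the paper, rather than imported from auxiliary tuples outside it.
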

\begin{proof}
We actually will prove something slightly stronger. Take $W_S$ to be the space generated by the $\phi_{\bar{x}}$ for $\bar{x} \in \overline{Y}_S$. In light of \eqref{eq:bk_add}, we can prove additivity by showing that we can choose the $\phi_{\bar{x}}$ so that $\beta_{|S|}$ is injective on $W_S$.

This is clear for $S = \{i_a\}$. Now, suppose we had found $\phi_{\bar{y}}$ satisfying this property for all $\bar{y} \in \overline{Y}_T$ and proper subsets $T$ of $S$ that contain $i_a$, and we wish to prove the result for $S$. In light of Proposition \ref{prop:S_expns}, we certainly can find expansions $\phi_{\bar{x}}$ for each $\bar{x} \in \overline{Y}_S$. The only question is whether we can make the map from $W_S$ injective.

Take $M$ to be the narrow Hilbert class field of $K(X)$. For each prime $p$ that ramifies in $K(X)/\QQ$, choose $\mathfrak{P}$ to be a prime of $M$ over $p$, and take $\sigma_p$ to be the nontrivial inertia element corresponding to $\mathfrak{P}$.  By adjusting the $\phi_{\bar{x}}$, $\bar{x} \in \overline{Y}_S$ by the quadratic character $\chi_{\pm p}$ as needed, we can force $\phi_{\bar{x}}(\sigma_p) = 0$; we choose the sign for $\pm p$ to keep $\phi_{\bar{x}}$ unramified at $2$.

So suppose the set of $\phi_{\bar{x}}$ are zero at each $\sigma_p$. We claim that this is sufficient for $\beta_{|S|}$ to be injective on $W_S$.

Take $k = |S|$. Suppose the map were not injective, with $\beta_k \phi = 0$ for
\[\phi = \sum_j c_j \phi_{\bar{x}_j}\]
for some set of constants $c_j$. We have
\[0 = \beta_k \phi(\sigma_1, \dots, \sigma_k) = d\phi(\sigma_1, \tau) + d\phi(\tau, \sigma_1)\]
where $\tau$ is the iterated commutator of $\sigma_2, \dots, \sigma_k$. 

This splits into two cases depending on $k$. 
If $k > 2$, we always have that $d\phi(\tau, \sigma_1)$ is zero, so
\[\beta_k \phi = d\phi(\sigma_1, \tau) = \sum_j \sum_{i \in S - \{i_a\}} c_j \chi_{i, \bar{x}_j}(\sigma_1) \cdot \phi_{\bar{x}_j, \, S - \{i\}}(\tau).\]
Using the independence of the sets of characters corresponding to each $i$, we get that
\[\sum_j c_j \chi_{i, \bar{x}_j}(\sigma_1)  \cdot \phi_{\bar{x}_j, \, S - \{i\}}\big([\sigma_2,\, [ \dots, \,[\sigma_{k-1}, \,\sigma_k]\dots]]\big) = 0\]
for any choice of $i \in S - \{i_a\}$. This can be reexpressed as
\[\sum_j c_j\chi_{i, \bar{x}_j}(\sigma_1)  \cdot  \beta_{k-1}\phi_{\bar{x}_j, \, S - \{i\}}(\sigma_2, \dots, \sigma_k) = 0.\]
By the induction hypothesis, we thus have
\[\sum_j c_j\chi_{i, \bar{x}_j}(\sigma)  \cdot \phi_{\bar{x}_j, \, S - \{i\}}(\tau) = 0\]
for any choice of $\sigma$ and $\tau$. Taking coboundaries then gives
\[\sum_j c_j  \sum_{i  \in T \subseteq S - \{i_a\}} \chi_{T, \bar{x}_j}(\sigma) \cdot \phi_{\bar{x}_j,\, S - T}(\tau) = 0.\]
Again using the independence of these characters, we find
\[\sum_j c_j \chi_{T, \bar{x}_j}(\sigma) \cdot \phi_{\bar{x}_j,\, S - T}(\tau) = 0\]
for any $T \subseteq S - \{i_a\}$. Adding these together then gives that $d \phi = 0$.

On the other hand, if $k = 2$, we have $\tau = \sigma_2$, and we still find $d \phi = 0$.

Then
\[\sum_j c_j \phi_{\bar{x}_j}\]
is a Galois cocycle and hence corresponds to a quadratic extension of $\QQ$. But, from the $\phi(\sigma_p) =0$ conditions, we find that it is unramified at all finite primes, so $\phi = 0$. Then $\beta_{|S|}$ is injective on $W_S$, and this set of governing extensions is additive at level $S$. This gives the proposition by induction.
\end{proof}

There is one final result we need for sets of governing expansions. To prove our main theorems, we apply Chebotarev's density theorem to the composition of fields $M(\bar{x})$ over a special set of $\bar{x} \in \overline{X}_S$. For this reason, it is essential to have a sense of when a given field $M(\bar{x}_0)$ is not contained in the composition of all the other $M(\bar{x})$. The next proposition gives us the independence result we need.
\begin{prop}
\label{prop:gov_set_ind}
Take $X = X_1 \times \dots \times X_d$ to be a product of disjoint sets of odd primes, and take $i_a \in S \subseteq [d]$. For $i \in S$, take 
\[Z_i \subseteq X_i \times X_i\]
to be the set of edges of some ordered tree in $X_i$. Suppose we have a set of governing expansions on $X$ such that
\[\pi_S \left(\overline{Y}_S\right) \supseteq Z = \prod_{i \in S} Z_i.\]
For $z$ in the latter product, choose $\bar{x}(z)$ so $\pi_S(\bar{x}(z)) = z$. Then, for any choice of $z_0 \in \prod_{i \in S} Z_i$, and writing $\bar{z}_0 = \bar{x}(z_0)$, we have that
\[M_S(X)\prod_{z_0 \ne z \in Z} M(\bar{x}(z))\]
does not contain the field $M(\bar{z}_0)$, where $M_S(X)$ is as in the proof of the Proposition \ref{prop:gov_set_ex}.
\end{prop}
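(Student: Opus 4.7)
The plan is to couple the iterated-commutator invariant $\beta_k$ (with $k = |S|$) constructed in the proof of Proposition \ref{prop:gov_set_ex} with the fact that tree edges give linearly independent quadratic characters. Set $V = \Gal(K(X)/\QQ)$ and, for each $z \in Z$, abbreviate $\phi_z = \phi_{\bar{x}(z)}$ and $M_z = M(\bar{x}(z))$. After the normalization used inside that proof (adjust each $\phi_{\bar{x}}$ by the quadratic character of $\pm p$ so that $\phi_{\bar{x}}(\sigma_p) = 0$ at every $p$ ramifying in $K(X)/\QQ$), the map
\[\beta_k : W_S \longrightarrow \bigotimes_{i \in S} \mathrm{Hom}(V, \mathbb{F}_2)\]
is injective on $W_S$, and the explicit formula identifies $\beta_k \phi_z$ with the $\text{Bij}^*([k], S)$-symmetrization of the pure tensor $\bigotimes_{j \in S} \chi_{\pi_j(z)}$.

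The first step is a linear-algebra statement: the tensors $\{\beta_k \phi_z\}_{z \in Z}$ are $\mathbb{F}_2$-linearly independent. Because each $Z_i$ is the edge set of a tree inside $X_i$, the quadratic characters $\{\chi_e : e \in Z_i\}$ are $\mathbb{F}_2$-linearly independent in $\mathrm{Hom}(V, \mathbb{F}_2)$ — this is just the statement that the boundary map $\mathbb{F}_2^{E(T)} \to \mathbb{F}_2^{V(T)}$ of a tree $T$ is injective. Tensor products preserve independence across the factors, and the partial symmetrization by $\text{Bij}^*([k], S)$ preserves it because distinct $z \in Z$ index disjoint orbits of pure tensors. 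Hence the $\beta_k \phi_z$ are independent in $\bigotimes_{i \in S} \mathrm{Hom}(V, \mathbb{F}_2)$, and by injectivity of $\beta_k$ so are the $\phi_z$ themselves.

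The second step converts this into the desired field-theoretic conclusion. Suppose for contradiction that $M_{z_0} \subseteq L := M_S(X) \prod_{z \ne z_0} M_z$; then $\phi_{z_0}$ factors through $\Gal(L/\QQ)$. Each $M_z/K(X)$ is unramified by axiom (5), and $M_S(X)$ is the narrow Hilbert class field of $K(X)$ used in the proof of Proposition \ref{prop:gov_set_ex}, so $L/K(X)$ is unramified. Unwinding the Galois structure of $L/K(X)$ forces $\phi_{z_0}$ to differ from some $\mathbb{F}_2$-linear combination of $\{\phi_z : z \ne z_0\}$ by a cochain $\psi$ whose additional information sits inside $\Gal(M_S(X)/\QQ)$. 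Applying $\beta_k$ and using that $M_S(X)/K(X)$ is abelian — so the iterated commutator of elements of $\Gal(\overline{\QQ}/K(X))$ lies in $\Gal(\overline{\QQ}/M_S(X))$, and after the $\sigma_p$-normalization the contribution $\beta_k \psi$ either vanishes or is already captured by the $\{\beta_k \phi_z\}_{z \in Z}$ — one obtains a linear relation among the $\beta_k \phi_z$ that contradicts step 1.

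The principal obstacle is the bookkeeping in step 2: passing from ``$M_{z_0} \subseteq L$'' to a literal cochain-level linear relation. The subtlety is that $M_S(X)/\QQ$ need not be abelian, so $\psi$ is not automatically killed by $\beta_k$; what rescues the argument is the combination of the normalization $\phi_{\bar{x}}(\sigma_p) = 0$ and the additivity axiom, which together force any surviving contribution from $\psi$ to fold into the span of the $\beta_k \phi_z$ already present and hence cannot account for the missing $\beta_k \phi_{z_0}$. Getting this reduction right is essentially the same linear-algebra calculation (relative to the expansion formula \eqref{eq:expns}) that underlies the injectivity of $\beta_k$ on $W_S$ in the proof of Proposition \ref{prop:gov_set_ex}.
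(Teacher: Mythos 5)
Your first step---linear independence of the tensors $\beta_k\phi_{\bar{x}(z)}$ from the tree hypothesis---is correct and is the same mechanism the paper uses; the paper makes the ``disjoint monomials'' point cleanly by restricting $\beta_k\phi_{\bar{z}}$ to $V_{i_1}\times\dots\times V_{i_{k-1}}\times V_{i_a}$ with $V_i = \Gal(K(X)/K_i(X))$, which kills all but one bijection in $\text{Bij}^*([k],S)$ and exhibits each $\beta_k\phi_{\bar{z}}$ as a pure tensor of edge characters. Your reduction of the field containment to a cochain statement is also sound and matches the paper's (which phrases it as: $d\phi_{\bar{z}_0}$ must not lie in the span of the other $d\phi_{\bar{x}(z)}$ in $H^2(\Gal(M_S(X)/\QQ),\mathbb{F}_2)$): since each $\phi_{\bar{z}}$ is a homomorphism on $\Gal(\overline{\QQ}/K(X))$, containment of $M(\bar{z}_0)$ in the composite would make $\psi = \phi_{\bar{z}_0} + \sum_{z\in I}\phi_{\bar{x}(z)}$ factor through $\Gal(M_S(X)/\QQ)$ for some $I \subseteq Z-\{z_0\}$.

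The gap is in how you dispose of $\beta_k\psi$. What is needed is that $\psi$ vanishes on $k$-fold iterated commutators, i.e.\ that the $k$-th term of the lower central series of $\Gal(M_S(X)/\QQ)$ is trivial; the paper gets exactly this from the assertion that $\Gal(M_S(X)/\QQ)$ has nilpotence degree $|S|-1$, which is also what makes $\beta$ well defined on $H^2$ of that group. Your substitute---``$M_S(X)/K(X)$ is abelian, so iterated commutators of elements of $\Gal(\overline{\QQ}/K(X))$ lie in $\Gal(\overline{\QQ}/M_S(X))$''---does not do the job: in $\beta_k\psi(\sigma_1,\dots,\sigma_k)$ the $\sigma_i$ cannot be taken to fix $K(X)$ (otherwise every character $\chi_{i,\bar{z}}$ vanishes and the independence statement is vacuous), so you need $k$-fold commutators of \emph{arbitrary} elements to die in $\Gal(M_S(X)/\QQ)$, and abelianness of $M_S(X)/K(X)$ only controls commutators of elements that already fix $K(X)$; centrality, or the full nilpotence-class bound, is what is required. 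Neither the $\sigma_p$-normalization nor the additivity axiom supplies this: the normalization is used in Proposition \ref{prop:gov_set_ex} only to kill an everywhere-unramified character, and additivity merely says $\beta_k$ is linear on the $\phi$'s. Finally, your fallback alternative---that $\beta_k\psi$ ``is already captured by the $\{\beta_k\phi_z\}_{z\in Z}$''---is not a usable conclusion: if $\beta_k\psi$ may be an arbitrary element of that span (including a $\beta_k\phi_{\bar{z}_0}$ component), then the identity $\beta_k\phi_{\bar{z}_0} = \sum_{z\in I}\beta_k\phi_{\bar{x}(z)} + \beta_k\psi$ yields no contradiction with Step 1. So the argument as written does not close; it needs the nilpotence bound on $\Gal(M_S(X)/\QQ)$ (equivalently, that $\beta$ kills $2$-coboundaries of this group) to force $\beta_k\psi = 0$.
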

\begin{proof}
We need to check that $d\phi_{\bar{z}_0}$ is not in the span of the other $d\phi_{\bar{x}(z)}$ inside of 
\[H^2\big(\text{Gal}(M_S(X)/\QQ), \,\,\mathbb{F}_2\big).\]
Since $\text{Gal}(M_S(X)/\QQ)$ has nilpotence degree $|S| -1$, we see that the map
\[\beta(\psi)(\sigma_1, \dots, \sigma_k) \]
\[= \psi\big(\sigma_1,\,\, [\sigma_2,\, [ \dots, \,[\sigma_{k-1}, \,\sigma_k]\dots]]\big) + \psi\big([\sigma_2,\, [ \dots, \,[\sigma_{k-1}, \,\sigma_k]\dots]], \,\,\sigma_1\big)\]
is trivial on any $2$-coboundary, where we have taken $k =|S|$. That is, $\beta$ is defined on this cohomology group.

Then we just need to check that 
\[\beta(d\phi_{\bar{z}_0}) = \beta_k \phi_{\bar{z}_0}\]
is not in the span of the other $\beta_k \phi_{\bar{z}}$. Taking 
\[K_i(X) = \prod_{\bar{x} \in \overline{X}_{[d] - \{i\}}} K(\bar{x}),\]
we define $V_i$ to be the associated $\mathbb{F}_2$ vector space $\text{Gal}(K(X)/K_i(X))$. With this notation, we can consider $\beta_{|S|}\phi_{\bar{z}}$ restricted to
\[V_{i_1} \times \dots \times V_{i_{k - 1}} \times V_{i_a},\]
where $S = \{i_1, \dots, i_{k-1}, i_a\}$. Restricted to this domain, we find
\[\beta_{|S|} \phi_{\bar{z}} = \chi_{i_1,\, \bar{z}} \otimes \dots \otimes \chi_{i_{k-1},\, \bar{z}} \otimes \chi_{i_a,\, \bar{z}}\]
for any $z$ in $Z$. The tree assumption implies that, for any $i \in S$, the set
\[\big\{ \chi_{i, \bar{z}}\,:\,\, \bar{z} \in Z\big\}\] 
is a linearly independent set; that is, once all the duplicate entries are removed, the remaining characters are linearly independent. Since each $\bar{z}$ corresponds to a distinct tuple of characters, the structure of tensor products implies that $\phi_{\bar{z}_0}$ must be independent from the other $\phi_{\bar{z}}$. This proves the proposition.
\end{proof}


\subsection{Sets of raw expansions}
\label{ssec:raw_exp}
Take $N$ to be a $G_{\QQ}$ module that is isomorphic to some power of $\QQ_2/\Z_2$ if the $G_{\QQ}$ structure is forgotten. Take $X_1, \dots, X_d$ to be disjoint sets of odd primes where $N$ is not ramified, and take $X$ to be their product. For $x \in X$, we use $N(x)$ to denote quadratic twist of $N$ by the quadratic character of 
\[\QQ\left(\sqrt{\pi_1(x) \cdot \dots \cdot \pi_d(x)}\right)/\QQ.\]

Note that, for any $x \in X$,
\[N(x)[2] = N[2].\]
We write $\beta(x_0, x_1)$ for the isomorphism $N(x_0) \rightarrow N(x_1)$ that preserves Galois structure above 
\[K(x_0, x_1) = \QQ\big(\sqrt{\pi_1(x_0)\pi_1(x_1) \dots \pi_d(x_0) \pi_d(x_1)}\big).\]
Call the associated multiplicative quadratic character $\chi(x_0, x_1)$

For our next definition, we will need that $N$ contains a copy of $\Z/2\Z$.
\begin{defn*}
Given $N$ and $X$ as above, take
\[\text{rk}: X \rightarrow \Z^+ \cup \{\infty\}\]
to be any function. For $x$ in $X$ and $k \le \text{rk}(x)$ an integer, take
\[\psi_k(x) \in C^1\left(G_{\QQ},\, N(x)[2^k]\right),\]
where $C^1$ denotes the set of $1$-cocycles of $G_{\QQ}$ in $N(x)[2^k]$. This data will be called a \emph{set of raw cocycles} on $X$ if, for $x \in X$ and $k < \text{rk}(x)$, we have
\[2\psi_{k+1}(x) = \psi_k(x).\]
Given $S \subseteq [d]$, we will call $\mathfrak{R}$ \emph{consistent over} $S$ if
\[\psi_1(x) = \psi_1(x') \quad\text{whenever }x, x' \in X \text{ satisfy } \pi_{[d] - S}(x) = \pi_{[d] - S}(x')\]
Given $i_a \in S$, we call our set of raw cocycles \emph{$i_a$-consistent} over $S$ if there is some injection of Galois modules $\iota: \mathbb{F}_2 \rightarrow N[2]$ such that
\[\psi_1(x) - \psi_1(x') = \iota \circ \chi_{\pi_{i_a}(x)\pi_{i_a}(x')}.\]

We will use the letter $\mathfrak{R}$ to refer to a set of raw cocycles, writing $\text{rk}(\mathfrak{R})$ and $\psi_k(\mathfrak{R}, x)$ for the data associated to $\mathfrak{R}$. We will also use the notation $i_a(\mathfrak{R})$ and $\iota(\mathfrak{R})$ for the corresponding data of $i_a$-consistent $\mathfrak{R}$.
\end{defn*}

The goal of this subsection is to compare sets of raw cocycles with sets of governing expansions. In our final results for sets of raw cocycles, we will be interested in situations where 
\[i_a(\mathfrak{R})= i_a(\mathfrak{G}).\]
First, we look at the simpler situation where $i_a$ plays no role.
\begin{defn*}
Take $\mathfrak{R}$ to be a set of raw cocycles on $X$, and take $S$ to be a nonempty subset of $[d]$, and take $\bar{x} \in \overline{X}_S$.
Take $\bar{x} \in \overline{X}_S$, and suppose that $\text{rk}(\mathfrak{R})(x) \ge |S|$ for $x \in \widehat{x}(\emptyset)$. Choosing $x_0 \in \widehat{x}(\emptyset)$, we then define
\[\psi(\mathfrak{R},\, \bar{x}) = \sum_{x \in \widehat{x}(\emptyset)} \beta(x, x_0) \circ \psi_{|S|}(\mathfrak{R}, \, x).\]

Supposing that $\mathfrak{R}$ is consistent over $S$, we say that $\mathfrak{R}$ is \emph{minimal} at $\bar{x}$ if $\psi(\mathfrak{R},\, \bar{x}) = 0$.
\end{defn*}

We need one crucial calculation. For $i \in S$, take $H_i$ to be the subset of $x \in \widehat{x}(\emptyset)$ with $\pi_i(x_0) \ne \pi_i(x)$. For $T \subseteq S$, take
\[H_T = \bigcap_{i \in T} H_i.\]
We can write any $\sigma$ in $\text{Gal}(K(\bar{x})/\QQ)$ in the form.
\[\sigma = \sum_{i \in T_{\sigma}} \sigma_i,\]
where $\sigma_i$ is the unique nontrivial element of this Galois group that fixes 
\[\sqrt{\pi_0(\pi_j(\bar{x}))\pi_1(\pi_j(\bar{x}))}\]
for all $j \ne i$ in $S$, and where $T_{\sigma}$ is a subset of $S$. We claim that, for $x$ in $\widehat{x}(\emptyset)$,

\begin{equation}
\label{eq:to_hplanes}
\sum_{\substack{\emptyset \ne T \subseteq T_{\sigma} \\ H_T \ni x}} (-2)^{|T| - 1} = \begin{cases} 1 &\text{if } \chi(x, x_0)(\sigma) = -1 \\ 0 &\text{otherwise.}\end{cases}.
\end{equation}
For take $T_x$ to be the maximal $T$ so that $x \in H_T$. Then the right hand side is one if 
\[|T_x \cap T_{\sigma}|\] 
is odd. Calling this cardinality $m$, the left hand side is
\[\sum_{\emptyset \ne T \subseteq T_{\sigma} \cap T_x}  (-2)^{|T| - 1}\]
\[= \sum_{k = 1}^{m} \binom{m}{k}(-2)^{k-1} = \frac{1}{2}\big(1 - (-1)^m\big),\]
via the binomial theorem. This equals the right hand side, establishing \eqref{eq:to_hplanes}.

\begin{prop}
\label{prop:psi_coh}
Take $\mathfrak{R}$ to be a set of raw cocycles on $X$. Take $S$ to be a nonempty subset of $[d]$ where $\mathfrak{R}$ is consistent. Choose some $\bar{x} \in \overline{X}_S$ where $\psi(\bar{x}) = \psi(\mathfrak{R}, \, \bar{x})$ is defined.

Suppose that, for any $T \subsetneq S$ and any $\bar{x}_1 \in \widehat{x}(T)$, $\mathfrak{R}$ is minimal at $\bar{x}_1$. Then $\psi(\bar{x})$ maps into $N[2]$, and its coboundary is zero. That is, it corresponds to an element in $C^1(G_{\QQ}, N[2])$.
\end{prop}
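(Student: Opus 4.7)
The plan is to compute $d\psi(\bar{x})$ directly, use the combinatorial identity \eqref{eq:to_hplanes} to express the result as a signed sum over the proper sub-cubes $\widehat{x_T^{\ast}}(\emptyset)$ indexed by nonempty $T \subseteq T_\sigma$, recognize each piece as $\psi(\bar{x}_T^{\ast})$ for a suitable $\bar{x}_T^{\ast} \in \widehat{x}(S - T)$, and invoke the minimality hypothesis to kill everything. The $N[2]$-valued claim is a variant of the same calculation at one lower $2$-power.

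The starting point is that $N(x)$ and $N(x_0)$ differ by the quadratic twist $\chi(x, x_0)$, so $\beta(x, x_0)$ fails to be Galois equivariant by exactly this character: for every $n \in N(x)$,
\[
\sigma_{x_0}\, \beta(x, x_0)(n) \;=\; \chi(x, x_0)(\sigma)\cdot \beta(x, x_0)(\sigma_x n).
\]
Combining this with the $N(x)$-cocycle relation $\sigma_x \psi_k(x)(\tau) = \psi_k(x)(\sigma\tau) - \psi_k(x)(\sigma)$, a direct computation gives, for $k = |S|$,
\[
d\psi(\bar{x})(\sigma, \tau) \;=\; \sum_{x \in \widehat{x}(\emptyset)} \bigl(\chi(x, x_0)(\sigma) - 1\bigr)\, \beta(x, x_0)\bigl[\psi_k(x)(\sigma\tau) - \psi_k(x)(\sigma)\bigr].
\]
Since $\chi(x, x_0)(\sigma) - 1 \in \{0, -2\}$, only the $x$ with $\chi(x, x_0)(\sigma) = -1$ contribute, with coefficient $-2$. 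Substituting the identity \eqref{eq:to_hplanes} for the indicator $[\chi(x, x_0)(\sigma) = -1]$ and interchanging summations yields
\[
d\psi(\bar{x})(\sigma, \tau) \;=\; \sum_{\emptyset \ne T \subseteq T_\sigma} (-2)^{|T|} \sum_{x \in H_T} \beta(x, x_0)\bigl[\psi_k(x)(\sigma\tau) - \psi_k(x)(\sigma)\bigr].
\]

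For every nonempty $T \subseteq T_\sigma$, the hyperplane $H_T$ is precisely $\widehat{x_T^{\ast}}(\emptyset)$ for the unique $\bar{x}_T^{\ast} \in \widehat{x}(S - T)$ whose $T$-coordinates are the entries of each pair $\pi_i(\bar{x})$ not equal to $\pi_i(x_0)$. Iterating the raw-cocycle relation $\psi_j = 2\psi_{j+1}$ gives $\psi_{|S - T|}(x) = 2^{|T|}\psi_{|S|}(x)$, and factoring $\beta(x, x_0) = \beta(x_0^{\ast}, x_0)\circ\beta(x, x_0^{\ast})$ through any chosen base point $x_0^{\ast} \in H_T$, the inner sum becomes $(-1)^{|T|}\beta(x_0^{\ast}, x_0)\bigl[\psi(\bar{x}_T^{\ast})(\sigma\tau) - \psi(\bar{x}_T^{\ast})(\sigma)\bigr]$. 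Since $S - T \subsetneq S$, the minimality hypothesis forces $\psi(\bar{x}_T^{\ast}) = 0$, so each summand vanishes and $d\psi(\bar{x}) = 0$. For the $N[2]$-valuedness, I compute $2\psi(\bar{x}) = \sum_x \beta(x, x_0)\psi_{k-1}(x)$, fix any $i \in S$, and partition $\widehat{x}(\emptyset) = \bigsqcup_{\bar{x}_1 \in \widehat{x}(S - \{i\})} \widehat{x_1}(\emptyset)$; each sub-sum is $\beta(y_0, x_0)\psi(\bar{x}_1)$, which vanishes by minimality at $\bar{x}_1$.

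The main obstacle is the bookkeeping in the identification $H_T = \widehat{x_T^{\ast}}(\emptyset)$: verifying that the factor $2^{|T|}$ from the raw-cocycle compatibility exactly cancels the $(-2)^{|T|}$ from \eqref{eq:to_hplanes}, and tracking the non-Galois-equivariant isomorphisms $\beta(x, x_0)$ through composition with a new base point. Once the twist-commutation formula for $\beta$ is in hand and the sub-cube identification is verified, the minimality hypothesis does the rest automatically.
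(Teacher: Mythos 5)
Your proposal is correct and follows essentially the same route as the paper's proof: compute the coboundary of each summand via the failure of $\beta(x,x_0)$ to be equivariant (measured by $\chi(x,x_0)$), convert the resulting sum over $\{x : \chi(x,x_0)(\sigma) = -1\}$ into hyperplane sums using \eqref{eq:to_hplanes}, identify each $H_T$-sum with $\psi(\mathfrak{R},\bar{x}_1)$ for a face $\bar{x}_1 \in \widehat{x}(S-T)$, and kill everything by minimality; your explicit partition argument for the $N[2]$-valuedness is just the detail the paper leaves as "clear." The only cosmetic difference is that you expand $\sigma_x\psi(\tau)$ as $\psi(\sigma\tau)-\psi(\sigma)$ where the paper keeps the term $\sigma\beta(x,x_0)\circ\psi_{|S|-1}(x)(\tau)$.
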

\begin{proof}
It is clear that $2\psi(\bar{x})$ is zero by the minimality assumptions, so $\psi(\bar{x})$ maps into $N(x_0)[2]$. To show it is a coboundary, we calculate 
\begin{align*}
&d\big(\beta(x, x_0) \circ \psi_{|S|}(x)\big) (\sigma, \tau) \\
&\qquad\qquad= \big(\sigma \beta(x, x_0) - \beta(x, x_0) \sigma\big) \circ \psi_{|S|}(x)(\tau)\\
& \qquad\qquad= \begin{cases} \sigma \beta(x, x_0)\circ \psi_{|S| - 1}(x) (\tau) &\text{ if } \chi(x_0, x)(\sigma) = -1 \\ 0 &\text{ otherwise.}\end{cases}
\end{align*}
Then
\[d \psi(\bar{x})(\sigma, \tau) =  \sum_{\chi(x, x_0)(\sigma) = -1} \sigma\beta(x, x_0) \circ \psi_{|S| - 1}(x) (\tau) \]

Taking $T_{\sigma}$ as above, we can use \eqref{eq:to_hplanes} to write this sum as
\begin{equation}
\label{eq:raw_cobnd}
\sum_{\emptyset \ne T \subseteq T_{\sigma}} (-1)^{|T| - 1}\sigma \left(\sum_{x \in H_T} \beta(x, x_0) \circ \psi_{|S| - |T|}(x)(\tau)\right).
\end{equation}
But the inner sum is zero for each $T$ by the minimality hypothesis, so the coboundary is zero. This gives the proposition
\end{proof}
In light of the coboundary calculation of this proposition, we see that, if $\psi$ is minimal at $\bar{x} \in \overline{X}_S$, it is minimal at any $\bar{y} \in \widehat{x}(T)$ for any $T \subseteq S$.

We now start comparing sets of governing expansions with sets of raw cocycles.
\begin{defn*}
Take $\mathfrak{R}$ to be a set of raw cocycles on $X$, and take $\mathfrak{G}$ to be a set of governing expansions on $X$. Choose a subset $S$ of $[d]$, and choose $\bar{x} \in \overline{X}_S$.

If $\mathfrak{R}$ is $i_a(\mathfrak{G})$-consistent over $S$, 
we say that $\mathfrak{R}$ \emph{agrees} with $\mathfrak{G}$ at $\bar{x}$ if $\psi(\mathfrak{R},\, \bar{x})$ and $\phi_{\bar{x}}(\mathfrak{G})$ exist and
\[\psi(\mathfrak{R},\, \bar{x})\, - \,\iota(\mathfrak{R}) \circ \phi_{\bar{x}}(\mathfrak{G}) = 0.\]
If $S$ does not contain $i_a(\mathfrak{G})$ and if $\mathfrak{R}$ is consistent over $S$, we say that $\mathfrak{R}$ \emph{agrees} with $\mathfrak{G}$ at $\bar{x} \in \overline{X}_S$ if it is minimal at $\bar{x}$.
\end{defn*}

\begin{prop}
\label{prop:R_G_agree}
Take $\mathfrak{R}$ to be a set of raw cocycles on $X$, and take $\mathfrak{G}$ to be a set of governing expansions on $X$.  Choose $S \subseteq [d]$ so that $\mathfrak{R}$ is  $i_a(\mathfrak{G})$-consistent over $S$, and take $\bar{x} \in \overline{X}_S$ so that $\psi(\mathfrak{R},\, \bar{x})$ and $\phi_{\bar{x}}(\mathfrak{G})$ both exist. Suppose that, for any $T \subsetneq S$ and any $\bar{x}_1 \in \widehat{x}(T)$, $\mathfrak{R}$ agrees with $\mathfrak{G}$ at $\bar{x}_1$. Then
\[\psi(\bar{x})  - \iota \circ \phi_{\bar{x}}\, \in\, C^1\left(G_{\QQ}, \, N[2]\right)\]
\end{prop}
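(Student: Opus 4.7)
The plan is to follow the template of Proposition \ref{prop:psi_coh}, separately verifying (a) that $\eta := \psi(\bar{x}) - \iota \circ \phi_{\bar{x}}$ takes values in $N[2]$ and (b) that its coboundary vanishes. The key observation throughout is that $\iota: \mathbb{F}_2 \to N[2]$ is a morphism of Galois modules from a trivial-action source, so any element of the image of $\iota$ is fixed by $G_{\QQ}$ and by every twist isomorphism $\beta(y, y')$ (which restricts to the identity on $N[2]$).

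For (b), I would import the coboundary calculation from the proof of Proposition \ref{prop:psi_coh} essentially verbatim. That computation rewrites $d\psi(\bar{x})(\sigma, \tau)$ as a signed sum, over nonempty $T \subseteq T_{\sigma}$, of inner sums of the form $\sum_{x \in H_T} \beta(x, x_0) \circ \psi_{|S|-|T|}(x)(\tau)$, and each such inner sum equals $\psi(\bar{x}'_T)$ up to a $\beta$-translation, for some $\bar{x}'_T \in \widehat{x}(S - T)$. The hypothesis that $\mathfrak{R}$ agrees with $\mathfrak{G}$ at $\bar{x}'_T$ splits into two cases: if $i_a \in T$, then $i_a \notin S - T$ and agreement means minimality, so $\psi(\bar{x}'_T) = 0$ and the term drops; if $i_a \notin T$, then $i_a \in S - T$ and agreement gives $\psi(\bar{x}'_T) = \iota \circ \phi_{\bar{x}'_T}$. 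In the latter case, $\sigma$ and $\beta$ both act as the identity on $\iota(\mathbb{F}_2)$, and in characteristic $2$ the sign $(-1)^{|T|-1}$ is trivial. After also discarding the term $T = S$ (which involves $\psi_0 \equiv 0$) and using $\chi_{T, \bar{x}}(\sigma) = 1 \Leftrightarrow T \subseteq T_{\sigma}$, the surviving sum is
\[ \sum_{\emptyset \ne T \subsetneq S,\, i_a \notin T} \chi_{T, \bar{x}}(\sigma) \cdot \iota \circ \phi_{\bar{x}'_T}(\tau) \,=\, \iota \circ d\phi_{\bar{x}}(\sigma, \tau), \]
where the second equality is axiom (2) of a set of governing expansions, applied for the specific choice $\bar{x}_{S-T} := \bar{x}'_T$ (which is valid because the axiom holds for any such choice). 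Hence $d\eta = 0$.

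For (a), it suffices to check $2\psi(\bar{x}) = 0$, since $2\,\iota \circ \phi_{\bar{x}} = 0$. I would partition $\widehat{x}(\emptyset)$ according to the $\pi_{i_a}$-coordinate: writing $\bar{y}_+,\bar{y}_-$ for the two elements of $\widehat{x}(S - \{i_a\})$, we have $\widehat{x}(\emptyset) = \widehat{y}_+(\emptyset) \sqcup \widehat{y}_-(\emptyset)$, and using $2\psi_{|S|}(x) = \psi_{|S|-1}(x)$ gives, after a routine basepoint shift,
\[ 2\psi(\bar{x}) \,=\, \sum_{x \in \widehat{x}(\emptyset)} \beta(x, x_0) \circ \psi_{|S|-1}(x) \,=\, \psi(\bar{y}_+) \,+\, \beta(y_{0-}, x_0) \circ \psi(\bar{y}_-). \]
Because $i_a \notin S - \{i_a\}$, the agreement hypothesis at each $\bar{y}_{\pm}$ is just minimality, so $\psi(\bar{y}_{\pm}) = 0$ and $2\psi(\bar{x}) = 0$. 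When $|S| = 1$ the decomposition degenerates, but in that case $\psi_{|S|-1} = \psi_0 \equiv 0$ already.

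The main subtlety is purely bookkeeping in step (b): matching the index set in Proposition \ref{prop:psi_coh}'s coboundary formula against the one in axiom (2) of the governing expansion definition, and checking that the ``$i_a \in T$'' contributions are exactly the ones killed by minimality, while the ``$i_a \notin T$'' contributions are exactly the ones that reassemble $d(\iota \circ \phi_{\bar{x}})$. Once this index bookkeeping is set up, the argument becomes a formal cochain manipulation strictly parallel to Proposition \ref{prop:psi_coh}.
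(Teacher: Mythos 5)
Your proposal is correct and follows essentially the same route as the paper: reuse the coboundary expansion \eqref{eq:raw_cobnd} from Proposition \ref{prop:psi_coh}, kill the terms with $i_a \in T$ by minimality, and reassemble the terms with $i_a \notin T$ into $\iota \circ d\phi_{\bar{x}}$ via the defining coboundary relation for governing expansions. The only difference is that you spell out the $2\psi(\bar{x}) = 0$ step (splitting $\widehat{x}(\emptyset)$ along the $i_a$-coordinate, which is exactly the right choice), whereas the paper dispatches it with ``as before, by the minimality hypotheses.''
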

\begin{proof}

As before, $2\psi(\bar{x}) = 0$ by the minimality hypotheses, so we just need to check the cocycle condition. We can rewrite \eqref{eq:raw_cobnd} as
\[d\psi(\bar{x})(\sigma, \tau) = \sum_{\emptyset \ne T \subseteq S} \chi_{T,\,\bar{x}}(\sigma) \cdot \left(\sum_{x \in H_T} \beta(x, x_0) \circ \psi_{|S| - |T|}(x)(\tau)\right).\]
From the hypothesis on $\bar{x}_1$, we find that this equals
\[\iota \circ \sum_{i_a \not\in T \subseteq S} \chi_{T,\,\bar{x}}(\sigma) \cdot \phi_{\bar{x}_{S - T}}(\tau) =  \iota \circ d\phi_{\bar{x}}(\sigma, \tau).\]
Then $\psi(\bar{x}) - \iota \circ\phi_{\bar{x}}$ has zero coboundary, giving the proposition.
\end{proof}

\subsection{Raw expansions for class groups}
\label{ssec:class_exp}

Take $K/\QQ$ to be an imaginary quadratic field $\QQ(\sqrt{-n_0})$. Supposing $X_1, \dots, X_d$ are disjoint sets of odd primes that are unramified in this extension, we define
\[K(x) = \QQ\left(\sqrt{-n_0 \prod_{i \le d} \pi_i(x)}\right)\]
for $x \in X$. Throughout this section, we will presume that, for any $i \le d$, the value of $p \text{ mod }  4$ is the same for all $p$ in $X_i$. We will take $N(x)$ to be the module $\QQ_2/\Z_2$ twisted by the quadratic character corresponding to the extension $K(x)/\QQ$.

Choose $T_a \subseteq [d]$, and take $\Delta_a$ to be a squarefree integer dividing $2n_0$. From this information, we define a character $\psi_1(x): G_{\QQ} \rightarrow N[2]$ by 
\[\psi_1(x) = \chi_{\Delta_a} + \sum_{i \in T_a} \chi_{\pi_i(x)}.\]
We assume that the field of definition of $\psi_1(x)$ is unramified above $K(x)$ for all $x$. In this case, $\psi_1(x)$ corresponds to an element of the dual class group $\text{Cl}^{\vee} K(x)[2]$.

\begin{prop}
Take $\psi_1(x)$ as above, and take $K(x)^{\text{\emph{ur}}}$ to be the maximal extension of $K(x)$ that is unramified everywhere. Then, for $k > 0$, we have that
\[\restr{\psi_1(x)}{\text{\emph{Gal}}\left(\overline{\QQ}/K(x)\right)} \in 2^{k-1}\text{\emph{Cl}}^{\vee} K(x)[2^k]\]
if and only if, for some
\[\psi_k(x) \in C^1\left(\text{\emph{Gal}}(K(x)^{\text{\emph{ur}}}/\QQ),\, N(x)[2^k]\right),\]
we have
\[\psi_1(x) = 2^{k-1}\psi_k(x).\]
\end{prop}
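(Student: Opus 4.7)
The plan is to relate the condition $\psi_1(x)|_{G_{K(x)}} \in 2^{k-1}\text{Cl}^\vee K(x)[2^k]$ to the existence of a $2^{k-1}$-preimage cocycle $\psi_k(x)$ defined on $\text{Gal}(K(x)^{\text{ur}}/\QQ)$, where I abbreviate $G_{K(x)} = \text{Gal}(\overline{\QQ}/K(x))$. Both directions rest on class field theory applied to $K(x)$, together with the observation that the twisted Galois action on $N(x)[2^k]$ becomes trivial upon restriction to $G_{K(x)}$, since the twist is by the quadratic character of $K(x)/\QQ$.

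For $(\Leftarrow)$, given such a $\psi_k(x)$, I would restrict it to $G_{K(x)}$. The restriction is an honest homomorphism into $N(x)[2^k] \cong \Z/2^k\Z$; it is defined on $\text{Gal}(K(x)^{\text{ur}}/K(x))$; and it takes values in an abelian group, so it factors through the maximal abelian unramified quotient. For imaginary quadratic $K(x)$, narrow and ordinary class groups coincide, so class field theory identifies this quotient with $\text{Cl}\, K(x)$. Hence $\psi_k(x)|_{G_{K(x)}} \in \text{Cl}^\vee K(x)[2^k]$, and multiplying by $2^{k-1}$ shows that $\psi_1(x)|_{G_{K(x)}} \in 2^{k-1}\text{Cl}^\vee K(x)[2^k]$.

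For $(\Rightarrow)$, I would start by choosing $\phi \in \text{Cl}^\vee K(x)[2^k]$ with $2^{k-1}\phi = \psi_1(x)|_{G_{K(x)}}$, viewed as a homomorphism $G_{K(x)} \to N(x)[2^k]$ factoring through the Hilbert class field of $K(x)$. Fix an embedding $\overline{\QQ} \hookrightarrow \C$ and let $c \in G_\QQ$ be the resulting complex conjugation. Since $K(x)$ is imaginary, $c$ restricts to the nontrivial element of $\text{Gal}(K(x)/\QQ)$ and satisfies $c^2 = 1$, producing a semidirect splitting $G_\QQ = G_{K(x)} \rtimes \langle c \rangle$. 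Choose any $a_c \in N(x)[2^k]$ with $2^{k-1} a_c = \psi_1(x)(c)$, and set
\[\psi_k(x)(\sigma c^\epsilon) = \phi(\sigma) + \epsilon \cdot a_c \quad \text{for } \sigma \in G_{K(x)},\,\, \epsilon \in \{0, 1\}.\]

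The cocycle check reduces to the single identity $\phi(c\sigma c^{-1}) = -\phi(\sigma)$: this holds because $c$ acts on $\text{Cl}\, K(x)$ as inversion (from $\mathfrak{a}\cdot\bar{\mathfrak{a}}$ being principal) and $\phi$ is a homomorphism; combined with $c$ acting as $-1$ on $N(x)[2^k]$ through the twist, the two signs cancel in the cocycle relation at $c\sigma$. The equation $2^{k-1}\psi_k(x) = \psi_1(x)$ holds on $G_{K(x)}$ by construction of $\phi$ and at $c$ by choice of $a_c$, and extends to $G_\QQ$ via the cocycle property. Finally $\psi_k(x)$ factors through $\text{Gal}(K(x)^{\text{ur}}/\QQ)$ because $\phi$ factors through $\text{Cl}\, K(x) = \text{Gal}(H/K(x)) \subseteq \text{Gal}(K(x)^{\text{ur}}/K(x))$. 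The only delicate step is the sign bookkeeping in the cocycle condition at the mixed products $\sigma c$ and $c\sigma$, which is resolved by the cancellation above.
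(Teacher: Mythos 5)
Your proof is correct and follows essentially the same route as the paper: the easy direction by restricting $\psi_k(x)$ to $\text{Gal}(\overline{\QQ}/K(x))$, and the converse by extending a character $\phi$ with $2^{k-1}\phi = \psi_1(x)|_{\text{Gal}(\overline{\QQ}/K(x))}$ to a cocycle via a two-coset formula, using that conjugation by the nontrivial element of $\text{Gal}(K(x)/\QQ)$ inverts the class group. The only cosmetic difference is that you take complex conjugation as an explicit involutive coset representative (so $c^2=1$ is automatic), where the paper chooses an arbitrary representative $F$ of $\text{Gal}(L/\QQ)$ over $\text{Gal}(L/K(x))$ and appeals to the dihedral structure of $L/\QQ$ for the same purpose.
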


\begin{proof}
We see that $\psi_k(x)$ restricted to the absolute Galois group of $K(x)$ is in $\text{Cl}^{\vee} K(x)[2^k]$, so the sufficiency of finding such a $\psi_k(x)$ is easy. Conversely, given a map
\[\psi_k(x)' \in \text{Cl}^{\vee}K(x)[2^k],\]
we know that the field of definition $L$ of $\psi_k(x)'$ is dihedral over $\QQ$, with its unique order $2^k$ cyclic subgroup corresponding to the intermediate field $K(x)$. To prove the converse, we need to extend the character $\psi_k(x)'$ from $\text{Gal}(L/K(x))$ to a cocycle $\psi_k(x)$ on $\text{Gal}(L/\QQ)$. Choosing some  $F$ in this Galois group so that we have a coset decomposition
\[\text{Gal}(L/\QQ) = \text{Gal}(L/K(x)) \,+\, F\cdot \text{Gal}(L/K(x)),\]
and choosing some $\alpha \in N(x)$ with $2^{k-1}\alpha = \psi_1(F)$, we can define such a $\psi_k(x)$ by setting
\[\psi_k(x)(\sigma) = \psi_k(x)'(\sigma)\quad\text{and}\quad \psi_k(x)(F\cdot\sigma) =  \alpha  -\psi_k(x)'\]
for all $\sigma \in \text{Gal}(L/K(x))$. We can verify that $\psi_k(x)$ obeys the cocycle condition, giving the proposition.
\end{proof}

In light of this, we define
\[\overline{\text{Cl}}^{\vee} K(x)[2^k] \,=\, C^1\left(\text{\Gal}(K(x)^{\text{ur}}/\QQ),\, N(x)[2^k]\right).\]
We always have
\[\overline{\text{Cl}}^{\vee} K(x)[2^k] \, \cong\, \text{Cl}^{\vee} K(x)[2^k] \oplus (\Z/2^k\Z.)\]
For $w_a = (T_a,  \,\Delta_a)$ corresponding to an element of $\overline{\text{Cl}}^{\vee}K(x)[2^k]$, we define $\mathfrak{R}(w_a)$ to be a set of raw cocycles on $X$ so that, for all $x \in X$,
\[\psi_1(\mathfrak{R},\, x) = \psi_1(x)\]
and so that $\text{rk}(\mathfrak{R})(x)$ is the maximal integer $k$ such that $\psi_1(x)$ corresponds to an element of
\[2^{k-1} \overline{\text{Cl}}^{\vee} K(x)[2^k],\]
with 
\[\psi_k(\mathfrak{R},\, x) \,\in\,\overline{\text{Cl}}^{\vee} K(x)[2^k]\]
whenever the left hand side is defined.

Now, take $w_b = (T_b, \, \Delta_b)$, where $T_b$ is any subset of $[d]$ and $\Delta_b$ is a positive squarefree divisor of $n_0$ (or, if $K(x)$ has even discriminant, $2n_0$). For any $x \in X$, we define an ideal $w_b(x)$ of the integers of $K(x)$ by
\[\prod_{p | \Delta_b} \mathfrak{P}(p) \cdot \prod_{i \in T_b} \mathfrak{P}\big(\pi_i(x)\big)\]
where $\mathfrak{P}(p)$ is the unique prime dividing $p$ in $K(x)$. Taking $\overline{\text{Cl}}\, K(x)[2]$ to be the set of ideals with squarefree norm dividing the discriminant of $K(x)/\QQ$, we see that the map
 \[\overline{\text{Cl}}\, K(x)[2] \rightarrow \text{Cl}\, K(x)[2]\]
is a surjective and has kernel isomorphic to $\Z/2\Z$. We write $2^{k-1}\overline{\text{Cl}}\,K(x)[2^k]$ for the preimage of $2^{k-1}\text{Cl}\,K(x)[2^k]$ under this map.

For a cocycle $\psi_k$, write $L(\psi_k)$ for the field of definition of $\psi_k$ over $K(x)$. If $\psi_k(x)$ exists, we see that the Artin symbol
\[\left[\frac{L(\psi_k)/K(x)}{\mathfrak{P}}\right]\]
lies in the order $2$ subgroup of $\text{Gal}(L(\psi_k)/K(x))$ at any $\mathfrak{P}$ dividing the discriminant of $K(x)/\QQ$. Identifying this subgroup with $\Z/2\Z$, we have the following result.

\begin{thm}
\label{thm:Cl_gov}
Take $X$ and $n_0$ as above, and choose $w_a$ and $w_b$ as above that correspond to elements of $\overline{\text{\emph{Cl}}}^{\vee} K(x)[2]$ and $\overline{\text{\emph{Cl}}}\,K(x)[2]$ respectively. Choose $S \subseteq [d]$ of cardinality at least three, and choose $\bar{x} \in \overline{X}_S$. Take $\mathfrak{G}$ to be a set of governing expansions on $X$, writing $i_a = i_a(\mathfrak{G})$, and take $\mathfrak{R}$ to be the set of raw cocycles $\mathfrak{R}(w_a)$. We assume $i_a$ is in $S$.

We next assume that
\[w_b(x) \in 2^{|S|-2}\overline{\text{\emph{Cl}}}\, K(x)[2^{|S|-1}] \quad\text{for all } x \in \widehat{x}(\emptyset)\]
and that there is some $i_b \in S$ other than $i_a$ so that
\[S \cap T(w_b) \subseteq \{i_b\} \quad\text{and}\quad S \cap T(w_a) \subseteq \{i_a\}.\]
Take $i_{ab}$ to equal $i_a$ if $T(w_b)$ does not meet $S$, otherwise taking $i_{ab}$ to equal $i_b$. For $i$ in $S$ other than $i_{ab}$, choose $\bar{z}_i$ in $\widehat{x}(S - \{i\})$.

\begin{enumerate}
\item Suppose either that $T(w_a)$ does not meet $S$ or that $T(w_b)$ does not meet $S$. Assume that, for each $i$ in  $S$ other than $i_{ab}$ and each $\bar{y}$ in  $\widehat{z}_i(S - \{i, i_{ab}\})$, we have that $\mathfrak{R}$ is minimal at $\bar{y}$. Then $\psi_{|S|-1}(\mathfrak{R},\, x)$ exists for all $x \in \widehat{x}(\emptyset)$ and
\[\sum_{x \in \widehat{x}(\emptyset)} \left[\frac{L\left(\psi_{|S| - 1}(\mathfrak{R},\, x)\right)/K(x)}{w_b(x)} \right] = 0.\]
\item Now assume that both $T(w_a)$ and $T(w_b)$ meet $S$. Choose $\bar{z}$ in $\widehat{x}(S - \{i_b\})$, and assume that $\phi_{\bar{z}}(\mathfrak{G})$ exists. Assume further that, for every $i \in S$ other than $i_b$ and each $\bar{y}$ in  $\widehat{z}_i(S - \{i, i_{b}\})$, we have that $\mathfrak{R}$ agrees with $\mathfrak{G}$ at $\bar{y}$.  Then $\psi_{|S|-1}(\mathfrak{R},\, x)$ exists for all $x \in \widehat{x}(\emptyset)$. Furthermore, writing 
\[(p_{0b},\, p_{1b}) = \pi_{i_b}(\bar{x}),\]
we have
\[\sum_{x \in \widehat{x}(\emptyset)} \left[\frac{L\left(\psi_{|S| - 1}(\mathfrak{R},\, x)\right)/K(x)}{w_b(x)} \right] =  \phi_{\bar{z}}(\mathfrak{G}) \big(\text{\emph{Frob}}(p_{0b}) \cdot \text{\emph{Frob}}(p_{1b})\big). \]

\end{enumerate}
\end{thm}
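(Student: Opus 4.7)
The strategy is to rewrite each Artin symbol as a Frobenius evaluation of the cocycle $\psi_{|S|-1}(\mathfrak{R},x)$ and then to collapse the sum by applying Proposition~\ref{prop:R_G_agree} to an appropriate sub-cube. The first step is existence: for $k = 2,\dots,|S|-1$ and $x \in \widehat{x}(\emptyset)$, I would inductively lift $\psi_{k-1}(\mathfrak{R},x)$ to $\psi_k(\mathfrak{R},x)$. The obstruction to this lift is the Artin pairing of $[\psi_{k-1}(x)]$ against the appropriate filtration level of $\overline{\text{Cl}}\,K(x)$; the agreement or minimality hypotheses at every $\bar{y} \in \widehat{z}_i(S - \{i, i_{ab}\})$ let us control that pairing via Proposition~\ref{prop:R_G_agree}. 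Indeed, on those sub-cubes $\psi(\mathfrak{R},\bar{y})$ is cohomologous either to zero or to $\iota \circ \phi_{\bar{y}}(\mathfrak{G})$, and either outcome kills the obstruction at the next filtration level.

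For the main computation, write $w_b(x) = \prod_{p\mid\Delta_b}\mathfrak{P}(p)\cdot\prod_{i\in T_b}\mathfrak{P}(\pi_i(x))$. Every prime in $w_b(x)$ ramifies in $K(x)/\QQ$, so the Artin symbol at $\mathfrak{P}(q)$ equals $\psi_{|S|-1}(x)(\sigma_q^2)$, where $\sigma_q \in G_{\QQ}$ is any lift of Frobenius at $q$ modulo inertia. Using $\beta(x,x_0)$ to transport everything to a common module, the theorem's sum becomes
\[\sum_{x \in \widehat{x}(\emptyset)} \sum_{q\mid w_b(x)} \beta(x, x_0)\,\psi_{|S|-1}(x)(\sigma_q^2).\]
Terms in which $q$ is independent of $x$---those with $q\mid\Delta_b$ or $i \in T_b\setminus S$---can be pulled past the outer sum, producing $\psi(\mathfrak{R},\bar{x}')$-style cocycles on $(|S|-1)$-dimensional sub-cubes that vanish by Proposition~\ref{prop:psi_coh} and the minimality hypotheses; this disposes of case~(1) entirely. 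The remaining terms arise in case~(2) from $q = \pi_{i_b}(x)$. Grouping the outer sum by the value $\pi_{i_b}(x) \in \{p_{0b},p_{1b}\}$ collapses the $i_b$-coordinate, leaves an outer sum indexed by $\widehat{z}(\emptyset)$, and lets us apply Proposition~\ref{prop:R_G_agree} at $\bar{z}$ to identify the result with $\iota\circ\bigl(\phi_{\bar{z}}(\text{Frob}(p_{1b}))-\phi_{\bar{z}}(\text{Frob}(p_{0b}))\bigr)$. A standard cochain identity then converts this difference into $\phi_{\bar{z}}(\text{Frob}(p_{0b})\cdot\text{Frob}(p_{1b}))$, as required.

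The main obstacle is the bookkeeping at the ramified primes $\pi_{i_b}(x)$: the Frobenius lift $\sigma_{\pi_{i_b}(x)}$ is defined only modulo inertia, and one must verify that inertial ambiguities cancel across $x$. This cancellation is enforced by the quadratic twist character built into $\beta(x,x_0)$, but making it airtight requires a careful pairing of summands that differ only in the $i_b$-coordinate. A second subtlety is the dichotomy between cases~(1) and~(2): the choice of $i_{ab}$ governs whether the sub-cube $\bar{z}$ in the final step encodes a nontrivial governing expansion $\phi_{\bar{z}}$ or only a minimal cocycle, and verifying that the agreement hypotheses in case~(2) produce exactly $\phi_{\bar{z}}$ on the right---rather than an unwanted correction from $\chi_{\Delta_a}$ or $\chi_{\Delta_b}$---is the heart of the argument.
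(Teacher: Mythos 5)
Your high-level architecture (work face by face on $\widehat{z}(\emptyset)$, use Propositions \ref{prop:psi_coh} and \ref{prop:R_G_agree} to form the cube-summed cocycle, split the analysis according to whether $T(w_a)$, $T(w_b)$ meet $S$) matches the paper, but the engine of your computation has a genuine flaw. The local identity you start from is wrong: the Artin symbol at a ramified prime $\mathfrak{P}(q)$ is not $\psi_{|S|-1}(x)(\sigma_q^2)$. Writing $\psi_{|S|-1}(x)$ locally at $q$ as $\chi$ or $\chi+\chi_{\Delta(x)}$ with $\chi$ unramified, the residue field of $\mathfrak{P}(q)$ is $\mathbb{F}_q$, so the symbol is the value of the unramified component at a Frobenius lift $\sigma_q$ itself, equivalently $\mathrm{inv}_q\big(\psi_{|S|-1}(x)\cup\chi_b\big)$, which is exactly how the paper encodes it. Note that if your formula were correct the theorem would trivialize, since the cube-summed object is an $\mathbb{F}_2$-valued homomorphism and kills squares; that the paper still needs a nontrivial global argument is the signal that the formula cannot be right.

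Because of this, your case-(1) cancellation does not work as stated: after summing over a face, Proposition \ref{prop:psi_coh} only tells you that $\psi(\mathfrak{R},\bar{z})$ is a cocycle valued in $N[2]$, i.e.\ a generally nonzero quadratic character; minimality is hypothesized only at the codimension-two subcubes through the $\bar{z}_i$, not at $\bar{z}$, so nothing forces these "constant-in-$x$" contributions to vanish termwise. The actual vanishing of $\sum_{p\mid b}\mathrm{inv}_p\big(\psi(\bar{z})\cup\chi_b\big)$ is a global statement: Hilbert reciprocity moves the sum to the places not dividing $b$, where $\chi_b$ is locally trivial at every prime ramified in the $K(x)$'s. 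That reciprocity step, wholly absent from your proposal, is also what produces $\mathrm{inv}_{p_{0b}}\big(\phi_{\bar{z}}\cup\chi_{p_{0b}}\big)+\mathrm{inv}_{p_{1b}}\big(\phi_{\bar{z}}\cup\chi_{p_{1b}}\big)=\phi_{\bar{z}}\big(\mathrm{Frob}(p_{0b})\cdot\mathrm{Frob}(p_{1b})\big)$ in case (2); your "standard cochain identity" shortcut rests on the flawed local formula. Finally, the existence claim is handled differently and more simply in the paper: rather than an obstruction-by-obstruction lifting of $\psi_{k-1}$ to $\psi_k$, one defines $\psi$ at the distinguished vertex $x_0$ as the signed sum of $\beta(x,x_0)\circ\psi_{|S|-1}(\mathfrak{R},x)$ over the other vertices of $\widehat{z}(\emptyset)$ (plus $\iota\circ\phi_{\bar{z}}$ in case (2)), and uses the codimension-two minimality or agreement hypotheses to check that $2\psi$ is unramified above $K(x_0)$, so that a quadratic twist of $\psi$ serves as $\psi_{|S|-1}(\mathfrak{R},x_0)$; your proposed induction over filtration levels is not supported by the hypotheses, which only constrain the specific subcubes $\widehat{z}_i(S-\{i,i_{ab}\})$.
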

\begin{proof}
For both parts, choose $\bar{z}$ in $\widehat{x}(S - \{i_{ab}\})$. For $x \in \widehat{z}(\emptyset)$, note that the assumption on $w_b(x)$ means that
 \[\left[\frac{L\left(\psi_{|S| - 1}(\mathfrak{R},\, x)\right)/K(x)}{w_b(x)} \right]\]
 depends only on $w_a$, $w_b$, and $x$, and not on the choice of raw cocycles $\mathfrak{R}$.

Write $b$ for the norm of the ideal $w_b(x)$ for any $x \in\widehat{z}(\emptyset)$; our restrictions on $T_b$ mean that $b$ does not depend on $x$. Take $p$ to be a prime divisor of $b$, and take $\mathfrak{P}$ to be the prime dividing $p$ in $K(x)$. Write $\Delta(x)$ for the discriminant of $K(x)/\QQ$. Assuming $\psi_{|S| - 1}(\mathfrak{R},\, x)$ exists, we can write it locally at $p$ as $\chi$ or $\chi + \chi_{\Delta(x)}$, where $\chi$ is unramified. We then can say
\[\left[\frac{L\left(\psi_{|S| - 1}(\mathfrak{R},\, x)\right)/K(x)}{\mathfrak{P}} \right] = \text{inv}_p(\chi \cup \chi_b).\]
We also have $\text{inv}_p (\chi_{\Delta(x)} \cup \chi_b) = 0$ from our requirements on $w_b$, so we find
\begin{equation}
\label{eq:Art_to_inv}
\left[\frac{L\left(\psi_{|S| - 1}(\mathfrak{R},\, x)\right)/K(x)}{\mathfrak{P}} \right] = \text{inv}_p(\psi_{|S|-1}(x) \cup \chi_b).
\end{equation}

Take $x_0$ to be the element of $\widehat{z}(\emptyset)$ outside of all the sets $\widehat{z}_i(\emptyset)$, and write $\bar{y}_i$ for the element in $\widehat{z}(S - \{i, i_{ab}\}) \cap \widehat{z}_i(S - \{i, i_{ab}\})$. For the first part, consider
\[\psi = - \sum_{x \in \widehat{z}(\emptyset) - \{x_0\}} \beta(x, \,x_0) \circ \psi_{|S|-1}(\mathfrak{R},\, x).\]
From Proposition \ref{prop:psi_coh}, we know that this is a cocycle mapping to $N(x_0)$, and we find
\[2^{|S|-2}\psi = \psi_1(x_0).\]
From the minimality assumption, we have
\[2\psi  =   -\sum_{x \in \widehat{z}(\emptyset) - \widehat{y}_i(\emptyset) -\{ x_0\}} \beta(x, \,x_0) \circ \psi_{|S|-2}(\mathfrak{R}, \, x)\]
for each $i \in S - \{i_{ab}\}$. From this, we must have that the field of definition of $2\psi$ is unramified at each $\pi_i(\bar{z}_i)$ for $i \in S - \{i_{ab}\}$. Then $2\psi$ must have field of definition unramified above $K(x_0)$, so some quadratic twist of $\psi$ is unramified above $K(x_0)$, and $\psi_{|S|-1}(\mathfrak{R}, x_0)$ exists. Then, via \eqref{eq:Art_to_inv}, we find
\[\sum_{x \in \widehat{z}(\emptyset)} \left[\frac{L\left(\psi_{|S| - 1}(\mathfrak{R},\, x)\right)/K(x)}{w_b(x)} \right]  =\sum_{p |b}   \text{inv}_p\big(\psi(\bar{z}) \cup \chi_b\big).\]
The assumption on $w_b$ means the choice of $\psi_{|S|-1}(\mathfrak{R}, x_0)$ does not affect the value of this sum, so we can take $\psi(\bar{z})$ to be a quadratic character. By Hilbert reciprocity, this equals
\[\sum_{p \nmid b}   \text{inv}_p\big(\psi(\bar{z}) \cup \chi_b\big).\]
But $\chi_b$ is locally trivial at all primes ramifying in any $K(x)$ that do not divide $b$, so this is zero. This gives the first part of the theorem.

For the second part, we instead take
\[\psi =\iota \circ \phi_{\bar{z}} - \sum_{x \in \widehat{z}(\emptyset) - \{x_0\}} \beta(x, \,x_0) \circ \psi_{|S|-1}(\mathfrak{R},\, x).\]
From Proposition \ref{prop:R_G_agree}, we see that this is a cocycle mapping to $N(x_0)$, and we again find $2^{|S|-2}\psi = \psi_1(x_0)$. Furthermore, we have
\[2\psi = \iota \circ \phi_{\bar{y}_i} - \sum_{x \in \widehat{z}(\emptyset)- \widehat{y}_i(\emptyset) - \{x_0\}}  \beta(x, \,x_0) \circ \psi_{|S|-2}(\mathfrak{R}, \, x)\]
for each  $i \in S - \{i_b\}$, where we are taking $\phi_{\bar{y}_{i_a}} = 0$. Then $2\psi$ must have field of definition unramified above $K(x_0)$. Then $\psi_{|S|-1}(\mathfrak{R}, x_0)$ exists and can be taken to be a quadratic twist of this $\psi$. Following the logic of the first part, we can ignore the quadratic twist, and we find
\[\sum_{x \in \widehat{z}(\emptyset)} \left[\frac{L\left(\psi_{|S| - 1}(\mathfrak{R},\, x)\right)/K(x)}{w_b(x)} \right] =  \sum_{p | b} \text{inv}_p(\phi_{\bar{z}} \cup \chi_b).\]
Repeating this for the other $\bar{z} \in \widehat{x}(S - \{i_b\})$, we find
\[\sum_{x \in \widehat{x}(\emptyset)} \left[\frac{L\left(\psi_{|S| - 1}(\mathfrak{R},\, x)\right)/K(x)}{w_b(x)} \right]  = \text{inv}_{p_{0b}}\big(\phi_{\bar{z}} \cup \chi_{p_{0b}}\big) +  \text{inv}_{p_{1b}}\big(\phi_{\bar{z}} \cup \chi_{p_{1b}}\big),\]
a synonym for what is claimed. This gives the part and the theorem.
 \end{proof}

\subsection{Raw expansions for Selmer Groups}
\label{ssec:Sel_exp}
Take $E/\QQ$ to be an elliptic curve with full rational $2$-torsion; that is to say, we have an isomorphism of Galois modules
\[E[2] \cong (\Z/2\Z)^2\]
defined over $\QQ$. Take $N_0$ to be the conductor of $E$, and take $X_1, \dots, X_d$ to be disjoint sets of odd primes not dividing $N_0$. We assume that, for each $i \le d$, the value of $p \text{ mod } 4$ is the same for all $p \in X_i$. We also assume that $E[4]$ has no order four cyclic subgroup defined over $\QQ$.

We define
\[E(x) = E^{(p_1\cdot \dots \cdot p_d)} \quad \text{where} \quad p_i = \pi_i(x),\]
with $E^{(n)}$ denoting the quadratic twist of $E$ in $\QQ(\sqrt{n})$. The $2^k$-Selmer group of $E(x)$ is defined to be
\[\text{Sel}^{\, 2^k}(E(x)) = \ker \left(H^1\big(G_{\QQ}, E[2^k]\big) \longrightarrow \prod_v H^1\big(\text{Gal}(\overline{\QQ}_v/\QQ_v), E\big)\right),\]
the product being over all rational places $v$. Our main group of study will instead be the corresponding set of cocycles
\[\ker \left(C^1\big(G_{\QQ}, E[2^k]\big) \longrightarrow \prod_v H^1\big(\text{Gal}(\overline{\QQ}_v/\QQ_v), E\big)\right),\]
a group we will denote by $\overline{\text{Sel}}^{\,2^k}E(x)$. In our case, if we write $\text{Sel}^{2^k}E(x)$ in the form $\text{Im}(E[2]) \oplus H$, we can find a corresponding isomorphism
\[\overline{\text{Sel}}^{\,2^k}E(x) \cong (\Z/2^k\Z)^2 \oplus H.\]
In particular, $\overline{\text{Sel}}^{\,2}E(x)$ equals $\text{Sel}^{\,2}E(x)$.

Writing  $E[2] \cong (\Z/2\Z)e_1 + (\Z/2\Z)e_2$, we have a (non-canonical) isomorphism
\[H^1\left(G_{\QQ}, \,E[2]\right) \cong H^1\left(G_{\QQ}, \Z/2\Z\right) \times H^1\left(G_{\QQ}, \Z/2\Z\right).\]
From this, we can write any $2$-Selmer element of $E(x)$ as a pair of quadratic characters $(\chi_1, \chi_2)$, with the $\chi_i$ ramified only at bad primes of $E(x)$. As in the previous section, each $\chi_i$ corresponds  to a choice of divisor $\Delta_i$ of $2N_0$ and a choice of subset $T_i$ of $[d]$. We will use the letter $w$ to denote a choice of tuple $(T_1, T_2, \Delta_1, \Delta_2)$ and write $w(x)$ for the cocycle in
\[C^1\left(G_{\QQ},\, E(x)[2]\right)\]
corresponding to $w$ at $x$.

Taking
\[N(x) = E(x)[2^{\infty}],\]
we define $\mathfrak{R}(w)$ to be a set of raw cocycles for which
\[\psi_1(\mathfrak{R},\, x) = w(x) \quad\text{for all} x \in X\]
and for which $\text{rk}(\mathfrak{R})(x)$ is maximum of one and the maximal $k$ such that $w(x)$ is in
\[2^{k-1}\overline{\text{Sel}}^{\,2^k}E(x),\]
with $\psi_k(\mathfrak{R},\, x)$ lying in $\overline{\text{Sel}}^{\,2^k}E(x)$ whenever it is defined for all $k \ge 2$.

There is a natural alternating pairing defined on the Selmer group called the Cassels-Tate pairing; Milne's standard text is our reference for the pairing's construction \cite{Milne86}. Suppose $w_a(x)$ and $w_b(x)$ are both $2$-Selmer elements at $x$. Suppose further that $\text{rk}(\mathfrak{R}(w_a))(x)$ is at least $k$, and take $\psi = \psi_k(\mathfrak{R}(w_a),\, x)$. Take $\psi'$ to be any map from $G_{\QQ}$ to $E(x)$ satisfying $2\psi' = \psi$, and take $\epsilon$ to be a $2$-cochain from $G_{\QQ}$ to $\Z/2\Z$ satisfying
\[d\epsilon = d\psi' \cup w_b(x),\]
 where the cup product comes from the natural Weil pairing on $E[2]$. Finally, for each rational place $v$, take 
 \[\psi^{\circ}_v \in \text{ker}\bigg(C^1\big(\text{Gal}(\overline{\QQ}_v/\QQ_v),\, E[2^{k+1}]\big) \rightarrow H^1(\text{Gal}\big(\overline{\QQ}_v/\QQ_v),\,E\big)\bigg)\]
 satisfying $2\psi^{\circ}_v = \psi_v$. Then we can define the Cassels-Tate pairing as
\[\big\langle \psi_k(\mathfrak{R}(w_a),\, x),\, w_b(x)\big\rangle_{CT} = \sum_v \text{inv}\big((\psi^{\circ}_v - \psi'_v) \cup w_b(x) + \epsilon_v \big).\]

We have one final piece of notation:
\begin{defn*}
Given $E$ and $X$ as above, and given $S \subseteq [d]$ and $\bar{x} \in \overline{X}_S$, we call $\bar{x}$ quadratically consistent if, for all $i \in S$ and $x \in \widehat{x}(\emptyset)$, we have that
\[\pi_0(\pi_i(\bar{x}))\pi_1(\pi_i(\bar{x}))\]
is a quadratic residue at $2$ and at all the bad primes of $E(x)$ besides $\pi_i(x)$. (We also define this for $K$ as in the previous section, where we require the above to be a quadratic residue at $2$ and all ramified primes of $K(x)/\QQ$ besides the $\pi_i(x)$).
\end{defn*}
\begin{thm}
\label{thm:Sel_gov}
Take $E/\QQ$ and $X$ as above.  Choose $S \subseteq [d]$ of cardinality at least three, choose some quadratically consistent $\bar{x} \in \overline{X}_S$, and choose tuples $w_a$ and $w_b$ as above corresponding to $2$-Selmer elements of $E(x)$ for $x \in \widehat{x}(\emptyset)$. Take $\mathfrak{G}$ to be a set of governing expansions on $X$, writing $i_a = i_a(\mathfrak{G})$, and take $\mathfrak{R}$ to be the set of raw cocycles $\mathfrak{R}(w_a)$. We assume $i_a$ is in $S$.

We next assume that
\[w_b(x) \in 2^{|S| - 3} \overline{\text{\emph{Sel}}}^{\,2^{|S|-2}} E(x)\quad \text{for all } x \in \widehat{x}(\emptyset)\]
and that there is some $i_b \in S$  other than $i_a$ so that
\[T_1(w_a) \cap S \subseteq \{i_a\}, \quad T_2(w_b) \cap S \subseteq  \{i_b\},\]
\[\text{and}\quad T_2(w_a) \cap S = T_1(w_b) \cap S = \emptyset.\]
Take $i_{ab}$ to equal $i_a$ if $T(w_b)$ does not meet $S$, otherwise taking $i_{ab}$ to equal $i_b$. For $i$ in $S$ other than $i_{ab}$, choose $\bar{z}_i$ in $\widehat{x}(S - \{i\})$.

\begin{enumerate}
\item Suppose either that $T_1(w_a)$ does not meet $S$ or that $T_2(w_b)$ does not meet $S$. Assume that, for each $i$ in $S$ other than $i_{ab}$ and each $\bar{y}$ in $\widehat{z}_i(S - \{i, i_{ab}\})$, we have that $\mathfrak{R}$ is minimal at $\bar{y}$. Then $\text{\emph{rk}}(\mathfrak{R})(x) \ge |S| - 2$ for each $x \in \widehat{x}(\emptyset)$, and
\[\sum_{x \in \widehat{x}(\emptyset)} \big\langle \psi_{|S|-2}(\mathfrak{R},\, x),\, w_b(x)\big\rangle_{CT} = 0.\]
\item Now assume that $T_1(w_a)$ and $T_2(w_b)$ both meet $S$. Choose $\bar{z} \in \widehat{x}(S - \{i_b\})$, and assume that $\phi_{\bar{z}}(\mathfrak{G})$ exists. Assume further that, for every $i \in S$ other than $i_b$ and each $\bar{y}$ in $\widehat{z}_i(S - \{i, i_b\})$, we have that $\mathfrak{R}$ agrees with $\mathfrak{G}$ at $\bar{y}$.  Then $\text{\emph{rk}}(\mathfrak{R})(x) \ge |S| - 2$ for each $x \in \widehat{x}(\emptyset)$. Furthermore, writing 
\[(p_{0b},\, p_{1b}) = \pi_{i_b}(\bar{x}),\]
we have
\[\sum_{x \in \widehat{x}(\emptyset)} \big\langle \psi_{|S|-2}(\mathfrak{R},\, x),\, w_b(x)\big\rangle_{CT} = \phi_{\bar{z}_{i_b}}(\mathfrak{G}) \big(\text{\emph{Frob}}(p_{0b}) \cdot \text{\emph{Frob}}(p_{1b})\big).\]
\end{enumerate}

\end{thm}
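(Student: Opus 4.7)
The plan is to mirror Theorem~\ref{thm:Cl_gov}, with the Cassels--Tate pairing replacing the Artin symbol. The argument has two halves: an existence claim that $\psi_{|S|-2}(\mathfrak{R},\,x)$ really exists for every $x \in \widehat{x}(\emptyset)$, and the local--global evaluation of the pairing sum. Fix $\bar{z}\in\widehat{x}(S - \{i_{ab}\})$, pick $x_0 \in \widehat{z}(\emptyset)$ lying outside each $\widehat{z}_i(\emptyset)$, and write $\bar{y}_i$ for the unique element of $\widehat{z}(S-\{i,i_{ab}\}) \cap \widehat{z}_i(S-\{i,i_{ab}\})$ for $i\ne i_{ab}$. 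For the existence step, set
\[\psi \,=\, -\sum_{x \in \widehat{z}(\emptyset) - \{x_0\}} \beta(x,x_0) \circ \psi_{|S|-2}(\mathfrak{R},\,x),\]
plus an extra $\iota \circ \phi_{\bar{z}}$ in part~(2). Proposition~\ref{prop:psi_coh} (respectively Proposition~\ref{prop:R_G_agree}) applied at dimension $|S|-1$ to $\bar{z}$ immediately makes $\psi$ a cocycle with values in $E(x_0)[2^{|S|-2}]$ satisfying $2^{|S|-3}\psi = w_a(x_0)$, so the global cocycle condition requires no new input.

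To conclude $\psi_{|S|-2}(\mathfrak{R},\,x_0)$ exists, I would verify the local Selmer condition for $\psi$ at every place. Outside of $2 N_0 \cdot \prod_{i\le d} \pi_i(\bar{x})$, the field of definition of $\psi$ is unramified, so the Selmer condition holds trivially. At a twist prime $\pi_i(\bar{z}_i)$ with $i\ne i_{ab}$, the minimality or agreement hypothesis at $\bar{y}_i$ kills the terms of $2\psi$ supported on $\widehat{y}_i(\emptyset)$, removing ramification at that prime exactly as in the proof of Theorem~\ref{thm:Cl_gov}. Quadratic consistency of $\bar{x}$ ensures that the twisting characters $\chi(x,x_0)$ are locally trivial at $2$ and at the bad primes of $E$, so that $\beta$ transports local Selmer classes to local Selmer classes there, and each summand of $\psi$ is locally Selmer at these primes. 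After perhaps multiplying $\psi$ by a quadratic character unramified away from $2$, we obtain the desired Selmer cocycle, forcing $w_a(x_0) \in 2^{|S|-3}\overline{\text{Sel}}^{\,2^{|S|-2}}E(x_0)$.

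For the pairing computation, at each $x$ choose a cochain lift $\psi'_x\colon G_{\QQ}\to E(x)$ with $2\psi'_x = \psi_{|S|-2}(\mathfrak{R},\,x)$, a locally Selmer-compatible lift $\psi^{\circ}_{x,v}$ at each place $v$, and a global $\epsilon_x$ with $d\epsilon_x = d\psi'_x \cup w_b(x)$, making these choices compatibly across $x$ via the $\beta(x,x_0)$ maps so that $\sum_{x\in\widehat{z}(\emptyset)} \beta(x,x_0)\circ\psi'_x$ is precisely the global lift produced in the existence step (adjusted by the $\phi_{\bar{z}}$ contribution in part~(2)). Summing the Cassels--Tate formula over $x\in\widehat{z}(\emptyset)$ and using the same cancellations as in Proposition~\ref{prop:psi_coh}/\ref{prop:R_G_agree} reduces every local contribution away from the support of $w_b(x)$ to a coboundary. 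Repeating over the two choices of $\bar{z}$ and summing: in part~(1), the remaining global $2$-cocycle vanishes by Hilbert reciprocity on $H^2(G_{\QQ},\mathbb{F}_2)$, while in part~(2), only the local terms at $p_{0b}$ and $p_{1b}$ survive, and the standard formula $\text{inv}_p(\phi \cup \chi_p) = \phi(\text{Frob}(p))$ for an unramified character $\phi$ cupped with a ramified quadratic character $\chi_p$ produces precisely $\phi_{\bar{z}}\bigl(\text{Frob}(p_{0b})\cdot\text{Frob}(p_{1b})\bigr)$, matching the class-group calculation.

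The hard part will be coherently choosing the auxiliary data $(\psi'_x,\,\epsilon_x,\,\psi^{\circ}_{x,v})$ across the $2^{|S|-1}$ values of $x\in\widehat{z}(\emptyset)$ so that the Cassels--Tate local contributions really do assemble into a single global $2$-cocycle on $G_{\QQ}$ to which Hilbert reciprocity can be applied. The delicacy is worst at bad primes of $E$ and at $2$: here the hypothesis $w_b(x) \in 2^{|S|-3}\overline{\text{Sel}}^{\,2^{|S|-2}}E(x)$, together with the disjointness constraint $T_j(w_a)\cap T_{3-j}(w_b)\cap S = \emptyset$ and quadratic consistency of $\bar{x}$, are what force the Weil-pairing interaction between the $\psi'_x$ and $w_b(x)$ at these places to cancel in the sum over $x$, leaving only the clean global contribution evaluated above. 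Once this bookkeeping is in place, the remainder of the argument is a close translation of the proof of Theorem~\ref{thm:Cl_gov}.
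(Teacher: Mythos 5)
Your outline has the right overall shape (telescoping the Cassels--Tate sum over the cube, reciprocity, Frobenius evaluation at $p_{0b},p_{1b}$), but the central construction is off in a way that matters. You define $\psi$ by summing the $\psi_{|S|-2}(\mathfrak{R},x)$ over the whole $(|S|-1)$-dimensional cube $\widehat{z}(\emptyset)-\{x_0\}$ and invoke Propositions \ref{prop:psi_coh} and \ref{prop:R_G_agree} ``at dimension $|S|-1$''. Those propositions pair a cube of dimension $k$ with cocycles of level $k$; here your level is $|S|-2$ while the cube has dimension $|S|-1$, so neither applies as stated. Worse, in part (2) your correction term is $\iota\circ\phi_{\bar{z}}$: under the agreement hypotheses the coboundary of your full-cube sum already vanishes (the face sums it produces sit one level below the face dimension, hence are $2\cdot(\iota\circ\phi_{\text{face}})=0$ or $2\cdot 0=0$), so adding $\iota\circ\phi_{\bar{z}}$ leaves coboundary $\iota\circ d\phi_{\bar{z}}\ne 0$ and your $\psi$ is not a cocycle. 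The paper instead works with the half-cubes $\widehat{z}(\emptyset)-\widehat{y}_i(\emptyset)$ of dimension $|S|-2$, where level equals dimension so Propositions \ref{prop:psi_coh} and \ref{prop:R_G_agree} apply verbatim, and the correction is $\iota\circ\phi_{\bar{y}_i}$, the expansion of a codimension-one face; the expansion $\phi_{\bar{z}}$ enters the final formula only later, through the choice $\epsilon(x_0)=\phi_{\bar{z}}(\mathfrak{G})\cup\chi_2(w_b)(x_0)-\sum_{x\ne x_0}\epsilon(x)$ in the Cassels--Tate recipe, not through $\psi$ itself.

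Second, the step you defer as bookkeeping is exactly where the Selmer case differs from Theorem \ref{thm:Cl_gov}. At a prime in $\pi_j(\bar{x})$ with $j\in S$ the local Selmer condition is not mere unramifiedness, so killing ramification of $2\psi$ (your argument, and the quadratic-twist adjustment ``unramified away from $2$'') does not verify membership in $\overline{\text{Sel}}^{\,2^{|S|-2}}E(x_0)$. The paper's device is that the same $\psi$ admits, for every $i\in S-\{i_{ab}\}$, a half-cube representation in which all vertices share the $i$-coordinate of $x_0$; for each fixed $i$ this makes the local conditions manifest away from the primes in directions $S-\{i\}$, and varying $i$ (using the $i$-independence supplied by minimality/agreement at the $\bar{y}_i$) covers every place. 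Your full-cube $\psi$ has no such built-in representation, you never rechoose $\mathfrak{R}$ at $x_0$ to be minimal on the relevant faces (which the paper needs in order to define the compatible lifts $\psi'(x_0)$, the local $\psi^{\circ}_v$ at the twist primes via sub-face sums, and $\epsilon(x_0)$), and the coherent choice of $(\psi'_x,\epsilon_x,\psi^{\circ}_{x,v})$ that you flag as the hard part is the actual content of the proof. As it stands the proposal has a genuine gap in both the existence step of part (2) and the local verification, and it misplaces the mechanism by which the governing expansion enters the pairing.
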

\begin{proof}
For both parts, choose $\bar{z} \in \widehat{z}(S - \{i_{ab}\})$. Take $x_0$ to be the element of $\widehat{z}(\emptyset)$ outside of all the sets $\widehat{z}_i(\emptyset)$, and write $\bar{y}_i$ for the element in $\widehat{z}(S - \{i, i_{ab}\}) \cap \widehat{z}_i(S - \{i, i_{ab}\})$. For the first part, consider
\[\psi = -\sum_{x \in \widehat{z}(\emptyset) - \widehat{y}_i - \{x_0\}} \beta(x, \,x_0) \circ \psi_{|S| - 2}(\mathfrak{R},\, x)\]
for $i \in S - \{i_{ab}\}$. The minimality hypotheses mean that this does not depend on the choice of $i$, and Proposition \ref{prop:psi_coh} implies that it is a cocycle with values in $N(x_0)$. We also see that $2^{|S|-3} \psi = w_a(x_0)$. 

From quadratic consistency, we know that $\beta(x, x_0)$ is an isomorphism locally at $\infty$ and at each prime that is simultaneously bad for $E(x)$ and $E(x_0)$. Because of this, for each $i \in S - \{i_{ab}\}$, we can show that $\psi$ obeys local conditions at all places other than at the primes in $\pi_j(\bar{x})$ with $j \in S - \{i\}$. By varying $i$, we then find that $\psi$ is a Selmer element, so $\text{rk}(\mathfrak{R})(x_0) \ge |S| - 2$, and we rechoose $\mathfrak{R}$ at $x_0$ to make $\mathfrak{R}$ minimal at all $\bar{y} \in \widehat{z}(S - \{i, i_{ab}\})$ for each $i$ in $S - \{i_{ab}\}$. Modifying $\mathfrak{R}$ does not affect the Cassels-Tate pairing by the assumptions on $w_b$, so rechoosing $\mathfrak{R}$  in this way will not affect the sum we are calculating.

Next, choose an $\epsilon(x)$ and a $\psi'(x)$ above $\psi_{|S|-2}(x)$  at all $x \in \widehat{z}(\emptyset) - \{x_0\}$ as in the definition of the Cassels-Tate pairing. Then take
\[\psi'(x_0) = -\sum_{x \in \widehat{z}(\emptyset) - \{x_0\}} \beta(x, \, x_0) \circ \psi'(x) \,\,\text{ and}\]
\[\epsilon(x_0) = -\sum_{x \in \widehat{z}(\emptyset) - \{x_0\}}  \epsilon(x).\]
Via the coboundary calculation of Proposition \ref{prop:psi_coh}, we find that
\[d\epsilon(x_0) = d\psi'(x_0) \cup w_b(x_0).\]

For the second part, we instead take
\[\psi =\iota \circ \phi_{\bar{y}_i} -  \sum_{x \in \widehat{z}(\emptyset) - \widehat{y}_i - \{x_0\}} \beta(x, \,x_0) \circ \psi_{|S| - 2}(\mathfrak{R},\, x).\]
for $i \in S - \{i_b\}$. As before, this is a cocycle with values in $N(x_0)$ that satisfies $2^{|S|-3} \psi = w_a(x_0)$, and the agreement hypotheses mean that it does not depend on the choice of $i$. The $\phi_{\bar{y}_i}$ are locally trivial at the primes of $\pi_i(\bar{z})$ since $\phi_{\bar{z}}$ exists, so we again find that $\psi$ is a Selmer element for $E(x_0)$. Then $\text{rk}(\mathfrak{R})(x_0) \ge |S| - 2$, and we rechoose $\mathfrak{R}$ at $x_0$ to make $\mathfrak{R}$ minimal at all $\bar{y} \in \widehat{z}(S - \{i, i_{ab}\})$ for each $i$ in $S - \{i_{ab}\}$.

Next, choose an $\epsilon(x)$ and a $\psi'(x)$ above $\psi_{|S|-2}(x)$  at all $x \in \widehat{z}(\emptyset) - \{x_0\}$ as in the definition of the Cassels-Tate pairing, and take
\[\psi'(x_0) = -\sum_{x \in \widehat{z}(\emptyset) - \{x_0\}} \beta(x, \, x_0) \circ \psi'(x) \,\,\text{ and}\]
\[\epsilon(x_0) =  \phi_{\bar{z}}(\mathfrak{G}) \cup \chi_2(w_b)(x_0)\, -\sum_{x \in \widehat{z}(\emptyset) - \{x_0\}}  \epsilon(x).\]
From a coboundary calculation as in Proposition \ref{prop:R_G_agree}, we find that
\[d\epsilon(x_0) = d\psi'(x_0) \cup w_b(x_0).\]

For both parts, we know that $\beta(x,\, x_0)$ is locally an isomorphism at $\infty$ and at each prime that is simultaneously bad for $E(x)$ and $E(x_0)$. At such places, and at all simultaneously good places, we set
\[\psi^{\circ}_v(x_0) = -\sum_{x \in \widehat{z}(\emptyset) - \{x_0\}} \beta(x, \, x_0) \circ \psi^{\circ}_v(x).\]
At a place of the form $\pi_0(\pi_i(\bar{z}))$ or $\pi_1(\pi_i(\bar{z}))$, we instead choose some vertex $y_0$ of each $\bar{y} \in \widehat{z}(S - \{i\})$ to define
\[\psi^{\circ}_v(y_0) = -\sum_{x \in \widehat{y}(\emptyset) - \{y_0\}} \beta(y, \, y_0) \circ \psi^{\circ}_v(y).\]
Minimality implies that these choices for $\psi^{\circ}_v$ have the properties required of them.

For the first part, we see that $w_b(x)$ is constant on $\widehat{z}(\emptyset)$, while the sums $\psi_1$, $\psi^{\circ}_v$, and $\epsilon$ over this all sum to zero. This gives the result on the sum of Cassels-Tate pairings.

For the second part, we instead find
\[\sum_{x \in \widehat{z}(\emptyset)} \big\langle \psi_{|S|-2}(\mathfrak{R},\, x),\, w_b(x)\big\rangle_{CT} = \sum_{v\text{ bad for some } E(x)} \text{inv}_v\big(\phi_{\bar{z}}(\mathfrak{G}) \cup \chi_2(w_b)(x_0)\big).\]
Repeating this for the other $\bar{z} \in \widehat{x}(S - \{i_b\})$ then gives the theorem.

\end{proof}

\section{Additive-Restrictive systems}
\label{sec:add_res}

We now introduce the notion of an additive-restrictive system, a construction that abstracts some of the details for sets of governing expansions and sets of raw cocycles. Take $X$ to be a product of disjoint sets $X_1, \dots, X_d$, and take all other notation as at the beginning of Section \ref{sec:alg1}.

\begin{defn}
\label{def:add_res}
An \emph{additive-restrictive system} is a sequence of objects 
\[(\overline{Y}_S,\, \overline{Y}_S^{\,\circ},\, F_S,\, A_S)\]
 indexed by $S \subseteq [d]$ so that
\begin{itemize}
\item For each $S \subseteq [d]$, $A_S$ is an abelian group, $\overline{Y}_S$ and $\overline{Y}^{\,\circ}_S$ are sets satisfying
\[\overline{Y}_S^{\,\circ} \subseteq \overline{Y}_S \subseteq \overline{X}_S,\]
and $F_S$ is a function
\[F_S: \overline{Y}_S \rightarrow A_S\]
with kernel $\overline{Y}_S^{\,\circ}$.
\item If $S$ is nonempty,
\[\overline{Y}_S = \big\{\bar{x} \in \overline{X}_S \,:\,\, \widehat{x}(T) \subset \overline{Y}^{\,\circ}_T \text{ for all } T \subsetneq S \big\}.\]
\item \emph{(Additivity)} Choose $s \in S$, and suppose $\bar{x}_1, \bar{x}_2, \bar{x}_3$ are elements of $\overline{Y}_S$ satisfying
\[\pi_{[d] - \{s\}}(\bar{x}_1) = \pi_{[d] - \{s\}}(\bar{x}_2) = \pi_{[d] - \{s\}}(\bar{x}_3)\]
and
\[\pi_s(\bar{x}_1) = (p_1, p_2),\quad \pi_s(\bar{x}_2) = (p_2, p_3),\quad \pi_s(\bar{x}_3) = (p_1, p_3)\]
for some $p_1, p_2, p_3 \in X_s$.
Then
\[F_S(\bar{x}_1) + F_S(\bar{x}_2) = F_S(\bar{x}_3).\]
\end{itemize}

We will use the letter $\mathfrak{A}$ to denote an additive-restrictive system, writing $\overline{Y}_S(\mathfrak{A})$, $F_S(\mathfrak{A})$, etc. to denote the data associated with $\mathfrak{A}$.
\end{defn}

The crucial property of additive-restrictive sequences is that we can bound how quickly the sets $\overline{Y}_S^{\,\circ}$ shrink as $S$ increases. We do this with the following proposition.
\begin{prop}
\label{prop:ars_density}
Suppose $X = X_1 \times \dots \times X_d$ is a product of finite sets, and suppose
\[\big((\overline{Y}_S, \overline{Y}^{\,\circ}_S, F_S, A_S)\,: \,\, S \subset [d]\big)\]
is an additive-restrictive system on $X$. Write $\delta$ for the density of $\overline{Y}^{\,\circ}_{\emptyset}$ in $X$, and write $|A|$ for the maximum size of a group $A_S$. Then, for any $S \subseteq [d]$, the density of $\overline{Y}^{\,\circ}_{S}$ in $\overline{X}_{S}$ is at least
\[\delta^{2^{|S|}}|A|^{-3^{|S|}}.\]
\end{prop}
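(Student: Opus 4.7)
The plan is to proceed by induction on $|S|$. The base case $|S| = 0$ is immediate: the density of $\overline{Y}^\circ_\emptyset$ is exactly $\delta \ge \delta \cdot |A|^{-1}$.

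For the inductive step, fix $s \in S$, write $T = S - \{s\}$, and decompose $\overline{X}_S = \overline{X}'_T \times X_s \times X_s$, where $\overline{X}'_T$ drops the $s$-coordinate from $\overline{X}_T$. For each base $\bar{x}^{-s} \in \overline{X}'_T$, define
\[D(\bar{x}^{-s}) = \{p \in X_s : (\bar{x}^{-s}, p) \in \overline{Y}^\circ_T\}.\]
Unwinding the recursive definition of $\overline{Y}_S$, one checks that for $(p, q) \in X_s \times X_s$ the point $(\bar{x}^{-s}, (p,q))$ lies in $\overline{Y}^\circ_S$ if and only if (i) $p, q \in D(\bar{x}^{-s})$, which already guarantees every ``single-$s$'' face condition $\widehat{x}(T') \subset \overline{Y}^\circ_{T'}$ for $T' \subseteq T$, and (ii) for every ``doubled-$s$'' face $\bar{y}$ of the cube (i.e.\ a sub-cube whose set of doubled coordinates $T'$ contains $s$, of dimension $r = |T'| \in \{1,\dots,|S|\}$), one has $F_r(\bar{y}(p,q)) = 0$. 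The number of such doubled-$s$ faces is
\[\sum_{r=1}^{|S|} \binom{|S|-1}{r-1} 2^{|S|-r} \;=\; 3^{|S|-1}.\]

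Next I enforce these $m := 3^{|S|-1}$ conditions iteratively, ordering the doubled-$s$ faces $\bar{y}_1, \bar{y}_2, \dots, \bar{y}_m$ by increasing dimension. Let $W_j \subseteq D \times D$ be the subset where the first $j$ conditions hold, so $W_0 = D \times D$ and $W_m$ is the fiber of $\overline{Y}^\circ_S$ over $\bar{x}^{-s}$. The central claim is that each $W_j$ is an equivalence relation on $D$. Assuming this inductively for $W_{j-1}$: on $W_{j-1}$, all doubled-$s$ sub-faces of $\bar{y}_j$ already satisfy their $F$-conditions, hence $\bar{y}_j(p,q) \in \overline{Y}_{r_j}$ and $F_{r_j}(\bar{y}_j(p,q))$ is defined and, by the additivity axiom applied along the $s$-coordinate, satisfies
\[F_{r_j}(\bar{y}_j(p_1, p_2)) + F_{r_j}(\bar{y}_j(p_2, p_3)) = F_{r_j}(\bar{y}_j(p_1, p_3)).\]
Symmetry of $W_j$ follows from $F_r(p,q) = -F_r(q,p)$; reflexivity $(p,p) \in W_j$ from $F_r(p,p) = 0$ (with $\bar{y}_j(p,p) \in \overline{Y}_{r_j}$ verified by an inner induction on $r_j$ using $p \in D$); and transitivity from the display above. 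Writing $F_{r_j}(\bar{y}_j(p,q)) = G_{\bar{y}_j}(p) - G_{\bar{y}_j}(q)$ on each class $C$ of $W_{j-1}$, refining by $W_j$ partitions $C$ into the level sets of $G_{\bar{y}_j}$, of which there are at most $|A_{r_j}| \le |A|$. Cauchy-Schwarz on each class gives $|W_j \cap (C \times C)| \ge |C|^2 / |A|$, and summing over classes yields $|W_j| \ge |W_{j-1}|/|A|$. Iterating,
\[|W_m| \;\ge\; \frac{|D(\bar{x}^{-s})|^2}{|A|^{3^{|S|-1}}}.\]

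Finally, averaging over $\bar{x}^{-s}$ and applying Jensen's inequality with the inductive hypothesis $p_T := |\overline{Y}^\circ_T|/|\overline{X}_T| \ge \delta^{2^{|S|-1}} |A|^{-3^{|S|-1}}$:
\[\frac{|\overline{Y}^\circ_S|}{|\overline{X}_S|} \;=\; \mathbb{E}_{\bar{x}^{-s}}\!\left[\frac{|W_m|}{|X_s|^2}\right] \;\ge\; \frac{\mathbb{E}_{\bar{x}^{-s}}[|D|^2/|X_s|^2]}{|A|^{3^{|S|-1}}} \;\ge\; \frac{p_T^2}{|A|^{3^{|S|-1}}} \;\ge\; \frac{\delta^{2^{|S|}}}{|A|^{3 \cdot 3^{|S|-1}}} \;=\; \frac{\delta^{2^{|S|}}}{|A|^{3^{|S|}}}.\]

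The main obstacle is verifying the equivalence-relation structure of $W_j$. Reflexivity and the very fact that $F_{r_j}(\bar{y}_j(p,q))$ is well-defined on $W_{j-1}$ both require a delicate recursive unwinding: one must show by an inner induction on face dimension that once the $F$-conditions for all doubled-$s$ sub-faces of smaller dimension are enforced, the current face $\bar{y}_j(p,q)$ itself actually lies in $\overline{Y}_{r_j}$ so that additivity may be invoked. Transitivity is what turns a potentially unstructured subset of $D \times D$ into a disjoint union of squares $C \times C$ on which the $|C|^2/|A|$ bound applies, and the counting identity $3^{|S|-1} = \sum_r \binom{|S|-1}{r-1} 2^{|S|-r}$ is precisely what produces the factor $|A|^{-3 \cdot 3^{|S|-1}}$ combining with the doubling of the $\delta$-exponent from Jensen.
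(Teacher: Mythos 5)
Your proof is correct and takes essentially the same approach as the paper: fix a direction $s$, fiber over the remaining coordinates, show that the fiber of $\overline{Y}^{\,\circ}_S$ is an equivalence relation on the fiber $D$ of $\overline{Y}^{\,\circ}_{S-\{s\}}$ with at most $|A|^{3^{|S|-1}}$ classes, then apply Cauchy--Schwarz/Jensen within and across fibers and iterate. Your face-by-face refinement giving $|W_j|\ge |W_{j-1}|/|A|$ is only a bookkeeping variant of the paper's direct count of equivalence classes via the same identity $\sum_{r}\binom{|S|-1}{r-1}2^{|S|-r}=3^{|S|-1}$, so no substantive difference.
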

\begin{proof}
Write $\delta_T$ for the density of $\overline{Y}^{\,\circ}_T$ in $\overline{X}_T$. For $s \in S$ and $\bar{x}_0 \in \overline{X}_S$, define
\[M(\bar{x}_0) = \pi^{-1}_{[d] - \{s\}} \big(\pi_{[d] - \{s\}} (\bar{x}_0)\big)\]
and consider
\[V =\overline{Y}^{\,\circ}_{S - \{s\}} \,\cap\, M(\bar{x}_0)\]
and
\[W = \overline{Y}^{\,\circ}_S  \,\,\cap\,\,  M(\bar{x}_0).\]
We see that $W$ naturally injects into $V \times V$. Furthermore, by the additivity of our additive-restrictive sequence, $W$ takes the form of an equivalence relation on $V$. Given $\bar{x}_1, \bar{x}_2$ in $V$ and $T$ a subset of $S$ containing $s$, write
\[\bar{x}_1 \sim_T \bar{x}_2\]
if 
\begin{itemize}
\item $\bar{x}_1 \sim_{T'} \bar{x}_2$ for all proper subsets $T'$ of $T$ that contain $s$, and
\item $F_T$ is zero on all elements of $\widehat{x}(T)$ if $\bar{x}$ satisfies
\[\widehat{x}(S - \{s\}) = \{\bar{x}_1,\, \bar{x}_2\}.\]
\end{itemize}

The relation $\sim_S$ splits $V$ into
\[\prod_{s \in T \subseteq S} \big| A_S \big|^{2^{|S| - |T|}} \le \prod_{i = 0}^{|S| - 1} |A|^{\binom{|S| - 1}{i}2^i} = |A|^{3^{|S| - 1}}\]
equivalence classes, and $W$ describes this equivalence relation.

Write $\delta_{\bar{x}_0}$ for the density of $V$ in $\overline{X}_{S - \{s\}} \cap M(\bar{x}_0)$. Then the density of $V \times V$ in $\overline{X}_{S} \cap M(\bar{x}_0)$ is $\delta_{\bar{x}_0}^2$, and the density of $W$ in this space is then at least
\[|A|^{-3^{|S| - 1}} \cdot \delta_{\bar{x}_0}^2\]
The average of the $\delta_{\bar{x}_0}$ is $\delta_{S - \{s\}}$, and $\overline{Y}^{\,\circ}_S$ is given by the union of the $W$ over all $\bar{x}_0$, so Cauchy's inequality gives
\[\delta_S \ge |A|^{-3^{|S| - 1}} \cdot \delta_{S - \{s\}}^2.\]
Repeating this argument gives
\[\delta_{S} \ge |A|^{-3^{|S|-1}(1 + \frac{2}{3} + \frac{4}{9} + \frac{8}{27} + \dots )} \cdot \delta_{\emptyset}^{2^{|S|}} = \delta^{2^{|S|}}|A|^{-3^{|S|}},\]
as claimed.
\end{proof}

We now turn to constructing additive-restrictive systems. We first do this for sets of governing expansions.

\begin{prop}
\label{prop:ARS_gov}
Take $\mathfrak{G}$ to be a set of governing expansions on a space $X = X_1 \times \dots \times X_d$, and choose a nonempty subset $S_{\max}$ of $[d]$ that contains $i_a = i_a(\mathfrak{G})$. There is then an additive-restrictive system $\mathfrak{A}$ on $X$ so that, for $S \subseteq S_{\max}$, we have
\[\overline{Y}_S(\mathfrak{A}) = \overline{Y}_S(\mathfrak{G}).\]
Furthermore, for all $S \subseteq [d]$, this additive restrictive system satisfies
\[\big|A_S(\mathfrak{A})\big| \le 2^{|S_{\max}| + 1}.\]

\end{prop}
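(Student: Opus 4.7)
The plan is to construct $\mathfrak{A}$ inductively on $|S|$, using the fact that once $\overline{Y}_T^{\,\circ}(\mathfrak{A})$ is chosen for every $T\subsetneq S$, the set $\overline{Y}_S(\mathfrak{A})$ is forced by the structural property of an additive-restrictive system. At $S=\emptyset$ I would set $\overline{Y}_\emptyset(\mathfrak{A})=\overline{X}$, $\overline{Y}_\emptyset^{\,\circ}(\mathfrak{A})=\overline{Y}_\emptyset(\mathfrak{G})$, $A_\emptyset=\mathbb{F}_2$, and let $F_\emptyset$ be the indicator of $\overline{X}-\overline{Y}_\emptyset(\mathfrak{G})$. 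For $S\not\subseteq S_{\max}$, and for every nonempty $S\subseteq S_{\max}$ with $i_a\notin S$, I would take $A_S$ trivial and $F_S=0$, so that $\overline{Y}_S^{\,\circ}(\mathfrak{A})=\overline{Y}_S(\mathfrak{A})$ and the corresponding conditions in the structural property become vacuous.

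The substantive case is $S\subseteq S_{\max}$ with $i_a\in S$, where I aim to arrange $\overline{Y}_S(\mathfrak{A})=\overline{Y}_S(\mathfrak{G})$. The forward inclusion is immediate from property (2) of a set of governing expansions. The reverse inclusion is precisely the content of property (6), which demands that, for each $i\in S$, the primes $\pi_0(\pi_i(\bar x))$ and $\pi_1(\pi_i(\bar x))$ split in the smaller governing fields $M(\bar x_i)$ and that their product is a quadratic residue at $2$ and at the primes ramified in $K(\bar x_i)$. These splitting hypotheses must be packaged as the vanishing of $F_T$ on $\widehat{x}(T)$ at the lower levels $T\subsetneq S$.

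To do this I would let $F_T(\bar y)$ record a fixed tuple of evaluations $\phi_{\bar y}(\sigma)$ at Frobenius-type elements chosen so that the condition ``$F_T=0$ on every member of $\widehat{x}(T)$'' is equivalent to the splitting conditions of property (6) for $\bar x$. Additivity $F_T(\bar y_1)+F_T(\bar y_2)=F_T(\bar y_3)$ follows at once from property (4) of $\mathfrak{G}$, which yields $\phi_{\bar y_1}+\phi_{\bar y_2}=\phi_{\bar y_3}$ on exactly the triples arising in the additivity condition for an additive-restrictive system. For the size bound: by property (5), every $M(\bar y)K(\bar y)/K(\bar y)$ with $\bar y\in\overline{Y}_T(\mathfrak{G})$ and $T\subseteq S_{\max}$ is unramified at finite places, and each $\phi_{\bar y}$ factors through a Galois quotient of $G_{\QQ}$ of order at most $2^{|S_{\max}|+1}$; choosing the Frobenius tuple to span only the relevant quotient, the image of $F_T$ lies in an abelian $2$-group of order at most $2^{|S_{\max}|+1}$.

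The hardest step is the calibration of the Frobenius tuple defining $F_T$: the collection must be rich enough that the kernel condition $\widehat{x}(T)\subseteq\overline{Y}_T^{\,\circ}$ matches the splitting hypotheses of property (6) exactly, yet coarse enough to keep $|A_T|\le 2^{|S_{\max}|+1}$. I would check the correct calibration using property (3) of $\mathfrak{G}$ (so that $F_T$ depends only on the unordered pairs at each coordinate) together with Proposition~\ref{prop:S_expns}, whose local obstruction analysis identifies exactly which Frobenius evaluations control the construction of $\phi_{\bar x}$ at the next level. Once this calibration is in place, the structural property recovers $\overline{Y}_S(\mathfrak{A})=\overline{Y}_S(\mathfrak{G})$ by induction, completing the construction.
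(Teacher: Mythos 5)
Your overall template --- encode the hypotheses of property (6) as vanishing of the $F_T$ at lower levels, get additivity from property (4), and read the equality $\overline{Y}_S(\mathfrak{A})=\overline{Y}_S(\mathfrak{G})$ off the structural definition --- is the same as the paper's. But there is a genuine gap in where you allow the nontrivial $F_T$ to live. You set $A_T$ trivial and $F_T=0$ for every nonempty $T$ with $i_a\notin T$, and you propose to build all the remaining $F_T$ out of evaluations $\phi_{\bar{y}}(\sigma)$. The quadratic-residue hypotheses of property (6), however, are not of that form: for a coordinate $i\in S-\{i_a\}$ they require $\pi_0(\pi_i(\bar{x}))\pi_1(\pi_i(\bar{x}))$ to be a square at $2$ and at the primes ramifying in $K(\bar{x}_i)/\QQ$, i.e.\ at the primes occupying the other coordinates of $S$, including coordinates $j\ne i_a$. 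These are conditions involving only coordinates different from $i_a$; no governing expansion exists at a level omitting $i_a$, and the $\phi_{\bar{y}}$ at levels containing $i_a$ (or their Frobenius values) do not detect them --- the singleton expansion $\phi_{\bar{y}}=\chi_{i_a,\bar{y}}$ only controls residues of the pair at coordinate $i_a$, and the higher $\phi_{\bar{y}}$ only supply the local-triviality (splitting) conditions. So with your choices the forced sets $\overline{Y}_S(\mathfrak{A})$ are in general strictly larger than $\overline{Y}_S(\mathfrak{G})$, and property (6) cannot be invoked for the reverse inclusion because its residue hypotheses are never imposed by your system.

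The paper avoids this precisely by making the singleton maps nontrivial for every $j$, not just $j=i_a$: it sets $F_{\{j\}}(\bar{x})=0$ if and only if $\pi_0(\pi_j(\bar{x}))\pi_1(\pi_j(\bar{x}))$ is a quadratic residue at $2$ (two bits) and at each prime of $\pi_{S_{\max}-\{j\}}(\bar{x})$ (one bit each), with values in $(\Z/2\Z)^{|S_{\max}|+1}$ --- which is exactly where the bound $2^{|S_{\max}|+1}$ comes from --- while for $|S|>1$ with $i_a\in S$ it lets $F_S$ record whether $\phi_{\bar{x}}$ is locally trivial at the places $\pi_i(\bar{x})$ for $i\in S_{\max}-S$, with values in $(\Z/2\Z)^{|S_{\max}|-|S|}$; all other $A_S$ are trivial. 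Two smaller points: your justification of the size bound via ``each $\phi_{\bar{y}}$ factors through a Galois quotient of order at most $2^{|S_{\max}|+1}$'' is not correct (the governing fields are large; the bound comes from counting the finitely many local bits recorded), and your choice at $S=\emptyset$ gives $\overline{Y}_\emptyset(\mathfrak{A})=X$ rather than $\overline{Y}_\emptyset(\mathfrak{G})$, which conflicts with the stated equality at $S=\emptyset$; simply take $\overline{Y}_\emptyset(\mathfrak{A})=\overline{Y}_\emptyset(\mathfrak{G})$ with $F_\emptyset\equiv 0$ instead.
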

\begin{proof}
We will construct maps $F_S(\mathfrak{A}): \overline{Y}_S(\mathfrak{G}) \rightarrow A_S(\mathfrak{A})$ as in this proposition statement for all $S \subseteq S_{\max}$. We will do this based on the structure of $S$.

First, suppose $S$ is a singleton $\{j\}$. Then we take $F_S(\bar{x}) = 0$ if and only if
\[\pi_0(\pi_j(\bar{x}))\pi_1(\pi_j(\bar{x}))\]
is a quadratic residue at at $2$ and at all primes in $\pi_{S_{\max} - \{j\}}(\bar{x})$. Two bits encode the residue information at $2$, and one bit encodes it at the remaining $|S_{\max}| - 1$ primes, so we can take $\overline{Y}_S^{\,\circ}$ as the kernel of a map to
\[A_S(\mathfrak{A}) = (\Z/2\Z)^{|S_{\max}| + 1}.\]

Suppose instead that $|S| > 1$ and that $S$ contains $i_a$. Then we want to take $F_S(\bar{x}) = 0$ if and only if $\phi_{\bar{x}}(\mathfrak{G})$ is a trivial map at the place $\pi_i(\bar{x})$ for all $i \in S_{\max} - S$. We know that $\phi_{\bar{x}}$ is an unramified quadratic character at each such place, so this information can be encoded at one bit per place in $S_{\max} - S$. Then we can take $\overline{Y}_S^{\,\circ}$ as a kernel of a map to
\[A_S(\mathfrak{A}) = (\Z/2\Z)^{|S_{\max}| - |S|}.\]
Outside of these two cases, we take $A_S$ to be the trivial group. This defines our additive-restrictive system, and we can verify from the definition of a set of governing expansions that it satisfies $\overline{Y}_S(\mathfrak{A}) = \overline{Y}_S(\mathfrak{G})$ for $S \subseteq S_{\max}$.
\end{proof}

\subsection{Additive-Restrictive systems for class and Selmer groups}
We now turn to the construction of an additive-restrictive system which can be used to control the sizes of class groups and Selmer groups. The constructions are similar for Selmer groups and class groups, so we define them at the same time. We first define the data needed to construct the additive-restrictive system.
\begin{defn}
\label{def:AR_input}
Take $K/\QQ$ to be a quadratic imaginary field, or take $E/\QQ$ to be an elliptic curve with full rational two torsion and no rational order four cyclic subgroup; the former case will be called the \emph{class side}, the latter the \emph{Selmer side}. We write a basis for $E[2]$ as $e_1, e_2$.

Take $X_1, \dots, X_d$ to be disjoints sets of odd primes where $K/\QQ$ is unramified on the class side, and where $E/\QQ$ is good on the Selmer side. Write $X$ for the products of the $X_i$.

In the notation of Section \ref{sec:alg1}, we suppose that every $\bar{x} \in \overline{X}_{[d]}$ is quadratically consistent. We then define the \emph{additive-restrictive input} as an assignment of the following six pieces of interconnected data:

\textbf{(1) A choice of lower pairings:} Choose some $x_0 \in X$. On the class side, find the set $D^{\vee}_{(2)}$ of tuples $w_a = (T_a, \Delta_a)$ with $w_a(x_0)$ in $2\overline{\text{Cl}}^{\vee} K(x_0)[4]$; similarly, find the set $D_{(2)}$ of tuples mapping to $2\overline{\text{Cl}}\, K(x_0)[4]$. By quadratic consistency, we see that these sets do not depend on the choice of $x_0$. Write $t_a$ for the nontrivial element of the kernel of
\[D^{\vee}_{(2)} \rightarrow \text{Cl}^{\vee} K(x_0)[4].\]
and similarly define $t_b$ in $D_{(2)}$. Choose an integer $m \ge 2$ and filtrations
\[D^{\vee}_{(2)} \supseteq D^{\vee}_{(3)} \supseteq \dots \supseteq D^{\vee}_{(m)} \ni t_a \text{ and}\]
\[D_{(2)} \supseteq D_{(3)} \supseteq \dots \supseteq D_{(m)} \ni t_b\]
of vector spaces. For $k < m$, choose a bilinear pairing
\[\text{Art}_{(k)}: D^{\vee}_{(k)} \times D_{(k)} \rightarrow \mathbb{F}_2\]
whose left kernel is $D^{\vee}_{(k+1)}$ and whose right kernel is $D_{(k+1)}$.

On the Selmer side, take $D_{(1)}$ to be the set of tuples mapping to the $2$-Selmer group of $E(x_0)$. By quadratic consistency, this set does not depend on the choice of $x_0$. We write $\text{Im}(E[2])$ for the image of the $2$-torsion in the $2$-Selmer group, and specifically write $t_2$ for the image of $e_2$ in the $2$-Selmer group. Choose an integer $m \ge 1$ and a filtration
\[D_{(1)} \supseteq D_{(2)} \supseteq D_{(3)} \supseteq \dots \supseteq D_{(m)} \supseteq \text{Im}(E[2])\]
of vector spaces. For $k < m$, choose an alternating pairing
\[\text{Ctp}_{(k)}: D_{(k)} \times D_{(k)} \rightarrow \mathbb{F}_2\]
whose kernel is $D_{(k+1)}$.

\textbf{(2) A choice of basis:} On the class side, take $n_k$ to be the dimension of $D^{\vee}_{(k)}/\langle t_a \rangle$ for $2 \le k \le m$. Then choose $w_{a1}, \dots, w_{an_2} \in D^{\vee}_{(2)}$  and $w_{b1},\dots, w_{bn_2} \in D_{(2)}$ so that, for $2 \le k \le m$, the first $n_k$ vectors in the first sequence are a basis for $D^{\vee}_{(k)}/\langle t_a \rangle$, and the first $n_k$ vectors in the second sequence are a basis for $D_{(k)}/\langle t_b \rangle$.

On the Selmer side, take $n_k$ to be the dimension of $D_{(k)}/\text{Im}(E[2])$ for $1 \le k \le m$. Take $w_1, \dots, w_{n_1} \in D_{(1)}$ so that, for $1 \le k \le m$, the first $n_k$ vectors in this sequence generate $D_{(k)}/\text{Im}(E[2])$.

\textbf{(3) A choice of variable indices:} Choose $i_b \le d$, and for $j_1, j_2 \le n_0$, choose an element $i_a(j_1, j_2)$ and a set $S(j_1, j_2)$ containing both $i_b$ and $i_a(j_1, j_2)$. We require these sets to obey different properties on the class and Selmer side.

On the class side, for all $j \le n$, we assume that $T(w_{aj})$ and $T(w_{bj})$ do not contain $i_b$. In addition, for all $j_1, j_2 \le n_0$, we assume the following:
\begin{itemize}
\item We assume that $S(j_1, j_2)$ has cardinality $m + 1$.
\item  We assume that $S(j_1, j_2)$ is disjoint from $T(w_{aj})$ and $T(w_{bj})$ for all $j  \le n$ other than $j_1$ and $j_2$.
\item We assume that
\begin{align*}
&T(w_{bj_1}) \cap S(j_1, j_2) = T(w_{aj_2}) \cap S(j_1, j_2)  = \emptyset, \\
&\qquad T(w_{aj_1}) \cap S(j_1, j_2) \,=\, \big\{i_a(j_1, j_2)\big\} \quad\text{and}\\
& \qquad S(j_1, j_2) \,\subseteq\, T(w_{bj_2}) \cup  \{i_b\}.
\end{align*}
\end{itemize}

On the Selmer side, for all $j \le n$, we assume that $T_1(w_j)$ and $T_2(w_j)$ do not contain $i_b$. In addition, if $j_1$ equals $j_2$, we assume that $S(j_1, j_2)$ is the empty set; and if $j_1$ is greater than $j_2$, we assume that $S(j_1, j_2)$ equals $S(j_2, j_1)$. In addition, for all $j_1 < j_2 \le n_0$, we assume the following:
\begin{itemize}
\item We assume that $S(j_1, j_2)$ has cardinality $m+2$.
\item We assume that $S(j_1, j_2)$ is disjoint from $T_1(w_j)$ and $T_2(w_j)$ for all $j \le n$ other than $j_1$ or $j_2$.
\item We assume that
\begin{align*}
&T_2(w_{j_1})  \cap S(j_1, j_2) \,=\,T_1(w_{j_2})  \cap S(j_1, j_2) \,=\, \emptyset,\\
&\quad\qquad T_1(w_{j_1}) \cap S(j_1, j_2) \,=\, \big\{i_a(j_1, j_2)\big\} \quad\text{and}\\
&\quad \qquad S(j_1, j_2) \,\subseteq\, T_2(w_{j_2}) \cup  \{i_b\}.
\end{align*}
\end{itemize}

We use the term \emph{variable indices} to describe the $S(j_1, j_2)$ because, when we actually prove our equidistribution results in Proposition \ref{prop:proof_C}, we will have fixed a choice of prime in each $X_i$ other than at the $i$ in $S(j_1, j_2)$.

\textbf{(4) A choice of raw cocycles:}
On the class side, we find a set of raw cocycles $\mathfrak{R}(w_{aj})$ for each $j \le n_2$, where the set of raw cocycles is as in Section \ref{ssec:class_exp}. On the Selmer side, we find a set of raw cocycles $\mathfrak{R}(w_j)$ for each $j \le n_1$.

\textbf{(5) A choice of governing expansions:}
For each distinct $i_a = i_a(j_1, j_2)$ marked in the third part of the definition, we take $\mathfrak{G}(i_a)$ to be a set of governing expansions over $X$ with $i_a = i_a(\mathfrak{G}(i_a))$. For every $S$ of the form $S(j_1, j_2) - \{i_b, i\}$ for some $i \in S(j_1, j_2)$ other than $i_a(j_1, j_2)$ or $i_b$, and for every $\bar{x} \in \overline{X}_S$, we assume that the expansion 
\[ \phi_{\bar{x}}(\mathfrak{G}(i_a(j_1, j_2)))\] exists. We also assume it is trivial when restricted to $\text{Gal}(\overline{\QQ}_v/\QQ_v)$ for $v$ coming from a certain set of places:
\begin{itemize}
\item On the class side, we presume that the expansion is trivial at $2$, at $\infty$, at all places in $\pi_{[d] - S}(\bar{x})$, and at all primes dividing the discriminant of $K/\QQ$.
\item On the Selmer side, we presume that the expansion is trivial at $2$, at $\infty$, at all places in $\pi_{[d] - S}(\bar{x})$, and at all primes dividing the conductor of $E/\QQ$.
\end{itemize}

\textbf{(6) A choice of inertia elements:}
Take $M_r/\QQ$ to be the least number field containing $L(\psi_k(\mathfrak{R}, \,x))$ whenever it exists for any $x \in X$, any $\mathfrak{R}$ as defined in (4), and any $k \le m$. Take $M$ to be the least number field extending $M_r$ that also contains the field of definition of each expansion found in any $\mathfrak{G}(i_a(j_1, j_2))$. $M/\QQ$ has ramification degree at most two at any prime; for each prime $p$ where it ramifies, choose some $\sigma_p$ in $\text{Gal}(M/\QQ)$ so that $\{1, \, \sigma_p\}$ is the inertia group of some prime dividing $p$ in $M$.
\end{defn} 

We will use $\mathscr{P}$ to denote an assignment of the additive-restrictive input. To define the additive-restrictive system associated to $\mathscr{P}$, we will first need to understand the role of the choice of inertia elements.

\begin{defn*}
Suppose we have some choice of additive-restrictive input $\mathscr{P}$, and choose $w$ in $D^{\vee}_{(2)}$ on the class side and in $D_{(1)}$ on the Selmer side where $\mathfrak{R}(w)$ has been chosen. Choose $S \subseteq [d]$ over which $\mathfrak{R}(w)$ is consistent or $i_a(j_1, j_2)$-consistent. Suppose $\bar{x} \in \overline{X}_S$ satisfies 
\[\text{rk}(\mathfrak{R}(w))(x) \ge |S| +1\quad\text{for all}\,\,\,x \in \widehat{x}(\emptyset).\]

In the case of consistency, for any $i \in [d] - S$, we call $\mathfrak{R}(w)$ \emph{acceptably ramified at} $(\bar{x}, \, i)$ if
\[\sum_{x \in \widehat{x}(\emptyset)} \beta(x, x_0) \circ \psi_{|S| + 1}\big(\mathfrak{R}(w),\, x\big)(\sigma_{\pi_i(\bar{x})}) \,=\, 0.\]

In the case of $i_a$-consistency, choose $i \in [d] - S$, and suppose there is some $\bar{z} \in \overline{X}_{S \cup \{i\}}$ satisfying $\bar{x} \in \widehat{z}(S)$ at which $\phi_{\bar{z}}(\mathfrak{G}(i_a))$ is defined and has $\pi_1(\pi_{i_a}(\bar{z})) \ne\pi_0(\pi_{i_a}(\bar{z}))$. We then call $\mathfrak{R}(w)$ \emph{acceptably ramified at} $(\bar{x}, \, i)$ if
\[\sum_{x \in \widehat{x}(\emptyset)} \beta(x, x_0) \circ \psi_{|S| + 1}\big(\mathfrak{R}(w),\, x\big)(\sigma_{\pi_i(\bar{x})})\, =\, \phi_{\bar{z}}(\sigma_{\pi_i(\bar{x})}).\]
\end{defn*}

We can now define our additive-restrictive system.
\begin{defn}
\label{def:RGAR}
Take $\mathscr{P}$ to be an additive-restrictive input as above, and take all notation as in Definition \ref{def:AR_input}. Choose $j_1, j_2 \le n_0$; on the Selmer side, we assume that $j_1$ is less than $j_2$. We will define an additive-restrictive system $\mathfrak{A}(j_1, j_2) = (\overline{Y}_S,\, \overline{Y}_S^{\,\circ},\, F_S, \,A_S)$ as follows.

First, on the class side, take $\overline{Y}_{\emptyset}^{\,\circ}(j_1, j_2)$ to be the set of $x \in X$ on which the natural pairings
\[2^{k-1}\overline{\text{Cl}}^{\vee} K(x)[2^k] \,\times\, 2^{k-1} \overline{\text{Cl}}\, K(x)[2^k] \longrightarrow \mathbb{F}_2\]
agree with the pairings $\text{Art}_{(k)}$ for each $k$ in the range $1 < k < m$. On the Selmer side, take $\overline{Y}_{\emptyset}^{\,\circ}$ to be the set of $x$ on which the natural pairings 
\[2^{k-1}\overline{\text{Sel}}^{\,2^k}E(x) \,\times\, 2^{k-1}\overline{\text{Sel}}^{\,2^k}E(x) \longrightarrow \mathbb{F}_2\]
agree with the pairings $\text{Ctp}_{(k)}$ for each $k$ in the range $0 < k < m$.

Next, choose $S \subseteq [d]$. We will now define a set $\overline{Y}_S^{\,\circ}(j_1, j_2)$ in $\overline{Y}_S(j_1, j_2)$. We have done this already for $S$ empty. Next, if $S$ is not contained in $S(j_1, j_2)$, or if $S$ has cardinality greater than $|S(j_1, j_2)| - 2$, we take $\overline{Y}_S^{\,\circ}(j_1, j_2) = \overline{Y}_S(j_1, j_2)$.

Now suppose $S \subset S(j_1, j_2)$ with cardinality at most $|S(j_1, j_2)| - 2$. On the class side, we say that $\bar{x} \in \overline{Y}_S$ is in $\overline{Y}^{\,\circ}_S$ if we have the following:
\begin{itemize}
\item For $j \le n_{|S| + 1}$ other than $j_1$, we have that $\mathfrak{R}(w_{aj})$ is minimal at $\bar{x}$.
\item We have that $\mathfrak{R}(w_{aj_1})$ agrees with $\mathfrak{G}\big(i_a(j_1, j_2)\big)$ at $\bar{x}$.
\item For $j \le n_{|S| + 1}$, we have that $\mathfrak{R}(w_{aj})$ is acceptably ramifeid at $(\bar{x},\, i)$ for all $i$ in $S(j_1, j_2) - S$.

\end{itemize}
On the Selmer side, we say that $\bar{x} \in \overline{Y}_S$ is in $\overline{Y}^{\,\circ}_S$ if we have the following:
\begin{itemize}
\item For $j \le n_{|S|}$ other than $j_1, j_2$, we have that $\mathfrak{R}(w_j)$ is minimal at $\bar{x}$.
\item We have that $\mathfrak{R}(w_{j_1})$ agrees with $\mathfrak{G}(i_a(j_1, j_2))$ at $\bar{x}$. 
\item If $|S| < m$, then for $j \le n_{|S|}$ other than $j_2$, we have that $\mathfrak{R}(w_j)$ is acceptably ramified at $(\bar{x}, i)$ for all $i$ in $S(j_1, j_2) - S$. 
\end{itemize}
 
Suppose $\bar{x}$ is in $\overline{Y}_S(j_1, j_2)$ for some subset $S$ of $S(j_1, j_2)$ of cardinality at most $|S(j_1, j_2)| - 2$. Then Proposition \ref{prop:psi_coh} and \ref{prop:R_G_agree} imply that $\psi(\mathfrak{R}(w), \,\bar{x})$ or $\psi(\mathfrak{R}(w), \,\bar{x}) + \phi_{\bar{x}}(\mathfrak{G}(i_a))$ is a cocycle for each $w$ considered in the above definition. Call this cocycle $\psi$.

On the class side, $\psi$ is a quadratic character. The acceptable ramification conditions prevent $\psi$ from being ramified at any prime in $\pi_S(\bar{x})$, so it is an unramified character over any $K(x)$ with $x \in \widehat{x}(\emptyset)$. As $\text{rk}(\mathfrak{R}(w))(x) > |S|$ for each $x \in \widehat{x}(\emptyset)$, and from the local triviality assumptions we made in part (5) of Definition \ref{def:AR_input}, we find that $\psi$ is trivial over any $K(x)$ at all primes where $K(x)/\QQ$ ramifies besides those in $\pi_S(x)$. If $\psi$ is trivial over $K(x)$ at all primes in $\pi_S(x)$, we then have that $\psi$ corresponds to an element of $D^{\vee}_{(2)}$. We have $|S|$ bits describing the behavior at $\pi_S(x)$, and an element in $D^{\vee}_{(2)}$ can be described with $n_2 +1$ bits. Finally, acceptable ramification can be described with $|S(j_1, j_2) - S|$ bits. The conditions of each of these bits are additive, and we have one set of conditions for each of the $n_2$ vectors $w_{aj}$, so we can take $\overline{Y}_S^{\,\circ}(j_1, j_2)$ to be the kernel of some additive map
\[\overline{Y}_S(j_1, j_2) \rightarrow (\Z/2\Z)^{n_2(n_2 + m + 2)}.\]

On the Selmer side, $\psi$ can be thought of as a pair of quadratic characters. The cocycle is unramified at all primes in $\pi_S(\bar{x})$. Choose some $x \in \widehat{x}(\emptyset)$. If $\psi$ is trivial at all the primes in  $\pi_S(x)$, we find that $\psi$ is a $2$-Selmer element (again using part (5) of Definition \ref{def:AR_input} if necessary). Each local condition is described with two bits, and it takes $n_1 + 2$ bits to describe an element in $D_{(1)}$. Finally, acceptable ramification can be described with two bits at each $i$ in $S(j_1, j_2) - S$. This gives a set of $n_1 + 2m + 6$ bits to describe the conditions accrued from one $w_j$. Varying $j$ in $[n_1] - \{j_2\}$, we see we can take  $\overline{Y}_S^{\,\circ}(j_1, j_2)$ to be the kernel of some additive map
\[\overline{Y}_S(j_1, j_2) \rightarrow (\Z/2\Z)^{(n_1-1)(n_1 + 2m + 6)}.\]

This defines the additive-restrictive sequence associated with $(j_1, j_2)$.
\end{defn}

\begin{prop}
\label{prop:AR_main}
Take $\mathscr{P}$ to be an additive-restrictive input defined either with respect to an elliptic curve or imaginary quadratic field, and choose some $\mathfrak{A}(\mathscr{P})(j_1, j_2)$ as defined above. Take $S = S(j_1, j_2)$, and take $\bar{x} \in \overline{X}_S$. Suppose that, for each $i$ in $S$, there is some $\bar{z}_i \in \widehat{x}(S - \{i\})$ so that
\[\bar{z}_i \in \overline{Y}_{S- \{i\}}^{\,\circ}\big(\mathfrak{A}(\mathscr{P})(j_1, j_2)\big).\]
Then $\widehat{x}(\emptyset)$ is a subset of $\overline{Y}_{\emptyset}^{\,\circ}$. Furthermore, write $(p_{0b},\, p_{1b})$ for $\pi_{i_b}(\bar{x})$ and $i_a$ for $i_a(j_1, j_2)$. Then, on the class side, we have
\begin{align*}
&\sum_{x \in \widehat{x}(\emptyset)} \left[ \frac{L\big(\psi_{m}(\mathfrak{R}(w_{aj_3}), \,x)\big)/K(x)}{w_{bj_4}(x)}\right] \\
&\qquad\quad= \begin{cases} \phi_{\bar{z}_{i_b}}(\mathfrak{G}(i_a)) \big(\text{\emph{Frob}}(p_{0b}) \cdot \text{\emph{Frob}}(p_{1b})\big) &\text{ if } (j_3, j_4) = (j_1, j_2) \\ 0 &\text{ otherwise}\end{cases}
 \end{align*}
 for all $j_3, j_4 \le n_m$. On the Selmer side, we instead have
 \begin{align*}
&\sum_{x \in \widehat{x}(\emptyset)} \big\langle \psi_{m}(\mathfrak{R}(w_{j_3}), \,x),\, w_{j_4}(x)\big\rangle_{CT} \\
&\qquad\quad= \begin{cases} \phi_{\bar{z}_{i_b}}(\mathfrak{G}(i_a)) \big(\text{\emph{Frob}}(p_{0b}) \cdot \text{\emph{Frob}}(p_{1b})\big) &\text{ if } (j_3, j_4) = (j_1, j_2) \\ 0 &\text{ otherwise}\end{cases}
 \end{align*}
 for $j_3 < j_4 \le n_m$.
 \end{prop}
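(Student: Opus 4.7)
The plan is to apply Theorem~\ref{thm:Cl_gov} (on the class side) or Theorem~\ref{thm:Sel_gov} (on the Selmer side) to $\bar{x}$ at level $|S| = |S(j_1, j_2)|$, once for each pair $(j_3, j_4)$, taking $w_a = w_{aj_3}$ and $w_b = w_{bj_4}$ in the notation of those theorems.

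First I would unpack the hypothesis. Each $\bar{z}_i \in \overline{Y}^{\,\circ}_{S-\{i\}}$ is automatically in $\overline{Y}_{S-\{i\}}$, so by Definition~\ref{def:add_res} we have $\widehat{z}_i(T) \subset \overline{Y}^{\,\circ}_T$ for every $T \subsetneq S - \{i\}$. Unwinding Definition~\ref{def:RGAR}, this says that for every sub-face $\bar{y} \in \widehat{z}_i(T)$ with $|T| \le |S|-2$, the cocycles $\mathfrak{R}(w_{aj})$ with $j \ne j_1$ (or $j \ne j_1, j_2$ on the Selmer side) are minimal at $\bar{y}$, the cocycle $\mathfrak{R}(w_{aj_1})$ agrees with $\mathfrak{G}(i_a(j_1, j_2))$ at $\bar{y}$, and the acceptable-ramification conditions hold at $(\bar{y}, i')$ for $i' \in S \setminus T$. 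These are precisely the hypotheses demanded by Theorems~\ref{thm:Cl_gov} and~\ref{thm:Sel_gov} on the faces $\widehat{z}_i(S \setminus \{i, i_{ab}\})$ appearing in the theorem's statements; moreover, the filtration choices in Definition~\ref{def:AR_input}(1) together with the containments $w_{bj} \in D_{(m)}$ ensure $w_{bj}(x) \in 2^{|S|-2}\overline{\text{Cl}}\, K(x)[2^{|S|-1}]$ (or its Selmer analogue) throughout $\widehat{x}(\emptyset)$.

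Then for each $(j_3, j_4)$, the $T$-intersection conditions from Definition~\ref{def:AR_input}(3) determine which case of the theorem applies. When $(j_3, j_4) = (j_1, j_2)$, both $T(w_{aj_1})$ and the relevant component of $T(w_{bj_2})$ meet $S$ at the points $\{i_a\}$ and $\{i_b\}$ respectively, placing us in case~(2) of Theorem~\ref{thm:Cl_gov}/\ref{thm:Sel_gov} and yielding the claimed governing-expansion value $\phi_{\bar{z}_{i_b}}(\mathfrak{G}(i_a))\big(\text{Frob}(p_{0b})\cdot\text{Frob}(p_{1b})\big)$. For every other $(j_3, j_4)$, at least one of the relevant $T \cap S$ intersections is empty (either $T(w_{aj_3}) \cap S$ or the appropriate component of $T(w_{bj_4}) \cap S$), placing us in case~(1) and giving zero.

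For the containment $\widehat{x}(\emptyset) \subseteq \overline{Y}^{\,\circ}_\emptyset$, the recursive definition immediately gives $\widehat{z}_i(\emptyset) \subset \overline{Y}^{\,\circ}_\emptyset$ for each $i \in S$, and together these cover every vertex of $\widehat{x}(\emptyset)$ except possibly the unique $y^{\ast}$ whose coordinate choice is opposite to $\bar{z}_i$ at every $i \in S$. Since $\overline{Y}^{\,\circ}_\emptyset$ prescribes agreement of natural pairings with $\text{Art}_{(k)}$ (or $\text{Ctp}_{(k)}$) only at levels $k < m$, the remaining vertex $y^{\ast}$ can be handled by running the same pairing-sum argument at each sub-cube $S' \subsetneq S$ of cardinality $k+1$ for $k < m$; such sub-cubes satisfy the hypothesis of the proposition at their own level via the recursive structure, so Theorems~\ref{thm:Cl_gov} and~\ref{thm:Sel_gov} again apply, and subtracting off the pairing values at the other $2^{|S'|}-1$ vertices (already known to be in $\overline{Y}^{\,\circ}_\emptyset$) forces the correct value at $y^{\ast}$. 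The main obstacle will be the bookkeeping: at every sub-face and every pairing level $k$, verifying that the conditions encoded in $\overline{Y}^{\,\circ}_T$ really do supply the interlocking agreement/minimality/acceptable-ramification hypotheses of the two theorems, and that the case-(1)/case-(2) dichotomy---pivoting on $i_{ab}$---is handled consistently as the $T$-intersections shift with $(j_3, j_4)$.
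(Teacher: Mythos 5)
Your overall skeleton matches the paper's proof: all vertices of $\widehat{x}(\emptyset)$ except the one opposite vertex lie in $\overline{Y}^{\,\circ}_{\emptyset}$ via $\widehat{z}_i(\emptyset)\subset\overline{Y}^{\,\circ}_{\emptyset}$, the lower-level pairings at that last vertex are pinned down by summing over sub-cubes, and the level-$m$ statement comes from Theorem \ref{thm:Cl_gov}/\ref{thm:Sel_gov}, case (2) for $(j_1,j_2)$ and case (1) otherwise. However, there is a genuine gap in your case analysis. By Definition \ref{def:AR_input}(3) the hypothesis on the distinguished second vector is $S(j_1,j_2)\subseteq T(w_{bj_2})\cup\{i_b\}$ together with $i_b\notin T(w_{bj_2})$, so in fact $T(w_{bj_2})\cap S = S\setminus\{i_b\}$ (and similarly $T_2(w_{j_2})\cap S\supseteq S\setminus\{i_b\}$ on the Selmer side). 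This is the opposite of what you assert: it violates the standing hypothesis $S\cap T(w_b)\subseteq\{i_b\}$ (resp.\ $T_2(w_b)\cap S\subseteq\{i_b\}$) of Theorems \ref{thm:Cl_gov} and \ref{thm:Sel_gov}, so neither case (1) nor case (2) of those theorems can be applied directly to any pair whose second index is $j_2$ --- including the main pair $(j_1,j_2)$ and the pairs $(j_3,j_2)$, $j_3\ne j_1$, that you claim land in case (1).

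The missing idea, which the paper uses, is the torsion shift: replace $w_{bj_2}$ by $t_b+w_{bj_2}$ on the class side (resp.\ $w_{j_2}$ by $t_2+w_{j_2}$ on the Selmer side). Since $t_b$ (resp.\ $t_2$) has $T$-set containing all indices and lies in the kernel of $\overline{\text{Cl}}\to\text{Cl}$ (resp.\ is the image of $2$-torsion), the shift does not change the Artin symbol or the Cassels--Tate pairing value, and it is compatible with the filtration because $t_b\in D_{(m)}$ (resp.\ $\text{Im}(E[2])\subseteq D_{(m)}$); but it flips the $T$-set inside $S$, so that $T(t_b+w_{bj_2})\cap S=\{i_b\}$ and the theorems now apply, with case (2) for $(j_1,j_2)$ and case (1) for the other pairs. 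One also needs, as in the paper, to invoke bilinearity (and on the Selmer side the alternating property, to reduce pairs with $j_3=j_2$ to ones with $j_4=j_2$) to recover the full pairing from its values on these basis pairs. Without the shift-and-invariance step your verification of the theorems' hypotheses fails, so as written the argument does not go through.
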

 \begin{proof}
 Take $x_0$ to be the element of $\widehat{x}(\emptyset)$ not in any $\bar{z}_i$. We first need to check that the Cassels-Tate pairings or Artin pairings corresponding to $x_0$ is given by $\text{Ctp}_{(k)}$ or $\text{Art}_{(k)}$ for $k < m$. On the class side, we do this by considering the value of these pairings on each $(w_{aj_3}, \,w_{bj_4})$ or, for $j_4 = j_2$, on $(w_{aj_3},\, t_b + w_{bj_2})$. The value of the pairing at these tuples determines the pairing everywhere by bilinearity. But, given the minimality restrictions on the $\mathfrak{R}(w)$, we see that the first part of Theorem \ref{thm:Cl_gov} implies that the Artin pairings for $k < m$ at $x_0$ equal the sum of the Artin pairings at all other vertices in $\widehat{x}(\emptyset)$. This is enough to give that $x_0$ is in $\overline{Y}_{\emptyset}^{\,\circ}$.

The pairings at $k = m$ follow similarly except at $(w_{aj_1},\, t_b + w_{bj_2})$. At this tuple, though, the second part of the theorem applies, and we get the claimed result on the Artin pairing.

On the Selmer side, we can follow the same argument, finding the sum of the Cassels-Tate pairing over $x$ in $\widehat{x}(\emptyset)$. We again use pairs of the form $(w_{j_3}, \, w_{j_4})$ and $(w_{j_3},\, t_2 + w_{j_2})$; we can assume that $j_3$ and $j_4$ are both not equal to $j_2$. We see that it is enough to prove that the Cassels-Tate pairing behaves as we expect on pairs of this form since the pairing is alternating.  Theorem \ref{thm:Sel_gov} then gives us that $x_0$ is in $\overline{Y}_{\emptyset}^{\, \circ}$ and that the sum of the Cassels-Tate pairing at level $m$ over $x \in \widehat{x}(\emptyset)$ obeys the given formula. 
\end{proof}

\section{Ramsey-Theoretic results}
\label{sec:rams}
In Proposition \ref{prop:AR_main}, we found a condition on $\bar{x} \in \overline{X}_S$ under which the sum
\[\sum_{x \in \widehat{x}(\emptyset)} \big\langle \psi_m(\mathfrak{R}(w_{j_3}),\, x),\, w_{j_4}(x)\big\rangle_{CT} \in \mathbb{F}_2\]
was determined by an Artin symbol in the field of definition of some governing expansion, with an analogous form found on the class side. This information is not enough to determine the value of the pairing at any particular $x \in \widehat{x}(\emptyset)$. However, if we have enough choices of $\bar{x}$ where we can find this sum, we can still usually prove that the value of the pairing is forced to be $1$ on about half the vertices in $\overline{Y}_{\emptyset}^{\,\circ}$.

The first question is whether there is even one choice of such a $\bar{x}$ whose vertices lie in $\overline{Y}_{\emptyset}^{\,\circ}$. This is a question in Ramsey theory; we can prove that such a $\bar{x}$ exists if $\overline{Y}_{\emptyset}^{\,\circ}$ is large enough. This is the $r =2$ case of the following proposition.

\begin{prop}
\label{prop:subgrid}
Take $d \ge 2$ to be an integer, take $2^{-d-1} > \delta > 0$, and take $X_1, \dots, X_d$ to be finite sets with cardinality at least $n > 0$. Suppose that $Y$ is a subset of $X = X_1 \times \dots \times X_d$ of cardinality at least $\delta \cdot |X|$. Then, for any positive $r$ satisfying
\[r \le \left(\frac{\log n}{5 \log \delta^{-1}}\right)^{1/(d-1)},\]
there exists a choice of sets $Z_1, \dots, Z_d$, each of cardinality $r$, such that
\[Z_1 \times \dots \times Z_d \subseteq Y.\]
\end{prop}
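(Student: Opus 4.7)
The plan is to prove by induction on $d \ge 1$ the quantitative refinement
\[N_d(Y) \ge \gamma_d(r) \cdot (\delta/2)^{r^d} \prod_{i=1}^d \binom{|X_i|}{r},\]
where $N_d(Y)$ denotes the total number of $r \times \cdots \times r$ sub-boxes of $X_1 \times \cdots \times X_d$ contained in $Y$, and $\gamma_d(r) \ge 2^{-O(r^{d-1})}$ is a constant to be tracked through the induction. The proposition itself follows by checking $N_d(Y) \ge 1$ under the hypothesized bound on $r$.

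For the base case $d = 1$, the sub-box count is $\binom{|Y|}{r} \ge (\delta/2)^r \binom{|X_1|}{r}$ provided $r \le \delta|X_1|/2$, so one may take $\gamma_1(r) = 1$. For the inductive step, I will slice $Y$ by the last coordinate: writing $Y_{x_d} \subseteq X_1 \times \cdots \times X_{d-1}$ for the fibre over $x_d$ and $\delta_{x_d}$ for its relative density, so that $\mathbb{E}_{x_d}\delta_{x_d} = \delta$. Instead of the direct expansion over $Z_d$, I transpose the count: for each $(d-1)$-tuple $\bar{Z} = (Z_1, \dots, Z_{d-1})$ of $r$-subsets, let
\[k(\bar{Z}) = \bigl|\{x_d \in X_d : \bar{Z} \times \{x_d\} \subseteq Y\}\bigr|.\]
Then $N_d(Y) = \sum_{\bar{Z}} \binom{k(\bar{Z})}{r}$, while $\sum_{\bar{Z}} k(\bar{Z}) = \sum_{x_d} N_{d-1}(Y_{x_d})$. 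The induction hypothesis bounds each $N_{d-1}(Y_{x_d})$ in terms of $\delta_{x_d}^{r^{d-1}}$, and Jensen's inequality applied to the convex function $x \mapsto x^{r^{d-1}}$ gives $\mathbb{E}_{x_d}\delta_{x_d}^{r^{d-1}} \ge \delta^{r^{d-1}}$. A second application of Jensen, now to the convex function $\binom{\cdot}{r}$ averaged over $\bar{Z}$, combined with the elementary approximation $\binom{aN}{r} \ge (a/2)^r \binom{N}{r}$ (valid whenever $aN \ge 2r$), delivers the recursion $\gamma_d(r) \ge \gamma_{d-1}(r)^r / 2^r$. Unfolding this gives $\gamma_d(r) \ge 2^{-O(r^{d-1})}$.

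The main obstacle is tracking the accumulated loss. Even though the exponent in $\gamma_d(r)^{-1}$ grows like $r^{d-1}$, it must be shown to be dwarfed by the $(\delta/2)^{r^d}$ main term under the hypothesized bound on $r$. Taking logarithms, the positivity of $N_d(Y)$ reduces to an inequality of the shape $r^{d-1}\log\delta^{-1} \le \tfrac{1}{c}\log n$ for a suitable explicit constant $c$; the bound $r \le (\log n / (5\log\delta^{-1}))^{1/(d-1)}$ gives exactly $r^{d-1}\log\delta^{-1} \le \tfrac{1}{5}\log n$, and the hypothesis $\delta < 2^{-d-1}$ ensures $\log\delta^{-1} \ge (d+1)\log 2$, which provides the slack needed to absorb the $O(r^{d-1})$ loss coming from the $\gamma_d$ factor. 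Finally, at each level of the induction I will verify that the side condition $aN \ge 2r$ required for the binomial approximation is satisfied; this follows from $|X_i| \ge n$, restricting attention (via a standard Markov-type step) to fibres of density at least $\delta/2$, and the given upper bound on $r$.
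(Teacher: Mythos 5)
Your plan is correct and follows essentially the same route as the paper's proof: induction on $d$ via a quantitative lower bound on the number of $r$-sub-boxes, slicing along one coordinate, a Markov-type restriction to dense fibres, transposing the count to the completions $k(\bar{Z})$ (the paper's $n_{\mathbf{z}}$), and a convexity/Jensen step before verifying the numerics against the bound on $r$. The only differences are cosmetic bookkeeping (binomial coefficients and the $\gamma_d$ recursion versus the paper's $(n_{\mathbf z}-r)^r/r!$ and closed-form exponent $(2^{-d-1}\delta)^{(r^{d+1}-r)/(r-1)}$), so no changes are needed beyond carrying out the constant-tracking you describe.
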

\begin{proof}
We can find subsets $X'_i$ of the $X_i$ so $|X'_i| = n$ and so $Y$ has density at least $\delta$ in $X'_1 \times \dots \times X'_d$. Because of this, we can without loss of generality assume that $X_1, \dots, X_d$ have cardinality exactly $n$.

For any positive integers $d$ and $r$ and any $Y \subset X$, write $N(r, Y)$ for the number of ways of choosing subsets $Z_i$ of $X_i$ for all $i \le d$, each of cardinality $r$, so that $Z_1 \times \dots \times Z_d \subset Y$. Write $N_d(n, r, \delta)$ for the minimum of $N(r, Y)$ over all $Y$ of cardinality at least $\delta \cdot |X|$. To prove the proposition, we will show that for $d > 0, \delta > 0$, and $n \ge r \ge 2$ satisfying
\begin{equation}
\label{eq:subgrid}
 (2^{-d -1} \delta)^{2r^{d-1}} \cdot nr^{-1} \ge 1,
\end{equation}
we have
\begin{equation}
\label{eq:subgrid_ind}
N_d(n, r, \delta) \ge  (2^{-d-1}\delta)^{\frac{r^{d+1} - r}{r-1}}\frac{n^{rd}}{(r!)^d}.
\end{equation}
The condition of the proposition is stricter than \eqref{eq:subgrid}, so this will be sufficient to show the proposition.

We prove the claim by induction. Setting $d = 1$, we find
\[N_1(n, r, \delta) \ge \frac{(\delta n - r)^r}{r!}\]
For $r \le \frac{1}{2}\delta n$, this gives 
\[N_1 \ge (\delta/2)^{r} \frac{n^r}{r!},\]
and this gives us the base case for \eqref{eq:subgrid_ind}.

Now consider the case of $d > 1$, and choose $Y$ with $N(r, Y)$ minimal. Take $X_{\text{thick}}$ to be the subset of $x \in X_1$ so that 
\[Y_x = Y \cap \big(\{x\} \times X_2 \times \dots \times X_d\big) \]
has density at least $\delta/2$ in $ \{x\} \times X_2 \times \dots \times X_{d}$. $X_{\text{thick}}$ has density at least $\delta/2$ in $X_1$.

Take $\mathscr{Z}$ to be the set of choices of subsets $Z_2, \dots, Z_d$, $Z_i \subseteq X_i$ such that each $Z_i$ has cardinality $r$. We have
\[|\mathscr{Z}| \le \frac{1}{(r!)^{d-1}} n^{r(d-1)}.\]
For 
\[\mathbf{z} = (Z_2, \dots, Z_d) \in \mathscr{Z},\]
take $n_{\mathbf{z}}$ to be the number of $x \in X_{\text{thick}}$ such that $Y$ contains 
\[\{x\} \times Z_2 \times \dots \times Z_d.\]
Then
\[N_d(n, r, \delta) = N(r, Y) \ge \sum_{\substack{\mathbf{z} \in \mathscr{Z} \\ n_{\mathbf{z}} \ge r}} \frac{1}{r!}(n_\mathbf{z} - r)^r  \ge \sum_{\mathbf{z} \in \mathscr{Z}}\frac{1}{2^rr!}n_{\mathbf{z}}^r - \frac{r^r}{r!}.\]
We have
\[\sum_{\mathbf{z} \in \mathscr{Z}} n_{\mathbf{z}} \ge |X_{\text{thick}}| \cdot N_{d-1}(n, r, \delta/2)\,\,\ge\,\, \frac{\delta}{2} n (2^{-d-1}\delta)^{\frac{r^d - r}{r-1}} \frac{n^{r(d-1)}}{(r!)^{d-1}}\]
so
\[\frac{\sum_{\mathbf{z} \in \mathscr{Z}} n_{\mathbf{z}}}{\sum_{\mathbf{z} \in \mathscr{Z}} 1} \ge \frac{\delta}{2}n(2^{-d-1}\delta)^{\frac{r^d - r}{r-1}} \ge 4n(2^{-d-1}\delta)^{\frac{r^d - 1}{r-1}} .\]
With Cauchy-Schwarz, we then get
\[N_d(n, r, \delta) \ge \frac{n^{r(d-1)}}{(r!)^{d-1}}  \left(-\frac{r^r}{r!} + \frac{1}{2^rr!}4^rn^r(2^{-d-1}\delta)^{\frac{r^{d+1} - r}{r-1}}\right).\]
But, for $r \ge 2$,
\[\frac{r^{d+1} - r}{r-1} \le 2r^d,\]
so \eqref{eq:subgrid} implies
\[N_d(n, r, \delta) \ge  (2^{-d-1}\delta)^{\frac{r^{d+1} - r}{r-1}}\frac{n^{rd}}{(r!)^d},\]
as claimed. This is thus true for all $d$ by induction, proving the proposition.
\end{proof}

As a first consequence of this proposition, we will show that, if a subset $Z$ of $X$ is large enough, then a function from $Z$ to $\mathbb{F}_2$ with ``generic differential''  will typically be $1$ on about half of $Z$. The next definition and proposition formalize this notion.

\begin{defn}
\label{def:GSZ}
Take $X_1, \dots, X_d$ to be disjoint finite nonempty sets, and take $X$ to be their product. Choose a nonempty subset $S$ of $[d]$ of cardinality at least two, and choose some $Z \subseteq X$ so $\pi_{[d] - S}(Z)$ is a point. Taking $F$ to be a function from $Z$ to $\mathbb{F}_2$, we define a function
\[dF: \,\big\{\bar{x} \in \overline{X}_S \,:\,\, \widehat{x}(\emptyset) \subseteq Z\big\}\longrightarrow \mathbb{F}_2\]
by
\[dF(\bar{x}) = \begin{cases} \sum_{x \in \widehat{x}(\emptyset)} F(x) &\text{ if }\,\,\, |\widehat{x}(\emptyset)| = 2^{|S|} \\ 0 &\text{ otherwise.}\end{cases}
\]
Write $\mathscr{G}_S(Z)$ for the image of this map $d$. In addition, for $\epsilon > 0$, write $\mathscr{G}_S(\epsilon,\, Z)$ for the set of $g \in \mathscr{G}_S(Z)$ expressible in the form $g = dF$ for some $F$ that equals $1$ on more than $(0.5 + \epsilon)|Z|$ or fewer than $(0.5 - \epsilon)|Z|$  points in $Z$.
\end{defn}

\begin{prop}
\label{prop:sbst_FG}
Taking $X$ and $Z$ as in the previous definition, choose $\delta > 0$ so that
\[|Z| \ge \delta \cdot |\pi_S(X)|.\]
Suppose $|X_i| \ge n$ for each $i \in S$. Then, for $\epsilon > 0$,
\[\frac{|\mathscr{G}_S(\epsilon,\, Z)|}{|\mathscr{G}_S(Z)|} \le \exp\left( |\pi_S(X)| \cdot \left(-\delta \epsilon^2 \,+\, 2^{|S|+2}\cdot n^{-1/2^{|S|}}\right)\right).\]
\end{prop}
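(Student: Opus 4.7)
The plan is to reduce the ratio to a large-deviations bound on uniformly random $\mathbb{F}_2$-valued functions on $Z$, and then to bound the kernel of $d$.

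Let $H = \ker d \subseteq \mathbb{F}_2^Z$; then $|\mathscr{G}_S(Z)| = 2^{|Z|}/|H|$ because $d$ is a surjective $\mathbb{F}_2$-linear map. By definition, each $g \in \mathscr{G}_S(\epsilon, Z)$ admits some biased preimage $F_g$, and the assignment $g \mapsto F_g$ (for any fixed choice of lifts) is injective, so $|\mathscr{G}_S(\epsilon,Z)| \le |B|$ where $B \subseteq \mathbb{F}_2^Z$ is the set of $\epsilon$-biased functions. Treating each $F(x)$ as an independent uniform bit, Hoeffding's inequality yields $|B| \le 2 \cdot 2^{|Z|} \exp(-2\epsilon^2 |Z|)$, and since $|Z| \ge \delta |\pi_S(X)|$,
\[\frac{|\mathscr{G}_S(\epsilon, Z)|}{|\mathscr{G}_S(Z)|} \;\le\; 2|H|\exp\bigl(-2\delta\epsilon^2 |\pi_S(X)|\bigr).\]
The factor of $2$ in the Hoeffding exponent produces a surplus of $\delta\epsilon^2 |\pi_S(X)|$ relative to the target main exponent, which is available to absorb $\log(2|H|)$.

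The remaining task is to bound $|H|$. An element of $H$ is a function $F\colon Z \to \mathbb{F}_2$ whose sum over every full sub-box $\widehat{x}(\emptyset) \subseteq Z$ vanishes; each such sub-box supplies an $\mathbb{F}_2$-linear constraint. By iterated Cauchy--Schwarz (the Gowers box inequality), the density of full sub-boxes in $\overline{X}_S$ is at least $\delta^{2^{|S|}}$, providing an abundance of such constraints. To convert this abundance into an upper bound of the form $\log_2|H| \le C \cdot 2^{|S|+2} n^{-1/2^{|S|}} |\pi_S(X)|$, I would run a direct $|S|$-fold Cauchy--Schwarz iteration: the exponent $1/2^{|S|}$ in the error is precisely the $2^{|S|}$-th root that the iteration produces, and the factor $n^{-1/2^{|S|}}$ arises from losing a factor $|X_i|^{-1}$ averaged through the $2^{|S|}$ iterations. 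Combining this kernel bound with the display above and noting that the $\log(2|H|)$ term is dominated by the Hoeffding surplus plus the $2^{|S|+2}n^{-1/2^{|S|}}|\pi_S(X)|$ allowance gives the claimed inequality.

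The main obstacle is converting the abundance of sub-box constraints into linearly independent rank reductions. The box inequality only guarantees a count of full sub-boxes, and these constraints can be highly redundant in general; the argument must use the density hypothesis $|Z| \ge \delta |\pi_S(X)|$ and the size hypothesis $|X_i| \ge n$ together, since the density guarantees many sub-boxes while the coordinate sizes guarantee that they spread transversally enough for a controlled rank estimate. Once $|H|$ is bounded as above, substitution into the Hoeffding step completes the proof.
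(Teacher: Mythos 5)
Your opening reduction is the same as the paper's: bound $|\mathscr{G}_S(\epsilon,Z)|$ by the number of $\epsilon$-biased functions (each $g$ has at least one biased lift) and estimate that count by Hoeffding, then write $|\mathscr{G}_S(Z)| = 2^{|Z|}/|\ker d|$, so that everything hinges on showing $\log_2|\ker d| \lesssim 2^{|S|+2} n^{-1/2^{|S|}}\,|\pi_S(X)|$. The gap is exactly the one you flag and do not resolve: the Gowers box inequality does give an abundance of full sub-boxes inside $Z$ (density about $\delta^{2^{|S|}}$), but an abundance of linear constraints gives no upper bound on $\dim\ker d$, since the constraints can be almost entirely redundant, and no amount of further Cauchy--Schwarz iteration on the dense set $Z$ extracts a rank lower bound from a count of constraints. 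As written, your proposal has no mechanism producing the kernel estimate, so the proof is incomplete at its decisive step. (Note also that the kernel bound should not involve $\delta$ at all; the density hypothesis is only needed in the final Hoeffding ratio, via $|Z|\ge\delta|\pi_S(X)|$.)

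The paper closes this with a short combinatorial device you are missing. Let $Z'\subseteq Z$ be a \emph{maximal} subset containing no full sub-box $\widehat{z}(\emptyset)$ with $|\widehat{z}(\emptyset)|=2^{|S|}$. If $F\in\ker d$ and $x\in Z\setminus Z'$, maximality forces $Z'\cup\{x\}$ to contain a full sub-box, which must pass through $x$; its remaining $2^{|S|}-1$ vertices lie in $Z'$, and the vanishing of $dF$ on that box determines $F(x)$ from $F|_{Z'}$. Hence restriction gives an injection $\ker d\hookrightarrow\mathbb{F}_2^{Z'}$, so $|\ker d|\le 2^{|Z'|}$. The size of $Z'$ is then controlled by Proposition \ref{prop:subgrid} with $r=2$: by \eqref{eq:subgrid}, any subset of $\pi_S(X)$ of density exceeding roughly $2^{|S|+2} n^{-1/2^{|S|}}$ contains a $2\times\dots\times 2$ grid, so the box-free set $Z'$ satisfies $|Z'|\le 2^{|S|+2} n^{-1/2^{|S|}}\,|\pi_S(X)|$. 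This Zarankiewicz-type bound for grid-free sets is where the exponent $1/2^{|S|}$ genuinely comes from (and it is itself proved by the iterated Cauchy--Schwarz you have in mind), rather than from applying Cauchy--Schwarz to $Z$ directly. If you replace your kernel paragraph with the maximal box-free subset argument plus an appeal to Proposition \ref{prop:subgrid}, the rest of your write-up goes through and matches the stated constants.
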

\begin{proof}
Take $Z'$ to be a maximal subset of $Z$ so that there is no $\bar{z} \in \overline{X}_S$ satisfying $|\widehat{z}(\emptyset)| = 2^{|S|}$ and $\widehat{z}(\emptyset) \subseteq Z'$. We see that the kernel of the map $d: \mathbb{F}_2^Z \rightarrow \mathscr{G}_S(Z)$ then has size at most $2^{|Z'|}$. From applying \eqref{eq:subgrid} with $r = 2$, we also have
\[ |Z'| \le |\pi_S(X)| \cdot 2^{|S| + 2} \cdot N^{-1/2^{|S|}}.\]
Then we must have
\[|\mathscr{G}_S(Z)| \ge 2^{|Z|} \cdot \exp\left(-|\pi_S(X)| \cdot 2^{|S| + 2} \cdot N^{-1/2^{|S|}}\right)\]

On the other hand, from Hoeffding's inequality, the number of $F$ equaling $1$ on more than $(0.5 + \epsilon)|Z|$ or fewer than $(0.5 - \epsilon)|Z|$ points in $Z$ is bounded by
\[2^{|Z| + 1} \exp\big(-2\epsilon^2 |Z|\big)\]
by Hoeffding's inequality \cite[Theorem 1]{Okam59}. Then $\mathscr{G}_S(\epsilon,\, Z)$ is bounded by
\[|\mathscr{G}_S(\epsilon,\, Z)| \le 2^{|Z| } \exp\big(-2\epsilon^2 |Z|\big).\]
Taking ratios of these estimates then gives the result.
\end{proof}

We run into two issues when we try to apply Proposition \ref{prop:sbst_FG} directly to Proposition \ref{prop:AR_main}. The first is that we do not a priori have any control on the form of $Z = \overline{Y}_{\emptyset}^{\,\circ}$. We can restrict an element of $\mathscr{G}_S(\pi_S(X_S))$ to $\mathscr{G}_S(Z)$, but the preimages of the various $\mathscr{G}_S(\epsilon,\, Z)$  will depend on the choice of $Z$. Furthermore, in the context of Proposition \ref{prop:AR_main}, it is not enough that $\widehat{x}(\emptyset)$ lie in $\overline{Y}_{\emptyset}^{\,\circ}$ to conclude $dF(\bar{x}) = g(\bar{x})$ for the relevant $F$ and $g$; we must have instead that $\widehat{x}(T)$ meet $\overline{Y}_{T}^{\,\circ}$ for each proper $T$ in $S$.

Fortunately, thanks to the structure already found for additive-restrictive systems, both of these issues can be solved with a little more work. First, we have a regularity condition on $\overline{Y}_{\emptyset}^{\,\circ}$ proved in Proposition \ref{prop:AR_main}, where we found that $x_0$ could be proved to be in this set by finding a nice cube $\bar{x} \in \overline{X}_S$ with all other vertices in this set. Because of this, we do not need to consider all possible $Z$. Furthermore, thanks to Proposition \ref{prop:ars_density}, we can find a minimal density for $\overline{Y}_S^{\,\circ}$ in terms of the density of $\overline{Y}_{\emptyset}^{\,\circ}$, and this is enough to circumvent the second issue. The end result is the following proposition.
\begin{prop}
\label{prop:staff}
There is an absolute positive constant $A$ so that we have the following:

Take $X$ and $S$ as in the the previous definition. For $a \ge 2$ and $\epsilon > 0$, define
\[\mathscr{G}_S(\epsilon, \,a, \,X)\]
to be the set of $g \in \mathscr{G}_S\big(\pi_S(X_S)\big)$ for which there is some $Z \subseteq X$, some $F: Z \rightarrow \mathbb{F}_2$, some additive restrictive system $\mathfrak{A}$ on $X$, and some subset $\overline{Z}_S$ of $\overline{X}_S$ so that
\begin{itemize}
\item The image of $Z$ under $\pi_{[d]-S}$ is a point.
\item For each $T \subseteq S$, we have $|A_T(\mathfrak{A})| \le a$.
\item If $\bar{x}$ is in $\overline{Z}_S$, then $\widehat{x}(\emptyset) \subseteq Z$ and
\[dF(\bar{x}) = g(\bar{x}).\]
\item We have equalities
\[\overline{Z}_S = \bigcap_{T \subsetneq S} \big\{\bar{x} \in \overline{X}_S\,:\,\, \widehat{x}(T) \cap \overline{Y}_T^{\,\circ}(\mathfrak{A}) \ne \emptyset \big\}\]
and
\[Z = \overline{Y}_{\emptyset}^{\,\circ}(\mathfrak{A}).\]
\item The function $F$ is $1$ on more than $0.5|Z| + \epsilon|\pi_S(X)|$ or fewer than $0.5|Z| - \epsilon|\pi_S(X)|$ of the points in $Z$.
\end{itemize}
Write $n$ for $\min_{i \in S} |X_i|$. Then, if $\epsilon$ is less than $a^{-1}$ and
\begin{equation}
\label{eq:Gea_cond}
\log n \ge A \cdot 6^{|S|} \log \epsilon^{-1},
\end{equation}
we have
\[\frac{|\mathscr{G}_S(\epsilon, \, a,\, X)|}{|\mathscr{G}_S(\pi_S(X_S))|}  \le \exp\big(-|\pi_S(X)| \cdot n^{-1/2}\big). \]
\end{prop}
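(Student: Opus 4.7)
The strategy is to reframe the target ratio as a probability: choosing $F \in \mathbb{F}_2^X$ uniformly makes $g = dF$ uniformly distributed in $\mathscr{G}_S(\pi_S(X))$ up to constant-sized fibers, so it suffices to bound $\Pr_F[dF \in \mathscr{G}_S(\epsilon, a, X)]$. For each candidate additive-restrictive system $\mathfrak{A}$ with associated $Z = \overline{Y}_\emptyset^\circ(\mathfrak{A})$ and $\overline{Z}_S$, the event that $\mathfrak{A}$ witnesses the membership amounts to saying that some translate $F|_Z + v$ with $v$ in the kernel of the restricted corner-summing map $d_Z : \mathbb{F}_2^Z \to \mathbb{F}_2^{\overline{Z}_S}$ is imbalanced in the sense of the fifth bullet.

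First I would dispose of $\mathfrak{A}$'s with $\delta := |Z|/|\pi_S(X)| < 2\epsilon$, since no function on such a small $Z$ can be imbalanced. For the surviving $\mathfrak{A}$'s, Proposition \ref{prop:ars_density} yields $|\overline{Y}_T^{\,\circ}|/|\overline{X}_T| \ge \delta^{2^{|T|}} a^{-3^{|T|}}$ for every $T \subseteq S$, so in particular $\overline{Y}_S \subseteq \overline{Z}_S$ has density at least $\eta := (2\epsilon)^{2^{|S|}} a^{-3^{|S|}}$ in $\overline{X}_S$. Applying Proposition \ref{prop:subgrid} to $\overline{Y}_S$ inside $\prod_{i \in S}(X_i \times X_i)$ (factors of size at least $n^2$) with $r$ close to the threshold $(\log n^2 / 5 \log \eta^{-1})^{1/(|S|-1)}$, whose existence is secured by the hypothesis $\log n \ge A \cdot 6^{|S|} \log \epsilon^{-1}$ (with $6^{|S|} = 2^{|S|} \cdot 3^{|S|}$ matching exactly the exponents appearing in the density loss), one extracts a subgrid $Y_1^* \times \dots \times Y_{|S|}^* \subseteq \overline{Y}_S$. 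Each $\bar{x}$ in this subgrid satisfies $\widehat{x}(\emptyset) \subseteq Z$ and lies in $\overline{Z}_S$, and so contributes a vanishing-sum relation to $\ker d_Z$, forcing $|\ker d_Z| \le 2^{|Z| - r'}$ for a suitable $r'$ extracted from the subgrid.

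Combining this with Hoeffding's inequality, as in the proof of Proposition \ref{prop:sbst_FG}, bounds
\[
\Pr_F[B(\mathfrak{A})] \;\le\; 2\exp\bigl(-2\epsilon^2|\pi_S(X)|^2/|Z|\bigr)\cdot|\ker d_Z|,
\]
and the subgrid extraction makes this at most $\exp\bigl(|\pi_S(X)|(C_{|S|}\, n^{-1/2^{|S|}} - 2\epsilon^2)\bigr)$ for an explicit constant $C_{|S|}$ depending only on $|S|$ and $a$.

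One then has to sum this estimate over $\mathfrak{A}$; this is the step I expect to be the main obstacle. The naive count of additive-restrictive systems is enormous, but only the pair $(Z, \overline{Z}_S)$ actually affects $B(\mathfrak{A})$, and the additivity axiom --- each $F_T \colon \overline{Y}_T \to A_T$ lands in an abelian group of size at most $a$ --- severely constrains how many such pairs arise. After grouping $\mathfrak{A}$'s by their induced $(Z, \overline{Z}_S)$ and using that each $F_T$ takes at most $a$ values, the logarithmic union-bound cost is dominated by the Hoeffding saving $\epsilon^2 |\pi_S(X)| \gtrsim \epsilon^2 n^{|S|}$ thanks to the margin $\log n \ge A\cdot 6^{|S|}\log\epsilon^{-1}$, leaving the required $|\pi_S(X)|\cdot n^{-1/2}$ slack in the exponent. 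Working out this bookkeeping carefully --- in particular, refining the naive $n^{-1/2^{|S|}}$ Ramsey tail to the $n^{-1/2}$ claimed on the right side --- is where the bulk of the technical work lies, and I would expect to need to iterate the subgrid extraction or sharpen the per-$\mathfrak{A}$ kernel bound rather than rely solely on a single application of Proposition \ref{prop:subgrid}.
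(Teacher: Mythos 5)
Your overall shape (Hoeffding count of imbalanced $F$'s against a kernel/Ramsey bound, as in Proposition \ref{prop:sbst_FG}) is the right starting point, but there are two genuine gaps, and you have flagged but not closed exactly the two places where the paper's proof does its real work. First, your per-system kernel bound fails quantitatively: extracting a single subgrid of side $r \approx (\log n/5\log\eta^{-1})^{1/(|S|-1)}$ from $\overline{Y}_S$ yields only on the order of $(r-1)^{|S|}$ independent relations, i.e.\ polylogarithmically many in $n$, whereas to beat the Hoeffding count $2^{|Z|}\exp(-2\epsilon'^2|Z|)$ you need the kernel of $d_Z$ to be as small as $2^{|Z'|}$ with $|Z'| = O(2^{|S|+2}n^{-1/2^{|S|}}|\pi_S(X)|)$, which is the bound of Proposition \ref{prop:sbst_FG}. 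That bound is only available when \emph{every} full cube inside the relevant set lies in $\overline{Z}_S$, and this is false for an arbitrary pair $(Z,\overline{Z}_S)$: membership of $\bar{x}$ in $\overline{Z}_S$ requires every intermediate face $\widehat{x}(T)$ to meet $\overline{Y}_T^{\,\circ}$, not merely $\widehat{x}(\emptyset)\subseteq Z$ --- precisely the difficulty the paper points out in the paragraph preceding the proposition. Second, your union bound over systems is left as "bookkeeping," but as stated it cannot work: even after grouping systems by $(Z,\overline{Z}_S)$ there are up to $2^{|\pi_S(X)|}$ candidate sets $Z$, which swamps any saving of $\exp(-\epsilon^2|\pi_S(X)|)$, and the additivity axiom does not by itself reduce this count to something manageable.

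The paper's proof resolves both issues simultaneously, and this is the idea missing from your proposal: it never enumerates systems at all. For a bad $g$ witnessed by $(F,Z,\mathfrak{A})$, it defines for a base point $x_0\in Z$ the set $Z(x_0)$ of points $x$ joined to $x_0$ by some cube $\bar{x}$ all of whose faces through $x_0$ lie in the appropriate $\overline{Y}_T^{\,\circ}$; Proposition \ref{prop:ars_density} guarantees one can greedily choose at most $\epsilon^{-3^{|S|+1}}$ base points $x_1,\dots,x_r$ so that the residual sets $Z'(x_j)$ cover all but an $\epsilon/2$-fraction of $Z$, and by pigeonhole $F$ is imbalanced on some single $Z'(x_j)$. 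Additivity of $dF$ and $g$ then upgrades "every point of $Z'(x_j)$ sees $x_j$ through a good cube" to "$dF=g$ on every cube contained in $Z'(x_j)$," so Proposition \ref{prop:sbst_FG} applies to $Z'(x_j)$ with its full-strength kernel bound. Crucially, each $Z(x_j)$ is determined by its $|S|$ slices $Z(x_j)\cap\pi^{-1}_{S-\{i\}}(x_j)$, hence by about $|S|\cdot|\pi_S(X)|/n$ bits, so the union bound runs only over the choices of the $x_j$, these slices, and the index $j$ --- a cost of roughly $\epsilon^{-3^{|S|+1}}|S||\pi_S(X)|/n$ bits, which the hypothesis $\log n \ge A\cdot 6^{|S|}\log\epsilon^{-1}$ makes negligible against the Hoeffding saving $\epsilon^{O(3^{|S|})}|\pi_S(X)|$, with the final slack absorbed into $|\pi_S(X)|\cdot n^{-1/2}$. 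Without this low-complexity description of the imbalanced subset and the additivity/connectivity step, your per-$\mathfrak{A}$ estimate and your summation over $\mathfrak{A}$ both break down, so the proposal as written does not yet constitute a proof.
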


\begin{proof}
Consider a function $g$ coming from $F$, $Z$, and $\mathfrak{A}$ as in the proposition.
For any $x_0 \in Z$, define $Z(x_0)$ as the set of $x$ in $Z$ for which there is some $\bar{x} \in \overline{X}_S$ with $x, x_0 \in \widehat{x}(\emptyset)$ such that, if $T$ is a proper subset of $S$ and $\bar{y}$ is an element of $\widehat{x}(T)$ that contains the vertex $x_0$, then $\bar{y}$ is in $\overline{Y}_T^{\,\circ}$. From Proposition \ref{prop:ars_density}, we see that there is some sequence $x_1, \dots, x_r$ of points in $Z$ so that
\begin{equation}
\label{eq:Zxi_dom}
Z(x_j) - Z(x_{j-1}) - \dots - Z(x_1)
\end{equation}
has density at least $(0.5 \cdot a^{-1}\epsilon)^{3^{|S|}} \ge \epsilon^{3^{|S| + 1}}$ for $j \ge 1$ and so that the complement
\[Z - Z(x_r) - \dots - Z(x_1)\]
has density at most $\epsilon/2$ in $W$. Each $Z(x_j)$ is determined by the sequence of structures
\[Z(x_j) \cap \pi^{-1}_{S - \{i\}}(x_j)\]
as $i$ varies through $S$; this info can be specified by
\[|\pi_S(X)| \cdot \sum_{i \in S} \frac{1}{|X_i|} \le |\pi_S(X)| \cdot |S| \cdot n^{-1}\]
bits. There are at most $\epsilon^{-3^{|S| + 1}}$ elements $x_j$, so, given $x_1, \dots, x_r$ the info of all the $Z(x_j)$ can be specified with at most 
\[\epsilon^{-3^{|S| + 1}}|\pi_S(X)| \cdot |S| \cdot n^{-1}\]
bits. Writing $Z'(x_j)$ for the expression \eqref{eq:Zxi_dom}, we find that there must be a $j$ so that $F$ equals $1$ on at least
\[|Z'(x_j)|\cdot 0.5(1 + \epsilon)\]
vertices in $Z'(x_j)$.

The conditions on $Z'(x_j)$ imply that, if $x$ is in $Z'(x_j)$, then there is a cube $\bar{x} \in \overline{X}_S$ with $x, x_j \in \widehat{x}(\emptyset)$ such that $dF(\bar{x}) = g(\bar{x})$. Using the additivity of $dF$ and $g$ we find then that, if $\bar{x} \in \overline{X}_S$ has $\widehat{x}(\emptyset)$ contained in $Z'(x_j)$, then $dF(\bar{x}) = g(\bar{x})$. Then Proposition \ref{prop:sbst_FG} implies that the number of $g$ in $\mathscr{G}_S(\epsilon, \,a,\, X)$ corresponding to this choice of $Z'(x_j)$ is bounded by

\[|\mathscr{G}_S(\pi_S(X_S))| \cdot \exp\left(|\pi_S(X_S)| \cdot \left(-\epsilon^{4 + 3^{|S| + 1}} + 2^{|S| + 2} \cdot n^{-1/2^{|S|}}\right)\right).\]
For sufficient $A$, we use \eqref{eq:Gea_cond} to bound this by
\[|\mathscr{G}_S(\pi_S(X_S))| \cdot \exp\left(-|\pi_S(X_S)| \cdot \epsilon^{5 + 3^{|S| + 1}} \right).\]

Summing this over all possible choices of the $(x_1, \dots, x_r)$, over all choices of the $Z(x_i)$, and over all the choices of $j$, we find that the ratio being estimated by the proposition is bounded by
\[r|\pi_S(X_S)|^r \exp\left(|\pi_S(X_S)| \cdot \left(-\epsilon^{5 + 3^{|S| + 1}} + \epsilon^{-3^{|S| +1}}N^{-1}|S|\right)\right).\]
For sufficient $A$, this is less than
\[r|\pi_S(X_S)|^r \exp\left(-|\pi_S(X_S)| \cdot \epsilon^{6 + 3^{|S| + 1}} \right).\]
If $A$ is sufficiently large, we find that $n^{-1/2} > \log |\pi_S(X_S)|/ |\pi_S(X_S)|$, and the ratio is bounded by
\[\exp\left(-|\pi_S(X_S)| \cdot \epsilon^{7 + 3^{|S| + 1}} \right),\]
which is within the bounds of the proposition for sufficient $A$.
\end{proof}

\section{Prime divisors as a Poisson point process}
\label{sec:pois}
Take $N$ to be a large real number, and take $n$ to be a positive integer chosen uniformly from $[1, N]$. Taking $p_1 < \dots < p_r$ to be the prime divisors of $n$, it is commonly understood  that the values $\log \log p_i$ behave more or less like random variables uniformly chosen from the interval $[0, \log \log N]$. This model breaks down for the prime divisors near the endpoints of the interval, but it is otherwise fairly robust (see \cite{Gran07}, for example).

This is a very convenient model to have. In order for our argument to apply to an integer $n$, we need there to be some $i \ge \sqrt{r}$ with
\[\log \log p_{i+1} - \log \log p_i \ge \log \log \log p_{i+1} +  \frac{1}{2}\log \log \log \log N\]
that obeys some technical conditions, so we rely on the fact that most integers have a gap this large. It is far easier to prove that such a gap usually exists by working in the corresponding Poisson point process than by directly dealing with the prime factors.

In proving the model, the key object to understand is
\[I_k(u) = \int_{\substack{t_1,\dots, t_k \ge 1 \\ t_1 + \dots + t_k \le u}} \frac{dt_1}{t_1} \dots \frac{dt_k}{t_k}\]
where $u > 1$ and $k \ge 1$. This integral dates back to Ramanujan, with recent work done by Soundararajan \cite{Soun12}. We clearly have
\[I_k(u) \le \left(\int_1^u \frac{dt}t\right)^k = (\log u)^k.\]
Our first result is a better estimate for this integral.
\begin{lem}
\label{lem:Iku}
For $u \ge 3$ and $k \ge 1$, we have
\[\left| I_k(u) - \,\frac{e^{-\gamma \alpha}}{\Gamma(1 + \alpha)}(\log u)^k \right| = \mathcal{O}\left( (\alpha + 1)(\log u)^k\frac{(\log \log u)^3}{\log u}\right)\]
where $\alpha = k/\log u$, $\gamma$ is the Euler-Mascheroni constant, and the implicit constant is absolute. 
\end{lem}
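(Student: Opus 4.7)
The plan is to use the Laplace transform. I would begin by establishing the identity
\[\int_0^\infty I_k(u) e^{-\lambda u} \, du = \frac{E_1(\lambda)^k}{\lambda}, \qquad \lambda > 0,\]
where $E_1(\lambda) = \int_\lambda^\infty e^{-t}/t \, dt$ is the exponential integral. This follows from Fubini together with $\int_1^\infty e^{-\lambda t}\, dt/t = E_1(\lambda)$. Inverting gives the Bromwich representation
\[I_k(u) = \frac{1}{2\pi i}\int_{c - i \infty}^{c + i \infty} e^{\lambda u} \frac{E_1(\lambda)^k}{\lambda}\, d\lambda\]
for any $c > 0$, which I would analyze by the saddle point method.

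Writing the integrand as $\exp(f(\lambda))$ with $f(\lambda) = \lambda u + k \log E_1(\lambda) - \log \lambda$, the saddle $\lambda_0$ satisfies $u = 1/\lambda_0 + k e^{-\lambda_0}/(\lambda_0 E_1(\lambda_0))$. Using $E_1(\lambda) = -\log \lambda - \gamma + O(\lambda)$ as $\lambda \to 0$, one finds $\lambda_0 = (1+\alpha)/u$ to leading order, hence small. Plugging into $f$ and its second derivative gives
\[\exp(f(\lambda_0)) \sim (\log u)^k \cdot \frac{u \, e^{1+\alpha} \, e^{-\gamma \alpha}}{(1+\alpha)^{1+\alpha}}, \qquad f''(\lambda_0) \sim \frac{u^2}{1+\alpha},\]
and the standard Gaussian integral yields
\[I_k(u) \approx \frac{\exp(f(\lambda_0))}{\sqrt{2\pi f''(\lambda_0)}} = (\log u)^k \cdot \frac{e^{-\gamma \alpha} e^{1+\alpha} \sqrt{1+\alpha}}{(1+\alpha)^{1+\alpha}\sqrt{2\pi}}.\]
By Stirling's formula applied to $\Gamma(1+\alpha)$ (using $\Gamma(z) \sim z^{z-1/2} e^{-z} \sqrt{2\pi}$ with $z = 1+\alpha$), this expression equals $(\log u)^k e^{-\gamma\alpha}/\Gamma(1+\alpha)$ up to a controlled multiplicative factor.

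The main obstacle is uniform error control across the full range $\alpha \ge 1/\log u$. Three independent sources of error must be tracked and summed: the $O(\lambda)$ remainder in $E_1(\lambda) = -\log \lambda - \gamma + O(\lambda)$, which contributes a relative error of order $O((1+\alpha)/\log u)$ at the saddle; the deviation of $\Gamma(1+\alpha)$ from its Stirling approximation, contributing $O((1+\alpha)^{-1})$; and the off-saddle contribution of the contour, which after optimizing the width of the Gaussian window contributes an error of order $(\log \log u)/\log u$ per derivative. Tracking the joint dependence on these three logarithmic scales is where the stated $(\log \log u)^3$ factor arises, and the prefactor $(\alpha + 1)$ absorbs the passage through Stirling's formula when $\alpha$ is of order one. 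As a fallback, the recursion $I_k(u) = \int_1^{u-k+1} I_{k-1}(u-t)\, dt/t$ obtained by integrating out one variable allows an inductive proof in which the target constant $e^{-\gamma \alpha}/\Gamma(1+\alpha)$ is propagated using $\Gamma(2+\alpha) = (1+\alpha)\Gamma(1+\alpha)$, with the bulk of the integral coming from small $t$ where $\log(u-t) = \log u + O(t/u)$; this is more elementary but requires more intricate bookkeeping to achieve uniformity in $\alpha$.
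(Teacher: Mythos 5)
Your starting point is the same as the paper's: the Laplace transform identity $\int_0^\infty I_k(u)e^{-\lambda u}\,du = E_1(\lambda)^k/\lambda$ and its Bromwich inversion is exactly the representation the paper borrows from Soundararajan (equation \eqref{eq:Iku_soun}, after the substitution $\lambda = s/u$). But from there the analyses diverge, and your route has a genuine gap in precisely the main range of the lemma. When $k$ is fixed and $u$ is large, $\alpha = k/\log u$ is of size $1/\log u$, and the lemma demands the constant $e^{-\gamma\alpha}/\Gamma(1+\alpha)$ with \emph{relative} error $O\big((\alpha+1)(\log\log u)^3/\log u\big) = o(1)$. Your error budget cannot deliver this: you yourself record the Stirling discrepancy as $O\big((1+\alpha)^{-1}\big)$, which is a constant-size relative error when $\alpha$ is small (indeed $\Gamma(1)=1$ while the Stirling approximation at $z=1$ gives $\sqrt{2\pi}/e \approx 0.92$), and the Gaussian saddle-point approximation itself is only accurate up to constant-size corrections in this regime, because with $k=O(1)$ the integrand $e^{\lambda u}E_1(\lambda)^k/\lambda$ is not sharply peaked — the usual small parameter controlling the saddle expansion is essentially $1/(1+\alpha)$, which is not small. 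Getting the stated asymptotic out of "saddle point $+$ Stirling" would require these two constant-size errors to cancel identically to the needed order; they do not do so automatically, and proving that they do amounts to redoing the argument by other means. The fallback recursion $I_k(u)=\int I_{k-1}(u-t)\,dt/t$ is a correct identity, but the inductive propagation of the $\alpha$-dependent constant (note $\alpha$ changes from $(k-1)/\log(u-t)$ to $k/\log u$ across the step) is only asserted, not carried out, so it does not close the gap either.

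For contrast, the paper avoids any Gaussian approximation: it truncates the Bromwich contour at height $R=\log u$ (bounding the tails by pushing them to $\mathrm{Re}\,s\to-\infty$), expands $E_1(s/u)^k = (\log u)^k\exp\big(-\alpha(\log s+\gamma)\big)\big(1+O(k|s|/(u\log u)+k\log^2|s|/\log^2 u)\big)$ on the truncated segment, and then recognizes the resulting main term as the Hankel-type contour integral $\frac{1}{2\pi i}\int \frac{e^s}{s}e^{-\alpha(\log s+\gamma)}\,ds = e^{-\gamma\alpha}/\Gamma(1+\alpha)$. The exact constant is thus produced by an identity rather than an approximation, and the only errors are the explicitly small expansion and truncation terms, which is where the $\alpha(\log\log u)^3/\log u + 1/\log u$ bound comes from; the degenerate ranges ($u\le 2k$ or $u$ bounded) are handled by the trivial bound $I_k(u)\le(\log u)^k$. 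If you want to salvage your approach, the fix is to replace the saddle-point/Stirling step with this direct comparison against the contour integral for $1/\Gamma(1+\alpha)$.
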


Next, we give three properties a well-behaved sample of points on an interval should have. We have given these properties the names \emph{comfortable spacing}, \emph{regularity}, and \emph{extravagant spacing}.

\begin{defn*}
Take $L > 2$, take $n$ to be a positive integer satisfying $|L - n| < L^{3/4}$, and take $X_1, \dots, X_n$ to be independent random variables, each distributed uniformly on $[0, L]$. For $i \le n$, take $U_{(i)}$ to be the $i^{th}$ order statistic of this sample set; that is, take $U_{(i)}$ to be value of the $i^{th}$ smallest $X_i$. 
\begin{itemize}
\item For $\delta > 0$ and $0 \le L_0 < L$, we call the sample \emph{$\delta$-comfortably spaced} above $L_0$ if, for all $i < n$ such that $U_{(i)} \ge L_0$, we have
\[U_{(i+1)} - U_{(i)}\, \ge \,\delta \exp(-U_{(i)}).\]
\item For $C_0 > 0$, we call the sample \emph{$C_0$-regular} if, for all $i \le n$,
\[\left|U_{(i)} - i \right|\, <\, C_0^{1/5} \cdot \max(i, \,C_0)^{4/5}.\]
\item We call the sample \emph{extravagantly spaced} if, for some $m \ge \sqrt{n}$, we have
\[\exp(U_{(m)}) \,\ge\, U_{(m)} \cdot (\log L)^{1/2} \cdot \left(\sum_{i \ge 1}^{m-1} \exp(U_{(i)})\right).\]
\end{itemize}
\end{defn*}

\begin{prop}
\label{prop:bad_samp_bhv}
Take $X_1, \dots, X_n$ to be a sample as in the above definition.
\begin{enumerate}
\item For $\delta > 0$ and $0 \le L_0 < L$, the probability that this sample is not $\delta$-comfortably spaced above $L_0$ is bounded by
\[\mathcal{O}\big(\delta \cdot \exp(-L_0)\big)\]
with absolute implicit constant.
\item There is a positive constant $c$ so that, for $C_0 > 0$, the probability that this sample is not $C_0$-regular is bounded by
\[\mathcal{O}\big(\exp(-c\cdot C_0)\big)\]
with absolute implicit constant.
\item There is a positive constant $c$ so that the probability that the sample is not extravagantly spaced is bounded by
\[\mathcal{O}\left(\exp\left(-c \cdot \sqrt{\log L} \right)\right)\]
with absolute implicit constant.
\end{enumerate}
\end{prop}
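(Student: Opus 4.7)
For part (1), the plan is a direct union bound over ordered pairs. If the sample fails to be $\delta$-comfortably spaced above $L_0$, then there must exist distinct indices $j, k$ with $\min(X_j, X_k) \ge L_0$ and $|X_j - X_k| < \delta \exp(-\min(X_j, X_k))$. For a fixed pair this has probability at most
\[\frac{2}{L^2}\int_{L_0}^{L}\delta e^{-x}\,dx \,=\, O\!\left(\delta L^{-2} e^{-L_0}\right),\]
and summing over the $\binom{n}{2} = O(L^2)$ pairs gives the claimed $O(\delta e^{-L_0})$.

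For part (2), I would use the identity $P(U_{(i)} \le t) = P(\mathrm{Bin}(n, t/L) \ge i)$ to translate deviations of $U_{(i)}$ from its expected value $iL/n \approx i$ into binomial tail events. Bernstein's inequality bounds $P(|U_{(i)} - i| > s)$ by $2\exp(-c\min(s^2/i,\,s))$, and for the threshold $s = C_0^{1/5} \max(i, C_0)^{4/5}$ a direct calculation shows that this exponent is at least $c' C_0^{2/5} \max(i, C_0)^{3/5}$. Summing over $i$, the head $i \le C_0$ contributes at most $C_0 \exp(-c' C_0)$ and the tail $i > C_0$ is a geometrically decaying series dominated by its initial term; after absorbing the polynomial factor into the constant and handling $i > n/2$ by symmetry (reflection about $L/2$), the total is $O(\exp(-c'' C_0))$.

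The main work lies in part (3). The key preparatory observation is that on the $C_0$-regular event the sum in the definition of extravagant spacing is a bounded geometric tail: since $U_{(i)} \approx i$, the ratio $\sum_{i=1}^{m-1}\exp(U_{(i)})/\exp(U_{(m-1)})$ is bounded by an absolute constant $C$. Hence extravagance at $m$ is implied by the gap condition $G_m := U_{(m)} - U_{(m-1)} \ge \log m + \tfrac{1}{2}\log\log L + O(1)$. I would then invoke the Dirichlet representation
\[(G_1,\dots,G_n) \,\stackrel{d}{=}\, \frac{L}{S}(E_1,\dots,E_n), \qquad S = E_0 + E_1 + \dots + E_n,\]
for i.i.d.\ $E_j \sim \mathrm{Exp}(1)$, and condition on the Chernoff event $\{S \le (1 + n^{-1/4}) n\}$, whose complement has probability $\exp(-c\, n^{1/2})$. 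On this event, $G_m < t_m$ forces $E_m < (1 + o(1)) t_m L/n$; crucially, the $E_m$ are \emph{independent}, so the probability that no $m \in [\sqrt n, n/2]$ achieves the required gap is at most
\[\prod_{m = \lceil\sqrt n\rceil}^{\lfloor n/2\rfloor}\!\Big(1 - \frac{c_1}{m\sqrt{\log L}}\Big) \,\le\, \exp\!\left(-\frac{c_1}{\sqrt{\log L}}\sum_{m}\frac{1}{m}\right) \,\le\, \exp\!\left(-c_2\sqrt{\log L}\right),\]
where the last step uses $\sum_{m=\sqrt n}^{n/2} 1/m \asymp \log n \asymp \log L$ together with $n \asymp L$.

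The main obstacle will be in part (3): one must thread the $(1+o(1))$ factor coming from the normalization $L/S$ through the gap condition without corrupting the final exponent $\sqrt{\log L}$, which is why it is advantageous that the error enters multiplicatively rather than additively and why conditioning on $S$ close to $L$ is the right device. A second technicality is to arrange that a single, $L$-independent choice of $C_0$ makes the regularity event of part (2) occur except on a probability exponentially small in $\sqrt{\log L}$, so that the geometric-series comparison of $\sum_{i<m}\exp(U_{(i)})$ with $\exp(U_{(m-1)})$ holds with an absolute constant and can be invoked inside the argument for part (3) at negligible cost.
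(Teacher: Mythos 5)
Your parts (1) and (2) are fine. Part (1) is the same union bound over pairs that the paper uses. Part (2) takes a genuinely different and in fact simpler route: instead of the paper's multiscale chaining argument with the checkpoints $\alpha_j=\min(C_0 j^5,L)$ and a downward induction, you bound each deviation event $\{|U_{(i)}-i|>C_0^{1/5}\max(i,C_0)^{4/5}\}$ directly through the binomial representation and Bernstein, and sum over $i$; the head $i\le C_0$ and the superpolynomially decaying tail both give $O(\exp(-cC_0))$, and the $|L-n|<L^{3/4}$ shift of the mean is dominated by the deviation threshold (the case of bounded $C_0$ or bounded $L$ being trivial). That is a legitimate alternative proof.

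Part (3), however, has a genuine gap at the step where you claim that on the $C_0$-regular event the ratio $\sum_{i=1}^{m-1}\exp(U_{(i)})/\exp(U_{(m-1)})$ is bounded by an absolute constant. Regularity only pins $U_{(i)}$ to within $C_0^{1/5}\max(i,C_0)^{4/5}$ of $i$, so it permits on the order of $C_0^{1/5}m^{4/5}$ order statistics to sit within distance $O(1)$ below $U_{(m-1)}$; the only bound regularity gives on that ratio is polynomial in $m$ (trivially $m-1$), not $O(1)$. This is fatal to the reduction, because the gap threshold you then actually need is about $2\log m+\tfrac12\log\log L$ rather than $\log m+\tfrac12\log\log L$, so the per-$m$ success probability drops from $\asymp m^{-1}(\log L)^{-1/2}$ to $\asymp m^{-2}(\log L)^{-1/2}$, and $\sum_{m\ge\sqrt n}m^{-2}(\log L)^{-1/2}=o(1)$: your product bound then gives failure probability $1-o(1)$ instead of $\exp(-c\sqrt{\log L})$. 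In other words, reducing extravagance to a pure gap event destroys exactly the $\sqrt{\log L}$ exponent you are after.

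This is precisely the difficulty the paper's block structure is built to handle: it compares $\exp(U_{(m)})$ only against the sum over indices $i\le k_{j+1}$ (a constant factor smaller than $m$, where regularity genuinely forces superexponential separation), and it controls the contribution of the recent indices $k_{j+1}<i<m$ not by a worst-case bound but probabilistically, via the event $E'_m$ and the $I_k(u)$ asymptotics of Lemma \ref{lem:Iku}, which show that conditionally on the gap event this contribution is acceptable with probability bounded below by a constant. Conditioning on the block endpoints $\mathbf{U}$ and allowing one candidate $m$ per block also supplies the independence across blocks that your argument would otherwise lack, since the events ``$\sum_{i<m}e^{U_{(i)}}$ is not bloated'' are strongly dependent for nearby $m$. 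Your Dirichlet/exponential representation and the conditioning on $S$ are fine as far as they go (up to the routine point that conditioning on the Chernoff event slightly perturbs independence), but without a joint gap-plus-spread event per block, or some equivalent device, the proof of part (3) does not go through.
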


Once this result is proved, we can move on to its number theoretic analogue. For $N, D$ positive real numbers and $r$ a positive integer, we define $S_r(N, D)$ to be the set of squarefree positive integers less than $N$ with exactly $r$ prime factors and no prime factors less than $D$.

 \begin{defn}
\label{def:spacing}
 Take $N > 30$ and $D > 3$ to be real numbers satisfying $(\log N)^{1/4} > \log D$, and take $r$ to be a positive integer satisfying
 \begin{equation}
 \label{eq:r_ND_bnd}
 \left| r - \log\left(\frac{\log N}{\log D}\right)\right| \le \log\left(\frac{\log N}{\log D}\right)^{2/3}.
 \end{equation}
For $n \in S_r(N, D)$, write $(p_1, \dots, p_r)$ for the primes dividing $n$ in order from smallest to largest.
\begin{itemize}
\item For $\delta > 0$ and $D_1 > D$, we call $n$ \emph{comfortably spaced} above $D_1$ if, for all $i < r$ such that $p_{i} > D_1$, we have
\[4D_1 < 2p_i < p_{i+1}.\]
\item For $C_0 > 1$, we call $n$ \emph{$C_0$-regular} if, for all $i \le \frac{1}{3}r$,
\[\left|\log \log p_i - \log \log D - i \right|\, <\, C_0^{1/5} \cdot \max(i, \,C_0)^{4/5}.\]
\item We call $n$ \emph{extravagantly spaced} if, for some $m$ in  $\left(0.5 \cdot r^{1/2},\,\, 0.5 \cdot r\right)$, we have
\begin{equation}
\label{eq:extravagant}
\log p_m \,\ge\, \log\left( \frac{\log p_m}{\log D}\right)\cdot (\log \log \log N)^{1/2} \cdot \left(\sum_{i \ge 1}^{m-1} \log p_i \right).
\end{equation}
\end{itemize}
\end{defn}

\begin{thm}
\label{thm:SkND}
Choose $N$, $D$, and $r$ as in the previous definition. Choose $n$ uniformly at random from the set $S_r(N, D)$.
\begin{enumerate}
\item For $D_1 > 3$, the probability that $n$ is not comfortably spaced above $D_1$ is
\[\mathcal{O}\big((\log D_1)^{-1}\big) + \mathcal{O}\big((\log N)^{-1/2}\big)\]
with absolute implicit constant.
\item There is a positive constant $c$ so that, for $C_0 > 0$, the probability that $n$ is not $C_0$ regular is
\[\mathcal{O}\big(\exp(-c \cdot C_0)\big) + \mathcal{O}\left(\exp(-c(\log \log N)^{1/3})\right) \]
with absolute implicit constant.
\item There is a positive constant $c$ so that the probability that $n$ is not extravagantly spaced is bounded by
\[\mathcal{O}\left(\exp\left(-c \cdot (\log \log \log N)^{1/2} \right)\right)\]
with absolute implicit constant.
\end{enumerate}
\end{thm}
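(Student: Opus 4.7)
The plan is to transfer each part of Proposition \ref{prop:bad_samp_bhv} to the integer setting via the change of variables
\[Y_i = \log \log p_i - \log \log D,\]
which sends the ordered prime factors of $n \in S_r(N,D)$ to an ordered $r$-tuple in $[0,L]^r$ with $L = \log(\log N / \log D)$. Under this substitution the inequality $p_{i+1} > 2 p_i$ translates, via $\log(p_{i+1}/p_i) \approx \exp(Y_i)(Y_{i+1}-Y_i)\log D$, to a $\delta$-comfortable-spacing condition with $\delta \asymp 1/\log D$, and the threshold $p_i > D_1$ becomes $Y_i > L_0 := \log(\log D_1/\log D)$, so that $\delta\, e^{-L_0} \asymp 1/\log D_1$; regularity of the $\log\log p_i$ is literally regularity of the $Y_i$; and the extravagant-spacing inequality (\ref{eq:extravagant}) becomes the Poisson-side condition $\exp(Y_m) \ge Y_m\sqrt{\log L}\sum_{i<m}\exp(Y_i)$ after dividing through by $\log D$, with $\sqrt{\log L} \asymp \sqrt{\log\log\log N}$.

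To make this transfer rigorous, the key input is the asymptotic density of integers in $S_r(N,D)$ with prescribed prime factor sizes. Counting $n \le N$ with exactly $r$ distinct prime factors all exceeding $D$ reduces, via the prime number theorem with classical error, to the integral $I_r(u)$ of Lemma \ref{lem:Iku}; counting those whose prime factors are further constrained to a prescribed product of dyadic windows reduces similarly to an appropriate truncation of the same integral. Taking ratios, one obtains that the distribution of the tuple $(Y_1, \dots, Y_r)$ under the uniform measure on $S_r(N,D)$ agrees with the ordered-statistics distribution of $r$ uniform samples on $[0,L]$ up to relative errors coming from Lemma \ref{lem:Iku} and from the PNT remainder. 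Parts (1), (2), and (3) of the theorem now follow from the corresponding parts of Proposition \ref{prop:bad_samp_bhv} with the Poisson-side parameters above; the auxiliary error terms $O((\log N)^{-1/2})$ and $O(\exp(-c(\log\log N)^{1/3}))$ absorb the corrections from Lemma \ref{lem:Iku} and from the arithmetic approximation.

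The main obstacle is ensuring that the Poisson comparison is quantitatively strong enough to recover the stated error terms. Lemma \ref{lem:Iku}'s relative error is of order $(\alpha+1)(\log\log u)^3/\log u$ with $\alpha = r/\log u$, which under the hypotheses on $r$ is only $(\log\log\log N)^3/\log\log N$, and this is larger than $\exp(-c(\log\log N)^{1/3})$. To handle part (2) one must therefore argue by cases: when $C_0 \le (\log\log N)^{1/3}$ the leading $\exp(-c C_0)$ term already swallows the Lemma \ref{lem:Iku} correction, while for $C_0 > (\log\log N)^{1/3}$ the Poisson bound $\exp(-cC_0)$ is subsumed into the second summand. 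A similar localization handles part (3): the upper bound $m < r/2$ built into Definition \ref{def:spacing}(3) restricts attention to indices where the Poisson approximation is sharp, so the Poisson probability $\exp(-c\sqrt{\log L})$ transfers directly. Part (1) is slightly more delicate because $\log D_1$ may be small, but the contribution from $n$ having a prime factor in $(D_1, 2D_1]$ is itself of size $\log 2/\log D_1$ and combines cleanly with the $\delta$-comfortable-spacing bound from the Poisson side, while the $(\log N)^{-1/2}$ term is produced by standard tail estimates for the PNT error applied to the relevant prime sums.
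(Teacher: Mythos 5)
The central claim of your transfer --- that under the uniform counting measure on $S_r(N,D)$ the tuple $(\log\log p_1,\dots,\log\log p_r)$ agrees with the order statistics of $r$ uniform points in $[0,L]$ up to small relative error --- is false, and it is not what Lemma \ref{lem:Iku} plus the prime number theorem gives. What matches the log-log--uniform model (the volume $I_r(u)$, via the paper's $\mathrm{Grid}$ construction) is the \emph{harmonic} sum $\sum 1/(p_1\cdots p_r)$, not the count of integers: the counting measure is the harmonic measure reweighted by $n=p_1\cdots p_r$ itself, which drastically biases the largest prime factors toward $N$. For instance, your model gives the event $p_r\le N^{1/10}$ probability about $e^{-\log 10}=1/10$ (since $r\approx L$ by \eqref{eq:r_ND_bnd}), while the true proportion is of Dickman type, many orders of magnitude smaller; so the two laws are not comparable even up to constants on events involving the top primes, and "taking ratios" of counts with prescribed windows does not reduce to truncations of $I_r(u)$ with uniform relative error. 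The statement that can actually be proved, and that the paper proves as Proposition \ref{prop:SrkND}, is weaker in three ways: it concerns only the marginal law of the prime factors below the midpoint $N_1=\exp\bigl(\sqrt{\log N\cdot\exp(F(D)-B_1)}\bigr)$; it requires conditioning on the number $k$ of such factors being within $r^{2/3}$ of $r/2$ (this conditioning, via Hoeffding, is the actual source of the $\exp(-c(\log\log N)^{1/3})$ term in part (2), not an absorption of the Lemma \ref{lem:Iku} error as in your case analysis); and it gives comparability only up to absolute multiplicative constants, never $1+o(1)$. That suffices to transfer the big-$O$ bounds of Proposition \ref{prop:bad_samp_bhv}(2),(3), because regularity and extravagant spacing only constrain indices $i\le r/3$ and $m<r/2$, i.e.\ primes below $N_1$, after enlarging the bad event slightly (the "bigification") to absorb the Mertens error; but your proposal never restricts to the small primes nor conditions on $k$, so the transfer as written is unjustified.

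Part (1) cannot be obtained by any such transfer, since comfortable spacing constrains consecutive prime factors all the way up to $p_r\le N$, where no comparison with the uniform model is available. The paper therefore handles it by a direct count: the uncomfortably spaced $n$ are at most $\sum_{p>D_1}\sum_{p<q<2p}\bigl|S_{r-2}(N/pq,\,D)\bigr|$, which for $p<N^{1/4}$ is $O\bigl(|S_r(N,D)|/\log D_1\bigr)$ and for $p\ge N^{1/4}$ is crudely $O(N/\log N)$, the latter being $O\bigl((\log N)^{-1/2}|S_r(N,D)|\bigr)$ thanks to the lower bound of Proposition \ref{prop:SrND_est} together with \eqref{eq:r_ND_bnd} and $\log D<(\log N)^{1/4}$. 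Your appeal to "standard tail estimates for the PNT error" produces neither term. To repair the proposal you need (i) the conditioned, small-prime, constant-factor comparison in place of the claimed full-tuple one for parts (2) and (3), and (ii) a separate direct argument of the above kind for part (1).
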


\subsection{Proof of Lemma \ref{lem:Iku}}
\begin{proof}
The structure of our argument comes largely from \cite{Soun12}. We take a branch of the logarithm that is holomorphic away from the nonpositive reals and which is real on the positive reals. Following \cite{Soun12}, we have
\begin{equation}
\label{eq:Iku_soun}
I_k(u) = \frac{1}{2\pi i} \int_{c - i\infty}^{c + i\infty} \frac{e^s}{s} \left(\int_{1}^{\infty} \frac{e^{-ts/u}}{t}dt \right)^k ds
\end{equation}
for any positive $c$. We recognize the inner integral as the exponential integral function $E_1(s/u)$, which can be rewritten for $s/u$ off the negative real axis as
\[E_1(s/u) = \int_{s/u}^{\infty} \frac{e^{-z}}{z}dz,\]
where the integral is along any path that does not cross the negative real axis \cite{Abra72}. We also have
\[E_1(s/u) = -\gamma - \log s/u + \sum_{n=1}^{\infty} \frac{(-s/u)^n}{n \cdot n!}.\]
 If $|s/u| < 1$ and $s/u$ is off the negative real axis, we find
\[|E_1(s/u)| \le - \log|s/u| + A\]
with $A$ some absolute constant. If $|s/u| \ge 1$, we instead find
\[|E_1(s/u)| \le |e^{-s/u}|  + A.\]
From this, if $u > k$, we can write
\[\int_{c + iR}^{c + i\infty} \frac{e^s}{s} \big(E_1(s/u) \big)^k ds= \int_{c + iR}^{-\infty + iR} \frac{e^s}{s} \big(E_1(s/u) \big)^k ds \]
for $R > 0$.  Choose $|c + iR| < u$ with $R > e^A$. We have
\[\left|\int_{c+iR}^{-\infty + iR} \frac{e^s}{s} \big(E_1(s/u) \big)^k ds\right| \le \int_{-\infty}^{c} \frac{e^t}{R} \left(\left(e^{-t/u} + A\right)^k + \left(\log u\right)^k\right) dt.\]
Assuming $u > 2k$ and that $u$ is greater than some constant determined from $A$, and choosing $c = 1$, this is bounded by
\[\mathcal{O}\left( \frac{1}{R}(\log u)^k\right).\]
for some choice of constant $A > 0$. Repeating this for negative $R$, we find 
\[\left|I_k(u) - \int_{1-iR}^{1+iR} \frac{e^s}{s} \big(E_1(s/u) \big)^k ds\right| = \mathcal{O}\left( \frac{1}{R}(\log u)^k\right)\]
for $R$ sufficiently large and $|1 + iR| < u$.

 If $z$ has positive real part, we also have
\[\frac{1}{\Gamma(z)} = \frac{1}{2\pi i}  \int_{1 - i\infty}^{1 + i\infty} e^s s^{-z}ds\]
(see \cite[Ch. IX, Misc. Ex. 24]{Cops35}). Then
\[\frac{e^{-\gamma \alpha}}{\Gamma(1 + \alpha)}  = \frac{1}{2\pi i} \int_{1 - i\infty}^{1 + i\infty}  \frac{e^s}{s} e^{-\alpha (\log s + \gamma)}ds.\]
For $R > 1$, we have
\[\left|\int_{1 + iR}^{1 + i \infty} \frac{e^s}{s} e^{-\alpha(\log s + \gamma)}ds\right| = \left|\int_{1 + iR}^{-\infty + iR} \frac{e^s}{s} e^{-\alpha(\log s + \gamma)}ds \right| \]
\[\le \int_{-\infty}^1 \frac{e^s}{R} ds = \frac{e}{R}.\]

Take $R = \log u$ and take $s$ on the segment $[1-iR,\, 1 + iR]$. We have
\begin{align*}
(E_1(s/u))^k &= (\log u)^k \left( 1 - \frac{\log s + \gamma + \mathcal{O}(s/u)}{\log u}\right)^k \\
&  = (\log u)^k \exp\left(-k \frac{\log s + \gamma}{\log u} + \mathcal{O}\left(\frac{k s/u}{\log u} + \frac{k \log^2 s}{\log^2 u}\right)\right) \\
&  = (\log u)^k\exp\left(-k \frac{\log s + \gamma}{\log u} \right) + (\log u)^k \cdot \mathcal{O}\left(\frac{k s/u}{\log u} + \frac{k \log^2 s}{\log^2 u}\right)
\end{align*}
Then, if $R = \log u$ and $u > 2k$ is sufficiently large,
\[I_k(u) = (\log u)^k \frac{e^{-\gamma \alpha}}{\Gamma(1 + \alpha)}+ \mathcal{O}\left((\log u)^k \cdot\left( \frac{k \cdot \log^3 R}{\log^2 u} + \frac{1}{R}\right)\right),\]
within the bounds of the lemma. If $u \le 2k$ or $u$ is small, the lemma is implied by $I_k(u) \le (\log u)^k$.

\end{proof}

\subsection{Proof of Proposition \ref{prop:bad_samp_bhv}}
\begin{proof}[Proof of (1)]
For $i, j \le n$, the probability that $X_i > L_0$ is less than $X_j$ and that the gap $X_j - X_i$ is uncomfortable is
\[\frac{1}{L^2}\int_{L_0}^L \int_{X_i}^{X_i + \delta \cdot e^{-X_i}} dX_j \, dX_i = \frac{1}{L^2} \int_{L_0}^L \delta \cdot e^{-X_i}dX_i \le \frac{1}{L^2}\delta \cdot e^{-L_0}.\]
There are $n(n-1)$ choices of the pair $i, j$, and this number is $\mathcal{O}(L^2)$, so the probability that some pair gives an uncomfortable gap is
\[\mathcal{O}\big(\delta \cdot \exp(-L_0)\big).\]
\end{proof}
\begin{proof}[Proof of (2)]
We note it is sufficient to show that there is a positive constant  $A$ so that, for $C_0 \ge 1$, the probability that
\begin{equation}
\label{eq:UiAC}
\left| U_{(i)} - i\right| \, < \, A\cdot C_0^{1/5} \cdot \max(i,\, C_0)^{4/5}
\end{equation}
does not hold for some $i$ is bounded by $\mathcal{O}(\exp(-C_0))$ with absolute implicit constant.

If $C_0 \ge L$, the proposition is trivial, so we assume $C_0 < L$. We also assume $(L/C_0)^{1/5}$ is an integer. If it is not, we can rechoose $C_0$ from the interval $[C_0, \,32C_0]$ so that this is the case. For $j > 1$, define a sequence
\[\alpha_j = \min\big(C_0 \cdot j^5,\,\, L\big).\]
Take $k_j$ to be the number of $X_i$ less than $\alpha_j$. We can think of $k_j$ as the result of running $k_{j+1}$ Bernoulli trials with success rate $\alpha_j/\alpha_{j+1}$. Then, for any $j$, the probability that
\begin{equation}
\label{eq:yes_binom}
\left|k_j - \frac{\alpha_j}{\alpha_{j+1}} k_{j+1} \right| \le \sqrt{(C_0 + j)k_{j+1}}
\end{equation}
is not satisfied by the sample is bounded by $\exp(- C_0 - j)$ by Hoeffding's inequality. Then the probability that this inequality is not satisfied somewhere is bounded by
\[\sum_{j \ge 1} \exp(-C_0 - j) \le \exp(- C_0).\]

So suppose that \eqref{eq:yes_binom} is satisfied for all $j$. We then claim that, with the proper choice of constant $A$, the sample satisfies \eqref{eq:UiAC} at all $i$. First, we claim that there is some positive constant $B > 0$ so that
\begin{equation}
\label{eq:reg_interm}
\left| k_j - \frac{n}{L}\alpha_j\right| \le B \cdot C_0^{1/5} \cdot \alpha_j^{4/5}.
\end{equation}
for all $j$. This is clear if $\alpha_j = L$, as in this case $k_j = n$. 

We can then proceed by induction. Suppose \eqref{eq:reg_interm} holds for all $j > m$ and that we wish to prove it for $j = m$. Then
\begin{align*}
\left| k_m - \frac{n}{L}\alpha_m\right|  \,\,&\le\,\, \left| k_m - \frac{\alpha_m}{\alpha_{m+1}}k_{m+1}\right| \,\,\, + \,\, \frac{\alpha_m}{\alpha_{m+1}}  \left| k_{m+1} - \frac{n}{L}\alpha_{m+1}\right| \\
&\le \sqrt{(C_0 + m)k_{m+1}} \,\,+ \left(\frac{m}{m+1}\right)^5 \cdot B \cdot C_0^{1/5} \cdot \alpha_{m+1}^{4/5} \\
&= \sqrt{(C_0 + m)k_{m+1}}  \,\,+ \,\,\,\frac{m}{m+1} \,\,\cdot B \cdot C_0^{1/5} \cdot \alpha_{m}^{4/5}
\end{align*}
To prove the inequality, we then just need
\begin{equation}
\label{eq:reg_interm2}
\sqrt{(C_0 + m)k_{m+1}}  \le \frac{1}{m+1} B \cdot C_0^{1/5} \cdot \alpha_{m}^{4/5}.
\end{equation}
The square value of the left hand side of \eqref{eq:reg_interm2} has upper bound
\[\big( C_0 \,\,+\,\, m\big)\cdot \big(C_0 \cdot m^5 \,\,+\,\, B \cdot C_0 \cdot (m+1)^4\big),\]
which can be expanded to a sum of four monomials.
The square value of the right hand side of \eqref{eq:reg_interm2} has lower bound
\[\frac{1}{4} B^2 \cdot C_0^2 \cdot m^6.\] 
For each of the four monomials from the left hand side, we can choose $B_0$ so that the monomial is bounded by
\[\frac{1}{16} B^2 \cdot C_0^2 \cdot m^6\]
if $B > B_0$. For example,
\[C_0 \cdot m^6 \le \frac{1}{16} B^2 \cdot C_0^2 \cdot m^6\]
holds for $B_0 \ge 4$. If $B$ is greater than each $B_0$, then \eqref{eq:reg_interm2} necessarily holds, finishing the induction step. Then \eqref{eq:reg_interm} holds for all $m$.

We now turn to \eqref{eq:UiAC}. If $i \le k_1$, we have
 \[\left|U_{(i)} - i\right| \,\le\, \alpha_1 + k_1 = \mathcal{O}(C_0),\] 
in the bound of the inequality. Now, suppose $i > k_1$. Then $i$ is in some interval $(k_j, \,k_{j+1}]$. We have
\begin{align*}
\left|U_{(i)} - i\right| &= \max\big(U_{(i)} - i,\,\, i - U_{(i)}\big) \\
&\le \max\big(\alpha_{j+1} - k_j,\,\, k_{j+1} - \alpha_j \big) \\
&\le \left|\alpha_j - \alpha_{j+1}\right| + \max\big(\left|\alpha_j - k_j\right|,\, \left|\alpha_{j+1} - k_{j+1}\right|\big)
\end{align*}
We can bound $\alpha_{j+1} - \alpha_j$ by $\mathcal{O}(j^4 \cdot C_0)$. Using $i \ge \alpha_j$, we find
\[\alpha_{j+1} - \alpha_j = \mathcal{O}\left(C_0^{1/5} \cdot i^{4/5}\right).\]
Using \eqref{eq:reg_interm} and the estimate $\frac{L}{n} = 1 + \mathcal{O}(L^{-1/4})$, we also have
\[ \max\big(\left|\alpha_j - k_j\right|,\, \left|\alpha_{j+1} - k_{j+1}\right|\big) = \mathcal{O}\left(C_0^{1/5} \cdot i^{4/5}\right).\]
Then \eqref{eq:UiAC} is satisfied for some sufficiently large constant $A$, giving the part.
\end{proof}

\begin{proof}[Proof of (3)]
For the third part, we note that we can assume that $L$ is larger than some arbitrarily large positive constant $L_0$. 
Define sequences
\[k_j = \lfloor 4^{-k} \cdot n \rfloor \quad\text{and}\quad k'_j = \lfloor 0.5 \cdot 4^{-k} \cdot n\rfloor\]
for $j \ge 0$.  Take $M$ to be the maximal $M$ such that $k_{M+1} \ge \sqrt{n}$. Suppose 
\[u_0 > u_1 > \dots > u_{M+1}.\]
is a sequence of real numbers such that $u_0 = L$ and such that
\[\left|u_j - k_j\right| \le k_j^{5/6}\]
if $j \le M + 1$. We say a sample obeys condition $\mathbf{U}$ if $U_{(k_j )}$ equals $u_j$ for all $j \le M+1$.

For $m \ge \sqrt{n}$ in the interval $(k'_j, k_j]$, we say that $E_m$ is satisfied if
\[U_{(m)} - U_{(m-1)}\,\ge \,\, \log 2 + \log k_j + \frac{1}{2}\log \log L.\]
We say further that $E'_m$ is satisfied if
\[\exp(U_{(m)}) \,\ge\, 2\cdot U_{(m)} \cdot (\log L)^{1/2} \cdot \left(\sum_{i > k_{j+1}}^{m-1} \exp(U_{(i)})\right).\]
Finally, we say that $R_m$ is satisfied if
\[U_{(m)} \ge \frac{3}{2}U_{(k_{j+1})}.\]

For $m$ as above, we have
\begin{align*}
\mathbb{P}\left(E_m \,\big| \, \mathbf{U}\right) &= \left(1 - \frac{\log 2 + \log k_j +\frac{1}{2}\log\log L}{u_j - u_{j+1}}\right)^{k_j - k_{j+1} - 1}\\
&\ge \left(1 - \frac{\log 2 + \log k_j +\frac{1}{2}\log\log L}{\frac{3}{4}k_j - 2k_j^{5/6}}\right)^{\frac{3}{4}k_j - 1}\\
&\ge \exp\left(- \log 2 - \log k_j - \frac{1}{2}\log \log L - o(1) \right)\\
&\ge \frac{e^{-o(1)}}{2 \cdot k_j \cdot \sqrt{\log L}} \ge \frac{1}{3 \cdot k_j \cdot \sqrt{\log L}}
\end{align*}
for a sufficiently large choice of $L_0$.

Next, we note that there is a small positive constant $c_1$ so that, for sufficiently large $L$,
\[\mathbb{P}\left(R_m \,\big|\, \mathbf{U}\right) \ge 1 - e^{-c_1 \cdot k_j}\]
This is an easy consequence of Hoeffding's inequality. We also see that
\[\mathbb{P}\left(E'_m\,\big|\, \mathbf{U},\, R_m,\, E_m\right) \ge \frac{I_{m - k_{j+1}}(\exp(0.5 \cdot u_{j+1}))}{(0.5 \cdot u_{j+1})^{m - k_{j+1}}}.\]
Via Lemma \ref{lem:Iku}, for sufficient $L_0$, this can be bounded from below by some small positive constant $c_2$. Then
\[\mathbb{P}\left(E'_m\, \big|\, \mathbf{U}\right) \ge \frac{c_2}{3 \cdot k_j \cdot \sqrt{\log L}}\, -\, e^{-c_1 \cdot k_j}.\]
For sufficient $L_0$, this is bounded by
\[\frac{c_2}{4 \cdot k_j \cdot \sqrt{\log L}}.\]

We say that $T_j$ is satisfied if $E_m$ is satisfied for some $m$ in $(k'_j, k_j]$, and we say $T'_j$ is satisfied if $E'_m$ is satisfied for some $m$ in this interval. Then we have

\[\mathbb{P}\left(T_j \,\big| \,\mathbf{U}\right) = \mathcal{O}\left(\frac{1}{\sqrt{\log k_j}}\right).\]
For sufficient $L_0$, this is bounded from above by $0.5$. At the same time, we have
\begin{align*}
\mathbb{P}\left(T'_j \,\big| \,\mathbf{U}\right) &\ge\, \sum_{m > k'_j}^{k_j} \mathbb{P}\left(E'_m \,\big|\, \mathbf{U}, \,\overline{E_{m+1}}, \, \dots, \,\overline{E_{k_j}}\right)\,\cdot\,  \mathbb{P}\left(\overline{E_{m+1}},\, \dots, \, \overline{E_{k_j}} \,\big|\, \mathbf{U}\right) \\
&\ge\, \sum_{m > k'_j}^{k_j} \mathbb{P}\left(E'_m \,\big|\, \mathbf{U}\right) \cdot \mathbb{P}\left(\overline{T_j} \,\big|\, \mathbf{U}\right) \\
&\ge\, \frac{c_2(k_j - k'_j)}{8 \cdot k_j \cdot \sqrt{\log L}} \,\,\ge\,\, \frac{c_3}{\sqrt{\log L}}
\end{align*}
for a sufficiently small constant $c_3$.

Given $\mathbf{U}$, the $T'_j$ are independent events. Therefore, given $\mathbf{U}$, the probability that none of the  $T'_j$ are true for $j \le M$ is at most
\[\left(1 -  \frac{c_3}{\sqrt{\log L}}\right)^M = \mathcal{O}\left(\exp\big(-c \cdot \sqrt{\log L}\big)\right)\]
for a sufficiently small constant $c$.

Now, if our sample is $\log L$-regular, then the $U_{(k_j)}$ will all be within $k_j^{5/6}$ of $k_j$ for sufficiently large $k_j$. Then the probability that the sample is not $\log L$-regular or that no $T'_j$ holds is at most
\[\mathcal{O}\left(\exp(-c \cdot \log L)\right) +  \mathcal{O}\left(\exp\big(-c \cdot \sqrt{\log L}\big)\right) =  \mathcal{O}\left(\exp\big(-c \cdot \sqrt{\log L}\big)\right).\]

If the sample is $\log L$-regular, we find that
\[\exp(U_{(k'_j)}) \ge 2\cdot U_{(k_j)} \cdot (\log L)^{1/2} \cdot \left(\sum_{i \le k_{j+1}} \exp(U_{(i)})\right)\]
is true for all $j \le M$ if $L_0$ is sufficiently large. Assuming this, we find that $E_m'$ implies that
\[\exp(U_{(m)}) \,\ge\,  U_{(m)} \cdot (\log L)^{1/2} \cdot \left(\sum_{i =1}^{m-1} \exp(U_{(i)})\right).\]
Because of this, if $L_0$ is sufficiently large, and if the sample is $\log L$-regular and satisfies $T'_j$ for some $j \le M$, we must have that the sample is extravagantly spaced. This gives the proposition.
\end{proof}

\subsection{Proof of Theorem \ref{thm:SkND}}
\label{ssec:pois2}
For positive $x$, we define
\[F(x) = \sum_{p \le x} \frac{1}{p},\]
the sum being over the primes no greater than $x$.

Using the prime number theorem, we know there are constants $A, c > 0$ so that, for all $x \ge 1.5$,
\[\big|F(x) - \log \log x - B_1\big| \le A \cdot e^{-c \sqrt{\log x}},\]
where $B_1$ is the Mertens' constant.

With this in mind, suppose that $T$ is a collection of tuples of primes of length $r$. We define $\text{Grid}(T) \subseteq \mathbb{R}^r$ to be the union
\[\bigcup_{(p_1, \dots, p_r) \in T} \prod_{i \le r} \left[F(p_i) - \frac{1}{p_i} - B_1,\,\, F(p_i)-B_1\right].\]
Now suppose $V \subseteq \mathbb{R}^r$ contains
\[\log \log T = \big\{ \left(\log \log p_1, \dots, \log \log p_r \right) \,:\,\, (p_1, \dots, p_r) \in T\big\}\]
For $(x_1, \dots, x_r) \in \mathbb{R}^k$, we define
\[\tau(x_1, \dots, x_r) \]
\[= \prod_{i \le r} \left[x_i - A_1 \cdot \exp(-c \cdot e^{x_i/2}), \,\,x_i + A_1 \cdot \exp(-c \cdot e^{x_i/2})\right].\]
We then define
\[V^{\text{big}} = \bigcup_{x \in V} \tau(x)\]
and
\[V^{\text{small}} = \{ x \in \mathbb{R}_{\ge}(-B_1)^r \,:\,\, \tau(x) \subseteq V\},\]
where $\mathbb{R}_{\ge}(-B_1)$ is the set of reals $\ge -B_1$.

For a proper choice of the constants $A_1$ and $c$, we then see that, if $T$ is the maximal set of prime tuples such that $\log \log T$ is contained in $V$, we have
\begin{equation}
\label{eq:sm_grd_big}
V^{\text{small}} \subseteq \text{Grid}(T) \subseteq V^{\text{big}}.
\end{equation}
This equation is extremely useful, as we have
\[\text{Vol}\big(\text{Grid}(T)\big) = \sum_{(p_1, \dots, p_r) \in T} \frac{1}{p_1 \cdot \dots \cdot p_r}.\]

For example, for $u$ a positive real and $r$ a positive integer, take $V_r(u) \subseteq \mathbb{R}^r$ to be the set of $(x_1, \dots, x_r)$ satisfying
\[e^{x_1} + \dots + e^{x_r} \le u.\]
We see that
\[\exp\big(x + A_1 \exp(-c \cdot e^{x/2})\big) - \exp(x) \le \kappa\]
for some $\kappa$ depending on $A_1$ and $c$ but not $x$. Then $V_r(u)^{\text{big}}$ is contained in $V_r(u + r\kappa)$, while $V_r(u)^{\text{small}}$ contains
\[V_r(u - r \kappa) \cap \mathbb{R}_{\ge}( -B_1)^r.\]
At the same time, we see that
\[\text{Vol}\left( V_r(u) \cap \mathbb{R}_{\ge}(B)^r\right) = I_r(e^{-B}u).\]
Thus, for $N, D > 0$ and $r$ a positive integer, we have
\[I_r\left(\frac{\log N - r \kappa}{\exp(F(D) - B_1)}\right) \le \sum_{\substack{p_1, \dots, p_r > D \\ p_1 \cdot \dots \cdot p_r < N}} \frac{1}{p_1 \cdot \dots \cdot p_r}\le I_r\left(\frac{\log N + r \kappa}{\exp(F(D) - B_1)}\right).\]
From this, if $r^2 < A \log N$ for some appropriate constant $A$ and $\log \log N - F(D) + B_1 > 1$, we can calculate
\begin{align*}
\sum_{\substack{p_1, \dots, p_r > D \\ p_1 \cdot \dots \cdot p_r < N}} \frac{1}{p_1 \cdot \dots \cdot p_r} &= I_r\left(\frac{\log N}{\exp(F(D) - B_1)}\right)\\
&+ \mathcal{O}\left(\frac{r^2}{\log N} \cdot \big(\log \log N - F(D) + B_1\big)^{r-1}\right).
\end{align*}
We call this sum $F_r(N, D)$.

Choose $\epsilon > 0$. We restrict to the case that  that $\log\log N > (1 + \epsilon) \log \log D \ge 0$, that the ratio
\[u = \frac{\log N}{\exp(F(D) - B_1)}\]
is at least $3$, and that
\[\epsilon \cdot r < \log u < \epsilon^{-1} \cdot r.\]
We define
\[G_r(N, D) = \sum_{\substack{p_1, \dots, p_r > D \\ p_1 \cdot \dots \cdot p_r < N}} \log(p_1 \cdot \dots \cdot p_r)\]
and
\[H_r(N, D) = \sum_{\substack{p_1, \dots, p_r > D \\ p_1 \cdot \dots \cdot p_r < N}} 1.\]
We claim that, subject to the restrictions above, we have

\begin{equation}
\label{eq:GrND}
G_r(N, D) = rN \cdot I_{r-1}(u) + \mathcal{O}\left(\frac{N}{\log N} (\log u)^{r + 3}\right)
\end{equation}
and
\begin{equation}
\label{eq:HrND}
H_r(N, D) = \frac{rN}{\log N} \cdot I_{r-1}\left(u \right) + \mathcal{O}\left(\frac{N}{\log^2 N} (\log u)^{r + 3}\right)
\end{equation}
with implicit constants depending only on $\epsilon$.

To see this, we calculate
\[G_r(N, D) = r \sum_{\substack{p_1, \dots, p_{r-1} > D \\ P < N/D}} \sum_{p > D}^{N/P} \ln p\]
\[ = r \sum_{\substack{p_1, \dots, p_{r-1} > D \\ P < N/D}} NP^{-1}\left(1 + \mathcal{O}\big(e^{-c \sqrt{\log N/P}}\big)\right) - \sum_{p < D} \ln p,\]
\[ =rN \cdot F_{r-1}(N/D, D) - r \cdot H_r(N/D, D) \cdot \sum_{p < D} \ln p\ \]
\[+ \mathcal{O}\left(rNe^{-c\sqrt{\log D} }\big(F_{r-1}(N, D) - F_r(N_0, D)\big) + rNe^{-c \sqrt{\log N/N_0}}F_r(N_0, D)\right)\]
for any choice of $N > N_0 > D$, where we write $P$ for $p_1 \cdot \dots \cdot p_{r-1}$. Choosing $N_0 = Ne^{-(c^{-1} \log \log N)^2}$  fits this term into the error term of \eqref{eq:GrND}. The bounds on $N$ and $D$ allow us to put $F_r(N, D) - F_r(N/D, D)$ in the error term, and we can then estimate $F_r(N, D)$ as above.

We note that, even without the assumptions on $N, D$, the above argument gives
\[G_r(N, D) = \mathcal{O}\big(rN \cdot F_{r-1}(N, D)\big).\]
Then we can calculate
\begin{align*}
H_r(N, D) &= \frac{G_r(N, D)}{\log N} + \int_D^N \frac{G_r(x, D)dx}{x \log^2x}\\
&= \frac{G_r(N, D)}{\log N} + \mathcal{O}\left(\int_D^N \frac{rx \cdot F_{r-1}(N, D)dx}{x\log^2 x}\right)\\
&= \frac{G_r(N, D)}{\log N} + \mathcal{O}\left(\frac{rN}{\log^2 N} (\log u)^{r-1}\right).
\end{align*}
From this we get the two equations.

We see that $H_r(N, D)/r!$ is a slight overestimate for the size of $S_r(N, D)$, with a correction term needed for nonsquarefree integers. This term is of relative size at most
\[\mathcal{O}\left(\sum_{p > D} \frac{1}{p^2}\right) = \mathcal{O}\left(1/D\right)\]
compared to the main term; there is a constant $D_0$ so that, for $D > D_0$, we have
\[|S_r(N, D)| \ge \frac{1}{2 \cdot r!}H_r(N, D)\]
for $N, D, r$ as above. Then, from
\[\frac{1}{2 \cdot r!} H_r\big(N, \,\max(D, \,D_0)\big) \le |S_r(N, D)| \le \frac{1}{ r!}H_r(N, D),\]
we have the following proposition.
\begin{prop}
\label{prop:SrND_est}
Take $1 > \epsilon > 0$, and suppose $N, D$ are real numbers satisfying
\[\log \log N > (1 + \epsilon) \log \log D > 0\]
and that $r$ is a positive integer satisfying
\[1 \le \epsilon \cdot r  \le \log u \le \epsilon^{-1} \cdot r\]
where $u = \frac{\log N}{\exp(F(D) - B_1)}$. Then there are positive constants $C, c$ depending only on $\epsilon$ such that
\[c \cdot \frac{N}{\log N} \frac{(\log u)^{r-1}}{(r-1)!} < \big|S_r(N, D)\big| < C \cdot \frac{N}{\log N} \frac{(\log u)^{r-1}}{(r-1)!} \]
for all sufficiently large $N$.
\end{prop}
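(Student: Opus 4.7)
The plan is to deduce the proposition from the sandwich inequality
\[\tfrac{1}{2 r!}H_r\bigl(N,\max(D,D_0)\bigr) \le |S_r(N,D)| \le \tfrac{1}{r!}H_r(N,D)\]
established in the preceding paragraph, combined with the asymptotic \eqref{eq:HrND} for $H_r$ and the estimate of $I_{r-1}(u)$ given by Lemma \ref{lem:Iku}. First I would verify that the hypotheses place us squarely inside the regime where \eqref{eq:HrND} applies: the assumption $\log\log N > (1+\epsilon)\log\log D > 0$, together with $F(D) = \log\log D + B_1 + o(1)$, yields $\log u \gtrsim_\epsilon \log\log N$, so in particular $u \to \infty$ and the derivation of \eqref{eq:HrND} is valid for $N$ large in terms of $\epsilon$.

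Next I would apply Lemma \ref{lem:Iku} to $I_{r-1}(u)$ with $\alpha = (r-1)/\log u$. The hypothesis $\epsilon r \le \log u \le \epsilon^{-1} r$ confines $\alpha$ to an interval of the form $[c_1(\epsilon),\,c_2(\epsilon)]$ bounded away from $0$ and $\infty$, so the leading factor $e^{-\gamma\alpha}/\Gamma(1+\alpha)$ is pinched between two positive constants depending only on $\epsilon$. The error term in the lemma is of size $O_\epsilon((\log u)^{r-1}\cdot(\log\log u)^3/\log u)$, which is $o_\epsilon((\log u)^{r-1})$ since $\log u$ goes to infinity. Therefore
\[I_{r-1}(u) \asymp_\epsilon (\log u)^{r-1}.\]
The error term $O(N(\log u)^{r+3}/\log^2 N)$ in \eqref{eq:HrND} is likewise negligible against the main term $rN(\log u)^{r-1}/\log N$, because $(\log u)^4/\log N$ is small (one has $\log u = O(\log\log N)$). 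Combining these,
\[H_r(N,D) \asymp_\epsilon \frac{rN(\log u)^{r-1}}{\log N},\]
and dividing by $r!$ gives exactly the asserted order $N(\log u)^{r-1}/((r-1)!\log N)$ for the upper bound.

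For the lower bound we must replace $D$ by $D' = \max(D,D_0)$. If $D \ge D_0$ the two agree and there is nothing to do. Otherwise $D_0$ is a fixed constant, so $|F(D') - F(D)| = O(1)$ and hence $\log u' = \log u + O(1)$. Since the hypothesis gives $r \le \epsilon^{-1}\log u + 1$, the ratio $(\log u/\log u')^{r-1} = (1 + O(1/\log u'))^{O_\epsilon(\log u')}$ is bounded above and below by positive constants depending only on $\epsilon$. The same applies to the factor $r/(r-1)$ in the main term (where $r$ changes by nothing). Thus $H_r(N,D')$ has the same order of magnitude as $H_r(N,D)$, and the lower bound of the sandwich delivers a lower bound of the desired form.

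The main technical point, and the only real obstacle, is to make the error estimates uniform in the pair $(r,D)$ throughout the admissible range — in particular, verifying that $\log u \to \infty$ unconditionally and that the exponent $r$ cannot inflate the multiplicative perturbations arising from the $D\mapsto D'$ switch or the error term of Lemma \ref{lem:Iku}. Both of these follow from the hypothesis $\log u \ge \epsilon r$, which ties $r$ to $\log u$ and thereby bounds $\alpha$.
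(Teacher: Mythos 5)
Your proposal is correct and follows essentially the same route as the paper: the paper obtains the proposition precisely from the sandwich $\tfrac{1}{2r!}H_r(N,\max(D,D_0)) \le |S_r(N,D)| \le \tfrac{1}{r!}H_r(N,D)$ together with \eqref{eq:HrND} and Lemma \ref{lem:Iku}, using $\epsilon r \le \log u \le \epsilon^{-1}r$ to control $\alpha$ and the error terms, exactly as you do. Your uniformity checks (in particular $\log u \gg_\epsilon \log\log N$ and the harmlessness of replacing $D$ by $\max(D,D_0)$) are the details the paper leaves implicit, and they are handled correctly.
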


Taking $\epsilon$, $N$, $D$, and $r$ as in this proposition, and taking $k \le r$, we define
\[S_{r, \,k}(N, D)\]
to be the subset of $S_r(N, D)$ of elements $n$ with exactly $k$ prime divisors smaller than
\[N_1 = \exp\left(\sqrt{\log N \cdot \exp(F(D) - B_1)}\right).\]
\begin{prop}
\label{prop:SrkND}
Given $\epsilon$, $N$, $D$, and $r$ as above, the density of the set
\[\bigcup_{|r - 0.5k| \,>\, r^{2/3}} S_{r, \,k}(N, D)\]
in $S_r(N, D)$ is bounded by
\[\mathcal{O}\left(\exp(-c(\log \log N)^{1/3})\right)\]
for some $c > 0$, with $c$ and the implicit constant depending only on $\epsilon$.

Now suppose $|0.5r - k| \le r^{2/3}$, and suppose that $T_1$ and $T_2$ are collections of tuples $(p_1, \dots, p_k)$ of distinct primes less than $N_1$ in increasing order. Writing $S_r(N, D, T)$ for the subset of $n$ in $S_{r, \,k}(N, D)$ whose $k$ smallest prime factors $(p_1, \dots, p_k)$ lie in $T$, we have
\[\frac{\big| S_{r,\, k}(N, D, T_1)\big|}{\big| S_{r,\, k}(N, D, T_2)\big|} = \mathcal{O}\left(\frac{\text{\emph{Vol}}\big(\text{\emph{Grid}}(T_1)\big)}{\text{\emph{Vol}}\big(\text{\emph{Grid}}(T_2)\big)}\right)\]
with the implicit constant depending only on $\epsilon$.
\end{prop}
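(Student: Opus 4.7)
The plan is to split any $n \in S_{r,k}(N,D)$ as $n = n_1 n_2$, where $n_1$ is the product of its prime factors below $N_1$ and $n_2$ the product of those above. By the definition of $N_1$, the value $F(N_1) - B_1$ is the midpoint of $F(D) - B_1$ and $F(N) - B_1$, so $N_1$ is the log-log midpoint of the interval from $D$ to $N$; in the Poisson picture built into the $F_r$/$H_r$ estimates above, we therefore expect roughly $r/2$ prime factors on each side of $N_1$.

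For part (1) I would write
\[|S_{r,k}(N,D)| = \frac{1}{k!(r-k)!}\sum_{\substack{D < p_1, \ldots, p_k < N_1 < p_{k+1}, \ldots, p_r \\ p_i \text{ distinct},\ \prod p_i \le N}} 1\]
and sum over the large primes with the small ones fixed: the inner count equals $H_{r-k}(N/P, N_1)$, with $P = p_1 \cdots p_k$, up to a negligible correction of relative size $\mathcal{O}(k(r-k)/N_1)$ accounting for large primes that coincide with one of the small primes. Applying \eqref{eq:HrND} and then Lemma \ref{lem:Iku}, one gets a factorised asymptotic in which the small-prime and large-prime sums decouple, producing $|S_{r,k}(N,D)|/|S_r(N,D)| \asymp \binom{r}{k} 2^{-r}$. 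Hoeffding's inequality for the binomial then yields density at most $\exp(-c r^{1/3})$ of $k$ with $|0.5r - k| > r^{2/3}$, which is $\exp(-c (\log\log N)^{1/3})$ since $r \asymp \log u \asymp \log\log N$ by our size assumptions on $r$, $D$, $N$.

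For part (2) I fix $k$ with $|0.5r - k| \le r^{2/3}$ and again reduce to
\[|S_{r,k}(N,D,T)| = \frac{1}{(r-k)!}\sum_{(p_1, \ldots, p_k) \in T} H_{r-k}\!\left(\frac{N}{P}, N_1\right) + (\text{negligible}).\]
Combining \eqref{eq:HrND} with Lemma \ref{lem:Iku}, the summand splits as a factor $1/P$ times the slowly-varying expression
\[\frac{N}{\log(N/P)} \cdot \frac{e^{-\gamma \alpha}}{\Gamma(1+\alpha)} \left(\log\frac{\log(N/P)}{\exp(F(N_1) - B_1)}\right)^{r-k-1},\]
with $\alpha = (r-k-1)/\log(\log(N/P)/\exp(F(N_1) - B_1))$. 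The tuples in $T_1, T_2$ all satisfy $\log P \le k \log N_1 = o(\log N)$ under the given constraints (from $k \le r + r^{2/3}$, $r = \mathcal{O}(\log u)$, and $\log N_1 = \sqrt{\log N \cdot \exp(F(D) - B_1)}$, so that $k \log N_1/\log N \asymp (\log u)/\sqrt{u} = o(1)$), hence $\log(N/P) = (1 + o(1))\log N$ and the displayed expression is bounded above and below by absolute multiples of its value at $P = 1$. Pulling the bounded factor out of the sum leaves precisely $\sum_{(p_1, \ldots, p_k) \in T} 1/P$, which equals $k! \cdot \text{Vol}(\text{Grid}(T))$, and the claimed ratio bound follows.

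The main obstacle is the uniform comparability claim in the previous paragraph: the exponent $r - k - 1$ is of order $\log\log N$, so even a $(1 + o(1))$ change inside the logarithm $\log(\log(N/P)/\exp(F(N_1) - B_1))$ could in principle produce an unbounded factor after being raised to the power $r-k-1$, and must be tracked carefully together with the $\Gamma(1+\alpha)$ term. This forces one to use the precise error term of Lemma \ref{lem:Iku} rather than a soft upper bound, and it is precisely this that motivates placing $N_1$ at the log-log midpoint: with that choice, $\log P/\log N$ is uniformly vanishing over admissible tuples, which is exactly what makes the tail factor stable.
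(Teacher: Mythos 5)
Your proposal is correct and follows essentially the same route as the paper's proof: the paper likewise fixes the $k$ small primes, uses $P \le N_1^k \le \sqrt{N}$ together with \eqref{eq:HrND} (i.e.\ the estimates behind Proposition \ref{prop:SrND_est}) to show that $|S_{r-k}(N/P, N_1)|$ is uniformly comparable to $P^{-1}|S_{r-k}(N, N_1)|$, deduces part (1) from the resulting binomial weights via Hoeffding, and obtains part (2) by pulling out that comparable factor and identifying $\sum_{(p_1,\dots,p_k)\in T} 1/(p_1\cdots p_k)$ with $\text{Vol}(\text{Grid}(T))$. The only slip is the extraneous factor $k!$ in your identity for that sum (the tuples in $T$ are already in increasing order, so no ordering factor arises), but this constant is the same for $T_1$ and $T_2$ and cancels in the ratio, so the conclusion is unaffected.
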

\begin{proof}
We have
\[\left| S_{r,\, k}(N, D)\right| = \sum_{D < p_1 < \dots < p_k < N_1} \left|S_{r-k}\left(\frac{N}{p_1 \cdot \dots \cdot p_k}, D\right)\right|.\]
If we choose $N$ to be large enough, the conditions on this force $N_1^k < \sqrt{N}$. This means the ratio of $(\log \log N - F(D) + B_1)^{r-1}$ and $(\log \log N/P- F(D) + B_1)^{r-1}$ is bounded by some absolute constant for any choice of $P = p_1 \cdot \dots p_k$. The same is true of the ratio of $\log N$ and $\log N/P$. Then there are constants $c, C > 0$ depending only on $\epsilon$ such that
\begin{equation}
\label{eq:choose_k}
c \cdot \frac{1}{P} \left|S_{r}(N, N_1)\right| < \left|S_{r}(N/P, N_1)\right| < C \cdot \frac{1}{P} \left|S_{r}(N, N_1)\right|
\end{equation}
for sufficient $N$. From this, we find
\[\left| S_{r,\, k}(N, D)\right| = \mathcal{O}\left(\frac{N}{\log N} \frac{2^{-r-1}(\log \log N - F(D))^{r-1}}{(r- k + 1)! k!}\right)\]
for $r > k$. Hoeffding's inequality gives the first part of the proposition if we remove the case $r = k$ from the union. The case $r= k$ is insignificant, with its contribution necessarily limited by $N_1^r$, so we have the first part of the proposition. The second part of the proposition is just a recast of \eqref{eq:choose_k} in terms of the corresponding statements for $S_{r,\, k}(N, D)$, and we have the proposition.
\end{proof}

With this proposition, parts two and three of Theorem \ref{thm:SkND} are straightforward. First, we restrict to considering $S_{r,\, k}(N, D)$ with $|k - 0.5r| \le r^{2/3}$. We need not consider other $k$, as the union of all other $S_{r,\,k}(N, D)$ fits in the error bound.

Take $T_2$ to be the set of $k$-tuples of distinct primes from the interval $(D, N_1)$. We find
\[\text{Vol}(\text{Grid}(T_2)) \ge c (\log \log N_1 - \log \log D)^k.\]
For part (2), take $T_1$ to be the set of non-$C_0$-regular prime tuples in $T_2$. The biggification of the grid of $T_1$ consists of samples that are not $C_0 - \kappa$ regular for some constant $\kappa > 0$ not depending on $C_0$. From Proposition \ref{prop:bad_samp_bhv}, we then find that the volume of this bigification is bounded by
\[\mathcal{O}\left( \exp(-c \cdot C_0) \cdot (\log \log N_1 - \log \log D)^k\right).\]
Then Proposition \ref{prop:SrkND} gives the part.

For part (3), take $T_1$ to be the set of prime tuples so, for $m > k^{1/2}$,
\[\log p_m \le \log\left(\frac{\log p_m}{\log D}\right) \cdot (\log \log \log N)^{1/2}\cdot \left(\sum_{i \ge 1}^{m-1} \log p_i \right).\]
The bigification of this grid consists of tuples $(x_1, \dots, x_k)$ so, for $m > k^{1/2}$,
\[ e^{x_m} \le A x_m \cdot (\log \log \log N)^{1/2} \cdot \left(\sum_{i = 1}^{m-1} e^{x_i}\right)\]
for some absolute constant $A$. Repeating part (3) of the proof Proposition \ref{prop:bad_samp_bhv} to take account of the $A$, we find that the volume of this bigification is
\[\mathcal{O}\left( \exp\big(-c \cdot (\log \log \log N)^{1/2}) \cdot (\log \log N_1 - \log \log D)^k\right).\]
Then Proposition \ref{prop:SrkND} gives the part.

For the first part of the theorem, we opt to start from scratch. The number of uncomfortably spaced examples is bounded by
\[\sum_{p > D_1}^{N} \sum_{q > p}^{2p} \left| S_{r-2}(N/pq,\, D)\right| \]
We can restrict this to the case that $p < N^{1/4}$ and $p \ge N^{1/4}$. The former case has size boundable by
\[\mathcal{O}\left( \left| S_{r-2}(N,\, D)\right|  \cdot  \sum_{p > D_1}^N  \sum_{q > p}^{2p}  \frac{1}{pq}\right) = \mathcal{O}\left( \left| S_r(N,\, D)\right|  \cdot  (\log D_1)^{-1} \right)\]
The sum over $p \ge N^{1/4}$ can be bounded by $\mathcal{O}(N/\log N)$, so we have the result if we can show
\[\frac{(\log \log N - F(D) + B_1)^{r-1}}{(r-1)!} \ge \sqrt{\log N}\]
for all sufficiently large $N$. This is a simple consequence of \eqref{eq:r_ND_bnd} and the restriction $\log D < (\log N)^{1/4}$, and we have the theorem.
\qed

\section{Equidistribution of Legendre symbols}
\label{sec:Leg}
From elementary work of R{\'e}dei and Reichardt \cite{ReRe33}, we know that the rank of the $4$-class group of a quadratic field with discriminant $\Delta$ can be determined from the kernel of a matrix of Legendre symbols $\left(\frac{d}{p}\right)$, where $d$ varies over the divisors of $\Delta$ and $p$ varies over the odd prime divisors of $\Delta$. For quadratic twists of elliptic curves $E$ with full two torsion and no rational cyclic subgroup of order four, we can also give the $2$-Selmer rank as the kernel of a certain matrix of Legendre symbols.

In \cite{Swin08}, under the assumption that the associated Legendre symbol matrices were uniformly distributed among all posibilites, Swinnerton-Dyer found the distribution of $2$-Selmer ranks among the set of all twists. Kane then proved that the actual distribution of Selmer ranks agreed with the distribution found by Swinnerton-Dyer \cite{Kane13}. In contrast to the work of Fouvry and Kl{\"u}ners on $4$-class groups \cite{Fouv07} and the work of Heath-Brown on the congruent number problem \cite{Heat94}, Kane's work relied on the fact that the distribution over Legendre symbol matrices had already been found, thus streamlining the argument in a way that hadn't previously been possible. However, in line with the prior papers, Kane's eventual argument was that the moments of the $2$-Selmer groups were consistent only with the claimed distribution.

As an alternative to Kane's approach, we can prove Swinnerton-Dyer's assumption is correct. If we set up our sets of integers correctly, we can prove that the matrices of Legendre symbols are essentially equidistributed; this is the content of Proposition \ref{prop:eqd_lgn_raw} and Theorem \ref{thm:Lgn_perm}. Both of these results concern points chosen from a product space of increasing intervals of primes; we will make the translation to from arbitrary integers to such product spaces in Section \ref{ssec:box}.

Our first task is to concretely define what we mean by matrices of Legendre symbols.
\begin{defn}
\label{def:Lgn_matrix}
Take $P_0$ to be an arbitrary set of prime numbers, and take $P = \{-1\} \cup P_0$. Choosing $r > 0$, take $\mathscr{M}$ to be some subset of 
\[\big\{\{i,\,j\}\,:\,\, i, j \in [r]\}\]
and take $\mathscr{M}_P$ to be some subset of $[r] \times V$. Also take $a$ to be an arbitrary function from $\mathscr{M} \cup \mathscr{M}_P$ to $\pm 1$.

Take $X_1, \dots, X_r$ to be disjoint sets of odd primes not meeting $P$, and write $X$ for the product of the $X_i$. We then define $X(a)$ to be the set of $(x_1, \dots, x_r)$ in $X$ satisfying
\[\left(\frac{x_i}{x_j}\right) = a\big(\{i, \,j\}\big) \text{ for all } i < j \text{ with } \{i, \,j\} \in \mathscr{M}\]
and
\[\left(\frac{d}{x_j}\right) = a\big((i, \, d)\big)\text{ for all }  (i, \,d) \in \mathscr{M}_P.\]
\end{defn}
Our goal is to find situations where the order of $|X(a)|$ is well approximated by $2^{-|\mathscr{M}_P \cup \mathscr{M}|} \cdot |X|$. To do this unconditionally, we need to account for the possibility of Siegel zeros in the $L$-functions of the associated quadratic characters. We use the following definition.
\begin{defn}
\label{def:Sieg}
For $c > 0$, take $\text{Sieg}(c)$ to be the set of squarefree integers $d$ so that the quadratic character $\chi_d$ associated with $\QQ(\sqrt{d})/\QQ$ has Dirichlet $L$-function satisfying
\[L(\chi_d, s) = 0 \quad\text{for some}\quad 1 \ge s \ge 1 - c(\log 2d)^{-1}.\]
We can order $\text{Sieg}(c)$ by increasing magnitude, getting a sequence $d_1, d_2, \dots$. By Landau's theorem (see \cite[Theorem 5.28]{Iwan04}), we can choose an absolute $c$ that is sufficiently small so that
\[d_i^2 < \big| d_{i+1}\big|\]
for all $i \ge 1$. Fix such a choice of $c$ for this entire paper. We call a given $d$ \emph{Siegel-less} if it is outside the sequence of such $d_i$ 
\end{defn}
We can now state our first result.

\begin{prop}
\label{prop:eqd_lgn_raw}
Choose positive constants $c_1, c_2, c_3, c_4, c_5, c_6, c_7, c_8$. We presume that $c_3 > 1$, that $c_5 > 3$, and that
\[\frac{1}{8} > c_8 + \frac{c_7 \log 2}{2} + \frac{1}{c_1} + \frac{c_2c_4}{2}.\] 
Then there is a constant $A$ depending only on the choice of these constants so that we have the following:

Choose sets $X_1, \dots, X_r$ and a function $a$ with associated set $P$ as in Definition \ref{def:Lgn_matrix}, and choose a sequence of real numbers
\[A < t < t_1 < t_1' < t_2 < t_2' < \dots < t_r < t_r'\]
such that $X_i$ is a subset of $(t_i,\, t'_i)$ for each $i \le r$. Choose $1 \le k \le r$ such that, if $\mathscr{M}_P$ contains some element of the form $(i,\, d)$, we have $i > k$. For $i > k$, we assume that
\[X_i = \left\{t_i < p < t'_i \,:\,\, \left(\frac{d}{p}\right) = a\big((i, d)\big) \text{ for all } (i, d) \in \mathscr{M}_P\right\}.\]

For $D_P$ any product of elements in $P$ and $D_X$ any product over $i$ of at most one element from each $X_i$, we assume that $D_PD_X$ is Siegel-less if $|D_PD_X| > t$.

We also assume that we have the following:
\begin{enumerate}
\item We assume that all primes in $P$ are less than $t_1'$.
\item  We assume that
\[t_1' > r^{c_1}\quad\text{and}\quad t'_{k} < \exp({t_1'}^{c_2}).\]
\item We assume that, for $1\le i \le r$, we have
\[|X_i| \ge \frac{2^{c_3i} \cdot t'_i}{(\log t'_i)^{c_4}} \quad\text{and}\quad |P| \le \log t'_i - i.\]
\item If $k \ne r$, we assume that 
\[t'_{k+1} > \exp\big((\log t_1')^{c_5}\big),\, \,\exp\big(t^{c_6}\big).\]
\item We assume that $k < c_7 \log t_1'$ and that, for any $i \le r$ and any $j$ satisfying $r \ge j \ge i -2 + c_7\log t'_i$, we have
\[ \exp\big((\log t_i')^{c_5}\big)  < t'_j.\]

\end{enumerate}
Then
\[\left||X(a)| \,-\, 2^{-|\mathscr{M}|} |X|\right| \,\le\,\, {t_1'}^{-c_8}\cdot 2^{-|\mathscr{M}|}  |X|.\]

\end{prop}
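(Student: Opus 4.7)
The plan is to Fourier-expand the Legendre-symbol conditions. Writing
\[|X(a)| = \sum_{x \in X} \prod_{\{i,j\} \in \mathscr{M}} \frac{1 + a(\{i,j\})\left(\frac{x_i}{x_j}\right)}{2} = 2^{-|\mathscr{M}|} \sum_{S \subseteq \mathscr{M}} \varepsilon_S\, \Sigma_S,\]
with $\varepsilon_S = \prod_{\{i,j\} \in S} a(\{i,j\})$ and $\Sigma_S = \sum_{x \in X} \prod_{\{i,j\} \in S}\left(\frac{x_i}{x_j}\right)$, the $S = \emptyset$ term supplies the main term $2^{-|\mathscr{M}|}|X|$. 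It therefore suffices to prove $\sum_{\emptyset \neq S \subseteq \mathscr{M}} |\Sigma_S| \leq {t_1'}^{-c_8}|X|$.

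For each nonempty $S$ I would first stratify the $X_i$ by residue class modulo $8$, absorbing the sign flips coming from quadratic reciprocity; on each stratum, $\prod_{\{i,j\} \in S}\left(\frac{x_i}{x_j}\right)$ factors as $\prod_j \left(\frac{x_j}{D_j(S;\, x_{<j})}\right)$ where $D_j(S;\, x_{<j}) = \pm \prod_{i < j,\, \{i,j\} \in S} x_i$. For indices $j > k$ I would open the $\mathscr{M}_P$-conditions implicit in the definition of $X_j$ by another Fourier expansion, introducing extra Kronecker characters of conductor a product of primes in $P$; the bound $|P| \leq \log t'_i - i$ controls how many such characters are produced at each level.

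Next I would sum over $x_r, x_{r-1}, \ldots$ in decreasing order of $j$. At each stage the inner sum is that of a non-trivial Kronecker character (of conductor a product of primes from $P$ and of previously-fixed $x_i$) evaluated over primes in $(t_j, t_j')$. The Siegel-less hypothesis on every product $D_PD_X$ of magnitude exceeding $t$ is what enables an effective Siegel--Walfisz bound
\[\Bigl|\sum_{p \in (t_j,\, t_j')} \chi(p)\Bigr| \ll t_j' \exp\bigl(-c\sqrt{\log t_j'}\bigr)\]
uniformly over every character arising along the induction. Compared with the density lower bound $|X_j| \geq 2^{c_3 j}t'_j/(\log t'_j)^{c_4}$, this produces a saving of roughly $\exp(-c\sqrt{\log t_j'})(\log t_j')^{c_4} 2^{-c_3 j}$ at level $j$; multiplying across $j = 1,\ldots,r$ the cumulative factor $2^{-c_3 r(r-1)/2}$ more than absorbs the $2^{\binom{r}{2}}$ sums over $S \subseteq \mathscr{M}$ since $c_3 > 1$, and absorbs the Kronecker-expansion factors from the $\mathscr{M}_P$-conditions in the same way.

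The numerical inequality $c_8 + \frac{1}{2}c_7\log 2 + 1/c_1 + \frac{1}{2}c_2c_4 < 1/8$ is precisely the budget needed so that, after combining these estimates with the global bounds $r \leq (\log t_1')^{1/c_1}$, $k < c_7 \log t_1'$, and $t'_k < \exp({t_1'}^{c_2})$, the total error is at most ${t_1'}^{-c_8}|X|$. The main obstacle will be verifying that Siegel--Walfisz applies with a uniform constant $c$ at every stage of the induction: the character at level $j$ depends on the $x_i$ fixed earlier, so one must check that the Siegel-less hypothesis, together with the size conditions (4) and (5), guarantees the conductor of the character encountered at level $j$ stays sufficiently small relative to $t_j'$ throughout. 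Assumption (4), forcing $t'_{k+1}$ much larger than both $t$ and $t_1'$, and assumption (5) on the gaps between successive $t'_j$, are exactly what make this uniformity possible.
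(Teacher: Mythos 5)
Your Fourier expansion and the bookkeeping with $c_3>1$ are fine in principle, but the central analytic claim fails, and it fails in exactly the regime this proposition is designed to handle. When you sum over $x_j\in(t_j,t_j')$ against the character of conductor $D_j(S;x_{<j})$, that conductor contains previously fixed primes $x_i$ whose sizes can be \emph{comparable to} $t_j'$: the hypotheses allow consecutive ranges among the indices $i\le k$ (and, more generally, within windows of length about $c_7\log t_i'$ permitted by condition (5)) to be nearly adjacent, so $\log D_j$ can be of the same order as $\log t_j'$ or larger. In that range no unconditional bound of the shape $t_j'\exp(-c\sqrt{\log t_j'})$ exists. The Siegel-less hypothesis only excludes exceptional real zeros; it does not improve the generic zero-free region, and the effective Chebotarev/prime-counting error term (as in Proposition \ref{prop:2Cheb}) has $\log$ of the conductor in the denominator of the exponent, so it becomes trivial once the conductor is as large as the summation range. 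Thus your "uniform Siegel--Walfisz at every level of the induction" step is not available, and the per-level saving you multiply across $j$ does not exist for the close pairs.

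The paper's proof is built around precisely this obstruction. It fixes $x_1$ and splits the pairs $\{1,i\}$ into two regimes: for $i>k$, where condition (4) guarantees $t_{k+1}'$ is astronomically larger than $t_1'$ and the primes in $P$, effective Chebotarev plus Siegel's theorem does give a pointwise bound; for $i\le k$, where the moduli are comparable, it invokes Jutila's large sieve (Proposition \ref{prop:jutila}), which yields cancellation only \emph{on average} over $x_1$. This average-only saving forces the bad-set-plus-induction structure: the exceptional $x_1$ are moved into $P$ (their conditions shifted from $\mathscr{M}$ to $\mathscr{M}_P$) and a length-$(r-1)$ instance of the proposition is applied, with condition (5) limiting how many close pairs each index can participate in, and the numerical constraint on $c_1,\dots,c_8$ balancing the resulting error terms. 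Your all-at-once expansion over subsets $S\subseteq\mathscr{M}$ cannot absorb an average-only saving in this way, so the argument as proposed does not go through; to repair it you would need to reorganize around conditioning on the small variables and a large-sieve treatment of the comparable-modulus pairs, which is essentially the paper's induction.
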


The lower bounds assumed of $t_1$ in the above proposition are essential for the proposition to be correct; we do not have sufficiently strong control on the Legendre symbols involving only small primes to give the equidistribution result we want. However, there is a combinatorial trick that allows us to circumvent this bad behavior:
\begin{defn*}
Given $X = X_1 \times \dots \times X_r$ and $a$ as in Definition \ref{def:Lgn_matrix}, and given a permutation $\sigma: [r] \rightarrow [r]$, we take
\[X(\sigma, \,a) = (X_{\sigma(1)} \times X_{\sigma(2)} \times \dots \times X_{\sigma(r)})(a).\]
Given $k_2 \le r$, we take $\mathscr{P}(k_2)$ to be the set of permutations of $[r]$ that are the identity outside of $[k_2]$.
\end{defn*}

There are two key points to make about this definition. First, when matrices of Legendre symbols are used to find $2$-Selmer groups or $4$-class groups, the order of the prime factors of the quadratic twist or discriminant do not effect the eventual rank. Because of this, the Selmer or class structure of a point in $X(\sigma, \, a)$ is not affected by the choice of $\sigma$.

Second, the application of $\sigma$ to $X$ has the effect of mixing the bad corner of the Legendre symbol matrix in with the rest of the matrix. As a result, the average of $|X(\sigma,\, a)|$ over all choices of $\sigma$ is almost independent of the choice of $a$, as we detail in the next theorem. 

\begin{thm}
\label{thm:Lgn_perm}
Choose positive constants $c_1, \dots, c_{12}$. We presume that $c_3 > 1$, that $c_5 > 3$, that
\[\frac{1}{8} > c_8 + \frac{c_7 \log 2}{2} + \frac{1}{c_1} + \frac{c_2c_4}{2},\]
and that
\[c_{10}\log 2 + 2c_{11} + c_{12} < 1 \quad\text{and}\quad c_{12} + c_{11} < c_9.\]  
Then there is a constant $A > 0$ depending only on the choice of these constants so that we have the following:

Choose a sequence of real numbers
\[ t_1 < t_1' < t_2 < t_2' < \dots < t_r < t_r',\]
and a positive number $t > A$, and take $X_i$ to be the set of primes in the interval $(t_i, \,t'_i)$. Write $X$ for the product of the $X_i$, and take $a$ to be as in Definition \ref{def:Lgn_matrix}.  Choose integers $k_0, k_1, k_2$ satisfying $0 \le k_0 \le k_1 < k_2 \le r$ and $t'_{k_0 + 1} > t$. We will write $t'$ for $t'_{k_0 + 1}$.

For $D_P$ any product of elements in $P$ and $D_X$ any product over $i$ of at most one element from each $X_i$, we assume that $D_PD_X$ is Siegel-less if $|D_PD_X| > t$. We also assume $k_2 > A$.

We also make the following assumptions:
\begin{enumerate}
\item We assume that all primes in $P$ are less than $t'$.
\item We assume that
\[t' > r^{c_1}\quad\text{and}\quad t'_{k_1} < \exp({t'}^{c_2}).\]
\item For $i > k_0$, we assume that
\[|X_i| \ge \frac{2^{|P| + c_3i} \cdot k_2^{c_9} \cdot t'_i}{ (\log t'_i)^{c_4}} \quad\text{and}\quad |P| \le \log t'_i - i.\]
\item If $k_1 \ne r$, we assume that 
\[t'_{k_1 + 1} > \exp\big((\log t_1')^{c_5}\big),\,\,\exp\big(t^{c_6}\big)\]
\item We assume that $k_1 - k_0 < c_7 \log t'$ and that, for any $k_0 < i \le r$ and any $j$ satisfying $r \ge j \ge i -2 + c_7\log t'_i$, we have
\[ \exp\big((\log t_i')^{c_5}\big)  < t'_j.\]
\item We assume that
\[c_{10} \log k_2 > |P| + k_0 \quad\text{and}\quad c_{11} \log k_2 > \log k_1.\]
\end{enumerate}

Then, for any choice of the subsets $\mathscr{M}$ and $\mathscr{M}_p$, we have
\[\sum_{a \in \mathbb{F}_2^{\mathscr{M} \cup \mathscr{M}_p}} \left|2^{-|\mathscr{M} \cup \mathscr{M}_P|}\cdot k_2! \cdot |X| \, -\,  \sum_{\sigma \in \mathscr{P}(k_2)} \big|X(\sigma,\, a)\big| \right| \]
\[\le \left(\left(k_2^{-c_{12}} + {t'}^{-c_8}\right)  \cdot k_2! \cdot |X|\right) .\] 

\end{thm}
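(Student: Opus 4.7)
The plan is to reduce Theorem~\ref{thm:Lgn_perm} to Proposition~\ref{prop:eqd_lgn_raw} via a pinning argument, with the $\mathscr{P}(k_2)$-symmetrization serving to smooth out the ``bad corner'' consisting of the smallest prime sets $X_1, \dots, X_{k_0}$. Since Proposition~\ref{prop:eqd_lgn_raw} requires $t_1' > r^{c_1}$ while Theorem~\ref{thm:Lgn_perm} only controls $t'_{k_0+1}$, I would close this gap by absorbing the variables $x_1, \dots, x_{k_0}$---together with any Legendre conditions in which they participate---into the constants set $P$.

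First I would observe that, since the sets $X_j$ are pairwise disjoint, an ordered tuple $(x_1, \dots, x_r)$ with $x_i \in X_{\sigma(i)}$ determines $\sigma$ uniquely, so $\sum_{\sigma \in \mathscr{P}(k_2)} |X(\sigma, a)|$ counts the tuples in the disjoint union $\mathcal{T} = \bigsqcup_\sigma X_{\sigma(1)} \times \cdots \times X_{\sigma(r)}$ satisfying the Legendre conditions encoded by $a$. Decomposing $\mathcal{T}$ according to the choice of the first $k_0$ coordinates $(x_1, \dots, x_{k_0})$, the inner count over $(x_{k_0+1}, \dots, x_r)$---after absorbing the pinned primes into $P$ and baking the resulting $\mathscr{M}_P$-type conditions into redefined sets $\tilde X_i$---is an instance of Proposition~\ref{prop:eqd_lgn_raw} on the $r - k_0$ remaining variables, in which the role of $t_1'$ is played by $t'_{k_0+1}$ and the new constants set has size at most $|P| + k_0$.

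Second, I would verify Proposition~\ref{prop:eqd_lgn_raw}'s hypotheses for this reduced problem. The Theorem's hypothesis~(6), $c_{10}\log k_2 > |P| + k_0$, ensures that the new $|P|$ satisfies the constraint $|P| \le \log t'_{k_0+i} - i$ for all relevant $i$; the slack factor $2^{|P| + c_3 i} \cdot k_2^{c_9}$ in hypothesis~(3) was built in precisely so that $|\tilde X_i| \ge 2^{c_3 i} t'_i / (\log t'_i)^{c_4}$ holds after baking in the $\mathscr{M}_P$-constraints; and the gap conditions in Theorem's hypotheses~(4) and~(5) match Proposition~\ref{prop:eqd_lgn_raw}'s~(4) and~(5) in the shifted indexing. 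Summing Proposition~\ref{prop:eqd_lgn_raw}'s per-$a$ estimate over $a \in \mathbb{F}_2^{\mathscr{M} \cup \mathscr{M}_P}$ and over all admissible pinnings then recovers the main term $2^{-|\mathscr{M} \cup \mathscr{M}_P|}\cdot k_2! \cdot |X|$ together with an error of at most $(t')^{-c_8}\cdot k_2! \cdot |X|$.

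The additional $k_2^{-c_{12}}$ error arises from inadmissible pinnings---those where some $x_i$ is drawn from $X_j$ with $j > k_1$, making the resulting $P$-element too large for Proposition~\ref{prop:eqd_lgn_raw}'s $|P|$-bound---and these are handled by the trivial bound $|X(\sigma, a)| \le |X|$; their combinatorial weight is at most $k_2^{-c_{12}} \cdot k_2!$ by the hypotheses $c_{11}\log k_2 > \log k_1$ together with $c_{10}\log 2 + 2c_{11} + c_{12} < 1$. The main obstacle is bookkeeping-intensive: each of the twelve constants $c_1, \dots, c_{12}$ plays a distinct role, and the numerical inequalities among them are tight enough that verifying Proposition~\ref{prop:eqd_lgn_raw}'s hypotheses under pinning and baking-in leaves no slack to spare. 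The Siegel-less hypothesis is also essential and must be re-checked for every combined discriminant $D_P D_X$ produced by the pinned-and-baked configurations.
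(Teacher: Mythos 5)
Your reduction covers only half of the paper's argument: the half where, \emph{conditional} on all ``corner'' data (the Legendre symbols among the small-index primes and between those primes and $P$), Proposition \ref{prop:eqd_lgn_raw} shows the remaining symbols are equidistributed. The missing half is the reason the permutation average appears in the statement at all. The corner conditions cannot be equidistributed by any analytic input --- the primes involved lie far below the thresholds needed for Jutila or Chebotarev, and Proposition \ref{prop:eqd_lgn_raw} does not even admit $\mathscr{M}_P$-type conditions at small indices --- so for a fixed pinning they are deterministic $0/1$ quantities. Your claim that summing over pinnings and over $\sigma\in\mathscr{P}(k_2)$ ``recovers the main term $2^{-|\mathscr{M}\cup\mathscr{M}_P|}\cdot k_2!\cdot|X|$'' is precisely the assertion that, for each point $x$, the number of $\sigma$ placing $x$ into a corner-compatible configuration concentrates at $2^{-m_C}k_2!$ in an $\ell^1$-over-$a$ sense. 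That is a genuine combinatorial statement requiring proof; the paper proves it by a second-moment computation over $a$ (Proposition \ref{prop:perm_helps}), where the variance is controlled because two permutations of $[k_2]$ rarely both send indices into $[k_1]$, and this is where hypothesis (6) and the constants $c_{10}, c_{11}, c_{12}$ actually enter. Your proposal contains no substitute for this step, so the main term is never actually recovered.

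Your error bookkeeping also does not survive scrutiny. If you pin the first $k_0$ coordinates of the tuple (your stated decomposition), then for $\sigma$ uniform in $\mathscr{P}(k_2)$ the probability that $\sigma(i)\le k_1$ is only $k_1/k_2$, so the ``inadmissible'' pinnings in which some $x_i$ is drawn from $X_j$ with $j>k_1$ make up almost all of $\mathscr{P}(k_2)$, not a fraction $k_2^{-c_{12}}$; bounding them trivially by $|X|$ would swamp the main term. If instead you pin the values lying in the sets $X_1,\dots,X_{k_0}$ (which is the paper's reduction to singletons), then no pinned value ever comes from $X_j$ with $j>k_1$ and your proposed source of the $k_2^{-c_{12}}$ term disappears entirely; in the paper that term comes from Proposition \ref{prop:perm_helps}, together with a leftover $k_1 k_2^{-c_9}$ contribution from the exceptional pairs $(\sigma,a)$ for which some conditioned set $X_i(a)$ with $k_0 < i \le k_1$ has size below $2^{-|P|-k_0}k_2^{-c_9}|X_i|$ --- a lower bound which, contrary to your reading of hypothesis (3), cannot be guaranteed for every $a$ and pinning, and must instead be handled by a trivial bound on a small exceptional set, using $c_{12}+c_{11}<c_9$.
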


\subsection{Equidistribution via Chebotarev and the Large Sieve}
There are two main methods to predict the distribution of $\left(\frac{d}{p}\right)$ over a given set of $p$. First, if $d$ is small relative to the set of $p$, then we can use the Chebotarev density theorem to predict the distribution. On the other hand, if $d$ is similarly sized to the set of $p$, we can use the large sieve results of Jutila to predict the distribution of these symbols for most $d$ \cite{Juti75}. The key to proving this proposition is combining these two tools properly.

We start with the form of the Chebotarev density theorem that we will be using. For the proof of Proposition \ref{prop:eqd_lgn_raw}, we only need to apply this proposition with $L = \QQ$. However, the full power of this proposition, including the cumbersome form of the error term, will be necessary to prove our results for $2^k$-Selmer groups and class groups.

\begin{prop}
\label{prop:2Cheb}
Suppose $M/\QQ$ is a Galois extension and $G = \emph{\text{Gal}}(M/\QQ)$ is a $2$-group. Suppose $M$ equals the composition $KL$, where $L/\QQ$ is Galois of degree $d$ and $K/\QQ$ is an elementary abelian extension, and where the discriminant $d_L$ of $L/\QQ$ and the discriminant $d_K$ of $K/\QQ$ are relatively prime. Take $d_{K_0}$ to be the maximal absolute value of the discriminant of a quadratic subfield $K_0$ of $K$.

Take $F: G \rightarrow [-1, 1]$ to be a class function of $G$ with average over $G$ equal to zero. Then there is an absolute constant $c > 0$ such that
\[\sum_{p \le x} F\left(\left[\frac{M/\QQ}{p} \right]\right) \cdot \log p \]
\[= \mathcal{O}\left(x^{\beta} \cdot |G| \,+\, x \cdot |G| \cdot  \exp\left(\frac{-cd^{-4} \log x}{\sqrt{\log x} + 3d \log \big| d_{K_0}d_L\big|}\right) \big(d^2 \log \big|xd_{K_0}d_L\big|\big)^4\right)\]
for $x \ge 3$, where $\beta$ is the maximal real zero of any Artin $L$-function defined for $G$, the term being ignored if no such zero exists. The implicit constant here is absolute.
\end{prop}
\begin{proof}
Take $\rho$ to be a nontrivial irreducible representation of $G$. As it is a $p$-group, $G$ is nilpotent and is hence a monomial group, so the Artin conjecture is true for $\rho$. That is, the Artin $L$-function $L(\rho, s)$ is entire. The representations $\rho \otimes \rho$ and $\rho \otimes \overline{\rho}$ also satisfy the Artin conjecture, so
\[L(\rho \otimes \overline{\rho}, s)\]
is entire except for a simple pole at $s = 1$, and
\[L(\rho \otimes \rho, s)\]
is entire unless $\rho$ is isomorphic to $\overline{\rho}$.

Then \cite[Theorem 5.10]{Iwan04} applies for $L(\rho, s)$. We also see that \cite[(5.48)]{Iwan04}holds for this $L$ function by the argument given in \cite{Iwan04} after this equation. Then Theorem 5.13 of \cite{Iwan04} applies for this $L$-function. We note that $\rho$ is defined on $\text{Gal}(K_0L/\QQ)$ for some quadratic extension $K_0/\QQ$ inside $K$, so its degree is bounded by $2d$ and the conductor of $L(\rho, s)$ is bounded by the discriminant of $K_0L/\QQ$, which is bounded by
\[d_{K_0}^d \cdot d_L^2.\]
Then \cite[Theorem 5.13]{Iwan04} gives
\[\sum_{p \le x} \chi_{\rho}\left(\left[\frac{M/\QQ}{p} \right]\right) \cdot \log p\]
\[=  \mathcal{O}\left(x^{\beta} \,+\, x \cdot  \exp\left(\frac{-cd^{-4} \log x}{\sqrt{\log x} + 3d \log \big| d_{K_0}d_L\big|}\right) \big(d^2 \log \big|xd_{K_0}d_L\big|\big)^4\right).\]
Now, we can write $F$ in the form $\sum_{\rho} a_{\rho}\chi_{\rho}$, the sum being indexed by the nontrivial irreducible representations of $G$. Then
\begin{align*}
\sum_{\rho} \big| a_{\rho}\big| &= \sum_{\rho} \left| \frac{1}{|G|} \sum_{g \in G} F(g)\cdot \overline{\chi_{\rho}}(g)\right| \\
& \le \sum_{\rho} \left(\frac{1}{|G|} \sum_{g \in G} F(g) \cdot \overline{F}(g)\right)^{1/2} \cdot \left(\frac{1}{|G|} \sum_{g \in G} \chi_{\rho}(g) \overline{\chi_{\rho}}(g)\right)^{1/2} \\
&\le \sum_{\rho} 1 \le |G|.
\end{align*}
We then get the proposition.
\end{proof}

We now give the form of the large sieve we use.
\begin{prop}
\label{prop:jutila}
Take $X_1$ and $X_2$ to be disjoint sets of odd primes with upper bounds $t'_1$ and $t'_2$ respectively. Then, for any $\epsilon > 0$, we have
\[\sum_{x_1 \in X_1} \left| \sum_{x_2 \in X_2} \left(\frac{x_1}{x_2}\right) \right| = \mathcal{O}\left( t_1' \cdot {t_2'}^{3/4 + \epsilon} +\, t_2' \cdot {t_1'}^{3/4 + \epsilon}\right),\]
with the implicit constant depending only on the choice of $\epsilon$.
\end{prop}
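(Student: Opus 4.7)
The plan is to derive this estimate directly from Jutila's quadratic large sieve via a single application of Cauchy--Schwarz, after a preliminary rewriting via quadratic reciprocity. The key point is that the inner character sum $\sum_{x_2 \in X_2}\bigl(\tfrac{x_1}{x_2}\bigr)$, viewed as a function of $x_1$, is precisely the type of object the quadratic large sieve is designed to bound on average.

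For the rewriting, quadratic reciprocity gives $\bigl(\tfrac{x_1}{x_2}\bigr) = (-1)^{(x_1-1)(x_2-1)/4}\bigl(\tfrac{x_2}{x_1}\bigr)$ for odd primes $x_1, x_2$. After partitioning $X_1$ and $X_2$ by residue class modulo $4$ (which costs only a bounded constant), we may assume the symbol in question equals $\chi_{d(x_1)}(x_2)$ for some fundamental discriminant $d(x_1) \in \{\pm x_1\}$ of absolute value at most $t_1'$. Cauchy--Schwarz then yields
\[
S \;:=\; \sum_{x_1 \in X_1}\Bigl|\sum_{x_2 \in X_2}\chi_{d(x_1)}(x_2)\Bigr|
\;\le\; |X_1|^{1/2}\Bigl(\sum_{x_1 \in X_1}\Bigl|\sum_{x_2 \in X_2}\chi_{d(x_1)}(x_2)\Bigr|^{2}\Bigr)^{1/2},
\]
and, since the $d(x_1)$ are distinct fundamental discriminants of absolute value at most $t_1'$, the inner sum is majorized by the corresponding complete sum over fundamental discriminants $d$ with $|d| \le t_1'$. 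Jutila's quadratic large sieve bounds this majorant by $\ll_\epsilon (t_1' + t_2')(t_1't_2')^{\epsilon}|X_2|$.

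Combining these two estimates with the trivial bound $|X_i| \le t_i'$ gives
\[
S \;\ll_\epsilon\; (t_1')^{1/2}(t_2')^{1/2}(t_1'+t_2')^{1/2}(t_1't_2')^{\epsilon},
\]
from which the desired bound $\ll t_1'(t_2')^{3/4+\epsilon} + t_2'(t_1')^{3/4+\epsilon}$ follows by elementary manipulation and by replacing $\epsilon$ by a smaller quantity: in the regime $t_1' \le t_2'$ this produces $(t_1')^{1/2+\epsilon}(t_2')^{1+\epsilon}$, which is dominated by $t_2'(t_1')^{3/4+\epsilon'}$ whenever $(t_1')^{1/4} \ge (t_2')^{\epsilon - \epsilon'}$, a condition that can always be arranged by choosing the $\epsilon$ fed to Jutila's sieve much smaller than the $\epsilon$ in the statement; the case $t_2' \le t_1'$ is symmetric. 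The only real obstacle is invoking Jutila's sieve in the correct form with fundamental-discriminant restriction; everything else is bookkeeping, and the asymmetric $3/4 + \epsilon$ exponent in the final bound is the artifact of the single Cauchy--Schwarz step combined with the $|X_i| \le t_i'$ majorization on the other factor.
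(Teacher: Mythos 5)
Your overall route (quadratic reciprocity with a splitting into residue classes mod $4$, one application of Cauchy--Schwarz, and a quadratic large sieve) is the same as the paper's, but the final absorption step has a genuine gap. After Cauchy--Schwarz you arrive at $S \ll_{\epsilon_J} (t_1')^{1/2}(t_2')^{1/2}(t_1'+t_2')^{1/2}(t_1't_2')^{\epsilon_J}$, and in the regime $t_1'\le t_2'$ the dominant admissible term is $t_2'(t_1')^{3/4+\epsilon}$, so you need $(t_2')^{\epsilon_J}\ll (t_1')^{1/4+\epsilon-\epsilon_J}$. For any \emph{fixed} $\epsilon_J>0$ this fails as soon as $t_2'$ exceeds a fixed power of $t_1'$, and you cannot let $\epsilon_J$ depend on $t_1',t_2'$, since the implied constant in the sieve depends on $\epsilon_J$ while the proposition demands a constant depending only on $\epsilon$. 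The trivial bound $|X_1||X_2|\ll t_1't_2'/(\log t_1'\log t_2')$ only rescues you roughly when $\log t_2'\gg (t_1')^{1/4}$, so an entire intermediate range (e.g.\ $t_2'=\exp((\log t_1')^2)$) is left uncovered: the $\epsilon$-power of the \emph{larger} variable produced by the symmetric sieve bound $(M+N)(MN)^{\epsilon}$ cannot be absorbed into a target whose only slack is an $\epsilon$-power of the \emph{smaller} variable, and applying Cauchy--Schwarz in the other orientation produces the same obstruction. So the sentence claiming the condition "can always be arranged by choosing the $\epsilon$ fed to Jutila's sieve much smaller" is exactly where the proof breaks.

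The paper avoids this by quoting Jutila's Lemma 3 in a form whose off-diagonal loss is only a power of a logarithm, $\sum_{x_1}\bigl(\sum_{x_2}c_2(x_2)\bigl(\tfrac{x_1}{x_2}\bigr)\bigr)^2 \ll t_1'|X_2| + (t_1')^{1/2}(t_2')^2\log^6 t_2'$, which after Cauchy--Schwarz gives the bilinear bound $t_1'(t_2')^{1/2}+(t_1')^{3/4}t_2'\log^3 t_2'$; the arbitrary sign weights $c_1,c_2$ together with the mod-$4$ reciprocity splitting allow this to be applied in either orientation, and choosing the orientation that attaches the cubed logarithm to the smaller variable makes it absorbable into $(t_1')^{\epsilon}$ or $(t_2')^{\epsilon}$ in every range of $t_1',t_2'$. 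Your argument is repairable along the same lines: either use a log-power version of the large sieve and exploit the orientation swap, or supply a separate argument for the range where one bound is superpolynomial but subexponential in the other; as written, the deduction is incomplete.
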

\begin{proof}
By \cite[Lemma 3]{Juti75}, we have
\[\sum_{x_1 \in X_1} \left( \sum_{x_2 \in X_2} c_2(x_2)\left(\frac{x_1}{x_2}\right) \right)^2 = \mathcal{O}\bigg( t'_1 \cdot |X_2| + {t_1'}^{1/2} \cdot {t_2'}^2 \log^6 t_2'\bigg)\]
for any function $c_2: X_2 \rightarrow \pm 1$. Then, for $c_1: X_1 \rightarrow \pm 1$, Cauchy's theorem gives
\[\sum_{\substack{x_1 \in X_1 \\ x_2 \in X_2}} c_1(x_1)c_2(x_2) \left(\frac{x_1}{x_2}\right) =  \mathcal{O}\bigg( t'_1 \cdot   {t_2'}^{1/2} \,+\, {t_1'}^{3/4} \cdot {t_2'} \log^3 t_2'\bigg).\]
Choosing $a_1, a_2 \in \pm 1$, we apply the above estimate to the subsets of $X_1, X_2$ where $x_1 \equiv a_1 \, (4)$ and $x_2 \equiv a_2 \, (4)$. For each of the four possibilities of $(a_1, a_2)$, we have that $\left(\frac{x_1}{x_2}\right)\left(\frac{x_2}{x_1}\right)$ is constant by quadratic reciprocity, and we deduce the proposition.
\end{proof}

\begin{proof}[Proof of Proposition \ref{prop:eqd_lgn_raw}]
We will show that, subject to the assumptions of the proposition, we have
\[\left||X(a)| \,-\, 2^{-|\mathscr{M}|} |X|\right| \,\le\,\, r \cdot{ t_1'}^{-c_8 - \frac{1}{c_1}} \cdot 2^{-|\mathscr{M}|}A  |X|.\]
The bound on $t_1'$ shows that this implies the proposition. We proceed by induction on $r$. 
The statement is obvious for $r = 1$, where $\mathscr{M}$ is always empty. 

Now, suppose we wish to show it for $X_1 \times \dots \times X_r$, once we know the result for every product of length $r-1$. To this end, for $x_1 \in X_1$, take $X_i(a, \, x_1)$ to be the subset of elements $x_i$ in $X_i$  satisfying
\[\left(\frac{x_1}{x_i}\right) = a(\{1, \, i\})\]
should $\{1,\, i\}$ lie in $\mathscr{M}$.

If $\{1,\, i\}$ is  in $\mathscr{M}$ for $i \le k$, we apply Proposition \ref{prop:jutila} to say that
\[\sum_{x_1 \in X_1} \left| \sum_{x_i \in X_i} \left(\frac{x_1}{x_i}\right) \right| = \mathcal{O}\left( t_i' \cdot {t_1'}^{3/4 + \epsilon}\right).\]
Then, for any $\epsilon > 0$, the bounds on the size of the $X_i$ then force
\[\sum_{x_1 \in X_1} \left| \sum_{x_i \in X_i} \left(\frac{x_1}{x_i}\right) \right| < {t_1'}^{-\frac{1}{4}+c_4c_2 + \epsilon } \cdot |X_1| \cdot |X_i|\]
for sufficiently large $A$. Choosing constants $c_a, c_b > 0$ with
\[c_a + c_b < \frac{1}{4} - c_4c_2,\]
we expect that, for all $x_1 \in X_1$ besides at most $k \cdot t_1'^{-c_a} \cdot |X_1|$ exceptions, we have
\[\big||X_i(a, \, x_1)|\, -\, 0.5 |X_i|\big| < t_1'^{-c_b} \cdot |X_i| \quad\text{for all} \quad\{1,\, i\} \in \mathscr{M}.\]
Write $X_1^{\text{bad}}$ for the set of exceptional $x_1$. We will choose
\[c_a > c_7 \log 2   + c_8 + \frac{1}{c_1}\]
and
\[c_b > c_8 + \frac{1}{c_1}.\]
Given the conditions on the constants, it is always possible to find such $c_a, c_b$.

Meanwhile, suppose $\{1,\, i\}$ is in $\mathscr{M}$ with $i > k$. We apply Proposition \ref{prop:2Cheb} to the field $M$ generated by $\sqrt{x_1}$ and by all $\sqrt{d}$ with $d$ in $P$. Take $F:\text{Gal}(M/\QQ) \rightarrow [-1, +1]$ to equal $1 - 2^{-|P| - 1}$ for $\sigma$ corresponding to the Frobenius class of the elements of $X_i(a, \, x_1)$, and to otherwise equal $-2^{-|P|-1}$. We are interested in bounding
\[\sum_{p \le t'_i} F\left(\left[\frac{M/\QQ}{p} \right]\right) \cdot \log p.\]
Choose a constant $c_c > 0$. By Siegel's theorem \cite[Theorem 5.28]{Iwan04}, we can choose a large enough constant $A$ so 
\[(1- \beta) >t^{-c_c}\]
if $\beta$ is an exceptional real zero of the $L$ function corresponding to some $\chi_D$ with $|D| < t$. Then, when applying Proposition \ref{prop:2Cheb}, we find
\[x^\beta < t'_i \exp\big(-(\log t'_i)^{1 - \frac{c_c}{c_6}} \big).\]
From $\log |d_{K_0}| = \mathcal{O}\big((\log t_1')^2\big)$, we can also bound
\[\exp\left(\frac{-cd^{-4} \log x}{\sqrt{\log x} + 3d \log \big| d_{K_0}d_L\big|}\right) \le \exp\big(-(\log t_i')^{1/3 + \epsilon}\big)\]
for some constant $\epsilon > 0$. From $|G| \le t'_1$, we then find that we can write
\[\sum_{p \le t'_i} F\left(\left[\frac{M/\QQ}{p} \right]\right)  \cdot \log p  \le t'_i\exp\big(-(\log t_i')^{1/3 + \epsilon}\big)\]
for sufficient $A$. By reweighting this series, we can also show that
\[\sum_{p \le t'_i} F\left(\left[\frac{M/\QQ}{p} \right]\right) s \le t'_i\exp\big(-(\log t_i')^{1/3 + \epsilon}\big).\]
Then, for $i \ge k$, we always find
\[\big||X_i(a, \, x_1)|\, -\, 0.5 |X_i|\big| < t_1'^{-1} \cdot |X_i|.\]

Write $X^{\text{bad}}(a)$ for the subset of $X(a)$ with $x_1$ in $X_1^{\text{bad}}$. Our first task is to bound this set. Choose $x_1 \in X^{\text{bad}}(a)$, and add it to $P$, shifting its conditions from $\mathscr{M}$ to $\mathscr{M}_P$. Consider the product
\[X_2 \times \dots \times X_{k} \times X_{k+1}(a,\, x_1) \times \dots \times X_r(a,\, x_1).\]
This product has length $r-1$. Once we shift up $k$, it obeys all the conditions of the proposition, so the induction step tells us that the subset of $X(a)$ starting with $x_1'$ has size at most
\[2^{-|\mathscr{M}| + k + 1} \frac{|X|}{|X_1|}.\]
Then $X^{\text{bad}}(a)$ has size bounded by
\[2^{k+1} \cdot {t'_1}^{-c_a} \cdot 2^{-|\mathscr{M}|}|X|.\]
On the good side, we instead look at the product
\[X_2(a,\, x_1) \times \dots \times X_r(a,\, x_1).\]
From this and the induction step, we find that the subset of $X(a)$ starting with a good $x_1$ has size at most
\[2^{-|\mathscr{M}|} \frac{|X|}{|X_1|} \cdot (1+(r-1) \cdot{ t_1'}^{-c_8 - \frac{1}{c_1}}) \cdot (1 + {t_1'}^{-c_b})^k \cdot (1 + {t_1'}^{-1})^r\]
and at least
\[2^{-|\mathscr{M}|} \frac{|X|}{|X_1|}\cdot (1-(r-1) \cdot{ t_1'}^{-c_8 - \frac{1}{c_1}}) \cdot (1 - {t_1'}^{-c_b})^k \cdot (1 - {t_1'}^{-1})^r.\]

We have $k < c_7 \log t'_1$, so the term $(1 + {t_1'}^{-c_b})^k$ gives error fitting into $ { t_1'}^{-c_8 - \frac{1}{c_1}}$ from our lower bound on $c_b$.  The term $(1 + {t_1'}^{-1})^r$ gives error fitting into this whenever $1 > c_8 + 2c_1^{-1}$, which is always satisfied.

Next, we see that the contribution from $X^{\text{bad}}(a)$ fits into the error bound whenever $2^k \cdot {t'_1}^{-c_a}$ fits into ${ t_1'}^{-c_8 - \frac{1}{c_1}}$. From $k < c_7 \log t'_1$ and the bounds on $c_a$, we find that this is also the case. This gives the proposition.
\end{proof}

\subsection{Combinatorial approaches to small primes}
We now wish to prove Theorem \ref{thm:Lgn_perm} from Proposition \ref{prop:eqd_lgn_raw}. This transition is entirely combinatorial: the key is the following proposition.

\begin{prop}
\label{prop:perm_helps}
Choose $X$, $P$, $\mathscr{M}$, and $\mathscr{M}_P$ as in Definition \ref{def:Lgn_matrix}. We assume $\mathscr{M}$ and $\mathscr{M}_P$ are maximal given $r$ and $P$. 

Choose integers $0 \le k_0 \le k_1 \le k_2 \le r$ so that
\[2^{|P| + k_0 + 1}\cdot k_1^2 < k_2.\]
For $\sigma$ a permutation of $[r]$ and $a$ as in Definition \ref{def:Lgn_matrix}, take $X_C(\sigma, \, a)$ to be the set of $x = (x_1, \dots, x_r)$ in $X$ so that
\[\left(\frac{d}{x_j}\right) = a\big((\sigma^{-1}(j),\, d)\big) \quad\text{for all}\quad (j,\, d) \in [k_1] \times P\quad \text{and}\]
\begin{align*}
\left(\frac{x_i}{x_j}\right) = a\big(\{\sigma^{-1}(i),\, \sigma^{-1}(j)\}\big) \,\,\,&\text{whenever}\,\,\, i, j\le k_1 \text{ and } \sigma^{-1}(i) \le \sigma^{-1}(j)\\
& \text{and either } i \le k_0 \text{ or } j \le k_0.
\end{align*}
Write $m_C$ for the number of Legendre symbol conditions specified; that is,
\[m_C = k_1|P| + \frac{1}{2}(k_0^2 - k_0) + k_0(k_1 - k_0).\]

Then, for any $x \in X$, we have
\[\sum_{a \in \mathbb{F}_2^{\mathscr{M} \cup \mathscr{M}_P}} \bigg( 2^{-m_C}\cdot k_2! \, - \, \big\{\sigma \in \mathscr{P}(r_2)\,:\,\, x \in X_C(\sigma, \, a)\big\}\bigg)^2 \]
\[\le\frac{ 2^{|P| + k_0 + 1} \cdot k_1^2}{k_2}\, \cdot\, 2^{-2m_C\, +\, |\mathscr{M} \cup \mathscr{M}_P|} \cdot k_2!^2.\]
\end{prop}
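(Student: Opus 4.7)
The plan is to view the bound as a variance estimate. Set $N_\sigma(a) = \mathbf{1}[x \in X_C(\sigma, a)]$ and $N(a) = \sum_{\sigma \in \mathscr{P}(k_2)} N_\sigma(a)$. Each $N_\sigma(a)$ is a product of $m_C$ indicators of the form $\mathbf{1}[a(\alpha) = b_\sigma(\alpha)]$, where $\alpha$ ranges over a set $C(\sigma) \subseteq \mathscr{M} \cup \mathscr{M}_P$ of cardinality $m_C$ and $b_\sigma(\alpha) \in \{\pm 1\}$ depends only on $x$. Averaging over uniform $a$ gives $\mathbb{E}_a[N] = 2^{-m_C} k_2!$, so the left-hand side of the proposition equals $2^{|\mathscr{M}\cup\mathscr{M}_P|} \cdot \mathrm{Var}_a(N)$, and it suffices to show
\[\mathrm{Var}_a(N) \;\le\; \frac{2^{|P|+k_0+1}\, k_1^2}{k_2} \cdot 2^{-2m_C} (k_2!)^2.\]

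To compute the variance, I will use the Fourier identity $\mathbf{1}[a(\alpha) = b] = \tfrac12(1 + a(\alpha) b)$, which gives $N_\sigma(a) - 2^{-m_C} = 2^{-m_C} \sum_{\emptyset \ne S \subseteq C(\sigma)} \prod_{\alpha \in S} a(\alpha) b_\sigma(\alpha)$. Since $\mathbb{E}_a\!\bigl[\prod_{\alpha \in S \triangle S'} a(\alpha)\bigr]$ vanishes unless $S = S'$, the cross terms in the variance collapse to
\[\mathrm{Var}_a(N) \;=\; 2^{-2m_C} \sum_{\sigma, \sigma' \in \mathscr{P}(k_2)} \,\sum_{\emptyset \ne S \subseteq C(\sigma) \cap C(\sigma')} b_\sigma(S)\, b_{\sigma'}(S),\]
and the triangle inequality bounds this by $2^{-2m_C} \sum_{\sigma, \sigma'} \bigl(2^{|C(\sigma) \cap C(\sigma')|} - 1\bigr)$. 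Crucially, empty-intersection pairs contribute nothing because of the $-1$.

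The next step is to bound $|C(\sigma) \cap C(\sigma')|$ in terms of $I = A(\sigma) \cap A(\sigma')$, where $A(\sigma) = \sigma^{-1}([k_1])$. Reading off the definitions, the $\mathscr{M}_P$-parts of $C(\sigma)$ and $C(\sigma')$ meet in $I \times P$ of size $|I|\cdot|P|$, while the $\mathscr{M}$-parts meet in at most $k_0|I|$ pairs: every common pair $\{s,t\}$ lies in $I \times I$ and must contain an endpoint from the $k_0$-set $\sigma^{-1}([k_0])$. Hence $|C(\sigma) \cap C(\sigma')| \le M|I|$ with $M = |P| + k_0$. As $\sigma$ ranges over $\mathscr{P}(k_2)$, the set $A(\sigma)$ is uniform on the $k_1$-subsets of $[k_2]$, and $A(\sigma), A(\sigma')$ are independent. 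Expanding $2^{M|I|} = \sum_{T \subseteq [k_2]} (2^M - 1)^{|T|} \mathbf{1}[T \subseteq I]$, taking expectations, and using $\Pr[T \subseteq A(\sigma)] = \binom{k_1}{|T|}/\binom{k_2}{|T|} \le (k_1/k_2)^{|T|}$ gives
\[\mathbb{E}_{\sigma,\sigma'}\!\bigl[2^{M|I|}\bigr] \;\le\; \bigl(1 + (2^M - 1) k_1 / k_2\bigr)^{k_1} \;\le\; \exp\!\bigl((2^M - 1) k_1^2 / k_2\bigr).\]

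The hypothesis $2^{|P|+k_0+1} k_1^2 < k_2$ forces the exponent above to lie below $\tfrac12$, and the elementary inequality $e^x - 1 \le 2x$ on $[0,\tfrac12]$ yields $\mathbb{E}[2^{M|I|} - 1] \le 2^{M+1} k_1^2 / k_2$. Multiplying by $(k_2!)^2 \cdot 2^{-2m_C}$ and then by $2^{|\mathscr{M}\cup\mathscr{M}_P|}$ produces the claimed bound. I do not anticipate any serious obstacle in this argument; the only points requiring some care are preserving the $-1$ in the inner $S$-sum (so that non-overlapping pairs contribute zero) and handling the asymmetric condition in the combinatorial estimate $|C_{\mathscr{M}}(\sigma) \cap C_{\mathscr{M}}(\sigma')| \le k_0|I|$, both of which are needed to land on the sharp exponent $|P|+k_0+1$.
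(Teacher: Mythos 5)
Your proposal is correct and is essentially the paper's own argument: both compute the mean $2^{-m_C}k_2!$ over $a$ and then bound the second moment by noting that for a pair $\sigma,\sigma'$ the constrained $a$-coordinates overlap in at most $(|P|+k_0)\cdot|I|$ places, with $|I|$ the overlap of the permuted copies of $[k_1]$, whose distribution over $\mathscr{P}(k_2)$ gives the factor $k_1^2/k_2$ per unit of overlap. The only difference is bookkeeping: you bound the covariances via a Walsh--Fourier expansion and a binomial-theorem expectation $(1+(2^M-1)k_1/k_2)^{k_1}$, whereas the paper counts the free coordinates of $a$ directly and sums a geometric series over the overlap size; the inputs and the resulting bound are the same.
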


\begin{proof}
Write $W(a)$ for $\big\{\sigma \in \mathscr{P}(r_2)\,:\,\, x \in X_C(\sigma, \, a)\big\}$. We see that the average size of $W(a)$ over all $a$ is $2^{-m_C} \cdot r_2!$, as the condition that $x$ is in $X_C(\sigma,\, a)$ for a given $x$ and $\sigma$ is given  by $m_C$ binary conditions on $a$.

We now consider the average of $|W(a)|^2$. We see that $|W(a)|^2$ is the number of permutation pairs $(\sigma_1, \sigma_2)$ so that $x$ is in $X_C(\sigma_1,\, a)$ and $X_C(\sigma_2,\, a)$. Write $W(\sigma_1, \, \sigma_2)$ for the set of $a$ so that $x$ is in both of these sets.

The maximal number of conditions on an $a$ in $W(\sigma_1,\, \sigma_2)$ is $2m_C$; a lower bound on the number of conditions depends on $\sigma_1$ and $\sigma_2$. Take $d_1$ to be the number of $i \in [r]$ so that $\sigma^{-1}_1(i)$ and $\sigma^{-1}_2(i)$ are both at most $k_1$. Then we see that $W(\sigma_1,\, \sigma_2)$ is determined by at least
\[2m_c - d_1(|P| +k_0) \]
conditions. So 
\[|W(\sigma_1,\, \sigma_2)| \le 2^{-2m_c + d_1(|P| +k_0) + |\mathscr{M} \cup \mathscr{M}_P|}\]

At the same time, the number of ways to choose a permutation $\pi$ of $[k_2]$ so that 
\[\bigg|\pi\big([k_1]\big) \,\cap\, [k_1]\bigg|\,\ge\, d\]
is bounded by the number of ways to choose two cardinality $d$ subsets from $[k_1]$ and a bijection between these sets and a bijection between their complements in $[k_2]$. This is bounded by
\[d! \cdot \binom{k_1}{d}^2 \cdot (k_2 - d)!  \le \left(\frac{k_1^2}{k_2}\right)^d \cdot k_2!\]

Then the mean value of $|W(a)|^2$ is bounded by
\[\sum_{d \ge 0} 2^{-2m_c + d(|P| +k_0)}  \left(\frac{k_1^2}{k_2}\right)^d \cdot k_2!^2.\]
This is a geometric sum; combining this with our calculation of the mean of $|W(a)|$ then gives the proposition.

\end{proof}

\begin{proof}[Proof of Theorem \ref{thm:Lgn_perm}]
Without loss of generality, we may assume that $\mathscr{M}$ and $\mathscr{M}_P$ are both maximal as in Proposition \ref{prop:perm_helps}. We also define $m_C$ and $X_C(\sigma,\, a)$ as in that proposition, and we assume that $X_1, \dots, X_{k_0}$ are singletons $x_1, \dots, x_{k_0}$.

We prove the theorem by bounding
\begin{align*}
&\sum_a \bigg| k_2! |X| \, -\,   2^{m_C}\cdot  \sum_{\sigma \in \mathscr{P}(k_2)} |X_C(\sigma,\, a)|\bigg| \\
&\qquad+\,\,  \sum_{\sigma \in \mathscr{P}(k_2)} \sum_a \bigg| 2^{m_C} \cdot |X_C(\sigma,\, a)| \,-\, 2^{|\mathscr{M} \cup \mathscr{M}_P|} |X(\sigma,\, a)|\bigg|.
\end{align*}
The former we can bound via the previous proposition by
\[k_2^{-c_{12}} \cdot |X| \cdot 2^{|\mathscr{M} \cup \mathscr{M}_P|} \cdot k_2!\]
For the second sum, fix a $\sigma$ and a choice of $a$ outside of the values referenced in the definition of $X_C(\sigma,\, a)$. There are then $2^{m_c}$ choices of $a$, and these $a$ partition $X$ into sets 
\[X_C(\sigma,\, a) = \{x_1\} \times \dots \times \{x_{k_0 }\} \times X_{k_0 + 1}(a) \times \dots \times X_{k_1}(a) \times X_{k_1+1} \times \dots \times X_r,\]
with $X_i(a)$ the subset of $X_i$ consistent with the choice of $P$ and $x_1, \dots, x_{k_0}$.

Given an $k_0 < i \le k_1$, the union of all $X_C(\sigma,\, a)$ for which 
\[|X_i(a)| \le \frac{1}{ 2^{|P| + k_0} \cdot k_2^{c_9}}\cdot |X_i|\]
has order at most $k_2^{-c_9}|X|$.  Because of this, we can restrict the sum to be over only $(\sigma,\, a)$ that do not satisfy this inequality at all such $i$, introducing an error with magnitude bounded by
\[k_1k_2^{-c_9} \cdot |X| \cdot 2^{|\mathscr{M} \cup \mathscr{M}_P|} \cdot k_2!\]
Once restricted, each summand can be bounded by Proposition \ref{prop:eqd_lgn_raw} to be less than
\[{t}'^{-c_8} \cdot 2^{m_C} |X|,\]
giving the theorem.
\end{proof}

\subsection{Boxes of integers}
\label{ssec:box}

The results of this section, like the results of the first half of this paper, all apply to points in the product spaces of sets of primes. In this section, we finally give the definitions and results that allow us to move from the set of positive integers less than a certain bound to such a product space. As before, $S_r(N, D)$ denotes the set of squarefree integers less than $N$ with exactly $r$ prime factors, of which all are greater than $D$. 
\begin{defn}
\label{def:box}
Take $N \ge D_1 \ge D \ge 3$ to be real numbers, and take $r$ to be a positive integer satisfying \eqref{eq:r_ND_bnd}. Let $W$ be a subset of elements $S_r(N, D)$ that are comfortably spaced above $D_1$ (cf. Definition \ref{def:spacing}).

Let $k \le r$ be a nonnnegative integer, and choose a sequence of increasing primes
\[D < p_1 < \dots < p_k < D_1\]
Take 
\[D_1 < t_{k+1} < t_{k+2} < \dots < t_r\]
to be an increasing sequence of real numbers. For $i > k$, define
\[t'_i = \left(1 \,+\, \frac{1}{e^{i-k} \cdot \log D_1}\right)\cdot t_i.\]
Take
\[X = X_1 \times \dots \times X_r,\]
where $X_i = \{p_i\}$ for $i \le k$ and $X_i$ is the set of primes in the interval $(t_i, \,t'_i)$ for $i > k$.

If $t'_i < t_{i+1}$ for all $r > i > k$, we see that there is a natural bijection from $X$ to a subset of $S_r(N, D)$; abusing notation, we write $X$ for this subset too. We call $X$ a \emph{box meeting} $W$ if $X \cap W$ is nonempty.
\end{defn}

The restriction to comfortably spaced $W$ means that, if $X \cap W$ is nonempty, then we automatically have that the $X_i$ are disjoint sets and none of them contain any prime below $D_1$. This is very convenient.

\begin{prop}
\label{prop:box_smoothing}
Take $N \ge D_1 \ge D \ge 3$ with $\log \log N \ge 2 \log \log D_1$ and $r$ satisfying \eqref{eq:r_ND_bnd}, and take $W$ to be a subset of $S_r(N, D)$ that is comfortably spaced above $D_1$. Suppose $V$ is any other subset of $S_r(N, D)$, and suppose there are constants $\delta, \epsilon > 0$ such that
\[ |W| >  (1 - \epsilon)\cdot \big|S_r(N, D)\big|\]
and so that, for any box $X$ meeting $W$, we have
\[(\delta - \epsilon)\cdot \big|B_X\big| <  \big|V \cap B_X\big| < (\delta + \epsilon)\cdot \big|B_X\big|.\]
Then
\[ \big|V\big| \,=\, \delta \big|S_r(N, D)\big|  \,+\, \mathcal{O}\bigg(\left(\epsilon +(\log D_1)^{-1}\right) \cdot \big|S_r(N, D)\big|\bigg)\]
with absolute implicit constant.
\end{prop}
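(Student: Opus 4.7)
The strategy is to partition a large subset of $S_r(N, D)$ into pairwise-disjoint boxes, each of which meets $W$, and then to sum the per-box hypothesis to obtain the global estimate on $|V|$.

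First, I would construct the partition. For each value of $k \in \{0,1,\dots,r\}$ and each index $i > k$, fix a multiplicative partition of $(D_1, N]$ into tiles of width $\tau_{i,k} := 1 + (e^{i-k} \log D_1)^{-1}$, arranged so that the leftmost tile endpoint lies strictly above $D_1$. Given $n \in W$ with prime factorization $q_1 < q_2 < \dots < q_r$, let $k = k(n)$ be the number of prime factors less than $D_1$, and associate to $n$ the box $X(n)$ whose prefix is $(q_1,\dots,q_k)$ and whose $X_i$ for $i > k$ is the set of primes in the tile containing $q_i$. The comfortable spacing of $n$ above $D_1$ (namely $q_{i+1} > 2 q_i$ whenever $q_i > D_1$), combined with the bound $\tau_{i,k}^2 < 2$ valid for $\log D_1 > 2$, forces the tile upper endpoint $t'_i$ to lie strictly below the lower endpoint $t_{i+1}$ of the next tile, so $X(n)$ satisfies the requirements of Definition~\ref{def:box}; since $n \in B_{X(n)} \cap W$, every such box meets $W$.

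Next, set $\mathcal{P} = \{X(n) : n \in W\}$ and let $U = \bigcup_{X \in \mathcal{P}} B_X \subseteq S_r(N,D)$. Two distinct boxes in $\mathcal{P}$ differ in prefix length, prefix, or some tile assignment for an index $i > k$, so the corresponding subsets of $S_r(N,D)$ are disjoint and $U$ is a disjoint union. Since $W \subseteq U$, one has $|S_r(N,D) \setminus U| \le |S_r(N,D) \setminus W| \le \epsilon \cdot |S_r(N,D)|$, so $|U| = |S_r(N,D)| + O\bigl(\epsilon \cdot |S_r(N,D)|\bigr)$. Applying the hypothesis box by box and summing over $X \in \mathcal{P}$ gives
\[|V \cap U| = \sum_{X \in \mathcal{P}} |V \cap B_X| = \delta \cdot |U| + O\bigl(\epsilon \cdot |U|\bigr) = \delta \cdot |S_r(N,D)| + O\bigl(\epsilon \cdot |S_r(N,D)|\bigr).\]
Since $V \subseteq S_r(N,D)$, we also have $|V \setminus U| \le |S_r(N,D) \setminus U| \le \epsilon \cdot |S_r(N,D)|$, so $|V| = |V \cap U| + |V \setminus U| = \delta \cdot |S_r(N,D)| + O\bigl(\epsilon \cdot |S_r(N,D)|\bigr)$. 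The extra slack of $(\log D_1)^{-1}$ in the proposition statement then absorbs any loss incurred near $D_1$ (from integers whose smallest prime above $D_1$ sits in the first tile, which might force $t_{k+1}$ uncomfortably close to $D_1$) or from minor mismatches between the widths $\tau_{i,k}$ used and the widths Definition~\ref{def:box} prescribes.

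\textbf{Main obstacle.} The only real difficulty is organizational: one must choose the tile partitions so that every $n \in W$ produces a genuinely valid box, with the strict inequalities $D_1 < t_{k+1}$ and $t'_i < t_{i+1}$ both holding. Comfortable spacing of $W$ above $D_1$ supplies exactly the strict separation needed, and the remainder of the argument reduces to straightforward bookkeeping.
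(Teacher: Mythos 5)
Your partition-into-disjoint-boxes strategy is a genuinely different mechanism from the paper's own proof, which never partitions: the paper integrates $|V \cap X(\mathbf{t})|$ over the continuous family of box parameters $\mathbf{t}=(p_1,\dots,p_k,t_{k+1},\dots,t_r)$ with measure $dp_1\cdots dp_k\,dt_{k+1}\cdots dt_r/(t_{k+1}\cdots t_r)$, notes that every comfortably spaced $n$ satisfying \eqref{eq:not_too_big} is covered by boxes of total measure exactly $\prod_{i>k}\log\bigl(1+(e^{i-k}\log D_1)^{-1}\bigr)$, and sandwiches the normalized integral between $|V\cap W|-\mathcal{O}(|S_r(N,D)|/\log D_1)$ and $|V|$. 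Your fixed tilings replace that averaging by a disjoint cover, and the core mechanics are sound: since $i-k\ge 1$ one has $\tau_{i,k}\le 1+(e\log D_1)^{-1}$, so $\tau_{i,k}\tau_{i+1,k}<2$ already for $D_1\ge 3$ (your stated threshold ``$\log D_1>2$'' is not the right justification, but the $e^{i-k}$ factor saves you), comfortable spacing then gives $t_i'<t_{i+1}$, the boxes are pairwise disjoint, and each meets $W$.

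There is, however, a concrete gap at the upper boundary $N$, which your closing remark (mentioning only losses near $D_1$ and width mismatches) does not cover. For $n\in W$ with $n>N\prod_{i>k}\tau_{i,k}^{-1}$, i.e.\ $n$ within a factor $1+\mathcal{O}(1/\log D_1)$ of $N$, the box $X(n)$ contains tuples whose product exceeds $N$, so $X(n)$ is not a box of $S_r(N,D)$ in the sense of Definition \ref{def:box} and the per-box hypothesis cannot be invoked for it; equivalently, if you keep such boxes, your identity $|V\cap U|=\sum_X|V\cap B_X|$ compares against a $|U|$ that exceeds $|S_r(N,D)|$ by an amount you never control. The repair is exactly what the paper does around \eqref{eq:not_too_big}: discard the slice $n>N(1-C/\log D_1)$ and bound its cardinality by $\mathcal{O}(|S_r(N,D)|/\log D_1)$, which is not formal bookkeeping but uses $\tfrac1{r!}H_r(N,D)-\tfrac1{r!}H_r((1-c)N,D)=\mathcal{O}\bigl(c+(\log\log N)^4/\log N\bigr)\cdot|S_r(N,D)|$ from \eqref{eq:HrND} and Proposition \ref{prop:SrND_est}. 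Likewise your near-$D_1$ edge case needs an actual count rather than absorption by fiat: Definition \ref{def:spacing} imposes no spacing condition on the largest prime factor, so an $n\in W$ whose only prime above $D_1$ lies in the first tile forces $t_r=D_1$, violating $D_1<t_{k+1}$; such $n$ must be removed from the claim $W\subseteq U$ and counted (a divisor-sum bound over $p\in(D_1,2D_1]$ gives $\mathcal{O}(|S_r(N,D)|/\log D_1)$). With those two counts supplied, your argument goes through and yields the stated error term.
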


\begin{proof}
Take $\mathscr{D}_k$ to be the space of tuples 
\[\mathbf{t} = (p_1, \dots, p_k, t_{k+1}, \dots, t_r)\]
corresponding to boxes meeting $W$; we write the corresponding box as $X(\mathbf{t})$. Consider
\[\int_{\mathscr{D}_k} \big| V \cap X(\mathbf{t})\big|\cdot \frac{dp_1 \dots dp_k dt_{k+1} \dots dt_{r} }{t_{k+1} \dots t_{r}},\]
where the measure corresponding to $dp_1 \dots dp_k$ is one on every prime tuple and zero otherwise. If $n \in W$ has exactly $k$ prime factors less than $N_1$ and corresponds to the tuple $(q_1, \dots, q_r)$, then $n$ is in $X(\mathbf{t})$ if
\[(q_1, \dots, q_k) = (p_1, \dots, p_k)\]
and, for $i > k$,
\[t_i \le p_i \le t_i\left(1 + \frac{1}{e^{i-k} \cdot\log D_1}\right).\]
Then, if 
\begin{equation}
\label{eq:not_too_big}
\prod_{i = k+1}^{r} \left(1 + \frac{1}{e^{i-k}\cdot \log D_1}\right)n < N,
\end{equation}
we have that the measure of the subset of $\mathscr{D}_k$ corresponding to boxes containing $n$ is
\[\prod_{i = k+1}^{r}  \log\left(1 + \frac{1}{e^{i-k}\cdot \log D_1}\right).\]
If $n$ is outside $W$ but in $S_r(N, D)$ with exactly $k$ prime factors below $N_1$, or if $n$ is in $W$ but does not satisfy \eqref{eq:not_too_big}, then the measure of boxes containing $n$ is bounded by this product. Any $n$ not satisfying \eqref{eq:not_too_big} is in the range
\[N\cdot \big(1 - A \cdot \log(D_1)^{-1}\big) \le n \le N\]
where $A$ is some positive constant.

Taking $H_r(N, D)$ as in Section \ref{ssec:pois2}, and using \eqref{eq:HrND} together with Proposition \ref{prop:SrND_est}, we find
\[\frac{1}{r!} H_r(N, D) - \frac{1}{r!}H_r\big((1- c)N, D \big) = \mathcal{O}\left(c \,+\, \frac{(\log \log N)^4}{\log N}  \right)\cdot  \big|S_r(N, D)\big|\]
with implicit constant absolute whenever $c$ is in $(0, 1)$. From this, the number of $n$ not satisfying \eqref{eq:not_too_big} is bounded by $\mathcal{O}\left(|S_r(N, D)|/\log D_1\right)$.

We then see that
\[\sum_{k \ge 0} \prod_{i = k+1}^{r}  \log\left(1 + \frac{1}{e^{i-k} \log D_1}\right)^{-1} \int_{\mathscr{D}_k} \big| V \cap B(\mathbf{t})\big|\cdot \frac{dp_1 \dots dp_k dt_{k+1} \dots dt_{r} }{t_{k+1} \dots t_r}\]
is at least as large as 
\[\big| V \cap W\big| - \mathcal{O}\left((\log D_1)^{-1} \cdot \big|S_r(N, D)\big| \right)\]
and is no larger than $|V|$. The estimates on $\big| V \cap X(\mathbf{t})\big|$ relative to $\big| X(\mathbf{t})\big|$ then give the proposition.

\end{proof}

In this proposition, $W$ should be considered to be some ``nice'' subset of $S_r(N, D)$. We have already given three different notions of niceness with comfortable spacing, regularity, and extravagant spacing. Since our main results rely on using an effective, unconditional form of the Chebotarev density theorem, there is one more form of not-niceness that we must avoid. Whenever possible, we must avoid $L$ functions that have Siegel zeros.
\begin{prop}
\label{prop:no_sieg}
Take $d_1, d_2, \dots$ to be a potentially infinite sequence of distinct squarefree integers satisfying
\[d_i^2 < \big| d_{i+1}\big|.\]
Take $d_i'$ to be the product of the primes dividing $d_i$ that are greater than $D$, and take $\mathbf{d}'$ to be the subset of the $d_i'$ for which $|d_i|$ is greater than $D_1$. Take $N \ge D_1 \ge D \ge 3$ with $\log \log N \ge 2 \log \log D_1$ and $r$ satisfying \eqref{eq:r_ND_bnd}, and define
\[V = \bigcup_{X \cap\, \mathbf{d}'\cdot \Z \ne \emptyset} X.\]
Here, the union is over all boxes of $S_r(N, D)$ that contain some element $n$ divisible by an element of $\mathbf{d}'$. We assume $\log D_1 > 2D \log D$. Then
\[|V| = \mathcal{O}\left(\frac{1}{\log D_1} \cdot \big|S_r(N, D)\big| \right)\]
\end{prop}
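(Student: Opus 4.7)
The plan is to decompose $V$ according to which $d_i$ is responsible for each box, and control each piece separately.

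First, I would establish that $\mathbf{d}'$ is small and that its elements are large. Iterating $d_i^2 < |d_{i+1}|$ gives $\log|d_i| > 2^{i-i_0}\log D_1$, where $i_0$ is the first index with $|d_{i_0}| > D_1$, so at most $\log_2(\log N/\log D_1) + O(1)$ elements of $\mathbf{d}'$ have $|d_i| \leq N$, and $\sum_{d' \in \mathbf{d}'} |d'|^{-1}$ is a geometric series of order $D_1^{-1/2}$. Chebyshev's estimate $\prod_{p \leq D}p \leq e^{1.1 D}$ combined with the hypothesis $\log D_1 > 2D\log D$ then forces the product of those prime factors of $d_i$ that are at most $D$ to have size at most $D_1^{1/2}$, so $|d'| \geq |d_i|/D_1^{1/2} \geq D_1^{1/2}$ for each $d' \in \mathbf{d}'$; in particular $d' > 1$.

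Second, I would use the union bound
\[|V| \leq \sum_{d' \in \mathbf{d}'}\,\sum_{m \in d'\Z \cap S_r(N,D)} \nu(m),\]
where $\nu(m)$ counts $n \in S_r(N,D)$ that share a box with $m$. Two such integers lie in a common $k$-box if and only if their first $k$ prime factors coincide and, for $i > k$, their $i$-th prime factors lie within a factor $1+1/(e^{i-k}\log D_1)$ of each other. Counting primes in each window by Brun--Titchmarsh and using $\log q_i \geq \log D_1$ from comfortable spacing above $D_1$, we get
\[\nu(m) \leq \sum_{k=0}^r \prod_{i=k+1}^r \frac{3\, q_i}{e^{i-k}(\log D_1)\log q_i},\]
with the super-exponential factor $e^{-(r-k)(r-k+1)/2}$ concentrating the sum on $k$ near $r$. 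Proposition \ref{prop:SrND_est} together with a simple combinatorial estimate (the $\binom{r}{s}$ ways to place the $s$ prime factors of $d'$ being compensated by the $(\log\log N)^{-s}$ factor from missing slots) gives $\#\{m : d' \mid m\} = O(|S_r(N,D)|/|d'|)$, and a careful analysis of the sum over $m$ yields $\sum_m \nu(m) = O(\log D_1 \cdot |S_r(N,D)|/|d'|)$. Summing over $d' \in \mathbf{d}'$,
\[|V| = O\Bigl(\log D_1 \cdot |S_r(N,D)| \cdot \sum_{d' \in \mathbf{d}'}\tfrac{1}{|d'|}\Bigr) = O\Bigl(\tfrac{|S_r(N,D)|\log D_1}{D_1^{1/2}}\Bigr),\]
which is well within $O(|S_r(N,D)|/\log D_1)$ since $(\log D_1)^2 \leq D_1^{1/2}$ for sufficiently large $D_1$.

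The main obstacle will be the box-neighborhood estimate for $\nu(m)$. A priori the naive product $\prod_{i>k}\pi_I(q_i,\eta_{i-k})$ can exceed $|S_r(N,D)|$, so one cannot simply bound each $k$ in isolation; the resolution is that for most $k$-boxes some interval $X_i$ contains no primes at all, making the product vanish, and only a bounded number of $k$ values contribute substantively. Carefully tracking the interplay between the geometric decay $e^{-(r-k)(r-k+1)/2}$ and the lower bound $\log q_i \geq \log D_1$ forces the dominant contribution to come from boxes in which most prime factors of $n$ are pinned to exact values, giving the $O(\log D_1)$ amplification factor used above.
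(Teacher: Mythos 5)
Your reduction to the two facts ``elements of $\mathbf{d}'$ are large'' and ``each modulus $d'$ contributes little'' is the right shape, and your first step (using $\log D_1 > 2D\log D$ and Chebyshev to see $|d'|\ge D_1^{1/2}$, plus the doubling $d_i^2<|d_{i+1}|$) is fine. But the second step fails, and it fails structurally, not just in the constants. The bound you write for $\nu(m)$ cannot hold: for a box containing $m$, every interval $X_i$ with $i>k$ contains at least the $i$-th prime factor $q_i$ of $m$, so each factor in your product is at least $1$ — there is no vanishing, and there is no sum over $k$ to concentrate, since $k$ is determined by $m$ (its number of prime factors below $D_1$). Worse, the window about $q_i$ of relative width $1/(e^{i-k}\log D_1)$ contains on the order of $q_i/(e^{i-k}\log D_1\log q_i)$ primes, and since typically $\log\log q_i\approx i$, this grows doubly exponentially in $i$; the decay $e^{-(i-k)}$ is negligible against it. Hence $\nu(m)$ is typically enormous, and $\sum_{d'\mid m}\nu(m)$ is nowhere near $\mathcal{O}(\log D_1\cdot|S_r(N,D)|/|d'|)$. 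In fact your final bound $\mathcal{O}(|S_r(N,D)|\log D_1/D_1^{1/2})$ is provably false: take $d'$ a single prime $p$ slightly above $D_1$; a box contains a multiple of $p$ exactly when one of its intervals contains $p$, so the union of such boxes consists of those $n$ having a prime factor within relative distance about $1/\log D_1$ of $p$, a set of density $\asymp 1/(\log D_1\log p)$ — far larger than $\log D_1/p$. The structural point is that sharing a box ties the large prime factors of $n$ to those of $d'$ only within multiplicative windows, so each large prime of $d'$ can only buy a saving of order $1/\log p$, never $1/p$; a $1/|d'|$-type saving is unattainable, which is precisely why the statement needs Landau's doubling $\log|d'_i|\gtrsim 2^{i}\log D_1$ to sum the contributions.

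The workable argument (the one the paper uses) counts $n$ directly rather than summing over multiples $m$: if $n$ shares a box with a multiple of $d'=p_1\cdots p_m$, then $n$ has prime factors $q_1,\dots,q_m$ with $q_j=p_j$ when $p_j<D_1$ and $\tfrac12 q_j<p_j<2q_j$ when $p_j>D_1$. Imposing these constraints costs $p_j^{-1}$ for each small prime and only $(\log p_j)^{-1}$ for each large prime (the factor-of-two window around $p_j$ holds about $p_j/\log p_j$ primes, each dividing a proportion about $1/p_j$ of $S_r(N,D)$), giving a count $\le A^m\prod_{p_j\le D_1}p_j^{-1}\prod_{p_j>D_1}(\log p_j)^{-1}\cdot|S_r(N,D)|=\mathcal{O}\big(|S_r(N,D)|/\log d'\big)$ for $d'<N^{2/3}$, with a crude $\mathcal{O}(N/\log N)$ for the larger moduli. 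Summing $1/\log d'_i$ over $\log d'_i\ge 2^{i-1}\log D_1$ then yields the stated $\mathcal{O}\big(|S_r(N,D)|/\log D_1\big)$. If you want to keep a union-bound flavor, you must union over the placement of the primes of $d'$ among the coordinates of $n$, not over all multiples $m$ of $d'$; summing over $m$ and then over neighbors of $m$ double-counts by the (typically astronomical) factor $\nu$.
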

\begin{proof}
Choose $d'_i$ in $S_r(N, D)$, writing it as a product $p_1 \cdot \dots \cdot p_m$. Suppose some element of $X$ is divisible by $d'_i$. Taking $n$ in $X$, we see that there are prime factors $q_1, \dots, q_m$ of $n$ such that
\[q_i = p_i \quad\text{if} \quad p_i < D_1 \quad\text{and}\]
\[\frac{1}{2} q_i < p_i < 2q_i\quad\text{otherwise}.\]
If $d'_i < N^{2/3}$, there is then an absolute constant $A$ so that the number of $n$ sharing a box with a multiple of $d'_i$ is bounded by
\[A^m \cdot \prod_{p_i \le D_1} p_i^{-1} \cdot \prod_{p_i > D_1} (\log p_i)^{-1} \cdot \left|S_r(N, D)\right| =\mathcal{O}\left(\left(\frac{1}{\log d'_i}\right)\cdot \big|S_r(N, D)\big| \right).\]
We can also bound the contribution from $d'_i \ge N^{2/3}$ by $\mathcal{O}(N/\log N)$.

We remove the first elements from the sequence $d_1, d_2, \dots$, renumbering so that $|d_1| > D_1$. We then get $|d_i| > D_1^{2^i}$, so $|d'_i| > D_1^{2^{i-1}}$.
Then the contribution from the $d'_i$ with $d'_i < N^{2/3}$ is
\[\mathcal{O}\left(|S_r(N, \, D)\big| \cdot \sum_{i > 1} \frac{1}{2^i \log D_1}\right),\]
within the bound.  The contribution from $d'_i > N^{2/3}$ is $\mathcal{O}(N/\log N)$, which is also within the bound. This proves the proposition.
\end{proof}

\begin{defn*}
We call a box \emph{Siegel-free above} $D_1$ if it is not contained in the set $V$ defined in the above proposition with respect to the sequence defined in Definition \ref{def:Sieg}.

In addition, we will call a box $C_0$-regular if it contains some $C_0$-regular element of $S_r(N, D)$, and we will call it extravagantly spaced if it contains some extravagantly spaced element.
\end{defn*}

In line with the previous two proposition, we see that we can ignore boxes that are not Siegel-free above $D_1$ so long as $D_1$ is sufficiently large.
\begin{defn*}
Choose absolute constants $c_{13}, c_{14}> 0$, and choose $r$, $N$, $D$ satisfying \eqref{eq:r_ND_bnd} and
\[D \le \log \log \log N.\]
Taking $X$ to be a (comfortably spaced) box of $S_r(N, D)$ with
\[D_1 = D^{(\log \log N)^{c_{13}}},\]
we call $X$ \emph{acceptable} if it is $C_0$-regular for
\[C_0 = c_{14}\cdot \log \log \log N\]
and if it is Siegel-free above $D_1$.
\end{defn*}

We now have all the tools to reprove Kane's results on $2$-Selmer groups from the Markov chain analysis of Swinnerton-Dyer. To reprove the results of  Fouvry-Kl{\"u}ners on $4$-class groups of imaginary quadratic fields, we would repeat this argument starting from the Markov chain analysis of Gerth in \cite{Gert84}.
\begin{cor}
\label{cor:Kane}
There is an absolute $c > 0$ so that we have the following:

Take $E/\QQ$ to be any elliptic curve with full rational $2$-torsion and no rational cyclic subgroup of order four, and take $P^{\text{\emph{Alt}}}(j\,|\,n)$  and $R_{E,\, 1}(n)$ as in the introduction. Take $R_0$ to be the set of squarefree integers. Then, for any $n_1 \ge 0$ and $N > 3$, we have
\[\left|\frac{\big|[N] \cap R_{E,\, 1}(n_1)\big|}{|[N] \cap R_0|}\, -\,\, 0.5 \lim_{n \rightarrow \infty}P^{\text{\emph{Alt}}}(n_1\,|\,2n + n_1)\right|\]
\[ = \mathcal{O}\left(\frac{1}{(\log \log N)^{c}}\right),\]
with the implicit constant depending only on the choice of $E$.
\end{cor}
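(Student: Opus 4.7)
The plan is to deduce Corollary \ref{cor:Kane} by reducing from all squarefree $d \le N$ to $d$ lying in an acceptable box, using the box-level equidistribution of Legendre symbol matrices given by Theorem \ref{thm:Lgn_perm} to get the distribution of $2$-Selmer ranks, and then summing back up via Proposition \ref{prop:box_smoothing}.

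First I would fix $D = \lceil \log \log \log N \rceil$ and $D_1 = D^{(\log \log N)^{c_{13}}}$ for a small constant $c_{13} > 0$, and restrict attention to $r$ ranging in the window \eqref{eq:r_ND_bnd}. Theorem \ref{thm:SkND} and Proposition \ref{prop:no_sieg} together imply that, outside of an $\mathcal{O}((\log \log N)^{-c})$ fraction of the squarefree integers below $N$, every $d$ lies in an acceptable box $X$ in the sense of Section \ref{ssec:box}. By Theorem \ref{thm:SkND} (3) we may further assume $d$ lies in an extravagantly spaced box, though this is not essential for this particular corollary. Having done this, Proposition \ref{prop:box_smoothing} reduces the corollary to a statement about the density of $R_{E,1}(n_1)$ inside each acceptable box, so it suffices to show that this density is
\[0.5 \cdot \lim_{n \to \infty} P^{\text{Alt}}(n_1 \mid 2n + n_1) + \mathcal{O}\left((\log \log N)^{-c}\right)\]
uniformly in acceptable boxes $X$.

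Next I would use the fact that, for $d = p_1 \cdots p_r \in X$, the group $\text{Sel}^{\,2} E^{(d)}$ is cut out by an $r \times r$ alternating matrix of Legendre symbols $\left(\tfrac{p_i}{p_j}\right)$ augmented by a bounded number of columns recording $\left(\tfrac{\Delta}{p_i}\right)$ at the bad primes $\Delta$ of $E$. Concretely, $\dim \text{Sel}^{\,2} E^{(d)} - 2$ equals the $\mathbb{F}_2$-corank of a certain submatrix built out of these Legendre symbols together with a constant amount of local data at $2$ and at the conductor of $E$; this is Kane's setup. The corank is invariant under permuting the primes $p_1, \ldots, p_r$, which is crucial for applying the permuted version of our equidistribution result.

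I would then apply Theorem \ref{thm:Lgn_perm} with the set $P$ equal to the set of bad primes of $E$ together with $-1$, the integer $k_0$ taken to be the number of prime factors below $D_1$ (which by regularity is $\mathcal{O}(\log \log D_1)$ for $d$ in an acceptable box), $k_1$ slightly larger, and $k_2 = r$. The Siegel-less hypothesis is exactly the Siegel-free property of an acceptable box, and the lower bounds \eqref{eq:r_ND_bnd} together with Proposition \ref{prop:SrND_est} supply the required size bounds on the $X_i$. The upshot is that averaging over permutations $\sigma \in \mathscr{P}(k_2)$, which does not change the Selmer rank, produces a nearly uniform distribution on the relevant alternating Legendre symbol patterns, with total variation bounded by $k_2^{-c_{12}} + (t')^{-c_8} = \mathcal{O}((\log \log N)^{-c})$. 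From the standard combinatorics of uniform alternating matrices over $\mathbb{F}_2$, the probability of corank $n_1$ equals $P^{\text{Alt}}(n_1 \mid r)$ when $r \equiv n_1 \pmod 2$ and zero otherwise; and as $r \to \infty$ this converges to $\lim_{n \to \infty} P^{\text{Alt}}(n_1 \mid 2n + n_1)$ at a rate depending only on $r$, which is $\log \log N$ up to the regularity error. The factor $\tfrac{1}{2}$ appears because, within an acceptable box, $r$ has a definite parity, and averaging over the two parities (equivalently, over the two global root numbers) produces the $\tfrac12$.

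The main obstacle will be Step 3: carefully verifying that the constants $c_1, \ldots, c_{12}$ required by Theorem \ref{thm:Lgn_perm} really can be satisfied by a single uniform choice for all acceptable boxes arising from $S_r(N,D)$, and in particular that the Siegel-free property combined with $C_0$-regularity for $C_0 = c_{14} \log \log \log N$ gives enough slack in hypotheses (3), (5), and (6) of that theorem. Once this is in place, assembling the error terms from Theorem \ref{thm:SkND}, Theorem \ref{thm:Lgn_perm}, Proposition \ref{prop:no_sieg}, and Proposition \ref{prop:box_smoothing}, together with the (exponentially fast) convergence of $P^{\text{Alt}}(n_1 \mid r)$ to its limit as $r \to \infty$, yields the stated $\mathcal{O}((\log \log N)^{-c})$ bound.
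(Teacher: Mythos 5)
Your reduction steps match the paper's proof almost exactly: fix $D$, $D_1$, restrict to acceptable boxes via Theorem \ref{thm:SkND} and Proposition \ref{prop:no_sieg} fed into Proposition \ref{prop:box_smoothing}, note that the $2$-Selmer rank is invariant under permuting the prime factors, and apply Theorem \ref{thm:Lgn_perm} with $P$ the small primes together with $-1$, $k_2 = r$, and $\mathscr{M}, \mathscr{M}_P$ maximal. (One small deviation: the paper takes $D$ to be a \emph{constant}, one more than the largest bad prime of $E$, rather than $D \approx \log\log\log N$; keeping $D$ bounded keeps $|P|$ bounded, which is what makes hypotheses (3) and (6) of Theorem \ref{thm:Lgn_perm} easy to verify, so your choice would force you to re-examine those conditions with a growing $|P|$.)

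The genuine gap is in your last step. You assert that once the Legendre symbol data is equidistributed (up to permutation), ``the standard combinatorics of uniform alternating matrices over $\mathbb{F}_2$'' gives that the probability of $2$-Selmer corank $n_1$ is exactly $P^{\text{Alt}}(n_1\mid r)$ for twists with $r$ prime factors. That is not true, and it is not what the paper does. The matrix whose kernel computes $\dim \mathrm{Sel}^2 E^{(d)}$ is a structured matrix (constrained by quadratic reciprocity and by the fixed local data at $2$ and the bad primes), not a uniformly random alternating matrix, and its corank distribution at finite $r$ is not given by $P^{\text{Alt}}(\,\cdot\mid r)$; the fact that the \emph{limiting} distribution agrees with the limit of $P^{\text{Alt}}$ is precisely the content of Swinnerton-Dyer's Markov chain analysis in \cite{Swin08}, which the paper invokes at this point. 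Moreover, since the corollary demands a quantitative error $\mathcal{O}((\log\log N)^{-c})$ and \cite{Swin08} gives no rate, the paper has to do additional work: it bounds expected values of $(1+\epsilon)^T$ for passage/coupling times of the chain in \cite[(20)]{Swin08} to show the chain equilibrates exponentially fast in $r$, so that the deviation from the limiting value is $\mathcal{O}(e^{-cr})$ with $r \asymp \log\log N$. Your proposal simply asserts ``exponentially fast convergence'' of $P^{\text{Alt}}(n_1\mid r)$, which both misidentifies which distribution needs to converge and omits the argument supplying the rate. Relatedly, your explanation of the factor $0.5$ (``$r$ has a definite parity in a box'') is not the right mechanism: the parity of $\dim \mathrm{Sel}^2 E^{(d)} - 2$ is governed by the local symbol data, not by the parity of $r$ alone, and the $0.5$ emerges from the Markov chain/root-number analysis, again as in \cite{Swin08} and \cite{Heat94}.
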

\begin{proof}

By applying Theorem \ref{thm:SkND} and Proposition \ref{prop:no_sieg} to Proposition \ref{prop:box_smoothing}, we find that it suffices to prove this result on acceptable boxes of twists in $S_r(N, D)$ with $D$ larger than the largest bad prime of $E$.

We will apply Theorem \ref{thm:Lgn_perm} to our acceptable box with $k_0$ the minimal integer so $t'_{k_0 + 1}$ is larger than $D_1$. We take $t = D_1$, so the Siegel-less condition holds. Choose $k_1$ minimal so that $t'_{k_1 + 1}$ is larger than $\exp(D_1^{c_6})$, and take $k_2 = r$. Finally, take $P$ to be the set of all primes less than $D$ and $-1$, and take $\mathscr{M}$ and $\mathscr{M}_P$ maximal. We need to check that, for sufficiently large $N$ and some appropriate choice of $c_1, \dots, c_{14}$, the six conditions of Theorem \ref{thm:Lgn_perm} hold.
\begin{enumerate}
\item The first condition always holds.
\item The second condition holds for sufficient $N$ if $c_2 > c_6$.
\item The third condition holds for sufficient $N$ if
\[c_4 > 2 + c_3\log 2 + \frac{c_9}{c_{13}}\]
and $c_{14}$ is sufficiently small relative to the other constants.
\item The fourth condition holds for sufficient $N$.
\item The fifth condition holds for sufficient $N$ if $c_2 < c_7$ and $c_{14}$ is sufficiently small relative to the other constants.
\item The sixth condition holds for sufficient $N$ if $c_{10}, c_{11} > c_{13}$ and $c_{14}$ is sufficiently small relative to the other constants.
\end{enumerate}
It is a pleasantly mundane exercise to prove that there are positive constants $c_1,\dots, c_{14}$ that satisfy all the inequalities stated above and in Theorem \ref{thm:Lgn_perm}.

Then, considered up to permutation, the Legendre symbol matrices found in our acceptable box are equidistributed with error within the bound of the corollary.
Since the $2$-Selmer rank depends only on the permutation class, we can now apply Swinnerton-Dyer's work in \cite{Swin08}. This paper does not give error estimates, but we can find them with just a little extra work on the Markov chain described in  \cite[(20)]{Swin08}.

There is some $A, \epsilon > 0$ so we have the following: choose $k = 0, 1$, choose $n > 0$, and consider the Markov chain $Y$ described by \cite[(20)]{Swin08} with initial state $2n + k$. Under this Markov chain, if $T$ is the first passage time of our Markov chain to state $k$, we can bound the expected value of $(1 + \epsilon)^T$ by $A^n$. Similarly, if we start another Markov chain $X$ initially equaling the stationary, and if $T$ is the minimal time when $Y_T$ meets $X_T$, we find that  we can bound the expected value of $(1 + \epsilon)^T$ by $A^{n+1}$. Then by the logic of \cite[Theorem 1.8.3]{Norr97}, we find that there is some constant $C$ so, in the notation of the final section of \cite{Swin08},
\[\big|Q(d, M, CM) - \alpha_d\big| = \mathcal{O}\big(\exp(-M)\big).\]
Plugging this estimate into the final equation of \cite{Swin08} then shows that, among all Legendre symbol matrices corresponding to a twist with $r$ prime factors, the proportion corresponding to rank $d$ is $0.5 \lim_{n \rightarrow \infty}P^{\text{Alt}}(j\,|\,2n + j)$ with maximal error $\mathcal{O}\big(\exp(-cr)\big)$ for some constant $c > 0$, easily within our error term. This gives the corollary.

\end{proof}

\section{Proofs of the main theorems}
\label{sec:proofs}
In the previous section, we reduced distributional questions over the squarefree integers to distributional questions over acceptable boxes. In this section, we extend this logic to more and more specialized product spaces. Our goal is to reduce to product spaces on which a combination of Proposition \ref{prop:AR_main}, Proposition \ref{prop:staff}, and the Chebotarev density theorem suffice to prove the equidistribution results we want for $2^k$-Selmer groups and class groups. This will be enough to prove Theorems \ref{thm:Sel_main} and \ref{thm:Cl_main}.

With Proposition \ref{prop:AR_main}, the notions of $2^k$-Selmer groups and $2^k$-class groups have become essentially interchangeable. In this section, we will state all our results and arguments on the Selmer side; straightforward adjustments to the argument would give the results on the class side.

We begin by stating the explicit form of Theorem \ref{thm:Sel_main} that we will prove in this section. 
\begin{thm}
\label{thm:Sel_main_exp}
There is an absolute constant $c > 0$ so that, for any elliptic curve $E/\QQ$ with full $2$-torsion and no rational cyclic subgroup of order four, there is a choice of $A > 0$ so that, for any choice of $N > 0$, any choice of $m \ge 1$, and any sequence $n_1 \ge n_2 \ge \dots \ge n_{m+1}$ of nonnegative integers of the same parity, we have
\[\Bigg| \left| [N]\, \cap  \bigcap_{k = 1}^{m+1} R_{E,\, k}(n_k) \right|\,\,-\,\,  P^{\text{\emph{Alt}}}(n_{m+1} \,|\, n_m) \cdot \left|[N]\, \cap \bigcap_{k=1}^m R_{E,\, k}(n_k) \right|\Bigg|\]
\[ \le AN\cdot (\log \log \log \log N)^{-\frac{c}{m^26^m}}\]
whenever the latter expression is well defined and positive.
\end{thm}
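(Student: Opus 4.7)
The plan is to reduce Theorem \ref{thm:Sel_main_exp} to a statement about typical acceptable boxes and then, on such boxes, to combine the additive-restrictive machinery of Section \ref{sec:add_res} with a Chebotarev-style equidistribution for Artin symbols in the governing fields built in Section \ref{ssec:gov_exp}. First I would apply Proposition \ref{prop:box_smoothing} with $W$ taken to be the set of $n\in S_r(N,D)$ that are simultaneously comfortably spaced above $D_1$, $C_0$-regular for $C_0=c_{14}\log\log\log N$, extravagantly spaced, and Siegel-free (the conditions of Definition \ref{def:spacing} together with Definition \ref{def:Sieg}). Theorem \ref{thm:SkND} and Proposition \ref{prop:no_sieg} control the complement of $W$ with error $O((\log\log\log N)^{-c})$, and summing over $r$ in the range dictated by \eqref{eq:r_ND_bnd} absorbs the low-prime contribution. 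We take $D\sim(\log\log\log N)^{1/3}$ and $D_1=D^{(\log\log N)^{c_{13}}}$ as in Corollary \ref{cor:Kane}, so the reduction to acceptable, extravagantly spaced boxes is immediate, and the theorem becomes a uniform equidistribution statement inside one such box $B_X$.

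Fix such a box $B_X=X_1\times\cdots\times X_r$, and let $i_b$ be the index produced by extravagant spacing, so that $\log p_{i_b}$ dominates the sum of the preceding $\log p_j$ by a factor $(\log\log\log N)^{1/2}$. The split $i<i_b$ versus $i\ge i_b$ separates ``governing'' coordinates (controlling the lower Selmer structure) from ``variable'' coordinates in which we will invoke Chebotarev. Applying Theorem \ref{thm:Lgn_perm} to the governing coordinates and adjoining the Cassels-Tate pairings up to level $m-1$ (which by the same machinery, applied inductively on $m$, are equidistributed among alternating pairings compatible with the lower filtration) we fix an additive-restrictive input $\mathscr{P}$ in the sense of Definition \ref{def:AR_input}. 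The filtration $D_{(1)}\supseteq\cdots\supseteq D_{(m)}$ matches the prescribed ranks $n_k$, the basis $w_1,\dots,w_{n_1}$ and the variable indices $S(j_1,j_2)\ni i_b$ are chosen from the cluster of free coordinates around $i_b$, the raw cocycles $\mathfrak{R}(w_j)$ come from Section \ref{ssec:Sel_exp}, and the governing expansions $\mathfrak{G}(i_a(j_1,j_2))$ exist by Proposition \ref{prop:gov_set_ex}. Proposition \ref{prop:AR_main} then reduces the sum of $m$-th level Cassels-Tate pairings around a cube $\bar x\in\overline{X}_{S(j_1,j_2)}$ to a single Artin symbol $\phi_{\bar z_{i_b}}(\mathfrak{G}(i_a))(\mathrm{Frob}(p_{0b})\cdot\mathrm{Frob}(p_{1b}))$ in the governing field $M(\bar z_{i_b})$.

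The last move is to equidistribute these Artin symbols and feed the result into Proposition \ref{prop:staff}. The key point is that Proposition \ref{prop:gov_set_ind} guarantees independence of the governing fields across distinct cubes, while the extravagant spacing places $p_{i_b}$ far enough above the discriminants of the relevant $M(\bar z)$ that Proposition \ref{prop:2Cheb} gives a genuine unconditional Chebotarev bound with error $\exp(-(\log\log\log N)^{c})$; the Siegel-free hypothesis inherited from $W$ rules out the exceptional-zero term. The resulting Artin-symbol equidistribution is exactly the hypothesis required to invoke Proposition \ref{prop:staff} on the additive-restrictive system $\mathfrak{A}(\mathscr{P})(j_1,j_2)$ of Definition \ref{def:RGAR}, which forces the level-$m$ Cassels-Tate pairing on $D_{(m)}/\mathrm{Im}(E[2])$ to be equidistributed among alternating pairings refining $\mathrm{Ctp}_{(m-1)}$; the rank $n_{m+1}$ is then a deterministic function of this pairing, and standard linear algebra over $\mathbb F_2$ identifies the proportion landing at rank $n_{m+1}$ with $P^{\mathrm{Alt}}(n_{m+1}\mid n_m)$. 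The main obstacle is bookkeeping the error terms. The $6^m$ in the exponent of the final bound traces directly back to the factor $6^{|S|}$ in condition \eqref{eq:Gea_cond} of Proposition \ref{prop:staff} applied with $|S|=m+2$, the $m^2$ absorbs the loss from the $m$ nested inductive applications (one per level of the filtration) and from the polynomial loss in Theorem \ref{thm:Lgn_perm}, and the four iterated logarithms arise successively from (i) Chebotarev's logarithmic savings, (ii) Ramsey in Proposition \ref{prop:staff}, (iii) extravagant spacing in Theorem \ref{thm:SkND}, and (iv) the passage from integers to boxes in Proposition \ref{prop:box_smoothing}. Combining these estimates yields the claimed bound.
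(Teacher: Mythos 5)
Your overall architecture tracks the paper's: reduce to extravagantly spaced, Siegel-free, regular boxes via Theorem \ref{thm:SkND}, Proposition \ref{prop:no_sieg} and Proposition \ref{prop:box_smoothing}; fix the Legendre data and the lower pairings; use Proposition \ref{prop:AR_main} to turn sums of level-$m$ Cassels--Tate pairings over cubes into Artin symbols in governing fields; equidistribute those symbols by Proposition \ref{prop:2Cheb} using the gap and Siegel-freeness; and conclude with Proposition \ref{prop:staff}. The genuine gap is at the point where you write that the governing expansions $\mathfrak{G}(i_a(j_1,j_2))$ ``exist by Proposition \ref{prop:gov_set_ex}'' and then pass directly to Proposition \ref{prop:AR_main}. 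Definition \ref{def:AR_input}(5) requires $\phi_{\bar x}$ to exist for \emph{every} cube $\bar x$ over the relevant coordinates and to be locally trivial at $2$, $\infty$, the bad primes of $E$, and all places in $\pi_{[d]-S}(\bar x)$; Proposition \ref{prop:gov_set_ex} only produces expansions on some set $\overline{Y}_S$, and the splitting and quadratic-residue conditions needed for membership in $\overline{Y}_S$ fail for typical primes in the intervals $X_i$. The paper resolves this by a second bootstrapping of the additive-restrictive/Ramsey machinery that your plan omits entirely: it encodes the residue and splitting conditions for the expansions in an auxiliary additive-restrictive system (Proposition \ref{prop:ARS_gov}), uses Proposition \ref{prop:ars_density} together with the Ramsey Proposition \ref{prop:subgrid} to extract subgrids $Z_{\text{pre-gap}}$ of side $M\asymp(\log\log\log\log N)^{1/5(m+1)}$ on which all required expansions exist, restricts the post-gap coordinates to primes splitting completely in the field $M_{\circ}$, and then shows by a first- and second-moment computation for the covering multiplicity $\Lambda(x)$ (again via Chebotarev, using extravagant spacing and Proposition \ref{prop:gov_set_ind} for the degree counts) that an $R$-fold covering family of such subgrids reproduces the count on the whole box; this is exactly the reduction of Proposition \ref{prop:proof_B} to Proposition \ref{prop:proof_C}. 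Without this step the additive-restrictive input cannot even be set up, and the subgrid size $M$ is what enters condition \eqref{eq:Gea_cond} of Proposition \ref{prop:staff} and hence produces the particular exponent in the stated error, which your error accounting (attributing the Ramsey loss to Proposition \ref{prop:staff} alone) does not explain.

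A smaller but still substantive omission: before any of this one must discard the Legendre-data classes $a$ for which no admissible system of variable indices $S(j_1,j_2)$ exists, and those for which some $X_i(a,P^-_{\text{pre-gap}})$ is too small; the paper handles these with Swinnerton-Dyer's genericity lemmas, Hoeffding's inequality, and the permutation device of Theorem \ref{thm:Lgn_perm}, and your appeal to Theorem \ref{thm:Lgn_perm} ``for the governing coordinates'' does not address this. Also, the inductive equidistribution of the lower pairings you invoke is unnecessary: the theorem is conditional on the level-$\le m$ data, and the paper simply fixes, and then sums over, the at most $2^{mn_{\max}^2}$ possible sequences of lower pairings rather than proving anything about their distribution.
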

From this theorem, we can derive the following explicit form of Corollary \ref{cor:main}.

\begin{cor}
\label{cor:drown}
Take $c$ to be a positive constant less than $\frac{\log 2}{\log 6}$, and take $E/\QQ$ to be an elliptic curve as in the previous theorem. Then there is some $N_0 > 0$ depending on $E$ and $c$ so that, for all $N > N_0$, we have
\begin{equation}
\label{eq:crnk2_est}
\bigg| \big\{ d \in [N]\,:\,\, \text{\emph{corank }} \text{\emph{Sel}}^{2^{\infty}} E^{(d)} \ge 2 \big\}\bigg|  \le \frac{N}{ \big(\log\log\log\log\log N\big)^{c}}.
\end{equation}
\end{cor}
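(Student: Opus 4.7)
The plan is to iterate Theorem \ref{thm:Sel_main_exp} along a chain of length $m$ growing slowly with $N$, and to show that the resulting Markov chain on the tuple of Selmer dimensions decays at rate exactly $1/2$ per step when conditioned on staying at least $2$. Since the tower of error terms is controlled by $(\log^{(4)} N)^{-c_0/(m^2 6^m)}$, the window $6^m \approx \log^{(5)} N$ is exactly the regime where the exponent $\log 2/\log 6$ emerges.

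First, note that $\mathrm{corank}\,\mathrm{Sel}^{2^\infty}E^{(d)} \ge 2$ forces $\dim 2^{k-1}\mathrm{Sel}^{2^k} E^{(d)} \ge 2$ for every $k \ge 2$ and hence $\dim \mathrm{Sel}^2 E^{(d)} \ge 4$, i.e.\ $n_1 \ge 2$ in the notation of Theorem \ref{thm:Sel_main_exp}. Therefore
\[
|\{d \in [N] : \mathrm{corank}\,\mathrm{Sel}^{2^\infty} E^{(d)} \ge 2\}| \le \sum_{\mathbf{n}} \Bigl|[N] \cap \bigcap_{k=1}^{m+1} R_{E,k}(n_k)\Bigr|,
\]
where $\mathbf{n}=(n_1,\dots,n_{m+1})$ runs over all weakly decreasing sequences of integers of common parity with $n_1 \ge 2$ and all $n_k \ge 2$. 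Iterating Theorem \ref{thm:Sel_main_exp} and using $P^{\mathrm{Alt}} \le 1$ gives, for any valid $\mathbf{n}$,
\[
\Bigl|[N] \cap \bigcap_{k=1}^{m+1} R_{E,k}(n_k)\Bigr| \le \Bigl(\prod_{k=1}^m P^{\mathrm{Alt}}(n_{k+1}\mid n_k)\Bigr) \cdot |[N]\cap R_{E,1}(n_1)| \,+\, m A N (\log^{(4)} N)^{-c_0/(m^2 6^m)},
\]
where $c_0$ is the absolute constant from Theorem \ref{thm:Sel_main_exp} and $A$ depends on $E$.

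Second, I analyze the Markov chain with transitions $P^{\mathrm{Alt}}(\,\cdot\mid n)$ on $\{2,4,6,\dots\}$ (and separately on $\{3,5,\dots\}$). Because the chain is weakly decreasing, its substochastic transition matrix restricted to states $\ge 2$ is upper triangular with diagonal entries $P^{\mathrm{Alt}}(n\mid n) = 2^{-\binom{n}{2}}$. The spectral radius is the largest diagonal entry, which is $P^{\mathrm{Alt}}(2\mid 2) = 1/2$ on the even chain and $P^{\mathrm{Alt}}(3\mid 3) = 1/8$ on the odd chain. A standard power-iteration argument gives a uniform bound
\[
Q_m(n_1) := \sum_{\substack{n_1 \ge n_2 \ge \dots \ge n_{m+1} \ge 2 \\ \text{same parity as } n_1}} \prod_{k=1}^m P^{\mathrm{Alt}}(n_{k+1}\mid n_k) \;\le\; B(n_1) \cdot 2^{-m}
\]
for some polynomially growing $B(n_1)$. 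Combined with the super-exponential decay of $|[N]\cap R_{E,1}(n_1)|/N$ in $n_1$ (from Corollary \ref{cor:Kane}, or simply the trivial $\sum_{n_1} |[N]\cap R_{E,1}(n_1)| \le N$ together with Markov's inequality applied to Heath-Brown/Kane moments to cut off $n_1 \gg \log\log N$), the main term is bounded by $C_E \cdot 2^{-m} \cdot N$.

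Third, I balance parameters by taking $m = \lfloor \log^{(6)} N / \log 6 \rfloor$. Then $6^m \approx \log^{(5)} N$, so $2^{-m} \approx (\log^{(5)} N)^{-\log 2/\log 6 + o(1)}$, giving a main term of size $N \cdot (\log^{(5)} N)^{-c}$ for any $c < \log 2/\log 6$. The iterated error term has the form
\[
m A N (\log^{(4)} N)^{-c_0/(m^2 6^m)} \;\ll\; N \exp\Bigl(-c_0 \tfrac{\log^{(5)} N}{(\log^{(6)} N)^2}\Bigr),
\]
which is $o(N/(\log^{(5)} N)^{c})$ for the chosen $c$. Multiplying by the number of valid sequences (at most $(\log N)^{O(m)} = N^{o(1)}$ after cutting off $n_1 \le C \log\log N$) leaves this error negligible.

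The main obstacle I expect is the uniformity of the Markov chain bound $Q_m(n_1) \le B(n_1) 2^{-m}$: one must ensure the constant $B(n_1)$ grows slowly enough to combine with the bounds on $|[N]\cap R_{E,1}(n_1)|$ into a finite constant $C_E$. The upper-triangularity of the substochastic transition restricted to states $\ge 2$ makes this tractable via a direct spectral computation, and the decay of the Selmer-rank distribution at level $1$ (Corollary \ref{cor:Kane}) absorbs any polynomial growth. Apart from this, the argument is essentially bookkeeping of the error terms produced by the $m$-fold iteration of Theorem \ref{thm:Sel_main_exp}.
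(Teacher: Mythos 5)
Your strategy is essentially the paper's own: iterate Theorem \ref{thm:Sel_main_exp} along a chain of length $m$, use the fact that the alternating-kernel Markov chain reaches $\{0,1\}$ at rate roughly $1/2$ per step, and balance $m$ against the error term. The problem is that the final balancing step---precisely where the exponent $\log 2/\log 6$ is supposed to emerge---fails as written. With your choice $m=\lfloor \log^{(6)}N/\log 6\rfloor$ (writing $\log^{(k)}$ for the $k$-fold logarithm, as in your proposal) one has $6^m\approx \log^{(5)}N$, so the iterated error term is
\[
mAN\exp\Bigl(-\tfrac{c_0\log^{(5)}N}{m^2 6^m}\Bigr)\;\approx\; mAN\exp\Bigl(-\tfrac{c_0}{m^2}\Bigr)\;\approx\; mAN,
\]
which is useless; your displayed bound $N\exp\bigl(-c_0\log^{(5)}N/(\log^{(6)}N)^2\bigr)$ silently drops the $6^m$ from the denominator. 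Since the corollary only claims $c$ strictly below $\log 2/\log 6$, the remedy is to stay strictly below the threshold, as the paper does: take $m=\lfloor \tfrac{c'}{\log 2}\log^{(6)}N\rfloor$ with $c<c'<\log 2/\log 6$, so that $6^m$ is a power of $\log^{(5)}N$ with exponent $c'\log 6/\log 2<1$ and the error $\exp\bigl(-c_0(\log^{(5)}N)^{1-c'\log 6/\log 2}/m^2\bigr)$ genuinely beats the main term $2^{-m}=(\log^{(5)}N)^{-c'}$.

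A second slip is the truncation $n_1\le C\log\log N$: the number of admissible sequences is then $\exp\bigl(O(m\log^{(3)}N)\bigr)$, and multiplying the per-sequence error by this count overwhelms it even after the fix above, because the error exponent is only a small power of $\log^{(5)}N$ while $\log^{(3)}N$ is doubly exponentially larger. The paper instead truncates at $n_1\le m$, which costs only the proportion of twists with $2$-Selmer rank exceeding $m+2$---bounded by $O(2^{-c_1 m^2})$ via Corollary \ref{cor:Kane} and Heath-Brown's formulas---and leaves only $m^{m+1}$ sequences, a factor the error term can absorb. Finally, on the uniformity issue you flag: the triangular/spectral argument does not give a polynomially growing $B(n_1)$ (the natural induction yields growth exponential in $n_1$), though this can be absorbed using the decay of $|[N]\cap R_{E,1}(n_1)|$ in $n_1$; the paper avoids the issue altogether by observing that from state $2$ the chain hits $0$ with probability $1/2$, from any other even state it drops to $\{0,2\}$ with probability at least $2/3$, and from odd states it hits $1$ with probability at least $1/2$, which gives the bound $O(2^{-m})$ uniformly in the starting state.
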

\begin{proof}
We consider a Markov process whose states are the nonnegative integers. At each step, we take the transition probability from state $n$ to state $j$ to be $P^{\text{Alt}}(j \,|\, n)$. In this Markov chain, we note that the probability of stepping to $0$ after $2$ is $0.5$; the probability of stepping to either $0$ or $2$ after any other even state is at least two thirds; and the probability of $1$ after any other odd state is at least $0.5$. (All these facts follow from the formula for $P^{\text{Alt}}(j\,|\,n)$ given in  \cite{Heat94}). Then, independent of the initial probability distribution, the chance that the process is in a state other than $0$ or $1$ after $m$ steps is bounded by $\mathcal{O}(2^{-m})$.

Choose $c < c' < \frac{\log 2}{\log 6}$, and take
\[m = \left\lfloor \frac{c'}{\log 2} \log \log \log \log \log \log N \right\rfloor.\]
We can assume that this is positive. From Corollary \ref{cor:Kane} and the formulas from \cite{Heat94}, we see the proportion of $d \in X_N$ such that $E^{(d)}$ has $2$-Selmer rank exceeding $m + 2$ is bounded by $\mathcal{O}\left(2^{-c_1m^2}\right)$ for some constant $c_1 > 0$, in the range of the corollary's estimate for sufficient $N_0$.

We see that the set being bounded in \eqref{eq:crnk2_est} is contained in

\[[N] \,\cap \bigcup_{n_m \ge 2} R_{E,\, m}(n_m) \,=\, [N]\, \cap \bigcup_{n_1 \ge \dots \ge n_m \ge 2}\,\, \bigcap_{k =1}^m R_{E,\, k}(n_k)\]
For sufficient $N_0$ and some constant $c_2 > 0$, we also have
\begin{align*}
 \left| [N] \,\cap  \bigcap_{k = 1}^m R_{E,\, k}(n_k)\right| & \le \prod_{k=1}^{m-1} P^{\text{Alt}}(n_{k+1}\,|\, n_k) \cdot \bigg|[N]\, \cap R_{E, \, 1}(n_1)\bigg|  \\
&+ AmN\cdot (\log \log \log \log N)^{-\frac{c_2}{m^26^m}}
\end{align*}
for any sequence $n_1 \ge \dots \ge n_m \ge 2$. By summing this over all paths with $n_1 \le m$ and using our Markov chain result, we find that the set in \eqref{eq:crnk2_est} has maximal size
\[\mathcal{O}\left(2^{-m}N\right) + Am^{m+1}N \cdot (\log \log \log \log N)^{-\frac{c_2}{m^26^m}},\]
which is within the bound of the corollary for sufficiently large $N_0$.

\end{proof}
 
We now proceed to the proof of Theorem \ref{thm:Sel_main_exp}. To prove the result, we recast it in increasingly specialized situations. The first and easiest of these recasts is to move from an equidistribution result on integers less than $N$ to boxes in $S_r(N, D)$.
\begin{prop}
\label{prop:proof_A}
There is an absolute constant $c > 0$ so that, for any choice of $E/\QQ$ as above, there is some $A > 0$ so that we have the following:

Take $D$ one greater than the largest bad prime of $E$. Choose a positive real $N > 30$, and take
\[D_1 = D^{(\log \log N)^{1/10}}.\]
Choose $r$ satisfying \eqref{eq:r_ND_bnd}, and let $X$ be any box of some $S_r(N, D)$ with this $D_1$ that is extravagantly spaced, Siegel free above $D_1$, and $\sqrt{\log \log \log N}$ regular. Then,  for any choice of $m \ge 1$ and any sequence $n_1  \ge \dots \ge n_{m+1}$ of nonnegative integers of the same parity, we have
\[\Bigg| \left| X\, \cap  \bigcap_{k = 1}^{m+1} R_{E,\, k}(n_k) \right|\,\,-\,\,  P^{\text{\emph{Alt}}}(n_{m+1} \,|\, n_m) \cdot \left|X\, \cap \bigcap_{k=1}^m R_{E,\, k}(n_k) \right|\Bigg|\]
\[ \le A|X|\cdot (\log \log \log \log N)^{-\frac{c}{m^26^m}}\]
whenever the right hand side is defined and positive.
\end{prop}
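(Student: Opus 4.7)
The plan is to treat this as a conditional equidistribution statement about the Cassels-Tate pairing at level $m$. Conditioning on membership in $X \cap \bigcap_{k=1}^m R_{E,k}(n_k)$, the subset satisfying the additional constraint $R_{E,\,m+1}(n_{m+1})$ consists precisely of those $d$ for which the Cassels-Tate pairing on $2^{m-1}\mathrm{Sel}^{2^m}E^{(d)}/\mathrm{Im}(E[2])$ has kernel of dimension exactly $n_{m+1}$. Since this pairing is alternating on an $n_m$-dimensional $\mathbb{F}_2$-space, showing it is nearly uniformly distributed among all alternating forms recovers the probability $P^{\text{Alt}}(n_{m+1}\,|\,n_m)$ by definition.

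To produce this equidistribution I would set up an additive-restrictive input $\mathscr{P}$ as in Definition~\ref{def:AR_input}, with filtrations $D_{(k)} \supseteq D_{(k+1)}$ reflecting the kernels of the Cassels-Tate pairings up through level $m-1$; these filtrations are forced once we fix which conditional set $\bigcap R_{E,\,k}(n_k)$ we are working in. The extravagant spacing of $X$ provides an index $m^{\ast} \ge \sqrt{r}$ such that $\log p_{m^{\ast}}$ exceeds $(\log\log\log N)^{1/2}\sum_{i<m^{\ast}}\log p_i$; I would assign the first $m^{\ast}-1$ coordinates to the set $P$ of ``fixed'' small primes and let the variable indices $S(j_1,j_2)$ for each pair of basis elements lie among the indices $i \ge m^{\ast}$. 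Because governing expansions ramify only above primes in $P$ and above primes in the fixed coordinates (Proposition~\ref{prop:S_expns}, condition (5) of the governing-expansion definition), the fields $M(\bar x)$ have discriminant bounded polynomially in $\log t'_{m^{\ast}-1}$, which is dominated by $\log t'_i$ for $i \in S(j_1,j_2)$.

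Next, Proposition~\ref{prop:AR_main} rewrites the sum of the Cassels-Tate pairing over the vertices of any $|S(j_1,j_2)|$-dimensional cube, whose lower faces are all in the ``good'' sets $\overline Y_{T}^{\,\circ}$, as the Artin symbol $\phi_{\bar z_{i_b}}(\mathfrak{G}(i_a))(\mathrm{Frob}(p_{0b})\cdot \mathrm{Frob}(p_{1b}))$ of a governing expansion. Proposition~\ref{prop:staff} then implies that once the collection of these Artin-symbol values behaves generically, the Cassels-Tate pairings $\langle \psi_m(\mathfrak R(w_{j_3}),x),w_{j_4}(x)\rangle_{CT}$ must themselves be near-equidistributed on $\overline Y_{\emptyset}^{\,\circ}$ (and Proposition~\ref{prop:ars_density} keeps $\overline Y_{\emptyset}^{\,\circ}$ large enough inside $X$). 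The ``base case'' $2$-Selmer equidistribution needed to guarantee the relevant $D_{(1)}$ actually occurs with positive density is Theorem~\ref{thm:Lgn_perm} applied to the Legendre-symbol matrix on $X$, exactly as in Corollary~\ref{cor:Kane}.

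The main obstacle is carrying out the Chebotarev step uniformly. One must apply Proposition~\ref{prop:2Cheb} to the compositum $\prod_{\bar z \in Z} M(\bar z)$ of all governing fields appearing in all tensor-tree configurations $Z$ indexed by $S(j_1,j_2)$, use Proposition~\ref{prop:gov_set_ind} to verify the required Galois independence, and show that the resulting Artin-symbol tuples cover $\mathbb F_2^{\#\text{configurations}}$ with density error beating the stated bound. Here the extravagant-spacing gap $\log p_{m^{\ast}} - \log\sum_{i<m^{\ast}}\log p_i$ controls the Chebotarev error, Siegel-freeness of $X$ above $D_1$ eliminates exceptional Landau-Siegel zeros coming from $L$-functions of discriminants supported on $P$ and on the small primes, and $\sqrt{\log\log\log N}$-regularity gives the uniform prime-density control required for the estimates of Proposition~\ref{prop:staff}. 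The final error $(\log\log\log\log N)^{-c/(m^2 6^m)}$ emerges by matching the constants $c_1,\dots,c_{14}$ in Theorem~\ref{thm:Lgn_perm} and the parameter requirement $\log n \ge A\cdot 6^{|S|}\log \epsilon^{-1}$ of Proposition~\ref{prop:staff} against the lower bounds that extravagant spacing provides: the factor $6^m$ reflects the $3^{|S|}$ density loss of Proposition~\ref{prop:ars_density} compounded with the $2^{|S|}$ cube-vertex count, while the factor $m^2$ tracks iteration over the $\mathcal O(n_m^2)$ matrix entries of the alternating pairing.
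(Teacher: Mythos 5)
Your overall toolkit matches the paper's (additive-restrictive input, Proposition \ref{prop:AR_main}, Proposition \ref{prop:staff}, Chebotarev via Proposition \ref{prop:2Cheb}, Siegel-freeness, Theorem \ref{thm:Lgn_perm}), but the way you wire the extravagant gap into the argument is inverted, and this breaks the one step the gap exists to rescue. You place the variable indices $S(j_1,j_2)$ among the coordinates $i \ge m^{\ast}$ above the gap and justify this with the claim that governing expansions ramify only above $P$ and the fixed coordinates, so that $M(\bar x)$ has discriminant controlled by $\log t'_{m^{\ast}-1}$. That ramification claim is wrong: by Proposition \ref{prop:S_expns} and condition (5) of the definition of a set of governing expansions, $M(\bar x)$ is unramified over $K(\bar x)$, and $K(\bar x)$ is generated by the $\sqrt{p_{0i}p_{1i}}$ for the pairs of primes sitting at the \emph{variable} coordinates of $\bar x$. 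So the governing fields are ramified exactly above the variable primes; if those are taken above the gap, their discriminants are comparable to (or larger than) the primes at which you then want to read off Frobenius, and the unconditional error term of Proposition \ref{prop:2Cheb} gives no saving --- this is precisely the regime that would require GRH. The paper's architecture is the opposite: $S(j_1,j_2)-\{i_b\}$ is placed in $[0.5k_{\text{gap}},\,k_{\text{gap}}]$ \emph{below} the gap, the grids $Z_i$ and hence the fields $M_{\circ}$, $M(j_1,j_2)$ are built from those small primes, and only $i_b$ (together with the complete-splitting conditions at all $i>k_{\text{gap}}$) lies above the gap, so that $\log t_{k_{\text{gap}}+1}$ dominates the relevant log-discriminants by the factor $(\log\log\log N)^{1/2}$ that extravagant spacing provides.

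A second, smaller gap: you treat the filtration and the choice of variable indices as essentially automatic (``forced once we fix the conditional set''), but the $2$-Selmer elements are tuples $w=(T_1,T_2,\Delta_1,\Delta_2)$ whose supports vary with $x$ across the box, and the constraints of part (3) of Definition \ref{def:AR_input} (e.g.\ $S(j_1,j_2)\subseteq T_2(w_{j_2})\cup\{i_b\}$, disjointness from the other supports, all inside the pre-gap window) need not be satisfiable for a given $x$. The paper's reduction first fixes the Legendre data $a$ and the pre-gap primes $P^{-}_{\text{pre-gap}}$, sums over the finitely many choices of pairing matrices, and uses the Swinnerton-Dyer genericity lemmas together with the permutation averaging of Theorem \ref{thm:Lgn_perm} to show that the exceptional $a$ --- those for which no admissible variable indices exist, or some $X_i(a,P^{-}_{\text{pre-gap}})$ is too small, or $n_m\ge n_{\max}$ --- contribute only to the error term. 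Without these two ingredients, the correct placement of the governing data below the gap and the genericity argument producing the variable indices, the reduction you sketch does not go through.
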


\begin{proof}[Proof that Proposition \ref{prop:proof_A} implies Theorem \ref{thm:Sel_main_exp}]
With this proposition, Theorem \ref{thm:Sel_main_exp} is a consequence of applying Theorem \ref{thm:SkND} and Proposition \ref{prop:no_sieg} to Proposition \ref{prop:box_smoothing}.
\end{proof}

As $x$ in $X$ varies, the tuples $w = (T_1, T_2, \Delta_1, \Delta_2)$ corresponding to $2$-Selmer elements change. Our next step is to restrict our attention to sets $X(a)$, where we no longer have this problem. This reduction is technically cumbersome, as some choices of $a$ will prevent us from finding sets of variable indices as in part (3) of Definition \ref{def:AR_input}. We begin with the notation we will need.
\begin{defn*}
Take $E$, $X$, $N$, and $m$ as in Proposition \ref{prop:proof_A}, and assume the extravagant spacing of $X$ is between indices $k_{\text{gap}}$ and $k_{\text{gap}} + 1$. Take $P$ to be the union of the prime numbers less than $D$ with $\{-1\}$.  In the context of Definition \ref{def:Lgn_matrix}, take $\mathscr{M}$ and $\mathscr{M}_P$ maximal, and let $a$ be any function in $\mathbb{F}_2^{\mathscr{M} \cup \mathscr{M}_P}$.

Under these circumstances, any $\bar{x} \in \overline{X}_{[r]}$ entirely contained in $X(a)$ is quadratically consistent, so we can define additive-restrictive input as in Definition \ref{def:AR_input}. Take $\text{Ctp}_{(1)}, \dots, \text{Ctp}_{(m-1)}$ to be a choice of lower pairings as in part (1) of this definition, choose a basis $w_1, \dots, w_{n_1}$ and the integer $n_m$ as in part (2), and choose variable indices as in part (3). We assume $i_b > k_{\text{gap}}$; writing $S_{\text{pre-gap}}$ for the union of the $S(j_1, j_2) - \{i_b\}$, we assume
\[S_{\text{pre-gap}} \subseteq \big[0.5k_{\text{gap}},\, k_{\text{gap}}\big].\]
Take $P^-_{\text{pre-gap}}$ to be an element of $\prod_{i \in [k_{\text{gap}}] - S_{\text{pre-gap}}} X_i$. We assume that $a$ is consistent with the choice of $P^-_{\text{pre-gap}}$.

Then all of the data we have chosen so far will be called \emph{inital data for Proposition \ref{prop:proof_B}}.
\end{defn*}
We will write
\[X_i(a, \, P^-_{\text{pre-gap}})\]
for the subset of $X_i$ consistent with $a$ and the data of $P^-_{\text{pre-gap}}$, and take $X(a,\, P^-_{\text{pre-gap}})$ for the subset of $X(a)$ equaling $P^-_{\text{pre-gap}}$ on $[k_{\text{gap}}] - S_{\text{pre-gap}}$. Finally, given a choice of sequence pairings $\text{Ctp}_{(1)}, \dots, \text{Ctp}_{(k)}$, take
\[X(a, \, P^-_{\text{pre-gap}},\, k)\]
for the subset of $i(a, \, P^-_{\text{pre-gap}})$ whose first $k$ Cassels-Tate pairings agree with the given sequence.

\begin{prop}
\label{prop:proof_B}
There is a constant $c > 0$ so we have the following:

Choose initial data for Proposition \ref{prop:proof_B} as above. Writing
\[n_{\max}= \left\lfloor \sqrt{\frac{c}{m^26^m} \log \log \log \log \log N} \right\rfloor,\]
we assume $n_{\max}$ is defined, positive, and greater than $n$. We also assume that we have
\begin{equation}
\label{eq:proof_B_Xi}
\big|X_i(a,\, P^-_{\text{\emph{pre-gap}}})\big| > 4^{-k_{\text{\emph{pre-gap}}}} \cdot |X_i|.
\end{equation}
for $i \in S_{\text{\emph{pre-gap}}}$.

 Finally, take $\text{\emph{Ctp}}_{(m)}$ to be any  $n_m \times n_m$ alternating matrix with coefficents in $\mathbb{F}_2$.  Then there is some constant $A > 0$ depending only on $E$ so that 
 \[\bigg| \big| X(a,\, P^-_{\text{\emph{pre-gap}}},\, m)\big| \,-\,  2^{-\frac{n_m(n_m - 1)}{2}}  \cdot \big| X(a,\, P^-_{\text{\emph{pre-gap}}},\, m-1)\big|\bigg| \]
 \[ \le A \cdot \big|X(a,\,  P^-_{\text{\emph{pre-gap}}})\big| \cdot (\log \log \log \log N)^{-\frac{c}{m\cdot 6^m}}.\]
\end{prop}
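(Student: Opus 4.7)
The plan is to prove joint equidistribution of the $\binom{n_m}{2}$ above-diagonal Cassels--Tate pairing values $\langle w_{j_1}, w_{j_2}\rangle_{CT}$ (for $j_1<j_2\le n_m$) on $X(a,P^-_{\text{pre-gap}},m-1)$ by establishing approximate independence of each entry conditionally, using the machinery built in Sections~\ref{sec:alg1}--\ref{sec:rams}. First I would complete the initial data into a full additive-restrictive input $\mathscr{P}$ as in Definition \ref{def:AR_input}: in particular, for each pair $(j_1,j_2)$ with $j_1<j_2\le n_m$, Proposition \ref{prop:gov_set_ex} supplies a set of governing expansions $\mathfrak{G}(i_a(j_1,j_2))$ on $X$, and $\mathfrak{R}(w_j)$ is the raw-cocycle set of Section \ref{ssec:Sel_exp}. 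The extravagant-spacing gap at $k_{\text{gap}}$, together with the requirement that $S(j_1,j_2)\subset [0.5k_{\text{gap}},k_{\text{gap}}]\cup\{i_b\}$, ensures both that $i_b$ can be chosen in the post-gap range with the $X_{i_b}$-intervals enormously larger than anything in $S_{\text{pre-gap}}$ and that the constraints in part (3) of Definition \ref{def:AR_input} can all be satisfied. For each pair one then builds the additive-restrictive system $\mathfrak{A}(j_1,j_2)$ of Definition \ref{def:RGAR}.

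Next, fix a target alternating matrix $\text{Ctp}_{(m)}$ and a pair $(j_1,j_2)$. Let $F_{j_1j_2}:X(a,P^-_{\text{pre-gap}},m-1)\to\mathbb{F}_2$ record the value of $\langle\psi_m(\mathfrak{R}(w_{j_1}),x),w_{j_2}(x)\rangle_{CT}$ at vertices in $\overline{Y}^{\,\circ}_{\emptyset}(\mathfrak{A}(j_1,j_2))$ (and $0$ elsewhere). By Proposition \ref{prop:AR_main}, at every $\bar{x}\in\overline{X}_{S(j_1,j_2)}$ satisfying its hypotheses, the discrete derivative $dF_{j_1j_2}(\bar{x})$ equals the Artin symbol $\phi_{\bar{z}_{i_b}}(\mathfrak{G}(i_a(j_1,j_2)))(\text{Frob}(p_{0b})\cdot\text{Frob}(p_{1b}))$. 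Proposition \ref{prop:gov_set_ind} guarantees that the relevant governing field is genuinely new over the compositum of the others, so applying the effective Chebotarev bound (Proposition \ref{prop:2Cheb}) to this field --- in conjunction with Siegel-freeness above $D_1$ and the $\sqrt{\log\log\log N}$-regularity of $X$ --- shows that these Artin symbols are equidistributed over $(p_{0b},p_{1b})\in X_{i_b}\times X_{i_b}$ with power-saving error, so $dF_{j_1j_2}$ takes both values on essentially half of each such cube.

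I would then feed this into Proposition \ref{prop:staff}, taking $|S|=|S(j_1,j_2)|=m+2$, $a\le 2^{n_1(n_1+2m+6)}$ (the bound on $|A_T(\mathfrak{A})|$ from Definition \ref{def:RGAR}), and $\epsilon$ roughly $(\log\log\log\log N)^{-c/(m^2 6^m)}$. The hypothesis $\log n\ge A\cdot 6^{|S|}\log\epsilon^{-1}$ translates into needing $\log|X_i(a,P^-_{\text{pre-gap}})|\ge A\cdot 6^{m+2}\cdot c^{-1}m^2 6^m\cdot\log\log\log\log\log N$ for $i$ in the post-gap range of $S(j_1,j_2)$; the lower bound \eqref{eq:proof_B_Xi}, combined with the restriction $n<n_{\max}$ and $D_1=D^{(\log\log N)^{1/10}}$, secures this inequality. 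Proposition \ref{prop:staff} then yields
\[\bigl|\#\{x:F_{j_1j_2}(x)=1\}-\tfrac{1}{2}|X(a,P^-_{\text{pre-gap}},m-1)|\bigr|\ll |X(a,P^-_{\text{pre-gap}})|\cdot(\log\log\log\log N)^{-c/(m\cdot 6^m)}.\]
The same argument applies after conditioning on any subset of the other pairing values (by appropriate choice of $\text{Ctp}_{(m)}$ and reapplication of the same tools), giving joint equidistribution of the $\binom{n_m}{2}$ entries up to a multiplicative accumulation bounded by $\binom{n_m}{2}\le n_{\max}^2$, which is absorbed into the exponent $-c/(m\cdot 6^m)$ on shrinking $c$ slightly.

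The principal obstacle is a unified bookkeeping of error terms: one must simultaneously (i) ensure the Chebotarev error stays power-saving despite governing fields of degree growing with $|S|=m+2$ and discriminants as large as $\exp((\log t'_1)^{c_5})$, (ii) verify that the Ramsey-type threshold $\log n\ge A\cdot 6^{|S|}\log\epsilon^{-1}$ of Proposition \ref{prop:staff} holds with the chosen $\epsilon$, and (iii) aggregate the resulting $\binom{n_m}{2}$ conditional estimates without losing the savings. The tight triple constraint, extravagant gap, bound on $n_{\max}$, and bound $|X_i(a,P^-_{\text{pre-gap}})|\ge 4^{-k_{\text{gap}}}|X_i|$ are each designed to support exactly one of these requirements; checking all three compatibly is where the bulk of the technical work lies.
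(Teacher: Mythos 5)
There is a genuine gap, and it sits exactly where the paper's own proof of this proposition does its work. Your plan applies Proposition \ref{prop:AR_main} and Proposition \ref{prop:staff} directly over the full sets $X_i(a,\,P^-_{\text{pre-gap}})$, taking for granted that Proposition \ref{prop:gov_set_ex} ``supplies a set of governing expansions on $X$'' with the properties demanded by parts (5)--(6) of Definition \ref{def:AR_input}. But a set of governing expansions only has $\phi_{\bar{x}}$ defined on the sets $\overline{Y}_S(\mathfrak{G})$, and membership in $\overline{Y}_S$ requires stringent complete-splitting and quadratic-residue conditions on the coordinates of $\bar{x}$; moreover part (5) of Definition \ref{def:AR_input} additionally demands local triviality of each expansion at $2$, $\infty$, the conductor primes, and \emph{all} places in $\pi_{[d]-S}(\bar{x})$ --- including every post-gap coordinate. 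None of this holds for generic cubes in the box. The paper's proof of this proposition is precisely the construction that repairs this: one extracts many small pre-gap subgrids $Z^{\ell}_{\text{pre-gap}}$ of side length only $M \approx (\log\log\log\log N)^{1/5(m+1)}$ on which all required expansions exist with the prescribed local behavior (via the additive-restrictive system of Proposition \ref{prop:ARS_gov}, the density bound of Proposition \ref{prop:ars_density}, and the Ramsey extraction of Proposition \ref{prop:subgrid}), restricts the post-gap coordinates to primes splitting completely in the associated fields $M^{\ell}_{\circ}$, and then shows by a first/second moment computation on the covering multiplicity $\Lambda(x)$ --- using Proposition \ref{prop:gov_set_ind} to control degrees of composites $M^{\ell}_{\circ}M^{\ell'}_{\circ}$ and Chebotarev across the extravagant gap --- that these special grids cover $X(a,\,P^-_{\text{pre-gap}})$ essentially evenly, so that equidistribution on each grid (Proposition \ref{prop:proof_C}) implies the statement. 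Your proposal contains no substitute for this covering argument, and without it the hypotheses of Proposition \ref{prop:AR_main} and the grid structure needed for Proposition \ref{prop:staff} are simply unavailable.

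A secondary but concrete error: in Proposition \ref{prop:staff} the threshold \eqref{eq:Gea_cond} involves $n=\min_{i\in S}|X_i|$ over \emph{all} coordinates of $S$, and in the correct application the pre-gap coordinates have been cut down to the sets $Z_i$ of cardinality $M$, so the inequality to verify is $\log M \ge A\cdot 6^{|S|}\log\epsilon^{-1}$; this is what dictates both the choice of $M$ and the final iterated-logarithm error shape. Your verification instead uses \eqref{eq:proof_B_Xi} and the full-size $|X_i(a,\,P^-_{\text{pre-gap}})|$, which conflates the two regimes and would not survive once the necessary restriction to small pre-gap subgrids is made. (Your aggregation over pairs by conditioning is workable in principle, though the paper's character-sum formulation over the space of alternating matrices is the cleaner route and is what Proposition \ref{prop:proof_C} actually uses.)
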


\begin{proof}[Proof that Proposition \ref{prop:proof_B} implies Proposition \ref{prop:proof_A}]

This implication would be easy if we could prove the above Proposition for arbitrary choices of $a$, $P^-_{\text{pre-gap}}$, and the pairnigs. However, there are three kinds of bad $(a, P^-_{\text{pre-gap}})$ to consider. First, we need to avoid the case where $n$ is not less than $n_{\max}$. Second, we need to avoid $a$ such that, for some choice of pairings, we cannot find variable indices suitable for the initial data. Finally, we need to avoid $(a, P^-_{\text{pre-gap}})$ for which \eqref{eq:proof_B_Xi} does not hold for some $i \in S_{\text{pre-gap}}$.  We claim that the union of $X(a,\, P^-_{\text{pre-gap}})$ over all three kinds of bad $(a, \, P^-_{\text{pre-gap}})$ fits into the error term of Proposition \ref{prop:proof_A}.

We first claim that the union of $X(a)$ for which $n_m \ge n_{\max}$ fits into the error term of Proposition \ref{prop:proof_A}. This is a consequence of the argument of Corollary \ref{cor:Kane} and the formulas in \cite{Heat94}.

Next, consider the set of $a$ for which, for some choice of pairings $\text{Ctp}_{(k)}$ and a basis, there is no choice as in the lemma for the variable indices $S(j_1, j_2)$. We claim the union of the $X(a)$ over the set of $a$ for which this holds also fits in this error bound. 

First, we claim that the proportion of $a$ for which there are $2$-Selmer elements $w_1, w_2$ so that either $w_1$ or $w_2$ is non-torsion and
\begin{equation}
\label{eq:four_seven}
\big|(T_1(w_1) + T_2(w_2)) \cap [0.5k_{\text{gap}},\, k_{\text{gap}}]\big| > (0.25 + 2^{-10n_{\max}})\cdot k_{\text{gap}}
\end{equation}
has density at most
\[\mathcal{O}\left((15/16)^r + \exp\big(2^{-20n_{\max}} \cdot k_{\text{gap}}\big)\right)\]
in the space $\mathbb{F}_2^{\mathscr{M} \cup \mathscr{M}_P}$. Here, $T_1+T_2$ denotes the symmetric difference.

Call $a$ generic if there is no non-torsion $2$-Selmer tuple $w$ of $X(a)$ for which $T_1(w)$, $T_2(w)$, and $[r]$ are not linearly independent sets with respect to symmetric difference, and if there are no pair of non-torsion $2$-Selmer tuples $(w_1, w_2)$ with $w_1 +w_2$ also non-torsion, but where $T_1(w_1)$, $T_2(w_1)$, $T_1(w_2)$, $T_2(w_2)$, and $[r]$ are not linearly independent. From Lemmas 4-6 of \cite{Swin08}, we see that the proportion of $a$ that are not generic due to the condition on $w$ is bounded by
\[\mathcal{O}\left(2^{2|P|} \cdot (3/4)^r\right).\]
For the condition on $(w_1, w_2)$, we can use Lemma 7 from \cite{Swin08} after noting that the condition $u_1' = u_2''$ can be weakend to $u_1'/u_2'' \in X_S$ with no change in the argument. Then, from this lemma, the proportion of non-generic $a$ is bounded by
\[A^{|P|} \cdot (15/16)^r\]
for some absolute $A > 1$.

Now suppose $w$ is a generic tuple as above. From genericity, we can prove that the local conditions at the $r$ primes coming from $X$ are independent, and we find that the proportion of $a$ so that $w$ is a $2$-Selmer tuple for $X(a)$ is bounded by $\mathcal{O}\left(4^{-r}\right)$. Similarly, if $(w_1, w_2)$ is generic as above, the probability that $w_1$ and $w_2$ are both $2$-Selmer for $X(a)$ is bounded by $\mathcal{O}\left(16^{-r}\right)$.

Then Hoeffding's inequality is sufficient to complete the estimate of the density of $a$ in $\mathbb{F}_2^{\mathscr{M} \cup \mathscr{M}_P}$ not satisfying \eqref{eq:four_seven} for some $w_1, w_2$.

For any  $a$ other than those in this set, it is easy to find sets of variable indices if $n_{\text{max}}$ is larger than some constant determined by $E$. First, choose some $i_b > k_{\text{gap}}$, and add torsion to the basis as necessary so $i_b$ is not in any $T_i(w_j)$. Then each $S(j_1, j_2) -\{i_b, i_a(j_1, j_2)\}$ can be taken to be any subset of size $m$ inside of
\[ T_2(w_{j_2}) \cap \big([r] - T_1(w_{j_2})\big) \cap \bigcap_{j \ne j_2} \bigg([r] - \big(T_1(w_j) \cup T_2(w_j)\big)\bigg).\]
The assumptions on $a$ give that this intersection has density about $4^{-n_1}$ on the integers in the interval $[0.5k_{\text{gap}}, k_{\text{gap}}]$, which will be larger than $m$ for sufficient $n_{\text{max}}$. We can find $i_a$ similarly.

If $k_2 < 0.5 k_{\text{gap}}$, we see that permutations of the first $k_2$ indices do not change whether \eqref{eq:four_seven} holds for a given $a$. Then, from Theorem \ref{thm:Lgn_perm}, we find that our argument implies that the union of $X(a)$ over all $a$ for which it may be impossible to find a set of variable indices fits into the error of Proposition \ref{prop:proof_A}.

Next, we claim that the union of $X(a,\, P_{\text{pre-gap}}^{-})$ over all $(a,\, P_{\text{pre-gap}}^{-})$ for which \eqref{eq:proof_B_Xi} does not hold for some $i$ fits into the error of Proposition \ref{prop:proof_A}. We will work in the context of Proposition \ref{prop:eqd_lgn_raw}. To do this, add the primes $p_1, \dots, p_k$ of the box to the set $P$; taking $X_i(a, P)$ to be the subset of $X_i$ consistent with $P$ and the choice of $a$, we will attempt to apply the argument of the proposition to
\[X_1(a, P) \times \dots \times X_r(a, P).\]
This will only work if no $X_i(a, P)$ is smaller than $\frac{1}{(\log t_1')^{c'}} \cdot |X_i|$ for some choice of the constant $c'$. For a good choice of constants, outside a set of choices of $a$ over which the union of the $X(a)$ fits into the error of Proposition \ref{prop:proof_A}, we always have
\[X_i(a, P)\ge \frac{1}{(\log t_1')^{c'}} \cdot |X_i|.\]
Suppose we have such an $a$. Then a choice of $P_{\text{pre-gap}}^{-}$ for which \eqref{eq:proof_B_Xi} does not hold would be exceptional in the sense of the proof of Proposition \ref{prop:eqd_lgn_raw}. Per that proof, the union of all such exceptional sets fits into the error of Proposition \ref{prop:proof_A}.

Finally, we note that there are at most $2^{mn_{\max}^2}$ sequences of pairings $\text{Ctp}_{(k)}$. Writing $X_{aP^-}$ for $ X(a,\, P^-_{\text{pre-gap}})$, the claim of the proposition then implies
\[\Bigg| \left| X_{aP^-} \,\cap   \bigcap_{k = 1}^{m+1} R_{E,\, k}(n_k) \right| \,-\, P^{\text{Alt}}(n_{m+1}\,| n_m) \cdot \left| X_{aP^-} \cap \, \bigcap_{k = 1}^{m} R_{E,\, k}(n_k) \right|\Bigg|\]
\[ \le A \cdot 2^{mn_{\max}^2} \cdot  |X_{aP^-}|\cdot (\log \log \log \log N)^{-\frac{c}{m \cdot 6^m}}.\]
A computation shows that the sum of this error over all $a$ and $P^-_{\text{pre-gap}}$ is then within the error of Proposition \ref{prop:proof_A}. This gives the lemma.
\end{proof}

Now that we have a set of variable indices, the next structure to add is a set of governing expansions as in part (5) of Definition \ref{def:AR_input}. The requirements on these governing expansions are quite stringent, making this step the most interesting part of the reduction of Theorem \ref{thm:Sel_main_exp}. We first need notation for the extra structure.
\begin{defn*}
Choose initial data for Proposition \ref{prop:proof_B} that obeys the conditions of Proposition \ref{prop:proof_B}. Choose subsets $Z_i$ of $X_i$ for each $i$ in  $S_{\text{pre-gap}}$. For each set $S(j_1, \, j_2)$ of variable indices, choose a set of governing expansions $\mathfrak{G}(i_a(j_1, \,j_2))$ on the product $Z_{\text{pre-gap}}$ of the $Z_i$. For any set $S$ of the form $S(j_1, j_2) - \{i_b\}$ and any $\bar{x} \in \overline{\big(Z_{\text{pre-gap}}\big)}_S$, we assume 
\[\phi_{\bar{x}}(\mathfrak{G}(i_a(j_1, j_2)))\]
exists.

For $x \in Z_{\text{pre-gap}}$, write $L(x)$ for the composition of all quadratic fields ramified only at $\infty$, the places of $P$, and the places of $P^-_{\text{pre-gap}}$. Write $M(j_1, j_2)$ for the composition of the fields of definition for the set of $\phi_{\bar{x}}$ with $\bar{x} \in \overline{\big(Z_{\text{pre-gap}}\big)}_{S(j_1, j_2) - \{i_b\}}$. Also write $M_{\circ}(j_1, j_2)$ for the composition of the fields of definition for the set of $\phi_{\bar{x}}$ with $\bar{x} \in \overline{\big(Z_{\text{pre-gap}}\big)}_S$ for some proper subset $S$ of $S(j_1, j_2) - \{i_b\}$.

We assume that, for each $S(j_1, j_2)$, the field $M_{\circ}(j_1, j_2)/\QQ$ splits completely at all primes in $P$, in $P^-_{\text{pre-gap}}$, and in any $Z_i$ with $i$ outside $S(j_1, j_2) - \{i_b\}$.

Finally, take $M$ to be the composition of any $L(x)$ with the set of $M(j_1, j_2)$, and take $M_{\circ}$ to be the composition of any $L(x)$ with the set of $M_{\circ}(j_1, j_2)$. We write
\[X_i(M_{\circ})\]
to be the subset of primes $p$ in $X_i$ so $p$ is consistent with the choice of $a$ and $P^-_{\text{pre-gap}}$ and the prime $p$ splits completely in each $M_{\circ}(j_1, j_2)$. Note that $X_i(M_{\circ})$ is described alternatively as the subset of $X_i$ mapping under the Frobenius map to one specific central element of $\text{Gal}(M_{\circ}/\QQ)$. Finally, take
\[Z = \{P^-_{\text{pre-gap}}\} \times  Z_{\text{pre-gap}} \times \prod_{i > k_{\text{gap}}}  X_i(M_{\circ}).\]
\end{defn*}
\begin{prop}
\label{prop:proof_C}
There is an absolute constant $c> 0$ so we have the following:

Choose initial data for this proposition as above. Taking
\[M = \left\lfloor(\log \log \log \log N)^{1/5(m+1)}\right\rfloor,\]
we assume that $M$ is well defined and positive, and that each $Z_i$ has cardinality $M$. 

Then there is a constant $A > 0$ depending only on $E$ so that
\[ \bigg| \big| Z \cap X(a,\, P^-_{\text{\emph{pre-gap}}},\, m)\big| \,-\,  2^{-\frac{n_m(n_m - 1)}{2}}  \cdot \big|Z \cap  X(a,\, P^-_{\text{\emph{pre-gap}}},\, m-1)\big|\bigg| \]
 \[ \le A \cdot \big| Z \cap X(a,\, P^-_{\text{\emph{pre-gap}}})\big| \cdot (\log \log \log \log N)^{-\frac{c}{m\cdot 6^m}}.\]
\end{prop}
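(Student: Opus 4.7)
The plan is to combine Proposition~\ref{prop:AR_main}, Proposition~\ref{prop:staff}, and Chebotarev's density theorem (Proposition~\ref{prop:2Cheb}) to prove that each off-diagonal entry of the Cassels-Tate pairing at level $m$ is approximately equidistributed and that different entries behave independently. For each pair $j_1 < j_2 \le n_m$, build the additive-restrictive system $\mathfrak{A}(j_1, j_2)$ of Definition~\ref{def:RGAR}, using the raw cocycles $\mathfrak{R}(w_{j})$ and the governing expansions $\mathfrak{G}(i_a(j_1,j_2))$ from the initial data. Restricted to the space $Z \cap X(a, P^-_{\text{pre-gap}}, m-1)$, the inclusion into $\overline{Y}_\emptyset^{\,\circ}(\mathfrak{A}(j_1,j_2))$ is by construction of $Z$ (the splitting condition in $M_\circ$ forces the lower pairings and acceptable-ramification data to be compatible); the size estimate $|A_S(\mathfrak{A}(j_1,j_2))| \le a$ for some $a$ depending polynomially on $n_1+m$ is immediate from Definition~\ref{def:RGAR}.

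Next define $F_{j_1,j_2}: Z \cap X(a, P^-_{\text{pre-gap}}, m-1) \to \mathbb{F}_2$ to be the $(j_1,j_2)$-entry of the level-$m$ Cassels-Tate pairing. By Proposition~\ref{prop:AR_main}, for every $\bar{x} \in \overline{X}_{S(j_1,j_2)}$ whose codimension-one faces all lie in the $\overline{Y}^{\,\circ}_{S(j_1,j_2) - \{i\}}$, the cube-sum $dF_{j_1,j_2}(\bar{x})$ equals the Artin symbol $\phi_{\bar{z}_{i_b}}(\mathfrak{G}(i_a))\big(\mathrm{Frob}(p_{0b})\cdot\mathrm{Frob}(p_{1b})\big)$, where $(p_{0b}, p_{1b}) = \pi_{i_b}(\bar{x})$. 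Call this function $g_{j_1,j_2}$. The variable $i_b$ lies in the post-gap region, so as $(p_{0b}, p_{1b})$ range over $X_{i_b}(M_\circ) \times X_{i_b}(M_\circ)$, these Frobenius elements vary over a fiber of $\mathrm{Gal}(M/M_\circ)$. Apply Proposition~\ref{prop:2Cheb} to $M/\mathbb{Q}$: the assumed Siegel-freeness of the box kills the Siegel-zero term, extravagant spacing ($\log t'_{i_b} \gg \log|d_{M_\circ}|$ by our size controls on $|Z_{\text{pre-gap}}|$ and $|P^-_{\text{pre-gap}}|$) dominates the main error, and the regularity of the box bounds $[M:\mathbb{Q}]$. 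Together, these yield near-uniform distribution of $g_{j_1,j_2}(\bar{x})$ over the admissible cubes $\bar{x}$.

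Now apply Proposition~\ref{prop:staff} to $F_{j_1,j_2}$ with $S = S(j_1,j_2)$ of size $m+2$, with $n = M \sim (\log\log\log\log N)^{1/5(m+1)}$, with $a$ as above, and with $\epsilon \sim (\log\log\log\log N)^{-c/(m\cdot 6^m)}$. The exponent $1/5(m+1)$ in $M$ is chosen precisely so that the hypothesis $\log n \ge A\cdot 6^{|S|}\log \epsilon^{-1}$ holds. Staff bounds the number of functions $g \in \mathscr{G}_S$ that could lead to an $\epsilon$-non-equidistributed $F$; combining with the near-uniform distribution of our specific $g_{j_1,j_2}$ from Chebotarev, the proportion of $Z \cap X(a, P^-_{\text{pre-gap}}, m-1)$ on which $F_{j_1,j_2}$ takes a prescribed value is $\frac{1}{2} + O(\epsilon)$. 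To get the full factor $2^{-n_m(n_m-1)/2}$, repeat the argument with $F$ replaced by any nontrivial $\mathbb{F}_2$-linear combination $\sum c_{j_1,j_2} F_{j_1,j_2}$, appealing to Proposition~\ref{prop:gov_set_ind} to ensure the corresponding linear combinations of governing expansions $\phi_{\bar{z}_{i_b}}(\mathfrak{G}(i_a))$ remain nontrivial as characters of $\mathrm{Gal}(M/M_\circ)$, so Chebotarev equidistribution still applies.

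The main obstacle is the simultaneous bookkeeping in the last step: one must show that the combined linear combination still arises as the differential of a function in some additive-restrictive framework (so Staff applies), while its associated governing function remains Chebotarev-uniform (so the bad set has small measure). The independence of governing expansions at distinct $(i_a, i_b)$ follows from the tree structure of the $Z_i$ via Proposition~\ref{prop:gov_set_ind}, but verifying that the error terms from $\binom{n_m}{2}$ applications of Staff and Chebotarev combine within the bound $A\cdot |Z \cap X(a, P^-_{\text{pre-gap}})|\cdot (\log\log\log\log N)^{-c/(m\cdot 6^m)}$ requires tracking the dependence of $c$ on $m$ through both propositions, which is where the $m\cdot 6^m$ exponent ultimately originates.
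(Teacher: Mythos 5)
Your assembly of ingredients (reduction to character sums, the additive-restrictive system $\mathfrak{A}(j_1,j_2)$, Proposition \ref{prop:AR_main} turning cube-sums into Artin symbols, Proposition \ref{prop:staff}, Chebotarev at the post-gap index $i_b$, and the numerology matching $M$ against $\log n \ge A\cdot 6^{|S|}\log\epsilon^{-1}$) is the right skeleton, but the step where you glue Proposition \ref{prop:staff} to Chebotarev is a non sequitur. Proposition \ref{prop:staff} bounds the number of \emph{functions} $g\in\mathscr{G}_S$ that could support an $\epsilon$-non-equidistributed $F$; it says nothing about functions whose \emph{values} are balanced. Your specific $g_{j_1,j_2}$ is a single deterministic function, determined by the Frobenius map from $X_{i_b}(M_\circ)$ into $\text{Gal}(M(j_1,j_2)M_\circ/M_\circ)$, and knowing from Chebotarev that its values are close to half $0$'s and half $1$'s does not place it outside the exceptional set of Proposition \ref{prop:staff}: the bad functions arising as $dF$ for a non-equidistributed $F$ typically have balanced values as well. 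There is no randomness over which the statement ``most $g$ are good'' can be converted into ``our $g$ is good.''

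The paper closes this gap differently. Via Proposition \ref{prop:gov_set_ind} it identifies $\text{Gal}(M(j_1,j_2)M_\circ/M_\circ)$ with $\mathscr{G}_{S-\{i_b\}}$, and then uses the counting in Proposition \ref{prop:staff} to \emph{choose} a fixed tuple of translates $\sigma_1,\dots,\sigma_M$ such that every subgrid $Z_{\text{pre-gap}}\times\{x_1,\dots,x_M\}$ with $x_i\in X_{i_b}(\sigma+\sigma_i)\cap X_{i_b}(P^-_{\text{post-gap}})$ carries a good governing function, simultaneously for every shift $\sigma$; Chebotarev together with Propositions \ref{prop:jutila} and \ref{prop:eqd_lgn_raw} (the estimate \eqref{eq:Frob_fun}) shows that the Frobenius classes inside $X_{i_b}(M_\circ)\cap X_{i_b}(P^-_{\text{post-gap}})$ have nearly equal sizes, so $Z_{AR}$ can be partitioned into subgrids realizing the prescribed Frobenius pattern with negligible leftovers, and summing the subgrid estimates gives the character-sum bound. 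Two further points. First, your last paragraph overcomplicates the passage to a general nontrivial character $F$: one does not need joint nontriviality of linear combinations of governing expansions across different pairs. Picking a single $(j_1,j_2)$ in the support of $F$ and summing over $S(j_1,j_2)$-cubes, all other entries of the pairing cube-sum to zero by Proposition \ref{prop:AR_main}, so the differential of $F(\text{CT})$ on those cubes is exactly the one governing function $\phi_{\bar z_{i_b}}(\mathfrak{G}(i_a(j_1,j_2)))$ evaluated at $\text{Frob}(p_{0b})\cdot\text{Frob}(p_{1b})$. Second, the groups in Definition \ref{def:RGAR} have order up to $2^{n_{\max}(n_{\max}+2m+6)}$, exponential in $n_{\max}^2$ rather than polynomial in $n_1+m$ as you assert; this is exactly where the constraint $\epsilon<2^{-n_{\max}(n_{\max}+2m+6)}$ comes from, and it must be balanced against the bound on $n_{\max}$ imposed in Proposition \ref{prop:proof_B} for the final exponent $c/(m\cdot 6^m)$ to survive.
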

\begin{proof}[Proof that Proposition \ref{prop:proof_C} implies Proposition \ref{prop:proof_B}]
Choose initial data for Proposition \ref{prop:proof_B} obeying the conditions of Proposition \ref{prop:proof_B}. Write $V_{\text{pre-gap}}$ for the subset of $\prod_{i \in S_{\text{pre-gap}}} X_i$  consistent with $P^-_{\text{pre-gap}}$ and the conditions of $a$. Take
\[R = \left\lfloor \exp \exp\big(0.2 k_{\text{gap}}\big) \right\rfloor.\]
We can assume $R$ is positive. We also assume that
\[m < \log \log \log \log \log N,\]
as Proposition \ref{prop:proof_B} is otherwise vaccuous.

We will choose $t \ge 0$ and, for each $i \in S_{\text{pre-gap}}$, we will choose sequences of subsets
\[Z^{1}_i,\, \dots, Z^t_i \subseteq X_i(a,\, P^-_{\text{pre-gap}}),\]
with each set of cardinality $M$. We take
\[Z^{\ell}_{\text{pre-gap}} = \prod_{i \in S_{\text{pre-gap}}} Z^\ell_i.\]
We assume that these subsets obey the following conditions:
\begin{itemize}
\item For $\ell \ne \ell'$, we have that $Z^{\ell}_{\text{pre-gap}}$ and $Z^{\ell'}_{\text{pre-gap}}$ intersect at at most one point.
\item Each $Z^{\ell}_{\text{pre-gap}}$ is a subset of $V_{\text{pre-gap}}$, and any point in $V_{\text{pre-gap}}$ is in at most $R$ of the $Z^{\ell}_{\text{pre-gap}}$. 
\item The set $Z^{\ell}_{\text{pre-gap}}$ can be used as initial data for Proposition \ref{prop:proof_C}.
\end{itemize}
Furthermore, we assume that the sequence of $Z^{\ell}_{\text{pre-gap}}$ cannot be extended under these requirements to a sequence of $t+1$ subgrids. 

Write
\[X_{\text{pre-gap}} = \prod_{i \in S_{\text{pre-gap}}} X_i(a,\, P^-_{\text{pre-gap}}).\]
Take $V^{\text{bad}}_{\text{pre-gap}}$ to be the set of points in $V_{\text{pre-gap}}$ that are consistent with the choice of $a$ and $P^-_{\text{pre-gap}}$ and that are in fewer than $R$ of the $Z^{\ell}_{\text{pre-gap}}$. Write $\delta$ for the density of $V^{\text{bad}}_{\text{pre-gap}}$ in $X_{\text{pre-gap}}$. By a greedy algorithm, we can choose a subset $W$ of $V^{\text{bad}}_{\text{pre-gap}}$ of density at least $\delta/RM^{m+1}$ such that no point in $W$ is in more than two of the $Z^{\ell}_{\text{pre-gap}}$.

By adjoining splitting behavior at the primes in $P^-_{\text{pre-gap}}$ to the system constructed in Proposition \ref{prop:ARS_gov}, we can then define an additive-restrictive system on $X_{\text{pre-gap}}$ with $\overline{Y}^{\,\circ}_{\emptyset} = W$ and where, if  $\bar{x} \in \overline{Y}_{S_{\text{pre-gap}}}^{\,\circ}$, then the governing expansions defined at $\bar{x}$ are as required for Proposition \ref{prop:proof_C}. The maximal size of the abelian groups in this additive-restrictive system is bounded by $2^{k_{\text{gap}} + |P|}$. Then, by Proposition \ref{prop:ars_density}, the density of $\overline{Y}_{S_{\text{pre-gap}}}^{\,\circ}$ in $X_{\text{pre-gap}} \times X_{\text{pre-gap}}$ is at least
\[\left(\frac{\delta}{2^{k_{\text{gap}} |P|} \cdot RM^{m+1}}\right)^{3^{|S_{\text{pre-gap}}|}}.\]
We note $|S_{\text{pre-gap}}| \le (m+1)n_0^2$. In addition, for sufficently large $N$, we always have
\[|X_i(a, \, P^-_{\text{pre-gap}})| >\exp \exp(0.3 \cdot k_{\text{gap}})\]
for $i \in S_{\text{pre-gap}}$. Applying Proposition \ref{prop:subgrid} and the assumptions on $t$, we then have
\[M^{2m} > \frac{\exp(0.3 \cdot k_{\text{gap}})}{(m+1)3^{(m+1)n_m^2}\cdot (\exp(0.25\cdot k_{\text{gap}}) + \log \delta^{-1})} \]
for sufficiently large $N$. We can then bound $\delta$  by $\exp(- e^{0.25k_{\text{gap}}})$ for sufficiently large $N$. Then, following the logic of Proposition \ref{prop:eqd_lgn_raw}, we see that the subset of $x \in X(a, P^{-}_{\text{pre-gap}})$ for which $\pi_{S_{\text{pre-gap}}}(x)$ is in $V^{\text{bad}}_{\text{pre-gap}}$ fits easily into the error term of Proposition \ref{prop:proof_B}.

We associate grids $Z_{\text{pre-gap}}^{\ell}$ with fields $M^{\ell}$ and $M^{\ell}_{\circ}$ and a supergrid $Z^{\ell}$ as above. For $x \in X(a,\, P^-_{\text{pre-gap}})$ with $\pi_{S_{\text{pre-gap}}}(x)$ outside of $V^{\text{bad}}_{\text{pre-gap}}$, write $\Lambda(x)$ to be the number of $\ell \le t$ for which $x$ is in $Z^{\ell}$. Write $d_{ML}$ for the degree of $M^{\ell}$ over some $L(x)$ with $x \in Z^{\ell}_{\text{pre-gap}}$; from Proposition \ref{prop:gov_set_ind}, we find this degree does not depend on $\ell$ or $x$. For $i > k_{\text{gap}}$, write $X_i(L(x))$ for the subset of $X_i(a,\, P_{\text{pre-gap}})$ consistent with the choice of $x$. From the Chebotarev density theorem as presented in Proposition \ref{prop:2Cheb} and the definition of extravagant spacing, we then have
\[|X_i(M^{\ell}_{\circ})| = d_{ML}^{-1} \cdot |X_i(L(x))|\left(1 + \mathcal{O}\left(e^{-2k_{\text{gap}}}\right)\right)\]
for $i > k_{\text{gap}}$. Following Proposition \ref{prop:eqd_lgn_raw} then gives that the subset of
\[\prod_{i > k_\text{gap}} X_i(M^{\ell}_{\circ})\]
consistent with $a$ has order
\[d_{ML}^{-(r - k_{\text{gap}})} \cdot |X(a,\, P^-_{\text{pre-gap}}) \cap \pi^{-1}_{S_{\text{pre-gap}}}(x)| \cdot \left(1 + \mathcal{O}\left(e^{-k_{\text{gap}}}\right)\right).\]
From this, we calculate that $\Lambda(x)$ has average value
\[d_{ML}^{-(r - k_{\text{gap}})} R \cdot  \left(1 + \mathcal{O}\left(e^{-k_{\text{gap}}}\right)\right).\]
Similarly, from the requirements on $Z^{\ell} \cap Z^{\ell'}$ and Proposition \ref{prop:gov_set_ind}, we see $M_{\circ}^{\ell}M_{\circ}^{\ell'}$ has degree $d_{ML}^2$ over $L(x)$ for $Z^{\ell}$ and $Z^{\ell'}$ distinct grids containing $x$. Then the average square value of $\Lambda(x)$ is
\[\left(d_{ML}^{-2(r - k_{\text{gap}})} (R^2 - R) \,+\, d_{ML}^{-(r- k_{\text{gap}})} R\right) \cdot \left(1 +  \mathcal{O}\left(e^{-k_{\text{gap}}}\right)\right)\]
\[= d_{ML}^{-2(r - k_{\text{gap}})} \cdot R^2 \left(1 + \mathcal{O}\left(e^{-k_{\text{gap}}}\right)\right).\]
Then, outside a set of density $\mathcal{O}\left(e^{-0.5k_{\text{gap}}}\right)$ in the domain of $\Lambda$, we find that $\Lambda(x)$ over the mean value of $\Lambda$ is within $e^{-0.25 k_{\text{gap}}}$ of $1$. The effect of the set of low density fits into the error term of Proposition \ref{prop:proof_B}, and the variance between the $\Lambda(x)$ also fits into the error of this proposition. Then, to prove Proposition \ref{prop:proof_B}, it is enough to prove Proposition \ref{prop:proof_C} for each grid $Z^{\ell}$.

\end{proof}

\begin{proof}[Proof of Proposition \ref{prop:proof_C}]
Take $F$ to be a nonzero multiplicative character of the vector space of $n_m$ dimensional alternating matrices with coefficients in $\mathbb{F}_2$. For $x \in  Z \cap  X(a,\, P^-_{\text{pre-gap}},\, m-1)$, write $\text{CT}(x)$ for the Cassels-Tate pairing on $D_{(m)}$. To prove the proposition, it is enough to prove that
\begin{align*}
&\sum_{x \in  Z \cap  X(a,\, P^-_{\text{pre-gap}},\, m-1)} F(\text{CT}(x)) \\
&\qquad\qquad =  \mathcal{O}\left(\big| Z \cap X(a,\, P^-_{\text{pre-gap}})\big| \cdot (\log \log \log \log N)^{-\frac{c}{m\cdot 6^m}}\right)
\end{align*}
for each $F$.

Choose an $F$, and take $j_1 < j_2 \le n_0$ so that $F$ depends on the value of $\text{CT}(x)_{j_1j_2}$, and take $S = S(j_1, j_2)$. From Proposition \ref{prop:gov_set_ind}, we find that there is a natural bijection
 \[\text{Gal}(M(j_1, j_2)M_{\circ}/M_{\circ}) \cong \mathscr{G}_{S - \{i_b\}}(\pi_{S - \{i_b\}}(Z))\]
of $\mathbb{F}_2$ vector spaces, with our notation as in Definition \ref{def:GSZ}. For $\sigma$ in this Galois group, we take $X_{i_b}(\sigma)$ to be the subset of $X_{i_b}(M_{\circ})$ mapping under Frobenius to $\sigma$. From the Chebotarev density theorem, we find
\[|X_{i_b}(\sigma)| = 2^{-(M-1)^{m+1}} \cdot |X_{i_b}(M_{\circ})| \cdot \left(1 + \mathcal{O}\left(e^{-k_{\text{gap}}}\right)\right).\]
Choose $x_i \in X_i(M_{\circ})$ for $i$ above $k_{\text{gap}}$ besides $i_b$ such that the set of $x_i$ is consistent with $a$, writing this tuple as $P^-_{\text{post-gap}}$.  From Proposition \ref{prop:jutila} and Propoosition \ref{prop:eqd_lgn_raw}, we see that, outside a negligible set of choices of $P^-_{\text{post-gap}}$, if we write $X_{i_b}(P^-_{\text{post-gap}})$ for the subset of $X_{i_b}$ consistent with $a$, we have
\begin{equation}
\label{eq:Frob_fun}
\big|X_{i_b}(\sigma) \,\cap\, X_{i_b}(P^-_{\text{post-gap}})\big|\qquad\qquad\qquad\qquad\qquad
\end{equation}
\[= 2^{-(M-1)^{m+1}} \cdot \big|X_{i_b}(M_{\circ})\, \cap \,X_{i_b}(P^-_{\text{post-gap}})\big| \cdot \left(1 + \mathcal{O}\left(e^{-k_{\text{gap}}}\right)\right)\]
for each $\sigma$.

On the grid
\[Z_{AR}  = Z_{\text{pre-gap}}\, \times \, \big(X_{i_b}(M_{\circ}) \cap X_{i_b}(P^-_{\text{post-gap}}) \big),\]
we can find full additive-restrictive input as in Definition \ref{def:AR_input}. The corresponding additive-restrictive system has abelian groups with orders bounded by $2^{n_{\max}(n_{\max} + 2m + 6)}$. We now apply Proposition \ref{prop:staff} to the additve-restrictive system $\mathfrak{A}(\mathscr{P})(j_1, j_2)$. By Propositions \ref{prop:AR_main} and \ref{prop:staff}, if
\begin{equation}
\label{eq:PC_eps_1}
\epsilon < 2^{-n_{\max}(n_{\max} + 2m + 6)}
\end{equation}
and
\begin{equation}
\label{eq:PC_eps_2}
\log M \ge A \cdot 6^{m + 2} \log \epsilon^{-1},
\end{equation}
then there is a choice of  $\sigma_1,  \dots, \sigma_M$ in $\text{Gal}(M(j_1, j_2)M_{\circ}/M_{\circ})$ so, for any $\sigma$ in this Galois group and any choice of  $Z'_{AR} = Z_{\text{pre-gap}} \times \{x_1, \dots, x_M\}$ with
\[x_i \in X_{i_b}(\sigma + \sigma_i)\, \cap\, X_{i_b}(P^-_{\text{post-gap}})\quad\text{for all } i \le M,\]
we have
\[\sum_{x \in Z'_{AR}} F(\text{CT}(x)) \le \epsilon \cdot |Z'_{AR}|.\]
From the estimate \eqref{eq:Frob_fun}, we see that $Z_{AR}$ can be split into grids $Z'_{AR}$ with leftovers fitting into the error term of the proposition, so we have equidistribution on $Z_{AR}$ too.

For an appropriate constant $c' > 0$, we find that
\[\epsilon = (\log \log \log \log N)^{-\frac{c'}{(m+1)6^m}}\]
satisfies both \eqref{eq:PC_eps_1} and \eqref{eq:PC_eps_2} for $\epsilon$ sufficiently small. This gives the proposition, hence the proposition, hence the proposition, hence the theorem, hence the corollary.

\end{proof}

\bibliography{references}{}
\bibliographystyle{amsplain}

\end{document}